\documentclass[12pt]{amsart}
\usepackage[hmargin=1in,vmargin=1.25in]{geometry}
\usepackage{float}
\usepackage[utf8]{inputenc}
\usepackage{amsmath}
\usepackage{amsthm}
\usepackage{amssymb}
\usepackage{amsfonts}
\usepackage{enumerate}
\usepackage{tikz}
\usepackage{tkz-euclide}
\usepackage{ytableau}
\usepackage{quiver}
\usepackage{longtable}
\usepackage{mathtools}

\theoremstyle{plain}
\newtheorem{theorem}{Theorem}[section]
\newtheorem{lemma}[theorem]{Lemma}
\newtheorem{corollary}[theorem]{Corollary}
\newtheorem{proposition}[theorem]{Proposition}
\newtheorem{conjecture}[theorem]{Conjecture}
\newtheorem{problem}[theorem]{Problem}

\theoremstyle{definition}
\newtheorem{defin}[theorem]{Definition}

\newtheorem{remark}[theorem]{Remark}
\newtheorem{exmp}[theorem]{Example}

\def\bbC{{{\mathbb C}}}
\def\bbR{{{\mathbb R}}}

\def\bbZ{{{\mathbb Z}}}

\def\bbQ{{{\mathbb Q}}}

\def\bx{{{\mathbf x}}}
\def\by{{{\mathbf y}}}
\def\bz{{{\mathbf z}}}

\def\bv{{{\mathbf v}}}
\def\bw{{{\mathbf {w}}}}
\def\bu{{{\mathbf {u}}}}

\def\des{{{\rm des}}}
\def\cyc{{{\rm cyc}}}
\def\bi{{{\mathbf {i}}}}

\def\bh{{{\mathbf {h}}}}

\def\ba{{{\mathbf {a}}}}
\def\bb{{{\mathbf {b}}}}
\def\bc{{{\mathbf {c}}}}
\def\bd{{{\mathbf {d}}}}
\def\br{{{\mathbf {r}}}}
\def\bs{{{\mathbf {s}}}}
\def\bt{{{\mathbf {t}}}}
\def\bP{{{\mathbf {P}}}}
\def\bN{{{\mathbf {N}}}}
\def\bCN{{{\mathbf {CN}}}}
\def\wt{{{\rm wt}}}

\def\skel{{{\rm skel}}}

\ytableausetup{boxsize=1.5em}

\newcommand{\inc}{\text{\rm inc}}

\newcommand{\SSYT}{\text{\rm SSYT}}
\newcommand{\SYT}{\text{\rm SYT}}
\newcommand{\T}{\text{\rm T}}
\newcommand{\ST}{\text{\rm ST}}

\newcommand{\CSSYT}{\text{\rm CSSYT}}

\newcommand{\threeone}{\ensuremath{(\mathbf{3}+\mathbf{1})}}
\newcommand{\twotwo}{\ensuremath{(\mathbf{2} + \mathbf{2})}}
\newcommand{\sgn}{\text{\rm sgn}}

\newcommand{\sort}{\text{\rm sort}}

\newcommand{\stat}{\text{\rm stat}}

\newcommand{\lpk}{\text{\rm lpk}}

\newcommand{\Sp}{\text{\rm Sp}}
\newcommand{\Tr}{\text{\rm Tr}}
\newcommand{\imm}{\text{\rm Imm}}
\newcommand{\A}{\text{\rm A}}
\newcommand{\SA}{\text{\rm SA}}
\newcommand{\chain}{\text{\rm chain}}

\numberwithin{equation}{section}

\title[]{Schur-Positve Symmetric Functions from Products: Log-Majorization, Total-Positivity, and PSD Matrices}

   \author{Isaiah Siegl}

   \address
   {Dept.\ of Mathematics\\
    University of Washington\\
    Seattle, WA}
   \date{\today}
   \email{isaiahsiegl@gmail.com}

\begin{document}

\begin{abstract}
For a polynomial $F(t) = 1 + a_1 t + a_2 t^2 +\dots + a_d t^d$ with coefficients in a $\mathbb{Q}$-algebra $\mathcal{R}$, we consider Schur-positivity of the symmetric function $\Omega_F(\bx) = \prod_{i \geq 1} F(x_i)$. 
Using work of Lindstr\"om and Gessel--Viennot, we give sufficient conditions for Schur-positivity in terms of directed networks. In the case when $F(t)$ is the multivariate independence polynomial of the  incomparability graph $\inc(P)$ of a natural unit interval order, we apply these ideas to give a new proof of Schur-positivity of the chromatic symmetric function $X_{\inc(P)}(\bx)$. Additionally, we prove Schur-positivity of certain sums of chromatic symmetric functions, refining recent work of Amdeberhan--Shareshian--Stanley.
In the case when $\mathcal{R} = \mathbb{Q}[z_1,z_2,\dots,z_n]$, we prove necessary and sufficient conditions for Schur-positivity in terms of symmetric function evaluations. In particular, we show for $\mathbf{u}, \mathbf{v} \in \mathbb{R}_{\geq 0}^n$, $s_\lambda(\mathbf{u}) \geq s_\lambda(\mathbf{v})$ for all partitions $\lambda$ if and only if $m_\lambda(\mathbf{u}) \geq m_\lambda(\mathbf{v})$ for all partitions $\lambda$, and these inequalities are equivalent to $\mathbf{u}$ log-majorizing $\mathbf{v}$. 
Using representation theory of $GL(\mathbb{C}^n)$, we prove Schur-positivity of $\Omega_F(\bx)$ when $F(t) = \det(I + t\sum_{i = 1}^m z_i \bv_i \bv_i^*)$ for a collection of vectors $\bv_1,\bv_2,\dots, \bv_m \in \mathbb{C}^n$. 
As an application for this result, we introduce the \emph{Kirchhoff symmetric function} $K_G(\bx)$ of a loopless graph $G$ and show that it it always Schur-positive.
\end{abstract}%
\maketitle
\section{Introduction}
Many fundamental problems in algebraic combinatorics concern the Schur-positivity of families of symmetric functions. Such problems often have deep connections to geometry and representation theory and reveal interesting combinatorial structure. 
A natural way to obtain symmetric functions is by taking products of polynomials.
For a polynomial $F(t) = 1 + a_1 t + a_2 t^2 + \dots + a_d t^d$, 
define $\Omega_F(\bx)$ by
\begin{equation}
\label{eq Omega F def}
\Omega_F(\bx) := \prod_{i \geq 1} F(x_i) = (1 + a_1 x_1 + \dots + a_d x_i^d)(1 + a_1 x_2 + \dots + a_d x_2^d) \cdots
\end{equation}
 
We say $\Omega_F(\bx)$ is Schur-positive if each homogeneous component is Schur-positive.
Motivated by positivity problems for chromatic symmetric functions and building on work of Aissen--Schoenberg--Whitney \cite{ASW}, Stanley showed that Schur-positivity of $\Omega_F(\bx)$ is closely related to real-rootedness of $F(t)$ and total-nonnegativity of Toeplitz matrices.
\begin{theorem}
\label{thm stan poly s pos}
\cite{StanleyChromatic1998}
Let $F(t) = 1 + a_1 t + a_2 t^2 + \dots + a_dt^d \in \mathbb{R}[t]$. The following are equivalent.
\begin{enumerate}
\item All roots of $F(t)$ are negative real numbers.
\item $\Omega_F(\bx)$ is Schur-positive.
\item $\Omega_F(\bx)$ is $e$-positive.
\item The Toeplitz matrix $\left[a_{j-i}\right]_{i,j=1}^{\infty}$ is totally-nonnegative, where $a_i$ is taken to be 0 when $i < 0$.
\end{enumerate}
\end{theorem}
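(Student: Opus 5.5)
We would prove the cycle of implications (1) $\Rightarrow$ (3) $\Rightarrow$ (2) $\Rightarrow$ (4) $\Rightarrow$ (1).

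\emph{(1) $\Rightarrow$ (3).} If every root of $F$ is negative real, we can factor $F(t)=\prod_{k=1}^d(1+r_kt)$ with all $r_k>0$. Then
\[
\Omega_F(\bx)=\prod_{k=1}^d\prod_{i\ge1}(1+r_kx_i)=\prod_{k=1}^d\Bigl(\sum_{n\ge0}r_k^n e_n(\bx)\Bigr).
\]
A product of $e$-positive series is $e$-positive, since $e_\mu e_\nu=e_{\mu\cup\nu}$. So each homogeneous component is $e$-positive.

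\emph{(3) $\Rightarrow$ (2).} This is immediate, because each $e_\lambda$ is Schur-positive: by Pieri's rule it is a sum of Schur functions with Kostka coefficients.

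\emph{(2) $\Rightarrow$ (4).} We define the ring homomorphism $\phi:\Lambda\to\bbR$ by $\phi(e_n)=a_n$, with $a_n=0$ for $n>d$. Multiplying the defining product by $\prod_i(1+y_i)$ and using the dual Cauchy identity,
\[
\prod_{i}F(x_i)=\sum_\lambda \phi(s_{\lambda'})\,s_\lambda(\bx).
\]
One can see this by writing $\prod_i F(x_i)$ as $\phi$ applied in auxiliary variables $\by$ to $\prod_{i,j}(1+x_iy_j)=\sum_\lambda s_\lambda(\bx)s_{\lambda'}(\by)$. Schur-positivity of $\Omega_F$ therefore says $\phi(s_\mu)\ge0$ for every partition $\mu$. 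For skew shapes we use $s_{\lambda/\mu}=\sum_\nu c^\lambda_{\mu\nu}s_\nu$ with Littlewood--Richardson coefficients $c^\lambda_{\mu\nu}\ge0$, which gives $\phi(s_{\lambda/\mu})\ge0$ for all skew shapes.

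Next, the dual Jacobi--Trudi formula $s_{\lambda'/\mu'}=\det[e_{\lambda_i-\mu_j-i+j}]$ shows that every minor of the Toeplitz matrix $[e_{j-i}]$ is a skew Schur function, or zero. Concretely, the minor on rows $i_1<\dots<i_k$ and columns $j_1<\dots<j_k$ corresponds to $\lambda_r=j_{k+1-r}-(k+1-r)$-type reindexing. The convention $e_n=0$ for $n<0$ takes care of the degenerate cases. Applying $\phi$ shows every minor of $[a_{j-i}]$ is nonnegative.

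\emph{(4) $\Rightarrow$ (1).} This is the Aissen--Schoenberg--Whitney (Edrei) theorem \cite{ASW}: a polynomial with $a_0=1$ whose Toeplitz matrix is totally nonnegative has only real nonpositive roots. Since $F(0)=1$, zero is not a root, so all roots are negative. We will cite this result rather than reprove it.

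Its proof, the expected main obstacle, goes through the generating function. Total nonnegativity of the Toeplitz matrix of $1/F(-t)$ is inherited from that of $F$: the inverse Toeplitz matrix, conjugated by the sign matrix $\mathrm{diag}((-1)^i)$, is again totally nonnegative by Jacobi's complementary minor formula. So the sequence $h_n$ of coefficients of $1/F(-t)$ is a Pólya frequency sequence. One then shows, via determinant inequalities on $h_n$, that a PF sequence has a generating function with nonnegative radius behaviour forcing all poles of $1/F(-t)$ onto the positive real axis. This is the analytic part, and we would rely on \cite{ASW}.
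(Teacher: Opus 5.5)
Your argument is correct and is the standard one. The paper gives no proof of its own (it cites Stanley), but the ingredients it records are exactly yours: the identification of Toeplitz minors with skew Schur $F$-analogues behind Proposition~\ref{prop TP s pos} for (2)$\Rightarrow$(4), the factorization $\Omega_F(\bx)=\Omega(\bu,\bx)$ for (1)$\Rightarrow$(3), and the Aissen--Schoenberg--Whitney theorem for (4)$\Rightarrow$(1). As a small aside, (4)$\Rightarrow$(1) can also be read off from the Edrei--Thoma factorization quoted in the preliminaries, since for a polynomial $F$ that factorization forces $\gamma=0$, all $\alpha_i=0$, and only finitely many nonzero $\beta_i$.
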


The purpose of this paper is to study Schur-positivity and $e$-positivity of $\Omega_F(\bx)$ when the coefficients of $F(t)$ are elements of a not-necessarily-commutative $\bbQ$-algebra $\mathcal{R}$.
For $\Omega_F(\bx)$ to be a well-defined power series, we require $F(0) = 1$, and for $\Omega_F(\bx)$ to be a symmetric function, we require that the coefficients of $F(t)$ to commute with each other in $\mathcal{R}$. 
Write $\Lambda = \bbQ[x_1,x_2,\dots]^{S_\infty}$ for the ring of symmetric functions of bounded degree.
A set $\mathcal{R}_+ \subset \mathcal{R}$ is a \emph{positive cone} if it is closed under multiplication and non-negative $\bbQ$-linear combinations.
We say a $\mathcal{R}$-coefficient symmetric function $f \in \mathcal{R} \otimes \Lambda$ is Schur-positive (resp. $e$-positive) if $f$ expands in the basis of Schur functions (resp. elementary symmetric functions) with coefficients in $\mathcal{R}_+$.
The equivalence between Schur-positivity of $\Omega_F(\bx)$ and the total-nonnegativity of the associated Toeplitz matrix extends to this generalized setting. However, Schur-positivity of $\Omega_F(\bx)$ does not imply $e$-positivity for general $\mathcal{R}$. 

%
A useful tool for studying the positivity of $\Omega_F(\bx)$ is the following \emph{dual Cauchy identity}
\begin{align}
\Omega(\bx, \by) &:= \prod_{i, j} (1 + x_i y_j) = \prod_{i} \sum_{k \geq 0} e_k(\by) x_i^k \\
&= \sum_{\lambda} e_\lambda(\bx) m_{\lambda}(\by) = \sum_{\lambda} s_\lambda(\bx) s_{\lambda'}(\by) \\
&= \sum_\lambda (-1)^{|\lambda| - \ell(\lambda)} z_\lambda^{-1} p_\lambda(\bx) p_\lambda(\by) = \sum_{\lambda} m_\lambda(\bx) e_{\lambda}(\by).
\end{align}
In the above identity, $\lambda'$ denotes the conjugate of the partition $\lambda$, and $z_\lambda = \prod_j m_j!\, j^{m_j}$, where $m_j$ is the number of parts of size $j$ in $\lambda$. For $F(t) = 1 + e_1^F t + \dots + e_d^F t^d \in \mathcal{R}[t]$, define the \emph{elementary $F$-analogue} $e_\lambda^F \in \mathcal{R}$ by $e_\lambda^F = e_{\lambda_1}^F e_{\lambda_2}^F \dots e_{\lambda_\ell}^F$. 
For a symmetric function $f(\bx) = \sum_\lambda a_\lambda e_\lambda(\bx)$, define the \emph{$F$-analogue} $f^F$ of $f$ by replacing each elementary symmetric function with its $F$-analogue so that $f^F = \sum_{\lambda} a_\lambda e_\lambda^F$. 
Thus, for any symmetric function identity, we have a corresponding identity for $F$-analogues. For example, the Jacobi--Trudi identity $s_\lambda(\bx) = \det(e_{\lambda_i' -i+ j}(\bx))$ implies $s_\lambda^F = \det(e_{\lambda_i' - i + j}^F)$. Corresponding to the dual Cauchy identity, we have
\begin{equation}
\label{eq F cauchy id}
\Omega_F(\bx) = \prod_{i} F(x_i) = \sum_{\lambda} e_\lambda^F m_\lambda(\bx) = \sum_{\lambda} s_\lambda^F s_{\lambda'}(\bx) = \sum_{\lambda} m_\lambda^F e_\lambda(\bx).
\end{equation}
As an immediate consequence of \eqref{eq F cauchy id}, we have that $\Omega_F(\bx)$ is monomial-positive if and only if each $e_i^F$ is in $\mathcal{R}_+$. Furthermore, $\Omega_F(\bx)$ is Schur-positive if and only if $s_\lambda^F \in \mathcal{R}_+$ for all partitions $\lambda$, and $\Omega_F(\bx)$ is $e$-positive if and only if $m_\lambda^F \in \mathcal{R}_+$ for all partitions $\lambda$. 

For partitions $\mu \subset \lambda$ with $m$ columns, the skew Schur function $s_{\lambda/\mu}(\bx)$ is the $(I, J)$-minor of the Toeplitz matrix $[e_{j-i}(\bx)]_{i,j = 1}^\infty$, where
$I = \{\mu_m' +1 , \mu_{m - 1}' + 2, \dots, \mu_1' + m\}$ and $J = \{\lambda_m' + 1, \lambda_{m - 1}' + 2, \dots, \lambda_1' + m\}$.
Applying the definition of $F$-analogues, this gives $s_{\lambda/\mu}^F = \Delta_{I, J}[e_{j-i}^F]_{i,j = 1}^\infty$. Say a matrix $A$ with entries in $\mathcal{R}$ is totally-nonnegative with respect to $\mathcal{R}_+$ if each minor of $A$ is in $\mathcal{R}_+$. The following proposition gives an equivalence between Schur-positivity of $\Omega_F(\bx)$ and total-nonnegativity of $[e_{j-i}^F]_{i,j = 1}^\infty$. The proof follows directly from the definitions.
\begin{proposition}
\label{prop TP s pos}
Let $F(t) = 1 + e_1^F t + \dots + e_d^F t^d \in \mathcal{R}[t]$. Then $\Omega_F(\bx)$ is Schur-positive with respect to a positive cone $\mathcal{R}_+$ if and only if the matrix $\left[e_{j-i}^F\right]_{i,j = 1}^\infty$ is totally-nonnegative with respect to $\mathcal{R}_+$.
\end{proposition}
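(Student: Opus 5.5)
The plan is to prove both directions by matching minors of the Toeplitz matrix $T^F := [e^F_{j-i}]_{i,j=1}^\infty$ with skew $F$-analogues $s^F_{\lambda/\mu}$, and then using the fact, read off from \eqref{eq F cauchy id}, that Schur-positivity of $\Omega_F(\bx)$ is equivalent to $s^F_\lambda \in \mathcal{R}_+$ for every partition $\lambda$. Since the $e^F_i$ commute, determinants of matrices with entries in the commutative subalgebra they generate are well defined. The map $f \mapsto f^F$ is a ring homomorphism $\Lambda \to \mathcal{R}$, sending $e_k \mapsto e^F_k$, with $e_0 \mapsto 1$ and $e_k \mapsto 0$ for $k<0$ or $k>d$. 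Hence every minor of $T^F$ is the image under this homomorphism of the corresponding minor of $[e_{j-i}(\bx)]$.

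For the direction ($\Leftarrow$), suppose $T^F$ is totally-nonnegative. Taking $\mu = \emptyset$ in the stated minor description of skew Schur functions, $s^F_\lambda = \Delta_{I,J}(T^F)$ for suitable $I,J$, so $s^F_\lambda \in \mathcal{R}_+$ for all $\lambda$. By the expansion $\Omega_F(\bx) = \sum_\lambda s^F_\lambda s_{\lambda'}(\bx)$, and since conjugation is a bijection on partitions, every Schur coefficient of $\Omega_F$ lies in $\mathcal{R}_+$.

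For the direction ($\Rightarrow$), take an arbitrary finite minor $\Delta_{I,J}$ with $I=\{i_1<\dots<i_m\}$ and $J=\{j_1<\dots<j_m\}$.
\begin{enumerate}
\item Because $T^F$ is Toeplitz, shifting both index sets by a constant leaves the minor unchanged. So we may assume $i_1 \ge 1$ and write $i_k = \alpha_k + k$ and $j_k = \beta_k + k$ with weakly increasing nonnegative sequences $\alpha,\beta$.
\item Read $\alpha,\beta$ backwards as conjugate partitions: $\mu' = (\alpha_m,\dots,\alpha_1)$ and $\lambda' = (\beta_m,\dots,\beta_1)$.
\item If $\mu \subset \lambda$, the minor equals $s^F_{\lambda/\mu}$ by the stated identity. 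If $\mu \not\subset \lambda$, then some $\alpha_k > \beta_k$, so $i_k > j_k$. For $r \ge k \ge c$ we get $i_r \ge i_k > j_k \ge j_c$, so the submatrix has an $(m-k+1)\times k$ block of zeros, which forces the determinant to vanish. Thus the minor is $0 \in \mathcal{R}_+$.
\end{enumerate}

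It remains to show $s^F_{\lambda/\mu} \in \mathcal{R}_+$, given only that each $s^F_\nu \in \mathcal{R}_+$. Expand $s_{\lambda/\mu} = \sum_\nu c^\lambda_{\mu\nu} s_\nu$ with Littlewood--Richardson coefficients $c^\lambda_{\mu\nu} \ge 0$, and apply the homomorphism to get $s^F_{\lambda/\mu} = \sum_\nu c^\lambda_{\mu\nu} s^F_\nu$. This is a nonnegative rational combination of elements of $\mathcal{R}_+$, so it lies in $\mathcal{R}_+$ because the cone is closed under such combinations.

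The main obstacle is this last step: the hypothesis only controls straight-shape analogues, so nonnegativity of the Littlewood--Richardson coefficients is essential. The index bookkeeping in the shift and the vanishing argument for $\mu\not\subset\lambda$ is routine.
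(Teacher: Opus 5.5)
Your proof is correct and follows the same route as the paper: identify the minors of $[e^F_{j-i}]$ with the skew analogues $s^F_{\lambda/\mu}$, and read off Schur-positivity as $s^F_\lambda\in\mathcal{R}_+$ from the dual Cauchy expansion, which the paper says ``follows directly from the definitions.'' Your write-up also makes explicit two points the paper leaves unstated: minors with $\mu\not\subset\lambda$ vanish, and the forward direction needs nonnegativity of the Littlewood--Richardson coefficients to pass from straight shapes to skew shapes.
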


Given a factorization $F(t) = \prod_{i = 1}^d (1 + u_i t)$ with commuting factors in $\mathcal{R}[t]$, the connection between $\Omega_F(\bx)$ and $\Omega(\bx, \by)$ becomes more explicit.
As the coefficients of a polynomial are related to its by the elementary symmetric functions, we have 
\begin{equation}
F(t) = \sum_{k = 0}^d e_k^F t^k = \sum_{k = 0}^d e_k(\bu) t^d = \prod_{i = 1}^d ( 1 + u_i t).
\end{equation}
Therefore, $e_k^F = e_k(\bu)$ for all $k$, and for any symmetric function $f$, we have $f^F = f(\bu)$. As a consequence, $\Omega_F(\bx) = \Omega(\bu, \bx)$. Therefore, if each $u_i$ is in $\mathcal{R}_+$, then $\Omega_F(\bx)$ is $e$-positive.

We are particularly interested in the case when $\mathcal{R}$ is the commutative polynomial ring $\bbQ[z_1,z_2,\dots, z_n]$ and $\mathcal{R}_+ = \bbQ_{\geq 0}[z_1,z_2,\dots, z_n].$
In this case, there is a useful interplay between evaluations of $F$-analogues $f^F(\bz)$ and evaluations of the corresponding symmetric function $f(\bx)$.
For $F(\bz, t) \in \mathcal{R}[t]$, write $\Omega_F(\bz, \bx) := \Omega_F(\bx)$. 
The following proposition follows immediately from Theorem~\ref{thm stan poly s pos} and the fact that evaluating polynomials in $\bbQ_{\geq 0}[\bz]$ at nonnegative real vectors gives nonnegative real numbers. 
\begin{proposition}
\label{prop bbQ Z eval roots}
Let $F(\bz, t) \in \bbQ[z_1,z_2,\dots,z_n,t]$. If $\Omega_F(\bz, \bx)$ is Schur-positive, then all roots of $F(\br, t)$ are negative real numbers for all vectors $\br \in \bbR_{\geq 0}^n$.
\end{proposition}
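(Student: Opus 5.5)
The plan is to specialize the indeterminates $z_1,\dots,z_n$ to the nonnegative real vector $\br$ and then apply Theorem~\ref{thm stan poly s pos}.

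Fix $\br \in \bbR_{\geq 0}^n$ and consider the evaluation homomorphism $\mathrm{ev}_\br : \bbQ[\bz] \to \bbR$, $z_i \mapsto r_i$. Write $F(\bz,t) = 1 + e_1^F(\bz) t + \dots + e_d^F(\bz) t^d$. Here the constant term is $1$, since this is the standing assumption needed for $\Omega_F$ to be defined. The specialized polynomial $F(\br,t) = 1 + e_1^F(\br)t + \dots + e_d^F(\br)t^d \in \bbR[t]$ then has real coefficients and constant term $1$, so it fits the hypotheses of Theorem~\ref{thm stan poly s pos}. One bookkeeping point: the degree in $t$ may drop after specialization. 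This is harmless, because Theorem~\ref{thm stan poly s pos} applies to any such polynomial, and $F(\br,t)$ has only finitely many roots.

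The central step is to check that specialization commutes with the $F$-analogue construction. Because $\mathrm{ev}_\br$ is a ring homomorphism, it sends $e_\lambda^F$ to $e_\lambda^{F(\br,\cdot)}$. Both $s_\lambda^F$ and $s_\lambda^{F(\br,\cdot)}$ are the same $\bbQ$-linear combination of elementary analogues, for instance through the Jacobi--Trudi determinant. Hence
\[
\mathrm{ev}_\br\bigl(s_\lambda^F\bigr) = s_\lambda^{F(\br,\cdot)}, \qquad \text{equivalently} \qquad \Omega_{F(\br,\cdot)}(\bx) = (\mathrm{ev}_\br \otimes \mathrm{id})\,\Omega_F(\bz,\bx).
\]

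Next I use \eqref{eq F cauchy id}. Schur-positivity of $\Omega_F(\bz,\bx)$ means $s_\lambda^F \in \bbQ_{\geq 0}[\bz]$ for every $\lambda$. A polynomial with nonnegative rational coefficients takes nonnegative values at any point of $\bbR_{\geq 0}^n$, so $s_\lambda^{F(\br,\cdot)} = s_\lambda^F(\br) \geq 0$ for all $\lambda$. Applying \eqref{eq F cauchy id} again, now over $\bbR$, the function $\Omega_{F(\br,\cdot)}(\bx) = \sum_\lambda s_\lambda^{F(\br,\cdot)} s_{\lambda'}(\bx)$ is Schur-positive in the classical sense.

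Finally, the implication (2)$\Rightarrow$(1) of Theorem~\ref{thm stan poly s pos} gives that every root of $F(\br,t)$ is a negative real number. I expect no real obstacle here. The only care needed is the compatibility of evaluation with $F$-analogues and the fact that the constant term stays equal to $1$. Both follow directly from the definitions.
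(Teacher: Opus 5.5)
Your proof is correct and follows the paper's own (one-line) argument: specialize $\bz \mapsto \br$, use that polynomials in $\bbQ_{\geq 0}[\bz]$ take nonnegative values on $\bbR_{\geq 0}^n$ so each $s_\lambda^F(\br) \geq 0$, and then apply (2)$\Rightarrow$(1) of Theorem~\ref{thm stan poly s pos}. Your checks that evaluation commutes with forming $F$-analogues and that the constant term stays $1$ (with a harmless possible degree drop) are exactly the details the paper leaves implicit.
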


Suppose $\Omega_F(\bz, \bx)$ is Schur-positive.
As a consequence of Proposition~\ref{prop bbQ Z eval roots}, for any $\br \in \bbR_{\geq 0}^n$, we have $F(\br, t) = \prod_{i = 1}^d (1 + u_i t)$  for some $\bu \in \bbR_{\geq 0}^d$. Observe that for each $k$, we have $e_k^F(\br) = e_k(\bu)$. Hence, for any symmetric function $f(\bx)$, we have $f^F(\br) = f(\bu)$. 
Let $\br, \bs \in \bbR_{\geq 0}^n$ such that $r_i \geq s_i$ for all $1 \leq i \leq n$,  and let $F(\bs, t) = \prod_{i=1}^d (1 + v_i t)$ for some $\bv \in \bbR_{\geq 0}^d$.
Then for any  polynomial $p(\bz) \in \bbQ_{\geq 0}[\bz]$, we have $p(\br) \geq p(\bs)$ as real numbers. As $\Omega_F(\bz, \bx)$ is Schur-positive, then for each partition $\lambda$, $s_\lambda^F(\bz) \in \bbQ_{\geq 0}[\bz]$ and therefore
\begin{equation}
\label{eq sF ineq}
s_\lambda(\bu) = s_\lambda^F(\br) \geq s_\lambda^F(\bs) = s_\lambda(\bv).
\end{equation}
Likewise, if $\Omega_F(\bz, \bx)$ is $e$-positive, then for all partitions $\lambda$, we have
\begin{equation}
\label{eq mF ineq}
m_\lambda(\bu) = m_\lambda^F(\br) \geq m_\lambda^F(\bs) = m_\lambda(\bv). 
\end{equation}
These inequalities suggest an interesting relationship between Schur- and $e$-positivity of $\Omega_F(\bz, \bx)$ and symmetric function evaluations.
The following theorem characterizes the inequalities \eqref{eq sF ineq} and \eqref{eq mF ineq} in terms of \emph{log-majorization}.
This result mirrors, but is distinct from, the relationship between majorization and \emph{symmetric means} studied by Cuttler--Greene--Skandera \cite{CGSsymMeans}, Sra \cite{Sra}, and Khare--Tao \cite{KhareTao}.
Additionally, this result characterizes when the difference of classical dual Cauchy products $\Omega(\bu, \bx) - \Omega(\bv, \bx)$ is Schur-positive and $e$-positive.
\begin{theorem}
\label{thm sym eval ineq}
Let $\bu, \bv \in \mathbb{R}^n_{\geq 0}$.
The following are equivalent.
\begin{enumerate}
\item $\bu \succeq_{\log} \bv$,
\item $m_\lambda(\bu) \geq m_\lambda(\bv)$ for all partitions $\lambda$,
\item $s_\lambda(\bu) \geq s_\lambda(\bv)$ for all partitions $\lambda$,
\item $\Omega(\bu, \bx) - \Omega(\bv, \bx)$ is Schur-positive,
\item $\Omega(\bu, \bx) - \Omega(\bv, \bx)$ is $e$-positive.
\end{enumerate}
\end{theorem}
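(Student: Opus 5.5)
We will prove the cycle $(1)\Rightarrow(2)\Rightarrow(3)\Rightarrow(1)$ and then show that $(3)\Leftrightarrow(4)$ and $(2)\Leftrightarrow(5)$. Throughout, $\bu\succeq_{\log}\bv$ means the following. Sort both vectors in weakly decreasing order. Then for every $k=1,\dots,n$ we require $u_1\cdots u_k\ge v_1\cdots v_k$. This is weak log-majorization from above, with no equality required at $k=n$.

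The equivalences with (4) and (5) are formal. By the dual Cauchy identity,
\[
\Omega(\bu,\bx)-\Omega(\bv,\bx)=\sum_\lambda \bigl(s_\lambda(\bu)-s_\lambda(\bv)\bigr)s_{\lambda'}(\bx)=\sum_\lambda\bigl(m_\lambda(\bu)-m_\lambda(\bv)\bigr)e_\lambda(\bx).
\]
The difference is therefore Schur-positive (resp. $e$-positive) exactly when every coefficient $s_\lambda(\bu)-s_\lambda(\bv)$ (resp. $m_\lambda(\bu)-m_\lambda(\bv)$) is nonnegative. The implication $(2)\Rightarrow(3)$ is also immediate. Indeed, $s_\lambda=\sum_\mu K_{\lambda\mu}m_\mu$ with Kostka numbers $K_{\lambda\mu}\ge 0$, so summing the inequalities in (2) gives (3).

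For $(3)\Rightarrow(1)$ we use rectangular partitions $\lambda=(N^k)$ with $N\to\infty$ and $k\le n$. The polynomial $s_{(N^k)}(\bu)$ is a sum of monomials $\bu^T$, one for each semistandard tableau $T$. The number of such tableaux grows only polynomially in $N$. Every monomial is at most $(u_1\cdots u_k)^N$, and the tableau with row $i$ filled by $i$ attains this bound. Hence
\[
s_{(N^k)}(\bu)^{1/N}\to u_1\cdots u_k,
\]
and the same holds for $\bv$. Taking $N$-th roots of $s_{(N^k)}(\bu)\ge s_{(N^k)}(\bv)$ and letting $N\to\infty$ gives $u_1\cdots u_k\ge v_1\cdots v_k$ for every $k$. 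The same argument with $m_{(N^k)}$ in place of $s_{(N^k)}$ gives $(2)\Rightarrow(1)$ directly, although the cycle does not need it.

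The substantive step is $(1)\Rightarrow(2)$, and we expect it to be the main obstacle. First assume all entries of $\bu$ and $\bv$ are positive, and write $u_i=e^{y_i}$ and $v_i=e^{y'_i}$. Then (1) says that $\by$ weakly submajorizes $\by'$: the sums of the $k$ largest entries satisfy $\sum_{i\le k}y_i\ge\sum_{i\le k}y'_i$ for all $k$. Fix $\lambda$, padded with zeros to length $n$, and consider
\[
g(\by)=m_\lambda(e^{\by})=\sum_{\alpha}\exp\langle\alpha,\by\rangle,
\]
where $\alpha$ ranges over the distinct rearrangements of $\lambda$. This function has three properties:
\begin{itemize}
\item it is symmetric in $\by$;
\item it is convex, being a sum of exponentials of linear forms;
\item it is coordinatewise nondecreasing, since every $\alpha$ has nonnegative entries.
\end{itemize}
By the Tomić--Weyl theorem (equivalently, the characterization of weak submajorization via increasing Schur-convex functions, as in Marshall--Olkin), these properties give $g(\by)\ge g(\by')$, which is $m_\lambda(\bu)\ge m_\lambda(\bv)$. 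If one prefers to avoid quoting that theorem, the alternative is a two-step reduction. First raise the smallest coordinates of $\by'$ until the total sums agree; this only increases $g$ by monotonicity. Then apply the ordinary majorization case, writing $\by'$ as a convex combination of permutations of $\by$ (Hardy--Littlewood--Pólya/Birkhoff) and using convexity and symmetry.

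It remains to handle vectors with zero entries. If some $v_i=0$, replace each zero entry of $\bv$ by a small $\epsilon>0$. If some $u_i=0$, replace those entries of $\bu$ by $\epsilon'>0$ chosen so that the perturbed $\bu$ still log-majorizes the perturbed $\bv$. Concretely, if $u_1\cdots u_k=0$ for some $k$, then (1) forces $v_1\cdots v_k=0$, and one takes $\epsilon'\ge\epsilon$. We then let $\epsilon\to0$ and use continuity of $m_\lambda$. Checking that such a perturbation preserves all $n$ inequalities is routine but requires care, and this bookkeeping is the main technical point of the step. Together with the formal equivalences above, this completes the plan.
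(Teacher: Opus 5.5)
Your proposal is correct. Outside the key step it matches the paper: $(2)\Rightarrow(3)$ by Kostka positivity, $(3)\Rightarrow(1)$ by rectangles $(N^k)$ whose tableau count grows polynomially, and $(3)\Leftrightarrow(4)$, $(2)\Leftrightarrow(5)$ from the dual Cauchy identity.

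The real difference is in $(1)\Rightarrow(2)$. The paper proves a vector-level lemma. The linear map $T_\lambda\colon \by\mapsto(\langle\alpha,\by\rangle)_\alpha$ commutes with permutations, so by Dean--Verducci it preserves majorization. Combined with Ando's lemma (weak majorization is entrywise domination by a majorized vector), this shows that the full vectors of monomial values satisfy $\bu^\lambda\succeq_{\log}\bv^\lambda$, and summing gives $m_\lambda(\bu)\ge m_\lambda(\bv)$.

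You go straight to the scalar inequality: $m_\lambda\circ\exp$ is symmetric, convex and increasing, so Tomi\'c--Weyl applies. This is shorter, rests on a single classical theorem, and loses nothing. Running the same argument on $\by\mapsto\sum_\alpha\psi(\langle\alpha,\by\rangle)$ for increasing convex $\psi$ recovers the paper's lemma. The paper's route, for its part, isolates the structural statement that the monomial vectors are themselves log-majorized, derived from linear-preserver theory rather than Schur-convexity.

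Two small remarks. First, in your fallback two-step reduction, the smallest coordinates of $\by'$ must be raised to a common level (water-filling). Raising only the single smallest entry can break majorization: for $\by=(1,1)$ and $\by'=(0,0)$ it produces $(2,0)\not\preceq(1,1)$. Second, your treatment of zero entries fills a gap the paper passes over, since its proof applies $\log$ to possibly vanishing entries. The key observations are that (1) forces $\bv$ to have at least as many zeros as $\bu$, and that replacing every zero in both vectors by one $\epsilon$ smaller than all positive entries then preserves log-majorization.
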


Our study of these symmetric functions $\Omega_F(\bz, \bx)$ is motivated by positivity problems for chromatic symmetric functions. For a graph $G$, the chromatic symmetric function $X_G(\bx)$ counts proper colorings of $G$. 
In 1993, Stanley--Stembridge \cite{StanleyStembridge} conjectured that $X_G(\bx)$ is $e$-positive when $G$ is the incomparability graph $\inc(P)$ of a \threeone -free poset $P$. 
In this case, Gasharov \cite{Gasharov} gave a combinatorial interpretation of the Schur coefficients of $X_{\inc(P)}(\bx)$ in terms of \emph{$P$-tableaux}. 
Guay-Paquet \cite{GPchromatic} reduced the Stanley--Stembridge conjecture to the case of \emph{natural unit interval orders} $P_\bh$, which are parameterized by \emph{Hessenberg functions} $\bh \in \mathbb{H}_n$.
Shareshian--Wachs \cite{SWchromatic} conjectured and Brosnan--Chow \cite{BrosnanChow} and Guay-Paquet \cite{guaypaquetCohom} proved that $\omega X_{\inc(P_\bh)}(\bx)$ is the Frobenius characteristic of the $S_n$ representation on the equivariant cohomology ring of regular semisimple Hessenberg varieties based on work of Tymoczko \cite{Tymoczko}.
In 1998, Stanley showed that Schur-positivity and $e$-positivity of chromatic symmetric functions are closely related to Schur and $e$-positivity of $\Omega^G(\bz, \bx) := \Omega_{I_G}(\bz, \bx) = \prod_{i \geq 1} I_G(\bz, x_i)$, where $I_G(\bz, t) = \sum_{k \geq 0} e_k^G(\bz) t^k$ is the \emph{multivariate independence polynomial of $G$}. Specifically, the Stanley--Stembridge conjecture is equivalent to $e$-positivity of $\Omega^G(\bz, \bx)$ for all incomparability graphs $G$ of \threeone -free posets. In a recent landmark result, Tatsuyuki Hikita \cite{HikitaChromatic} showed that $X_{\inc(P_\bh)}(\bx)$ is $e$-positive, resolving the Stanley--Stembridge conjecture.
We build on the known Schur-positivity and $e$-positivity results for natural unit interval orders in the results below.
We recall further relevant background for chromatic symmetric functions in Section~\ref{sec chromaticbackground}. 

Using Proposition~\ref{prop TP s pos}, we give a new approach to proving Schur-positivity of chromatic symmetric functions through total-positivity. In particular, for a Hessenberg function $\bh \in \mathbb{H}_n$, we construct a directed network $\Gamma_\bh$ with path-weight matrix $W_{\Gamma_\bh} = \left[e_{j-i}^{\inc(P_\bh)}(\bz)\right]_{i,j = 1}^\infty$. From Lindstr\"om's lemma, we know that $W_{\Gamma_\bh}$ is totally-nonnegative and therefore $\Omega^{\inc(P)}(\bz, \bx)$ is Schur-positive.
Theorem~\ref{thm s pos XincP Gamma h} shows how the coefficient of $s_\lambda(\bx)$ in $X_{\inc(P_\bh)}(\bx)$ can be recovered from non-intersecting path families in $\Omega_\bh$. 
In Theorem~\ref{thm s pos matching sum}, we use a modification of $\Gamma_\bh$ to prove Schur-positivity of sums of not-necessarily-Schur-positive chromatic symmetric functions.
Theorem~\ref{thm s pos matching sum} is closely related to a recent result of Amdeberhan--Shareshian--Stanley \cite{AmdShaSta}.
\begin{theorem}
\label{thm s pos XincP Gamma h}
Let $\bh \in \mathbb{H}_n$ be a Hessenberg function, and let $X_{\inc(P_\bh)}(\bx) = \sum_{\lambda \vdash n} a_\lambda s_\lambda(\bx)$. Then for a partition $\lambda = (\lambda_1, \dots, \lambda_\ell)$, the Schur-coefficient $a_{\lambda'}$ is the number of non-intersecting path families in $\Gamma_\bh$ from sources $(s_1,s_2,\dots, s_\ell)$ to sinks $(t_{\lambda_\ell}, t_{\lambda_{\ell-1}+1},\dots, t_{\lambda_1 + \ell - 1})$
In particular, there is a natural bijection between non-intersecting path families in $\Gamma_\bh$ and Gasharov's $P_\bh$-tableaux.
\end{theorem}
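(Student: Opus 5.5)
Throughout, I assume that $\Gamma_\bh$ is constructed (as described just before the theorem) as a planar acyclic network. Its sources $s_1,s_2,\dots$ and sinks $t_1,t_2,\dots$ are ordered compatibly along the boundary, and its path-weight matrix $W_{\Gamma_\bh}$ is the Toeplitz matrix $\left[e_{j-i}^{\inc(P_\bh)}(\bz)\right]_{i,j\ge1}$, with edge weights monomials in the $z_k$. A path from $s_i$ to $t_j$ should encode an independent set (an antichain of $P_\bh$) of size $j-i$, weighted by $\prod z_k$ over its elements. The plan has three steps.

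\emph{Step 1: Schur coefficients are coefficients of $s_{\lambda}^{I_G}$.} By Stanley's relation between $X_G$ and $\Omega^G$, the coefficient of $z_1z_2\cdots z_n$ in $\Omega^G(\bz,\bx)$ is $X_G(\bx)$. Indeed, a monomial $z_1\cdots z_n\,x^{\alpha}$ arises by choosing, for each $i$, an independent set to receive color $i$, with these sets partitioning the vertices. Combining this with \eqref{eq F cauchy id}, namely $\Omega^G(\bz,\bx)=\sum_\lambda s_\lambda^{I_G}(\bz)\,s_{\lambda'}(\bx)$, we get
\[
a_{\lambda'} = [z_1\cdots z_n]\, s_{\lambda}^{I_G}(\bz).
\]

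\emph{Step 2: identify $s_\lambda^{I_G}$ as a minor and apply Lindstr\"om--Gessel--Viennot.} Using the Jacobi--Trudi determinant $s_\lambda^F=\det(e^F_{\lambda'_i-i+j})$, or equivalently the minor description $s_{\lambda/\mu}^F=\Delta_{I,J}$ with $\mu=\emptyset$, write $s_\lambda^{I_G}$ as a minor of $W_{\Gamma_\bh}$. This minor has consecutive rows $1,\dots,\ell$ and columns $\lambda_\ell+1,\lambda_{\ell-1}+2,\dots,\lambda_1+\ell$. The paper's sink indexing $t_{\lambda_\ell},\dots,t_{\lambda_1+\ell-1}$ suggests the sinks are indexed from $0$, so I would adjust the offset to match the construction. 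Because $\Gamma_\bh$ is planar with compatibly ordered boundary vertices, only the identity permutation admits vertex-disjoint path families. Lindstr\"om's lemma then gives that this minor equals the weighted sum over non-intersecting path families from $(s_1,\dots,s_\ell)$ to the listed sinks.

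\emph{Step 3: extract the multilinear part and match with Gasharov.} Taking the coefficient of $z_1\cdots z_n$ selects the non-intersecting families whose paths together use each vertex label exactly once. I would argue that in $\Gamma_\bh$ every family counted by the theorem automatically has this property, so the count equals $a_{\lambda'}$. This should follow either by restricting $\Gamma_\bh$ to its multilinear version, or because the construction already forces each label $k$ to be used at most once. I would then construct the bijection with Gasharov's $P_\bh$-tableaux of shape $\lambda'$ directly, as a consistency check. Each path from $s_i$ produces an antichain, read in increasing order, and these antichains form the columns of the tableau, whose lengths are $\lambda_1,\lambda_2,\dots$. The non-crossing of paths $i$ and $i+1$ should translate into the row condition of a $P$-tableau: each entry is not greater than the entry to its left (in $P_\bh$).

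\emph{Main obstacle.} The main difficulty is this last translation. It requires checking, from the explicit local structure of $\Gamma_\bh$ built from the Hessenberg function $\bh$, that vertex-disjointness of adjacent paths is exactly equivalent to Gasharov's row condition. The check must also cover columns of unequal lengths, which correspond to the shifted sink positions. I would first try to verify this equivalence locally, level by level in the network, using the unit-interval property: the set of elements incomparable to $k$ is an interval $[\,\cdot\,,\bh(k)]$.
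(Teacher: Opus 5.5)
Your skeleton is the same as the paper's: read the Schur coefficient off $\Omega^{G}$ as a $[z_1\cdots z_n]$-coefficient, realize the relevant Jacobi--Trudi minor by Lindstr\"om's lemma on $\Gamma_\bh$, then translate non-intersection into tableau conditions. However, several concrete steps are wrong.

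\textbf{Step 3: labels, chains, and the key lemma.} The construction does not force each label to be used at most once. The $z_k$-edges $(i,k)\to(i+1,h_k+1)$ exist for every $i\in\bbZ$, so different paths of a non-intersecting family can reuse a label. For example, take $\bh=(n,\dots,n)$, so $P_\bh$ is an antichain, $\inc(P_\bh)=K_n$ and $X_{K_n}=n!\,s_{1^n}$. Take $\lambda=(1^n)$ with sinks $t_2,\dots,t_{n+1}$ (your corrected offset). Path $i$ climbs column $i$ to a height $a_i$ and jumps to $(i+1,n+1)$, a vertex path $i+1$ never visits. So all $n^n$ label choices give non-intersecting families, and only the $n!$ of weight $z_1\cdots z_n$ contribute to a Schur coefficient. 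In general the paper's proof gives a weight-preserving bijection between all non-intersecting families and all $P_\bh$-tableaux of shape $\lambda'$. The Schur coefficient counts only the families of weight $z_1\cdots z_n$, i.e.\ the standard tableaux; this restriction must be imposed and is not automatic.

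A path also does not encode an antichain: independent sets of $\inc(P)$ are \emph{chains} of $P$. Indeed, after a $z_c$-step the path sits at height $h_c+1$, so its labels satisfy $c_1<_{P_\bh}c_2<_{P_\bh}\cdots$, and these chains are the $P$-increasing columns.

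The step you defer as the main obstacle is the core of the paper's argument and has to be supplied. Let $\gamma_i$ have labels $c_{i,1}<_P c_{i,2}<_P\cdots$. On column $i+r$, $\gamma_i$ occupies heights $h_{c_{i,r}}+1,\dots,c_{i,r+1}$ and $\gamma_{i+1}$ occupies $h_{c_{i+1,r-1}}+1,\dots,c_{i+1,r}$, with the obvious conventions at the ends. Taking $r$ minimal and using that $\bh$ is weakly increasing, one checks that $\gamma_i$ meets $\gamma_{i+1}$ iff $c_{i,r}<_P c_{i+1,r}$ for some $r$. With Lemma~\ref{lem adj path intersect}, this is exactly the row condition for top-aligned columns: the left entry is not $>_P$ the right entry. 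The direction you wrote is reversed.

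\textbf{Step 2: the conjugation.} The minor on rows $1,\dots,\ell$ and columns $\lambda_\ell+1,\dots,\lambda_1+\ell$ is $\det(e^G_{\lambda_i-i+j})_{i,j=1}^{\ell}=s^G_{\lambda'}$. By contrast, $s^G_\lambda=\det(e^G_{\lambda'_i-i+j})$ is a $\lambda_1\times\lambda_1$ minor. Combined with your (correct) Step 1, the families compute $[z_1\cdots z_n]\,s^G_{\lambda'}=a_\lambda$, not $a_{\lambda'}$.

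This agrees with your own Step 3, where columns of lengths $\lambda_1,\dots,\lambda_\ell$ give shape $\lambda'$. It agrees with Gasharov's theorem, which puts $|\ST_{P_\bh}(\lambda')|$ on $s_\lambda$. It also agrees with the paper's proof, which identifies the Lindstr\"om sum as $s^{\inc(P_\bh)}_{\lambda'}(\bz)$ and matches the coefficient of $s_\lambda$ with $P_\bh$-tableaux of shape $\lambda'$.

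The subscript $\lambda'$ in the displayed statement is a slip. In the antichain example with $\lambda=(n)$ and $n\geq 2$, no path of $\Gamma_\bh$ has more than one diagonal step, so there are no families; yet $a_{\lambda'}=a_{1^n}=n!$. Your derivation reaches $a_{\lambda'}$ only because of the misidentified minor.
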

\begin{theorem}
\label{thm s pos matching sum}
Let $\bh \in \mathbb{H}_n$, and let $\bd \in \bbZ_{\geq 0}^{n \times r}$. The symmetric function
\begin{equation}
\sum_{M} \frac{X_{\inc(P_M)}(\bx) }{\prod_{\bc \in \chain_r(P_\bh)} m_{\bc}(M)!}
\end{equation}
is Schur-positive, where the sum is over all multisets $M$ of chains of length $r$ in $P_\bh$ such that $\deg(M) = \bd$, $P_M$ is the interval order associated to $M$, and $m_\bc(M)$ is the multiplicity of a chain $\bc$ in $M$.
\end{theorem}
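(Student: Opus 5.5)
We follow the same strategy used for Theorem~\ref{thm s pos XincP Gamma h}: realize a suitable Toeplitz matrix as the path-weight matrix of a planar directed network, apply Lindstr\"om's lemma through Proposition~\ref{prop TP s pos}, and then read off the desired sum as a coefficient extraction.

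\textbf{Polynomial $F$.} Introduce variables $z_{\bc}$ indexed by chains $\bc \in \chain_r(P_\bh)$. Let $F(\bz,t) = \sum_k e^{(r)}_k(\bz) t^k$, where $e^{(r)}_k(\bz)$ is the sum of $\prod_{\bc \in S} z_{\bc}$ over sets $S$ of $k$ chains of length $r$ that are pairwise ``independent.'' Here two chains are independent when every element of one is comparable to every element of the other, in the sense dictated by the definition of $P_M$. This choice makes $F$ the multivariate independence polynomial of the incomparability graph of the interval order on $\chain_r(P_\bh)$.

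\textbf{Schur-positivity of $\Omega_F$.} Stanley's identity relating $\Omega^G(\bz,\bx)$ to chromatic symmetric functions expands $\Omega_F(\bz,\bx) = \prod_i I_G(\bz,x_i)$ as
\[
\Omega_F(\bz,\bx)=\sum_{M}\frac{\bz^M}{\prod_{\bc} m_{\bc}(M)!}\,X_{\inc(P_M)}(\bx),
\]
where $M$ ranges over multisets of chains and $G_M$ is the graph obtained by blowing up each vertex $\bc$ into $m_{\bc}(M)$ copies.

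Grouping the monomials $\bz^M$ according to $\deg(M)$ amounts to specializing $z_{\bc} \mapsto \prod_{j} y_{c_j,j}$, with one variable $y_{p,j}$ for each element $p$ and position $j$. Under this specialization, the coefficient of $\by^{\bd}$ is exactly the sum in the statement. Since this specialization sends $\bbQ_{\geq0}$-polynomials to $\bbQ_{\geq 0}$-polynomials, it suffices to show that $\Omega_F$ is Schur-positive after specialization. By Proposition~\ref{prop TP s pos}, this reduces to showing that $[e^{F}_{j-i}(\by)]$ is totally nonnegative.

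\textbf{Network construction.} To prove total nonnegativity, we modify $\Gamma_\bh$ to a network $\Gamma_{\bh,r}$: take $r$ copies of the layered structure of $\Gamma_\bh$ and concatenate them. A single step from $s_i$ to $t_j$ should then traverse, level by level, an independent family of $j-i$ chains. The edge weights are $y_{p,j}$, chosen so that the path weight equals $\prod_{\bc} z_{\bc}$ after specialization.

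Lindstr\"om's lemma then gives total nonnegativity, provided the network is planar and acyclic and its path-weight matrix is exactly the Toeplitz matrix $[e^F_{j-i}]$.

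\textbf{Main obstacle.} The hardest step is the bijection between source-to-sink paths and independent sets of chains. As in $\Gamma_\bh$, a path must encode an increasing sequence of chains in which consecutive chains are comparable. The key fact needed is that, for a unit interval order, independence of a \emph{set} of chains reduces to comparability of \emph{consecutive} chains in a canonical order, sorted for instance by first element and then lexicographically. I would first prove this transitivity lemma using the natural unit interval structure, namely that $\bh$ is weakly increasing. If the full Toeplitz matrix cannot be realized with a single network, a fallback is to realize it as a product of $r$ matrices, one per level of the chain. Each factor would be totally nonnegative by Theorem~\ref{thm s pos XincP Gamma h}-type networks, and total nonnegativity is preserved under products by Cauchy--Binet.
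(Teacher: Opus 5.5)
Your outline is correct, and the positivity half is the paper's argument. Made precise, your network of $r$ concatenated layers is the reweighting $\Gamma_\bh^{(r)}$. This is the graph $\Gamma_\bh$ itself, with sources $s^\bh_{ir}$ and sinks $t^\bh_{ir}$, where the diagonal edge leaving column $a$ at height $p$ gets weight $y_{p,\,(a \bmod r)+1}$. A path from $s^\bh_{ir}$ to $t^\bh_{jr}$ is a $(j-i)r$-chain of $P_\bh$. Cutting it into consecutive blocks of length $r$ gives exactly a $(j-i)$-element chain, with the specialized weight, in the order $Q$ on $\chain_r(P_\bh)$ defined by $\bc <_Q \bc'$ iff $\max\bc <_{P_\bh} \min\bc'$. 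So the weight matrix is $[e^F_{j-i}(\by)]$, and compatibility is inherited from $\Gamma_\bh$.

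You genuinely differ from the paper on the coefficient identity. The paper proves Lemma~\ref{lem Omega Y exp} by comparing $m_{\lambda'}$-coefficients, using $P$-arrays of shape $\lambda^r$ and the $\prod_\bc m_\bc(M)!$-to-one map of Lemma~\ref{lem inc PM array exp}. You instead observe that before specialization $F=I_{\inc(Q)}$. You then apply Theorem~\ref{thm OmegaG sum} via $\inc(Q)[\alpha]\cong\inc(P_M)$ (copies of one chain carry the same interval, so they form a clique), and specialize $z_\bc\mapsto\prod_j y_{c_j,j}$. Your route is shorter and explains the factor $1/\prod_\bc m_\bc(M)!$ as Stanley's $1/\alpha!$. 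It also shows that positivity comes from the specialization: the unspecialized $\Omega_{I_{\inc(Q)}}$ has the individual $X_{\inc(P_M)}$ as coefficients, and these need not be Schur-positive. The paper's route is more elementary and self-contained.

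Three corrections are needed.
\begin{itemize}
\item Your first description of independence (``every element of one is comparable to every element of the other'') is wrong, since in a total order it holds for any two chains. It must be $Q$-comparability as above, which is also what makes $\inc(Q)[\alpha]\cong\inc(P_M)$.
\item Your ``main obstacle'' is not one. A set of chains is pairwise $Q$-comparable iff consecutive ones are, simply because $Q$ is transitive, and this holds for any poset. The monotonicity of $\bh$ is used only for the planarity of $\Gamma_\bh$ that Lindstr\"om's lemma requires.
\item Discard the fallback. A product of Toeplitz matrices is the Toeplitz matrix of the product of their generating functions. You would therefore need $F=G_1\cdots G_r$ with $G_\ell$ involving only the level-$\ell$ variables $y_{\cdot,\ell}$. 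This fails already for $r=2$, $\bh=(1,2)$, where $F(t)=1+y_{1,1}y_{2,2}t$: the $t$-coefficient of such a product has no monomials mixing levels.
\end{itemize}
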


To connect symmetric functions $\Omega_F(\bz, \bx)$ to the third part of the title, consider the class of real-rooted polynomials $F(t) = \det(I + tA)$ for positive semidefinite (PSD) matrices $A$.
By the spectral theorem, if $A$ is a positive semidefinite matrix, then all roots of the polynomial $\det(I + t A)$ are negative real numbers. For any collection of vectors $\bv_1,\bv_2,\dots, \bv_m \in \mathbb{C}^{m}$, the matrix $A(\bz) = \sum_{i = 1}^m z_i \bv_i\bv_i^*$ is positive semidefinite for all $\bz \in \bbR_{\geq 0}^m$. 
In the following theorem, we show that $\Omega_F(\bz, \bx)$ is Schur-positive for $F(\bz, t) = \det(I + t A(\bz))$.

\begin{theorem}
\label{thm herm s pos}
For $\bv_1, \bv_2, \dots,\bv_m \in \mathbb{C}^n$, let $A(\bz) = \sum_{i = 1}^m z_i \bv_i \bv_i^*$, where $\bv^*$ is the conjugate transpose of $\bv$. Let $F(\bz, t) = \det(I + t A(\bz))$. Then $\Omega_F(\bz, \bx)$ is Schur-positive.
\end{theorem}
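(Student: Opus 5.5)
The plan is to identify the $F$-analogues $s_\lambda^F(\bz)$ with traces of a Schur functor applied to $A(\bz)$, and then to exhibit each such trace as a sum of monomials in $\bz$ weighted by diagonal entries of a positive semidefinite matrix. One caveat: since the $\bv_i$ are complex, the coefficients that come out are nonnegative reals rather than rationals. So I read "Schur-positive" here with respect to the cone $\bbR_{\geq 0}[\bz]$, the natural positive cone when the coefficients of $F$ lie in $\bbR[\bz]$.

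\emph{Step 1: $s_\lambda^F$ is an evaluation at eigenvalues.} Let $\alpha_1,\dots,\alpha_n$ be the eigenvalues of $A(\bz)$, viewed over an algebraic closure of $\bbQ(\bz)$. Then $\det(I+tA(\bz))=\prod_i(1+\alpha_i t)$, so $e_k^F=e_k(\alpha)$. As in the discussion preceding Theorem~\ref{thm sym eval ineq}, this gives $f^F=f(\alpha)$ for every symmetric function $f$, and in particular $s_\lambda^F(\bz)=s_\lambda(\alpha)$. By \eqref{eq F cauchy id}, it suffices to show $s_\lambda(\alpha)\in\bbR_{\geq 0}[\bz]$ for all $\lambda$.

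\emph{Step 2: a trace formula via Schur--Weyl duality.} Let $k=|\lambda|$. Let $P^{(n)}_\lambda$ be the central idempotent of $\bbC[S_k]$ for $\lambda$, acting on $(\bbC^n)^{\otimes k}$. It is the orthogonal projection onto the $\lambda$-isotypic component for the standard Hermitian inner product, because $S_k$ acts unitarily. By Schur--Weyl duality, for any $n\times n$ matrix $X$,
\begin{equation*}
\Tr\bigl(X^{\otimes k}P^{(n)}_\lambda\bigr)=f^\lambda\, s_\lambda(\text{eigenvalues of }X),
\end{equation*}
where $f^\lambda$ is the number of standard Young tableaux of shape $\lambda$. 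This holds for invertible $X$ by character theory and extends to all $X$ by polynomial continuity.

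Now write $A(\bz)=VZV^*$, where $V=[\bv_1\,\cdots\,\bv_m]$ is $n\times m$ and $Z=\mathrm{diag}(z_1,\dots,z_m)$. Since $V^{\otimes k}$ intertwines the $S_k$-actions on $(\bbC^m)^{\otimes k}$ and $(\bbC^n)^{\otimes k}$, we have $V^{\otimes k}P^{(m)}_\lambda=P^{(n)}_\lambda V^{\otimes k}$, and similarly for $V^*$. Using cyclicity of the trace and the idempotence of $P^{(m)}_\lambda$,
\begin{equation*}
f^\lambda s_\lambda^F(\bz)=\Tr\bigl(Z^{\otimes k}\,Q\bigr),\qquad Q:=P^{(m)}_\lambda (V^*V)^{\otimes k}P^{(m)}_\lambda .
\end{equation*}

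\emph{Step 3: positivity.} The matrix $V^*V$ is PSD, hence so is $(V^*V)^{\otimes k}$. Since $P^{(m)}_\lambda$ is Hermitian, $Q$ is PSD as well, so every diagonal entry $Q_{\mathbf i,\mathbf i}$ in the standard basis $e_{i_1}\otimes\cdots\otimes e_{i_k}$ is a nonnegative real. In that basis $Z^{\otimes k}$ is diagonal with entries $z_{i_1}\cdots z_{i_k}$. Therefore
\begin{equation*}
s_\lambda^F(\bz)=\frac{1}{f^\lambda}\sum_{\mathbf i\in[m]^k}Q_{\mathbf i,\mathbf i}\,z_{i_1}\cdots z_{i_k},
\end{equation*}
which has nonnegative real coefficients.

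The main point needing care is Step 2. Specifically, one must check that the identity $\Tr(X^{\otimes k}P_\lambda)=f^\lambda s_\lambda(\mathrm{eig}(X))$ holds for singular $X$, and that the intertwining relation holds for the non-square map $V$. I would handle the first by noting that both sides are polynomial in the entries of $X$ and agree on the dense set $GL_n$. The second is immediate because $V^{\otimes k}$ commutes with permutations of tensor factors.
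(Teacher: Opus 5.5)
Your proposal is correct and is essentially the paper's argument: both use $\Tr(P_\lambda A(\bz)^{\otimes k}) = f^\lambda s_\lambda[A(\bz)]$ from Schur--Weyl duality and read off the coefficient of $z_{i_1}\cdots z_{i_k}$ as a squared norm. Your factorization $A = VZV^*$ with the PSD matrix $Q$ is only a repackaging, since $Q_{\mathbf i,\mathbf i} = \|V^{\otimes k}P^{(m)}_\lambda e_{\mathbf i}\|^2 = \|P^{(n)}_\lambda (\bv_{i_1}\otimes\cdots\otimes\bv_{i_k})\|^2$, which is exactly the paper's coefficient. (One trivial slip: the eigenvalues live in an algebraic closure of $\bbC(\bz)$, not $\bbQ(\bz)$, because the $\bv_i$ are complex.)
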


The paper is organized as follows. In Section 2, we recall necessary background and prove Proposition~\ref{prop TP s pos}, generalizing work of Stanley \cite{StanleyChromatic1998}. In Section 3, we  
prove Theorem~\ref{thm sym eval ineq}. Using this result, we obtain log-majorization results for the roots of independence polynomials of incomparability graphs of \threeone -free posets. For $F(\bz, t) \in \mathbb{Q}[\bz, t]$, we give necessary conditions for Schur and $e$-positivity of $\Omega_F(\bz, \bx)$ in terms of alternating sums of symmetric function evaluations.
In Section 4, we use networks to study Schur-positivity of $\Omega_F(\bx)$. 
In particular, we prove Theorem~\ref{thm s pos XincP Gamma h} and Theorem~\ref{thm s pos matching sum}.
We state a refinement of a conjecture of Alexandersson--Dai \cite{AlexanderssonDai} on the map $T_k: \Lambda \to \Lambda$ sending $e_i(\bx)$ to $e_{ki}(\bx)$.
In Section 5, we prove Theorem~\ref{thm herm s pos}. To give an application to Theorem~\ref{thm herm s pos}, we introduce the \emph{Kirchhoff symmetric function} $K_G(\bx)$, which counts rooted forests of a loopless graph $G$, and show that it is always Schur-positive.
In Section 6, we give concluding remarks and state open problems.
As an example of symmetric functions $\Omega_F(\bx)$ with coefficients in a noncommutative ring, we consider the case when $\mathcal{R}$ is the symmetric group algebra $\mathbb{Q}S_n$. Using the factorization into Jucys--Murphy elements, we show that $\Omega_{C_n}(\bx)$ is $e$-positive when $C_n(t) = \sum_{\pi \in S_n} \pi t^{n - \cyc(\pi)}$, where $\cyc(\pi)$ is the number of cycles of $\pi$. For the $\mathbb{Q}S_n$-coefficient Eulerian polynomial $E_n(t) = \sum_{\pi \in S_n} \pi t^{\des(\pi)}$, we conjecture that $\Omega_{E_n}(\bx)$ is Schur-positive. 

\section{Preliminaries}
We assume familiarity with symmetric function theory and posets. Appropriate background can be found in \cite{ec2}. 

\begin{defin}
For a partition $\lambda = (\lambda_1, \lambda_2,...,\lambda_\ell)$, we identify $\lambda$ with the Ferrers diagram $\lambda = \{(i,j) \in \mathbb{Z}_{> 0}\mid j \leq \lambda_i \}$, drawn as boxes in the plane.
For partitions $\mu \subset \lambda$, define the \emph{skew shape} $\lambda/ \mu$ to be the set difference of their Ferrers diagrams $\lambda \setminus \mu$.
A filling $T$ of $\lambda/\mu$ with positive integers is a \emph{(skew) semistandard Young tableau} if $T_{i,j} \leq T_{i,j+1}$ and $T_{i,j} < T_{i+1, j}$ for all such pairs of cells in $\lambda/\mu$. Write $\SSYT(\lambda/\mu)$ for the set of semistandard Young tableaux of shape $\lambda/\mu$, and write $\SSYT(\lambda/\mu, n)$ for the set of semistandard Young tableaux with entries in $[n]$. Write $\SYT(\lambda/\mu)$ for the set of \emph{standard Young tableaux} of shape $\lambda/\mu$. A tableau $T \in \SSYT(\lambda)$ is a \emph{standard Young tableau} if each element of $\{1,2,...,|\lambda|\}$ appears exactly once in $T$. Write $\SYT(\lambda)$ for the set of standard Young tableaux of shape $\lambda$.
\end{defin}

The \emph{Weyl character formula} gives a nice expression for the number of semistandard Young tableau of shape $\lambda$ with entries in $[n]$. As a consequence, we have that $|\SSYT(m \lambda, n)|$ is a polynomial in  $m$, where $m \lambda = (m \lambda_1, m \lambda_2, \dots, m\lambda_\ell)$.

\begin{theorem}[Weyl character formula]
\cite[Theorem 6.3]{FultonHarris}
Let $\lambda$ be a partition, then
\begin{equation}
|\SSYT(\lambda, n)| = s_\lambda(1^n) = \prod_{1 \leq i < j \leq n} \frac{\lambda_i - \lambda_j + j - i}{j-i}.
\end{equation}
\end{theorem}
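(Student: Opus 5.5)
The statement just before this point is the Weyl character formula, cited from Fulton--Harris. The plan is to derive it from the bialternant formula $s_\lambda(x_1,\dots,x_n) = a_{\lambda+\delta}/a_\delta$. Here $\delta = (n-1,n-2,\dots,0)$, $a_\alpha = \det(x_i^{\alpha_j})_{i,j=1}^n$, and $\lambda$ is padded with zeros to length $n$; if $\ell(\lambda) > n$, both sides vanish by convention. The first equality, $|\SSYT(\lambda,n)| = s_\lambda(1^n)$, is immediate from the combinatorial definition $s_\lambda = \sum_T \bx^T$: setting $x_1=\dots=x_n=1$ and all other variables to $0$ counts the tableaux with entries in $[n]$. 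So the real content is the product formula.

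We cannot substitute $x_i = 1$ directly into the ratio of alternants, since both vanish. Instead we specialize along the principal specialization $x_i = q^{i-1}$ and then let $q \to 1$. Under this substitution $a_\alpha(1,q,\dots,q^{n-1}) = \det(q^{(i-1)\alpha_j})$ is a Vandermonde determinant in the quantities $q^{\alpha_j}$. It therefore equals
\[
\pm\prod_{i<j}(q^{\alpha_i}-q^{\alpha_j}).
\]
With $\alpha = \lambda+\delta$ the exponents are $\lambda_i + n - i$, and the signs for numerator and denominator agree because both exponent sequences are strictly decreasing. This gives
\[
s_\lambda(1,q,\dots,q^{n-1}) = \prod_{i<j}\frac{q^{\lambda_i+n-i}-q^{\lambda_j+n-j}}{q^{n-i}-q^{n-j}}.
\]
Factoring $q^{\lambda_j+n-j}$ from each numerator factor and $q^{n-j}$ from each denominator factor turns each ratio into $q^{\lambda_j}\,[\lambda_i-\lambda_j+j-i]_q/[j-i]_q$, with $[k]_q = (1-q^k)/(1-q)$. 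Letting $q\to 1$ then gives $[k]_q\to k$ and $q^{\lambda_j}\to 1$, which yields the claimed product; the left side is a polynomial in $q$, so the limit is just evaluation at $q=1$.

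The main step requiring care is the bialternant formula itself. I would assume it as standard (e.g.\ from \cite{ec2}). If a self-contained route is wanted, one can instead use the Jacobi--Trudi identity and a Lindstr\"om--Gessel--Viennot argument, or the alternant identity $a_\delta s_\lambda = a_{\lambda+\delta}$ via a sign-reversing involution on tableaux (Bender--Knuth). The remaining steps are routine Vandermonde manipulations. The polynomiality of $|\SSYT(m\lambda,n)|$ in $m$, noted in the text, follows at once, since each factor $(m\lambda_i - m\lambda_j + j - i)/(j-i)$ is linear in $m$.
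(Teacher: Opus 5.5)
The paper gives no proof of this statement; it is only quoted from Fulton--Harris. Your argument (bialternant formula, principal specialization $x_i=q^{i-1}$, evaluating both alternants as Vandermonde determinants, then letting $q\to 1$) is the standard derivation, and it is correct whenever $\ell(\lambda)\le n$.

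One claim in your proposal is false: for $\ell(\lambda)>n$ the right-hand side does not vanish. The product only involves $\lambda_1,\dots,\lambda_n$, and each factor $(\lambda_i-\lambda_j+j-i)/(j-i)$ is a positive rational number. So the product is positive, while $|\SSYT(\lambda,n)|=s_\lambda(1^n)=0$. For example, $n=2$ and $\lambda=(1,1,1)$ give $0$ on the left and $1$ on the right. The identity genuinely requires the hypothesis $\ell(\lambda)\le n$, which is how Fulton--Harris state it. You should impose that hypothesis rather than assert that the degenerate case holds by convention.

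A smaller point: with $y_j=q^{\alpha_j}$, the identity $\det\bigl(y_j^{\,i-1}\bigr)=\prod_{i<j}(y_j-y_i)$ gives the same sign $(-1)^{\binom{n}{2}}$ for every exponent sequence. So the signs of numerator and denominator agree automatically. Strict decrease of the exponents is only what makes the alternants nonzero.
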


\begin{corollary}
\label{cor ssyt pol}
Let $\lambda$ be a partition, and let $m,n \in \mathbb{Z}_{\geq 0}$. Then
\begin{equation}
|\SSYT(m \lambda, n)| = s_{m \lambda}(1^n) = \prod_{1 \leq i < j \leq n} \frac{m(\lambda_i - \lambda_j) + j - i}{j-i}.
\end{equation}
Hence, $|\SSYT(m \lambda, n)|$ is a polynomial in $m$.
\end{corollary}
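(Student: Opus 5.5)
Substituting $m\lambda$ for $\lambda$ in the Weyl character formula immediately above gives the stated identity. Concretely, the parts of $m\lambda$ are $m\lambda_1, m\lambda_2, \dots, m\lambda_\ell$, padded with zeros up to length $n$, so $(m\lambda)_i - (m\lambda)_j = m(\lambda_i - \lambda_j)$ for every pair $i<j$. The Weyl character formula applied to the partition $m\lambda$ then reads
\begin{equation*}
|\SSYT(m\lambda, n)| = s_{m\lambda}(1^n) = \prod_{1 \leq i < j \leq n} \frac{m(\lambda_i - \lambda_j) + j - i}{j-i}.
\end{equation*}

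Two edge cases need a word. If $\ell(\lambda) > n$, then $\SSYT(m\lambda, n)$ is empty for $m \geq 1$, since a column of length exceeding $n$ cannot be strictly increasing with entries in $[n]$. In that case I would either note that the formula is understood for $\ell(\lambda) \le n$, or check that the intended convention gives $0$. The case $m=0$ is consistent: $m\lambda = \emptyset$ and both sides equal $1$. The case $n = 0$ is also consistent, with an empty product.

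For polynomiality, each factor $\frac{m(\lambda_i-\lambda_j)+j-i}{j-i}$ is an affine function of $m$ with rational coefficients. A finite product of such factors is a polynomial in $m$ with rational coefficients, of degree at most the number of pairs $i<j$ with $\lambda_i \neq \lambda_j$. Because this polynomial agrees with $|\SSYT(m\lambda, n)|$ for every $m \in \bbZ_{\ge 0}$, the count is a polynomial in $m$.

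There is no real obstacle here. The only point needing care is the $\ell(\lambda) > n$ edge case, which lies outside the hypothesis of the Weyl character formula as stated.
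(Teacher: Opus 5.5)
Your argument is correct and matches the paper's, which presents the corollary as the Weyl character formula applied to the partition $m\lambda$, using $(m\lambda)_i-(m\lambda)_j=m(\lambda_i-\lambda_j)$, with polynomiality read off the product of affine factors. On the edge case, no convention can make the formula give $0$ when $\ell(\lambda)>n$: every factor is at least $1$, while the count is $1$ at $m=0$ and $0$ for $m\geq 1$, so $\ell(\lambda)\leq n$ is a necessary tacit hypothesis for both the identity and the polynomiality claim (this also covers $n=0$ with $\lambda\neq\emptyset$, which you called consistent but is not).
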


\subsection{Noncommutative Schur Functions and Sprout Symmetric Functions}

Symmetric functions of the form $\Omega_F(\bx) = \prod_{i \geq 1} F(x_i)$ for a power series $F(t) = \sum_{k \geq 0} e_k^F t^k$ have been studied in various settings.
The study of \emph{noncommutative Schur functions} was initiated by Fomin--Greene \cite{FG}. Noncommutative Schur functions can be interpreted as a special case of $s_\lambda^F(\bz)$, where $\mathcal{R}$ is a quotient of the noncommutative polynomial ring $\mathbb{Q}\langle z_1,z_2,\dots,z_n\rangle$ and $e_k^F \equiv \sum_{i_1 > i_2 > \dots > i_k} z_{i_1}z_{i_2}\cdots z_{i_k}$.
This approach was further developed by Blasiak--Fomin \cite{BF, BFcommute} and used to study positivity of a wide range of symmetric functions \cite{BLamLLT, BEPS, BLKronecker, Hwang, LamRibbon}.
More recently, Amdeberhan--Shareshian--Stanley \cite{AmdShaSta} have applied Theorem~\ref{thm stan poly s pos} to study positivity of various symmetric functions.
Amdeberhan--Shareshian--Stanley call $F(t)$ the \emph{seed} of the sequence of \emph{sprout symmetric functions} $(R_0, R_1, \dots)$ defined to be the homogeneous components of $\Omega_F(\bx)$.

Schur-positivity of $\Omega_F(\bx)$ is closely-related to \emph{P\'olya frequency} sequences and total nonnegativity of Toeplitz matrices. Recall that a matrix with real entries is defined to be \emph{totally nonnegative} if every minor is nonnegative. A sequence $(a_i)_{i \in \bbZ_{\geq 0}}$ is a \emph{P\'olya frequency sequence} if the Toeplitz matrix $\left[a_{j-i}\right]_{i,j = 1}^\infty$ is totally nonnegative. P\'olya frequency sequences were first studied in the 1950s by Schoenberg along with Aissen, Edrei, and Whitney \cite{ASW, Edrei}. P\'olya frequency sequences were later connected to the representation theory of the infinite symmetric group by Thoma \cite{Thoma}. The following theorem characterizing P\'olya frequency sequences was proved independently by Edrei \cite{Edrei} and Thoma \cite{Thoma}.

\begin{theorem}
\cite{Edrei, Thoma}
Let $F(t) = 1 + a_1 t + a_2 t^2 + \dots  \in \mathbb{R}[[t]].$
The Toeplitz matrix $\left[a_{j-i}\right]_{i,j = 0}^\infty$ is totally nonnegative if and only if $F(t)$ can be factored as
\begin{equation}
F(t) = \exp(\gamma t) \prod_{i \geq 1} \frac{1 + \beta_i t}{1 - \alpha_i t} 
\end{equation}
for real numbers $\gamma, \alpha_i, \beta_i \geq 0$ and $\sum_{i \geq 1} \alpha_i + \beta_i < \infty$.
\end{theorem}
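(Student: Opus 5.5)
The converse direction is easy; the forward direction is the deep part of the theorem, and I would take the Vershik--Kerov ``ergodic method'' route to it. Throughout I would translate total nonnegativity into positivity of a specialization. By the Jacobi--Trudi identity (exactly as in Proposition~\ref{prop TP s pos} with $\mathcal{R}=\bbR$), every minor of $[a_{j-i}]$ is a skew Schur function evaluated under the algebra homomorphism $\rho:\Lambda\to\bbR$, $e_k\mapsto a_k$. So the hypothesis says precisely that $\rho(s_{\lambda/\mu})\ge 0$ for all skew shapes. By the dual Cauchy identity, $F(t)=\sum_k \rho(e_k)t^k$ is the generating function of $\rho$.

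\emph{Sufficiency.} First I would check the basic factors.
\begin{itemize}
\item For $F(t)=1+\beta t$ the Toeplitz matrix is bidiagonal with nonnegative entries, so it is totally nonnegative.
\item For $F(t)=(1-\alpha t)^{-1}$ the matrix is $[\alpha^{j-i}]_{j\ge i}$. Its minors are either $0$ or products of powers of $\alpha$; equivalently, this is the specialization $h_k\mapsto\alpha^k$, which sends $s_{\lambda/\mu}$ to a nonnegative number.
\end{itemize}
The Toeplitz matrix of a product of generating functions is the product of the Toeplitz matrices. The Cauchy--Binet formula then shows that a finite product of totally nonnegative Toeplitz matrices is totally nonnegative. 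Next, $\exp(\gamma t)=\lim_N(1+\gamma t/N)^N$ coefficientwise, and the convergent infinite products are likewise coefficientwise limits of finite products because $\sum(\alpha_i+\beta_i)<\infty$. Each minor is a polynomial in finitely many coefficients, so nonnegativity passes to the limit.

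\emph{Necessity.} Let $\mathcal{S}$ be the set of homomorphisms $\rho$ with $\rho(s_\lambda)\ge 0$ for all $\lambda$ and $\rho(e_1)=1$; the general case follows by rescaling $t$, and the case $a_1=0$ is trivial. Set $\varphi(\lambda)=\rho(s_\lambda)\dim\lambda$, where $\dim\lambda=|\SYT(\lambda)|$. Using $s_1 s_\lambda=\sum_{\lambda^+}s_{\lambda^+}$, the function $\varphi$ is a normalized harmonic function on Young's lattice. The multiplicativity of $\rho$ should force $\varphi$ to be extreme among such functions; I would prove this by the standard argument that a multiplicative positive functional cannot be a nontrivial convex combination, using a variance computation of the form $\rho(fg)=\rho(f)\rho(g)$.

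The ergodic method then identifies extreme harmonic functions as limits along paths. For suitable growing $\lambda^{(N)}\vdash N$,
\[
\rho(f)=\lim_{N\to\infty}\frac{\dim(\lambda^{(N)}/\mu)}{\dim\lambda^{(N)}}
\]
for $f=s_\mu$. Here $\dim(\lambda/\mu)/\dim\lambda$ is asymptotically the evaluation of $s_\mu$ at the normalized Frobenius coordinates $\bigl(a_i/N\bigr)$ and $\bigl(-b_i/N\bigr)$ in the supersymmetric sense. To make this precise I would use shifted Schur functions (Okounkov--Olshanski): the normalized ratio $\dim(\lambda/\mu)/\dim\lambda$ equals $s^*_\mu(\lambda)/N^{\downarrow|\mu|}$ exactly, and $s^*_\mu$ has leading term a supersymmetric Schur polynomial in the modified Frobenius coordinates.

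Passing to a subsequence along which the scaled row and column Frobenius coordinates converge gives parameters $\alpha_i,\beta_i\ge0$ with $\sum_i(\alpha_i+\beta_i)\le 1$. The defect $\gamma=1-\sum_i(\alpha_i+\beta_i)$ comes from the mass in the diagonal of the diagram that escapes to small rows and columns. The limit of $p_k$ is then $\sum_i\alpha_i^k+(-1)^{k-1}\sum_i\beta_i^k$ for $k\ge2$, with $p_1\mapsto 1$. Exponentiating these power-sum values recovers exactly $F(t)=e^{\gamma t}\prod_i(1+\beta_i t)/(1-\alpha_i t)$.

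\emph{Main obstacle.} The hardest step is the asymptotic identification: showing that the power sums of the normalized Frobenius coordinates control $\rho(p_k)$ uniformly, and in particular that $p_1$ picks up the extra $\gamma$ while $p_k$ for $k\ge2$ does not. My first attempt would be the explicit bound
\[
\Bigl|\,N^{-k}p^*_k(\lambda)-\Bigl(\sum_i (a_i/N)^k+(-1)^{k-1}\sum_i (b_i/N)^k\Bigr)\Bigr|=O(1/N),
\]
obtained from the contents formula $p_k$-type expansion $\sum_{\square\in\lambda}c(\square)^{k-1}$. If that fails, the fallback is Edrei's original complex-analytic route: show via total positivity that $F$ is meromorphic of genus $0$ with negative zeros and positive poles, then apply Hadamard factorization.
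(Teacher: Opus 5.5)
The paper gives no proof of this theorem; it is quoted from Edrei and Thoma as background and never used, so there is no internal argument to match. Your plan is sound and takes a genuinely different route from the original proofs. The sufficiency half is complete as written; Cauchy--Binet is a finite sum here because all the matrices are upper triangular.

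For necessity, Edrei's route is, roughly, complex-analytic. Building on Aissen--Schoenberg--Whitney, total positivity is used to show that $F$ is meromorphic with negative zeros and positive poles. Nevanlinna theory then shows that the remaining zero-free entire factor is $e^{\gamma t}$. That route targets sequences directly and needs no representation theory, but its crux is a delicate growth estimate.

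Your Vershik--Kerov route instead identifies normalized totally nonnegative Toeplitz matrices with extreme harmonic functions on Young's graph. It proves Thoma's classification of extreme characters of $S_\infty$ at the same time, and it explains $\gamma$ conceptually as mass lost in the limit of $p_1$. The price is importing three tools: the ergodic theorem (reverse martingale convergence along almost every path), the Choquet decomposition, and the Okounkov--Olshanski identity $\dim(\lambda/\mu)/\dim\lambda=s^*_\mu(\lambda)/N^{\downarrow|\mu|}$.

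A few details need fixing, none fatal.

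\emph{The harmonic function.} The function $\rho(s_\lambda)\dim\lambda$ is not harmonic; it is the coherent measure on partitions of $N$. The harmonic function is $\varphi(\lambda)=\rho(s_\lambda)$ itself, harmonic because $\rho(s_1)=1$ and $s_1s_\lambda=\sum s_{\lambda^+}$. Only this version matches your limit formula for $\rho(s_\mu)$.

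\emph{The variance argument.} It presupposes that extreme harmonic functions are multiplicative. This holds because $f\mapsto\varphi(s_\mu f)$ is harmonic and dominated by $\varphi(s_1^{|\mu|}f)=\varphi(f)$ on Schur-positive $f$, so extremality forces $\varphi(s_\mu f)=\varphi(s_\mu)\varphi(f)$.

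\emph{The $e$/$h$ labels.} Since $\rho$ is defined on the $e_k$, your power-sum values give $\sum_k\rho(e_k)t^k=e^{\gamma t}\prod_i(1+\alpha_i t)/(1-\beta_i t)$. So your row limits play the role of the $\beta_i$ in the displayed formula; the single row $\lambda^{(N)}=(N)$ gives $F(t)=1+t$. Likewise, $(1-\alpha t)^{-1}$ corresponds to $e_k\mapsto\alpha^k$, which is $\omega$ followed by a one-variable specialization. Both slips are harmless because $\omega s_{\lambda/\mu}=s_{\lambda'/\mu'}$.

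\emph{The ``main obstacle'' is easier than you expect.} Use modified Frobenius coordinates $a_i=\lambda_i-i+\tfrac12$ and $b_i=\lambda_i'-i+\tfrac12$, and set $\mathbf p_k(\lambda)=\sum_i a_i^k+(-1)^{k-1}\sum_i b_i^k$. These functions generate the shifted symmetric functions exactly, with no error term. Moreover, $s^*_\mu$ equals $s_\mu$ evaluated at $p_k\mapsto\mathbf p_k$ plus terms of lower degree. Those lower terms are $O(N^{|\mu|-1})$, because $|\mathbf p_j(\lambda)|\le\bigl(\sum_i(a_i+b_i)\bigr)^j=N^j$.

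What actually separates $p_1$ from the $p_k$ with $k\ge 2$ is an elementary tail bound. From $a_i,b_i\le N/i$ you get $\sum_{i>M}(a_i/N)^k\le M^{1-k}$ uniformly in $N$, so limit and sum can be exchanged for $k\ge2$. For $k=1$ only Fatou's inequality survives, and that is exactly where $\gamma$ enters.
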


Stanley connected P\'olya frequency sequences to Schur and $e$-positivity of $\Omega_F(\bx)$, proving Theorem~\ref{thm stan poly s pos} \cite{StanleyChromatic1998}. In particular, the coefficient of $s_\lambda(\bx)$ in $\Omega_F(\bx)$ is a minor of the Toeplitz matrix $\left[e_{j-i}^F\right]_{i,j=1}^\infty$ as in Proposition~\ref{prop TP s pos}.

\subsection{Chromatic Symmetric Functions}
\label{sec chromaticbackground}
For a graph $G = (V, E)$, a \emph{proper coloring of $G$} is a function $\kappa: V \to \bbZ_{> 0}$ such that $\kappa(v) \neq \kappa(w)$ if $\{v, w\} \in E$.
In 1995, Stanley \cite{StanleyChromatic} defined the following family of symmetric functions.
\begin{defin}
\cite{StanleyChromatic}
Let $G = (V, E)$ be a graph. The \emph{chromatic symmetric function} $X_G(\bx)$ of $G$ is defined by
\begin{equation}
X_G(\bx) = \sum_{\kappa} \prod_{v \in V} x_{\kappa(v)},
\end{equation}
where the sum is over all proper colorings $\kappa: V \to \mathbb{Z}_{> 0}$ of $G$.
\end{defin}

Chromatic symmetric functions are particularly interesting in the case when $G$ is the incomparability graph $\inc(P)$ of a \threeone -free poset $P$. For a poset $P$, the \emph{incomparability graph} $\inc(P)$ is the graph with vertex set $P$ and edges between each pair of incomparable elements of $P$. A poset $P$ is \threeone -free if it contains no induced subposet which is the disjoint union of a 3-chain and a 1-chain. In this case, Gasharov \cite{Gasharov} defined $P$-tableaux and showed that they give a combinatorial interpretation of the Schur-coefficients of $X_{\inc(P)}(\bx)$.
\begin{defin}
\cite{Gasharov}
Let $P$ be a \threeone -free poset, and let $\lambda$ be a partition. A filling $T$ of $\lambda$ with entries in $P$ is a \emph{$P$-tableau} if $T_{i,j} <_P T_{i+1,j}$ and $T_{i,j} \not>_P T_{i, j+1}$ for all such adjacent pairs of cells in $\lambda$. Equivalently, the entries of $T$ are increasing in $P$ down columns and non-descending in $P$ across rows. Write $\T_P(\lambda)$ for the set of $P$-tableaux of shape $\lambda$, and write $\ST_P(\lambda)$ for the set of $P$-tableaux in which each element of $P$ appears exactly once. Note that this definition is the transpose of Gasharov's $P$-tableaux. We take this convention so that $\T_P(\lambda) = \SSYT(\lambda, n)$ when $P$ is the total order on $[n]$.
\end{defin}
\begin{theorem}\cite{Gasharov}
\label{thm Gasharov}
Let $P$ be a \threeone -free poset. Then $X_{\inc(P)}(\bx) = \sum_{\lambda \vdash |P|} |\ST_P(\lambda)| s_{\lambda'}(\bx)$.
\end{theorem}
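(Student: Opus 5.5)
The approach is Gasharov's sign-reversing involution applied to the Jacobi--Trudi expansion. Let $n = |P|$ and fix $\lambda \vdash n$. Since the Schur functions are orthonormal for the Hall inner product, the coefficient of $s_{\lambda'}(\bx)$ in $X_{\inc(P)}(\bx)$ is $\langle X_{\inc(P)}, s_{\lambda'}\rangle$. I would expand $s_{\lambda'}$ by the Jacobi--Trudi identity $s_{\lambda'} = \det(h_{\lambda'_i - i + j})$, which gives
\begin{equation*}
\langle X_{\inc(P)}, s_{\lambda'}\rangle = \sum_{\pi \in S_\ell} \sgn(\pi)\, \langle X_{\inc(P)}, h_{\alpha(\pi)}\rangle,
\end{equation*}
where $\ell = \lambda_1$ is the number of columns of $\lambda$ and $\alpha(\pi)_i = \lambda'_i - i + \pi(i)$.

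Next I would interpret each term. Since $h$ is dual to $m$, $\langle X_{\inc(P)}, h_\alpha\rangle$ is the coefficient of $x^\alpha$ in $X_{\inc(P)}$. This coefficient counts ordered sequences $(C_1,\dots,C_\ell)$ of pairwise disjoint stable sets of $\inc(P)$ covering $P$ with $|C_i| = \alpha_i$. A stable set of $\inc(P)$ is exactly a chain of $P$. So each term counts pairs $(\pi, (C_1,\dots,C_\ell))$ in which $C_i$ is a $P$-chain of size $\lambda'_i - i + \pi(i)$.

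I would draw $C_i$ as a column listed increasingly in $P$ from top to bottom. This turns each pair into a ``$\pi$-shifted'' filling whose columns are $P$-increasing. When $\pi$ is the identity and the row condition $T_{r,j} \not>_P T_{r,j+1}$ holds throughout, the filling is precisely an element of $\ST_P(\lambda)$; each such filling occurs once, with sign $+1$. It remains to cancel all the other pairs.

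For that I would define an involution on the remaining pairs. Scan for the first bad spot: the smallest column index $j$, and then the smallest row in it, where the entry of column $j$ is $>_P$ the horizontally adjacent entry of column $j+1$, with rows aligned according to the shift. Exchange the lower portions (tails) of columns $j$ and $j+1$ below that point, and replace $\pi$ by $\pi \circ (j\ j{+}1)$. The lengths then match the swapped permutation, in the same way as the Gessel--Viennot tail swap.

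The main obstacle is checking that the swapped columns are still $P$-chains and that the map is an involution, meaning the same bad spot is chosen again after the swap. This is exactly where \threeone-freeness enters. Suppose some element $a$ would have to sit above an element $b$ in a new column with $a \not<_P b$. Then the three consecutive elements of the other chain around that point, together with $a$ or $b$, form an induced $\mathbf{3}+\mathbf{1}$. I would first attempt this local case analysis at the first violation, following Gasharov. With it in place, the involution is sign-reversing and cancels all non-fixed pairs, so $\langle X_{\inc(P)}, s_{\lambda'}\rangle = |\ST_P(\lambda)|$, which is the claimed expansion.
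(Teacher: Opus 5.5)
The paper gives no proof of this statement; it quotes it from Gasharov. Your outline is Gasharov's: Jacobi--Trudi, then $\langle X_{\inc(P)},h_\alpha\rangle$ as ordered chain covers, then a sign-reversing involution. But the involution is the entire content of that proof, and as you specify it, it does not work. So there is a genuine gap.

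\textbf{Bad spots are too narrow.} You only detect violations $T_{r,j}>_P T_{r,j+1}$ between two existing entries, and a term with $\pi\neq\mathrm{id}$ need not contain any. Take $P=\{a<_P b\}$ and $\lambda=(2)$. The term $\pi=(1\,2)$ has $\alpha(\pi)=(2,0)$, and its configuration $C_1=\{a,b\}$, $C_2=\emptyset$ has no horizontally adjacent entries at all. Yet it must cancel against $(\mathrm{id},(\{b\},\{a\}))$. You must also count shape defects as bad: column $j+1$ has an entry in a row where column $j$ has none. You then need to check that an array with no bad spot has $\pi=\mathrm{id}$.

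\textbf{Indexing mismatch.} Your indexing $|C_i|=\lambda'_i-i+\pi(i)$ fixes the sinks. With it, a Gessel--Viennot tail exchange on top-justified columns gives lengths $|C_{j+1}|-1$ and $|C_j|+1$. These never equal the lengths prescribed by $\pi\circ(j\ j{+}1)$, since they are off by $\lambda'_j-\lambda'_{j+1}+2\ge 2$. For a tail swap you should index by sources instead: $|C_k|=\lambda'_{\sigma(k)}-\sigma(k)+k$.

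\textbf{The scan order is not stable.} This is the more serious problem. Your rule is ``smallest column index, then smallest row''. The swap rewrites column $j$, which can create a bad spot between columns $j-1$ and $j$, and your rule then prefers that spot. Here is an example. Take the unit interval order with
\[
s_1=[0,1],\quad s_2=[2,3],\quad q_1=[3.5,4.5],\quad p_1=[4,5],\quad q_2=[5.5,6.5],\quad p_2=[6,7],
\]
let $\lambda=(3,3)$ and $\pi=\mathrm{id}$, with columns $(p_1,p_2)$, $(q_1,q_2)$, $(s_1,s_2)$.
\begin{itemize}
\item Columns $1,2$ have no bad spot, because $p_i$ and $q_i$ are incomparable.
\item So your rule swaps columns $2,3$ at row $1$, producing the chains $s_1<_P q_1<_P q_2$ and $(s_2)$.
\item In the new array, row $1$ is $p_1$ followed by $s_1$ or $s_2$, depending on which chain you put in column $2$. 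Both are $<_P p_1$.
\item Your rule therefore next selects columns $1,2$, and never returns to the original array.
\end{itemize}
Bottom-aligning the columns and swapping upper portions does not rescue it. The top-down scan inside columns $2,3$ then finds either the shape defect at $s_1$ or the spot $q_2>_P s_2$, not the spot at $q_1$ that was just used. Note also that whether the chosen spot is re-chosen depends only on the scan order, not on \threeone-freeness.

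\textbf{The repair is Gasharov's order.}
\begin{itemize}
\item Top-justify the columns with $|C_k|=\lambda'_{\sigma(k)}-\sigma(k)+k$.
\item Call $(r,k)$ bad if $T_{r,k+1}$ exists and either $T_{r,k}$ does not, or $T_{r,k+1}<_P T_{r,k}$.
\item Choose the smallest row $r$ containing a bad spot, and in it the largest such $k$.
\item Replace columns $k,k+1$ by $(T_{1,k},\dots,T_{r-1,k},T_{r+1,k+1},\dots)$ and $(T_{1,k+1},\dots,T_{r,k+1},T_{r,k},T_{r+1,k},\dots)$, and set $\sigma\mapsto\sigma\circ(k\ k{+}1)$.
\end{itemize}
Why this works:
\begin{itemize}
\item Rows above $r$, and columns $k+1,k+2,\dots$ in row $r$, are untouched, and $(r,k)$ stays bad. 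So the same spot is re-chosen.
\item \threeone-freeness is needed only for $T_{r-1,k}<_P T_{r+1,k+1}$. This follows from the chain $T_{r-1,k+1}<_P T_{r,k+1}<_P T_{r+1,k+1}$ and the fact that $(r-1,k)$ is not bad.
\item An array with no bad spot has $|C_1|\ge|C_2|\ge\cdots$. This forces $\sigma=\mathrm{id}$, so the fixed points are exactly $\ST_P(\lambda)$.
\end{itemize}
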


In 1993, Stanley--Stembridge conjectured that the chromatic symmetric functions of incomparability graphs of \threeone -free posets are $e$-positive \cite{StanleyStembridge}. This conjecture was resolved more than 30 years later by Hikita.

\begin{theorem}\cite{HikitaChromatic}
Let $P$ be a \threeone -free poset. Then $X_{\inc(P)}(\bx)$ is $e$-positive. 
\end{theorem}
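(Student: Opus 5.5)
Since this statement is Hikita's theorem, quoted from \cite{HikitaChromatic}, the plan below follows the route I would take to reprove it rather than anything developed in this paper. I have not checked that the key identity goes through.

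The first step is to reduce to natural unit interval orders. By Guay-Paquet's reduction \cite{GPchromatic}, $X_{\inc(P)}(\bx)$ for a \threeone-free poset $P$ is a convex combination of chromatic symmetric functions $X_{\inc(P_\bh)}(\bx)$ with $\bh \in \mathbb{H}_n$. It therefore suffices to treat $P = P_\bh$. I would then prove the stronger graded statement for the Shareshian--Wachs refinement $X_\bh(\bx;q)$. Its $e$-coefficients should be polynomials in $q$ with nonnegative coefficients, and setting $q = 1$ recovers the claim.

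The second step is to use a characterization of $X_\bh(\bx;q)$ by linear relations. The modular law of Guay-Paquet and Abreu--Nigro applies to triples of Hessenberg functions $\bh_0, \bh_1, \bh_2$ that differ in one or two adjacent values. In the appropriate configurations it says
\begin{equation}
(1+q)X_{\bh_1} = qX_{\bh_0} + X_{\bh_2}.
\end{equation}
Together with multiplicativity over connected components and the values on complete graphs, $X_{\inc(P_\bh)}$ for a complete graph being $\prod_{i}(1+q+\dots+q^{i-1})\, e_n$, these relations determine $X_\bh(\bx;q)$ uniquely. Hence any function $\Phi$ on $\bigsqcup_n \mathbb{H}_n$ that satisfies the same relations and initial values must equal $X_\bh$.

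The third step is to construct a manifestly $e$-positive candidate $\Phi(\bh)$. Following Hikita, I would build it recursively by adding the vertices $1,2,\dots,n$ one at a time. The state is a standard-tableau-like object recording the current column lengths, so $\Phi(\bh)=\sum_\lambda c_\lambda(\bh;q) e_\lambda$. When the new vertex $k$ arrives, it attaches to a column with an explicit weight depending on $\bh(k)$ and the current column lengths. The weights are rational functions in $q$ that are nonnegative, and at $q=1$ they are genuine probabilities. This makes $e$-positivity of $\Phi$ automatic, so everything reduces to checking that $\Phi$ satisfies the modular law and the base cases.

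I expect the main obstacle to be verifying the modular law for $\Phi$. The base cases and multiplicativity are routine. The modular relation, however, compares three recursions that agree up to step $k$ and differ only at one or two attachment steps. The identity then has to be checked at the level of transition weights, summed over the two affected steps. I would first localize the relation to those steps, since the weights before and after coincide, and reduce to a finite identity among rational functions in $q$ and the column lengths. If that direct verification does not close, I would instead add a second parameter to the weights, prove the identity generically, and then specialize.
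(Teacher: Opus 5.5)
The paper gives no proof of this statement to compare against. It is quoted from \cite{HikitaChromatic} and used only as a black box, together with Guay-Paquet's reduction, to conclude that $\Omega^G(\bz,\bx)$ is $e$-positive. The paper's own methods reach only Schur-positivity and, via Theorem~\ref{thm Gamma rect e pos}, nonnegativity of the coefficients of $e_{c^r}(\bx)$.

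Judged on its own, your proposal has a genuine gap: it says where a proof would go without supplying one. Your first two steps correctly describe the standard framework: Guay-Paquet's convex-combination reduction \cite{GPchromatic}, and the Abreu--Nigro theorem that the modular relation, multiplicativity, and the complete-graph values determine $X_\bh(\bx;q)$. That framework carries none of the difficulty. Uniqueness says nothing about positivity, because the modular relation does not propagate $e$-positivity in general. Whenever one has to solve for an extreme term, e.g.\ $X_{\bh_2} = (1+q)X_{\bh_1} - qX_{\bh_0}$, a subtraction appears.

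So the entire content lies in your third step. You need an explicit family of nonnegative transition weights, together with a proof that the resulting $\Phi$ satisfies the modular relation in every configuration. You never write the weights down, and you state that you have not checked the identity. The fallback of adding ``a second parameter'' does not say which parameter or why it would help. That construction and verification is exactly what Hikita's paper does, so ``follow Hikita'' here stands in for the proof rather than outlining it.

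Even the step you call routine, multiplicativity, is not automatic for a sequential process. After a cut point $h_k = k$, the later vertices still see the columns built from $1,\dots,k$. Factorization of $\Phi$ over connected components therefore has to be checked from the explicit weights, and it may force your state to record more than column lengths.

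Separately, your claim that this route proves the graded statement does not follow. Weights that are nonnegative rational functions of $q$ give, at best, $e$-positivity of $X_\bh(\bx;q)$ after specializing $q$ to a positive real number. They do not show that the $e$-coefficients are polynomials in $q$ with nonnegative coefficients. That stronger assertion is the Shareshian--Wachs conjecture, which the paper credits to separate work \cite{OAIePos}, not to Hikita. Since the theorem only needs $q=1$, drop the graded claim and specialize directly. Before doing so, check that each weight product has no pole at $q=1$, so that the specialized sum is a termwise nonnegative $e$-expansion.
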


An important case of \threeone -free posets are those coming from Hessenberg functions. A sequence $\bh = (h_1, h_2,\dots, h_n) \in [n]^n$ is a \emph{Hessenberg function} if $i \leq h_i \leq h_{i+1}$ for all $i$. Write $\mathbb{H}_n$ for the set of Hessenberg functions of length $n$. The natural unit interval order $P_{\bh}$ associated to $\bh \in \mathbb{H}_n$ is the poset on $[n]$ such that $i <_{P_\bh} j$ if $h_i < j.$ Guay-Paquet showed that if $P$ is a \threeone -free poset, then $X_{\inc(P)}(\bx)$ is a convex combination of chromatic symmetric functions of incomparability graphs of natural unit interval orders \cite{GPchromatic}. Hence, positivity problems for \threeone -free posets reduce to positivity of natural unit interval orders.
Shareshian--Wachs \cite{SWchromatic} introduced the \emph{chromatic quasisymmetric function} $X_G(\bx, q)$ of a graph with vertex set $[n]$, and showed that $X_G(\bx, q)$ is in fact a symmetric function when $G = \inc(P_\bh)$. Shareshian--Wachs \cite{SWchromatic} conjectured, and Brosnan--Chow \cite{BrosnanChow} and Guay-Paquet \cite{guaypaquetCohom} proved that $\omega X_{\inc(P_\bh)}(\bx, q)$ is the graded Frobenius characteristic of the $S_n$ action on the equivariant cohomology of the regular semisimple Hessenberg variety associated to $\bh$, where $\omega$ is the involution on symmetric functions sending $s_\lambda(\bx)$ to $s_{\lambda'}(\bx)$. The Shareshian--Wachs conjecture on the $e$-positivity of $X_{\inc(P_\bh)}(\bx, q)$ was recently proven by OpenAI \cite{OAIePos}.

Another interesting class of graphs are the \emph{claw-free} graphs. A graph $G$ is \emph{claw-free} if it has no induced $K_{1,3}$. Gasharov \cite{GasharovClaw, StanleyChromatic1998} conjectured that if $G$ is claw-free, then $X_G(\bx)$ is Schur-positive. 
Minimal counterexamples to this conjecture were recently discovered independently by Matherne--Morales \cite{MMclaw} and Prajapati \cite{PrajapatiClaw}, and infinite families of counterexamples were constructed by Wang--Zhang--Zhao \cite{WZZclaw}. 
The minimal counterexamples to Gasharov's conjecture are shown in Figure~\ref{fig non s pos claw free graphs}.

\begin{figure}[H]
\caption{Non-Schur-Positive Claw-Free Graphs}
\label{fig non s pos claw free graphs}
\[\begin{tikzcd}[ampersand replacement=\&,sep=tiny]
	\&\&\& \bullet \&\&\&\&\&\&\&\& \bullet \&\&\& \\
	\& \bullet \& \bullet \&\& \bullet \& \bullet \&\&\&\& \bullet \& \bullet \&\& \bullet \& \bullet \\
	\bullet \&\&\&\&\&\& \bullet \&\& \bullet \&\&\&\&\&\& \bullet \\
	\& \bullet \& \bullet \&\& \bullet \& \bullet \&\&\&\& \bullet \& \bullet \&\& \bullet \& \bullet \\
	\&\&\& \bullet \&\&\&\&\&\&\&\& \bullet
	\arrow[no head, from=1-4, to=2-5]
	\arrow[no head, from=1-12, to=2-13]
	\arrow[no head, from=2-2, to=2-3]
	\arrow[no head, from=2-2, to=4-3]
	\arrow[no head, from=2-3, to=1-4]
	\arrow[no head, from=2-3, to=4-3]
	\arrow[no head, from=2-5, to=2-3]
	\arrow[no head, from=2-5, to=4-5]
	\arrow[no head, from=2-5, to=4-6]
	\arrow[no head, from=2-6, to=2-5]
	\arrow[no head, from=2-6, to=3-7]
	\arrow[no head, from=2-10, to=2-11]
	\arrow[no head, from=2-10, to=4-11]
	\arrow[no head, from=2-11, to=1-12]
	\arrow[no head, from=2-11, to=4-11]
	\arrow[no head, from=2-13, to=2-11]
	\arrow[no head, from=2-13, to=4-13]
	\arrow[no head, from=2-13, to=4-14]
	\arrow[no head, from=2-14, to=2-13]
	\arrow[no head, from=2-14, to=3-15]
	\arrow[no head, from=3-1, to=2-2]
	\arrow[no head, from=3-1, to=4-2]
	\arrow[no head, from=3-7, to=4-6]
	\arrow[no head, from=3-9, to=2-10]
	\arrow[no head, from=3-9, to=4-10]
	\arrow[no head, from=3-15, to=4-14]
	\arrow[no head, from=4-2, to=2-2]
	\arrow[no head, from=4-2, to=2-3]
	\arrow[no head, from=4-3, to=4-2]
	\arrow[no head, from=4-3, to=5-4]
	\arrow[no head, from=4-5, to=2-6]
	\arrow[no head, from=4-5, to=4-6]
	\arrow[no head, from=4-6, to=2-6]
	\arrow[no head, from=4-10, to=2-10]
	\arrow[no head, from=4-10, to=2-11]
	\arrow[no head, from=4-11, to=4-10]
	\arrow[no head, from=4-11, to=4-13]
	\arrow[no head, from=4-11, to=5-12]
	\arrow[no head, from=4-13, to=2-14]
	\arrow[no head, from=4-13, to=4-14]
	\arrow[no head, from=4-14, to=2-14]
	\arrow[no head, from=5-4, to=4-5]
	\arrow[no head, from=5-12, to=4-13]
\end{tikzcd}\]
\end{figure}

In 1998, Stanley introduced \emph{$G$-analogues} of symmetric functions as a tool to study chromatic symmetric functions. To avoid confusion with the $F$-analogues defined in the introduction, we will always use $F$ to denote a polynomial and $G$ to denote a graph. 
\begin{defin} \cite{StanleyChromatic1998}
Let $G$ be a graph with vertex set $[n]$, and let $z_1,z_2,\dots,z_n$ be commuting indeterminates. For a partition $\lambda$, define the \emph{elementary $G$-analogue} $e_{\lambda}^G(\bu)$ by
\begin{equation}
e_k^G(\bz) = \sum_{\{i_1,i_2,\dots,i_k\} \text{ indep. in } G} z_{i_1}z_{i_2}\cdots z_{i_k} \text{ \ \ and \ \ } e_{\lambda}^G(\bz) = e_{\lambda_1}^G(\bz) e_{\lambda_2}^G(\bz) \cdots e_{\lambda_\ell}^G(\bz).
\end{equation}
For a symmetric function $f(\bx) = \sum_{\lambda} a_\lambda e_{\lambda}(\bx)$, define its $G$-analogue $f^G(\bz)$ by $f^G(\bz) = \sum_{\lambda} a_{\lambda} e_\lambda^G(\bz)$.
As with $F$-analogues, we obtain the $G$-analogue of the dual Cauchy product identity, which we denote $\Omega^G(\bz, \bx)$ and define by
\begin{align}
\Omega^G(\bz, \bx) &= \prod_{i \geq 1} \sum_{k \geq 0} e_k^G(\bz) x_i^k = \sum_{\lambda} e_\lambda^G(\bz) m_\lambda(\bx) \\
&= \sum_\lambda (-1)^{|\lambda| - \ell(\lambda)} z_\lambda^{-1} p_\lambda^G(\bz) p_\lambda(\bx) = \sum_{\lambda} s_\lambda^G(\bz) s_{\lambda'}(\bx) = \sum_{\lambda} m_\lambda^G(\bz) e_\lambda(\bx).
\end{align}
\end{defin}
For a graph $G$ with vertex set $[n]$ and a composition $\alpha \in \mathbb{Z}_{\geq 0}^n$, we write $G[\alpha]$ for the graph obtained by replacing vertex $i$ of $G$ with a clique of size $\alpha_i$. See Figure~\ref{fig blowup}. It is straightforward to verify that if $G$ is the incomparability graph of a \threeone -free poset, then $G[\alpha]$ is also the incomparability graph of a \threeone -free poset. Likewise, if $G$ is claw-free, then $G[\alpha]$ is also claw-free.

\begin{figure}[H]
\caption{}
\label{fig blowup}
\[\begin{tikzcd}[ampersand replacement=\&]
	\&\&\&\&\&\&\& \bullet \\
	\&\&\&\&\& \bullet \&\& \bullet \& \bullet \\
	1 \& 2 \& 3 \& 4 \&\& \bullet \& \bullet \& \bullet \& \bullet \\
	\&\& G \&\&\&\& {G[2,1,3,2]}
	\arrow[no head, from=1-8, to=2-8]
	\arrow[no head, from=1-8, to=2-9]
	\arrow[curve={height=6pt}, no head, from=1-8, to=3-8]
	\arrow[no head, from=1-8, to=3-9]
	\arrow[no head, from=2-6, to=3-6]
	\arrow[no head, from=2-6, to=3-7]
	\arrow[no head, from=2-8, to=2-9]
	\arrow[no head, from=2-8, to=3-8]
	\arrow[no head, from=2-8, to=3-9]
	\arrow[no head, from=3-1, to=3-2]
	\arrow[no head, from=3-2, to=3-3]
	\arrow[no head, from=3-3, to=3-4]
	\arrow[no head, from=3-6, to=3-7]
	\arrow[no head, from=3-7, to=1-8]
	\arrow[no head, from=3-7, to=2-8]
	\arrow[no head, from=3-7, to=3-8]
	\arrow[no head, from=3-8, to=2-9]
	\arrow[no head, from=3-8, to=3-9]
	\arrow[no head, from=3-9, to=2-9]
\end{tikzcd}\]
\end{figure}

Stanley \cite{StanleyChromatic1998} expressed $\Omega^G(\bz, \bx)$ as a weighted sum of chromatic symmetric funtions. As a consequence, positivity of $\Omega^G(\bz, \bx)$ in a given basis is equivalent to positivity of $X_{G[\alpha]}(\bx)$ for all $\alpha \in \bbZ_{\geq 0}^n$.
Using results of Guay-Paquet \cite{GPchromatic} and Hikita's proof \cite{HikitaChromatic} of the Stanley--Stembridge conjecture, we obtain the following corollary.
\begin{theorem}\cite{StanleyChromatic}
\label{thm OmegaG sum}
Let $G$ be a graph on $[n]$. Then
\begin{equation}
\Omega^G(\bz, \bx) = \sum_{\alpha \in \mathbb{Z}_{\geq 0}^n} \frac{\bz^{\alpha}}{\alpha!} X_{G[\alpha]}(\bx),
\end{equation}
where $\alpha! = \alpha_1!\alpha_2!\cdots \alpha_n!$.
Hence, $X_G(\bx)$ is the coefficient of $z_1z_2\cdots z_n$ in $\Omega^G(\bz, \bx)$.
\end{theorem}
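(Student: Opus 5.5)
We prove the identity by expanding the left-hand side as a sum over colorings and grouping the terms by the multiplicities of the vertices of $G$. By definition,
\begin{equation*}
\Omega^G(\bz, \bx) = \prod_{j \geq 1} \sum_{k \geq 0} e_k^G(\bz) x_j^k = \prod_{j \geq 1} \sum_{S \text{ indep. in } G} \Bigl(\prod_{i \in S} z_i\Bigr) x_j^{|S|}.
\end{equation*}
Expanding the product amounts to choosing, for each color $j$, an independent set $S_j$ of $G$, with all but finitely many $S_j$ empty. The resulting monomial is $\prod_j \bigl(\prod_{i \in S_j} z_i\bigr) x_j^{|S_j|}$. Its $\bz$-exponent is $\alpha$ with $\alpha_i = \#\{j : i \in S_j\}$, so $\Omega^G(\bz, \bx) = \sum_\alpha \bz^\alpha \, N_\alpha(\bx)$, where $N_\alpha(\bx) = \sum \prod_j x_j^{|S_j|}$ over sequences $(S_j)$ of independent sets in which each vertex $i$ lies in exactly $\alpha_i$ of the sets.

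The key step is to show $N_\alpha(\bx) = X_{G[\alpha]}(\bx)/\alpha!$. Label the vertices of the clique replacing $i$ in $G[\alpha]$ by $(i,1), \dots, (i,\alpha_i)$. Take a proper coloring $\kappa$ of $G[\alpha]$ and set $S_j = \{ i : \kappa(i,r) = j \text{ for some } r\}$.

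First, the vertices $(i,1),\dots,(i,\alpha_i)$ form a clique, so they receive $\alpha_i$ distinct colors. Hence $i$ lies in exactly $\alpha_i$ of the sets $S_j$, and $|S_j|$ equals the number of vertices of $G[\alpha]$ colored $j$. This makes the weights agree.

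Second, each $S_j$ is independent in $G$. If $i \sim i'$ in $G$, then every copy of $i$ is adjacent to every copy of $i'$ in $G[\alpha]$, so no color can appear on copies of both.

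Third, the map $\kappa \mapsto (S_j)$ is exactly $\alpha!$-to-one. Given $(S_j)$, the color set used on the clique of $i$ is fixed, namely $\{j : i \in S_j\}$, which has size $\alpha_i$. Any of the $\alpha_i!$ bijections from the copies of $i$ onto this set gives a proper coloring: it is injective within the clique, and colors are never shared across adjacent cliques because the $S_j$ are independent.

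Summing over $\kappa$ therefore gives $X_{G[\alpha]}(\bx) = \alpha! \, N_\alpha(\bx)$, which is the identity. For $\alpha = (1,\dots,1)$ we have $G[\alpha] = G$ and $\alpha! = 1$, so $X_G(\bx)$ is the coefficient of $z_1 \cdots z_n$.

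The only delicate point is the factor $\alpha!$, where we must check that the fibers of the map are full: every relabeling within a clique preserves properness. The remaining steps are bookkeeping. Components with $\alpha_i = 0$ correspond to vertices simply deleted from $G[\alpha]$, and these cause no issue.
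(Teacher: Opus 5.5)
Your argument is correct and complete. The map from proper colorings of $G[\alpha]$ to sequences of independent sets $(S_j)$ preserves weights and lands in sequences covering each vertex $i$ exactly $\alpha_i$ times, and every such sequence has a fiber of size exactly $\alpha!$; that is all the identity requires.

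The paper cites this result from Stanley and does not reproduce a proof. Your $\alpha!$-to-one counting is the same argument the paper uses for its analogous expansions: Lemma~\ref{lem inc PM array exp}, Lemma~\ref{lem Omega Y exp}, and the expansion of $\Omega_{RF_G}(\bz,\bx)$ in Kirchhoff symmetric functions. There it is phrased by comparing coefficients of $m_\lambda(\bx)$, rather than individual monomials $\prod_j x_j^{|S_j|}$.
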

\begin{corollary}
If $G$ is the incomparability graph of a \threeone -free poset, then $\Omega^G(\bz, \bx)$ is $e$-positive.
\end{corollary}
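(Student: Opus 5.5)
The corollary follows by combining Theorem~\ref{thm OmegaG sum} with Hikita's theorem. Let $G = \inc(P)$ for a \threeone-free poset $P$ on $n$ elements, labeled by $[n]$. By Theorem~\ref{thm OmegaG sum},
\begin{equation*}
\Omega^G(\bz, \bx) = \sum_{\alpha \in \mathbb{Z}_{\geq 0}^n} \frac{\bz^{\alpha}}{\alpha!} X_{G[\alpha]}(\bx).
\end{equation*}
Since the monomials $\bz^\alpha$ are distinct for distinct $\alpha$, the coefficient of $e_\lambda(\bx)$ in $\Omega^G(\bz,\bx)$, namely $m_\lambda^G(\bz)$, equals $\sum_\alpha \frac{\bz^\alpha}{\alpha!}\, c_\lambda(G[\alpha])$. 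Here $c_\lambda(H)$ denotes the coefficient of $e_\lambda$ in $X_H(\bx)$. This sum has only finitely many terms for each $\lambda$, because $X_{G[\alpha]}$ is homogeneous of degree $|\alpha|$. Consequently $m_\lambda^G(\bz) \in \bbQ_{\geq 0}[\bz]$ exactly when every $c_\lambda(G[\alpha]) \geq 0$.

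It therefore suffices to show that each $X_{G[\alpha]}(\bx)$ is $e$-positive. The paper has already noted that if $G$ is the incomparability graph of a \threeone-free poset, then so is $G[\alpha]$. Explicitly, one replaces element $i$ of $P$ by an antichain of size $\alpha_i$, each of whose elements inherits the relations of $i$. Any induced $3+1$ would then use at most one copy from each antichain on the chain side, and would project to a $3+1$ in $P$. For $\alpha_i = 0$ the element is simply deleted, and an induced subposet of a \threeone-free poset is still \threeone-free. Hikita's theorem then gives that $X_{G[\alpha]}(\bx)$ is $e$-positive for every $\alpha$. Hikita's result is stated for natural unit interval orders, and the reduction to arbitrary \threeone-free posets is Guay-Paquet's convexity theorem, as recorded above.

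Combining the two steps, every $m_\lambda^G(\bz)$ has nonnegative rational coefficients, so $\Omega^G(\bz,\bx)$ is $e$-positive. The only point requiring care is verifying that the blowup $G[\alpha]$ remains the incomparability graph of a \threeone-free poset. That verification is the projection argument given above.
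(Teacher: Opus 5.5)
Your proposal is correct and follows the paper's route: compare coefficients of $e_\lambda(\bx)$ in Theorem~\ref{thm OmegaG sum}, then use that $G[\alpha]$ is again the incomparability graph of a \threeone-free poset, together with the Guay-Paquet/Hikita $e$-positivity result. Your antichain-blowup argument fills in what the paper calls straightforward, and one step there is left implicit: the isolated element of a putative $3+1$ cannot be a copy of a chain element, since such a copy would be comparable to the other two chain elements.
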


For a graph $G$, define the \emph{independence polynomial} $I_G(t)$ by
\begin{equation}
I_G(t) = \sum_{S \text{ indep. in } G} t^{|S|},
\end{equation}
and define the \emph{multivariate independence polynomial} $I_G(\bz, t)$ by
\begin{equation}
I_G(\bz, t) = \sum_{S \text{ indep. in } G} t^{|S|} \prod_{i \in S} z_i = \sum_{k \geq 0} e_k^G(\bz) t^k.
\end{equation}
Note that $\Omega^G(\bz, \bx) = \Omega_{I_G}(\bz, \bx)$, using notation from \eqref{eq Omega F def}. 
As a corollary of Theorem~\ref{thm stan poly s pos}, Theorem~\ref{thm Gasharov}, and Theorem~\ref{thm OmegaG sum}, Stanley observed that if $G$ is the incomparability graph of a \threeone -free poset, then all roots of $I_G(\br, t)$ are negative real numbers for all $\br \in \mathbb{R}_{\geq 0}^n$ \cite{StanleyChromatic1998}. Chudnovsky--Seymour \cite{ChudnovSeym} then showed that $I_G(t)$ is real-rooted for all claw-free graphs $G$. Engstr\"om \cite{Engstrom} and Leake--Ryder \cite{LeakeRyder} showed that $I_G(\br, t)$ is real-rooted for all claw-free graphs $G$ and $\br \in \mathbb{R}_{\geq 0}^n$.


Posets coming from Hessenberg functions are a special case of \emph{interval orders}. For a set of intervals $J = \{(a_i < b_i) \mid i \in [n]\}$ define the associated interval order $P_J$ to be the  
poset such that $i <_P j$ if $b_i < a_j$. An interval order $P$ is \emph{naturally labelled} if $i <_P j$ implies $i < j$ as integers. An interval order $P$ is a \emph{natural unit interval order} if $P$ is naturally labeled and $b_i = a_i + 1$ for all $i \in [n]$. 
Posets corresponding to Hessenberg functions are exactly the natural unit interval orders. 

A \emph{matching} $M$ in a graph $G$ is subset of edges of $G$ such that no two edges in $M$ share a vertex.
A matching $M$ in a graph $G$ is a \emph{perfect matching} if each vertex is contained in an edge of $M$. Write $\mathcal{PM}(G)$ for the set of perfect matchings of $G$. 
For a graph $G$ with vertex set $V(G) \subset \bbR$, treat each edge as an interval in $\bbR$, and define the symmetric function $PM_G(\bx)$ by
\begin{equation}
PM_G(\bx) = \sum_{M \in \mathcal{PM}(G)} X_{\inc(P_M)}(\bx).
\end{equation} 
Amdeberhan--Shareshian--Stanley \cite{AmdShaSta} proved the following positivity result for $PM_{K_{2n}}(\bx)$.
\begin{theorem}\cite{AmdShaSta}
\label{thm AmdShaSta match}
Let $F(t) = \sec(\sqrt{t})$. Then
\begin{equation}
\omega \Omega_F(\bx) = \sum_{n \geq 0} \frac{1}{(2n)!}PM_{K_{2n}}(\bx),
\end{equation}
where $K_{2n}$ is the complete graph on $[2n]$. Furthermore, $PM_{K_{2n}}$ is $e$-positive for all $n$.
\end{theorem}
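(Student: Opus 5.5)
The plan is to treat the identity and the $e$-positivity separately, both starting from the product $\Omega_F(\bx) = \prod_i \sec(\sqrt{x_i})$.

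\emph{The identity.} Since $F(0)=1$, the involution $\omega$ acts on $\Omega_F$ by $x_i^k \mapsto (-1)^k x_i^k$ inside each factor and then inverts the factor. Equivalently, $\omega \prod_i (1+u x_i) = \prod_i (1-u x_i)^{-1}$ extends multiplicatively to
\[
\omega\, \Omega_F(\bx) = \prod_{i \geq 1} F(-x_i)^{-1}.
\]
This needs a short justification. Write $\log F(t) = \sum_k c_k t^k$. Then $\Omega_F(\bx) = \exp\bigl(\sum_k c_k p_k(\bx)\bigr)$, and $\omega p_k = (-1)^{k-1} p_k$ turns this into $\prod_i F(-x_i)^{-1}$. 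With $F(t) = \sec(\sqrt t)$ we have $F(-t)^{-1} = \cos(\sqrt{-t}) = \cosh(\sqrt t) = \sum_{a \geq 0} t^a/(2a)!$. Hence
\[
\omega\,\Omega_F(\bx) = \prod_i \cosh(\sqrt{x_i}),
\]
and the coefficient of $\bx^\alpha$ with $|\alpha| = n$ is $\prod_i 1/(2\alpha_i)!$.

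On the other side, the coefficient of $\bx^\alpha$ in $PM_{K_{2n}}(\bx)$ counts pairs $(M,\kappa)$. Here $M$ is a perfect matching of $[2n]$ viewed as $n$ intervals, and $\kappa$ is a proper coloring of $\inc(P_M)$ in which color $i$ is used $\alpha_i$ times. A proper color class is an independent set of $\inc(P_M)$, that is, a chain of pairwise disjoint intervals. I would set up a bijection between these pairs and ordered set partitions $(B_1, B_2, \dots)$ of $[2n]$ with $|B_i| = 2\alpha_i$. Given such a partition, the only way to cover $B_i$ by $\alpha_i$ pairwise disjoint intervals is to pair its sorted elements consecutively. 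So $M$ and $\kappa$ are determined by $(B_i)$, and conversely. The number of such partitions is $(2n)!/\prod_i (2\alpha_i)!$. Dividing by $(2n)!$ matches the degree-$n$ component of $\omega\Omega_F$. The main point to check carefully is this uniqueness of the consecutive pairing and the fact that comparability in $P_M$ means disjointness, i.e.\ $b_i < a_j$.

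\emph{$e$-positivity.} Use the Weierstrass product
\[
\cos(\sqrt t) = \prod_{k \geq 0}\Bigl(1 - \frac{4t}{(2k+1)^2\pi^2}\Bigr),
\qquad\text{so}\qquad
\sec(\sqrt t) = \prod_{k \geq 0} (1 - \alpha_k t)^{-1}
\]
with $\alpha_k = 4/((2k+1)^2 \pi^2) > 0$ and $\sum_k \alpha_k < \infty$. Then
\[
\Omega_F(\bx) = \prod_{i,k} (1 - \alpha_k x_i)^{-1} = \sum_\lambda m_\lambda(\boldsymbol\alpha)\, h_\lambda(\bx),
\]
which converges because $\sum_k \alpha_k < \infty$. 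Applying $\omega$ gives
\[
\omega\,\Omega_F(\bx) = \sum_\lambda m_\lambda(\boldsymbol\alpha)\, e_\lambda(\bx).
\]
Each $m_\lambda(\boldsymbol\alpha)$ is a convergent sum of positive reals, so every homogeneous component is $e$-positive. By the identity, the degree-$n$ component is $PM_{K_{2n}}/(2n)!$, so $PM_{K_{2n}}$ is $e$-positive.

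Two points need care here. The coefficients $m_\lambda(\boldsymbol\alpha)$ are real, but $PM_{K_{2n}}$ has integer monomial coefficients and the $e_\lambda$ form a $\bbZ$-basis, so the $e$-coefficients are integers and nonnegative. The infinite product also has to be justified as an identity of formal power series; absolute convergence of $\sum_k \alpha_k$ handles this.
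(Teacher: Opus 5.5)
Your proof is correct and complete. In particular, these steps all hold:
\begin{itemize}
\item The computation $\omega\Omega_F=\prod_i F(-x_i)^{-1}=\prod_i\cosh(\sqrt{x_i})$ via $\omega p_k=(-1)^{k-1}p_k$ is right.
\item The consecutive-pairing bijection with ordered set partitions of $[2n]$ into blocks of sizes $2\alpha_i$ is right. If some $b_j$ in a block were paired with $b_k$ for $k>j+1$, the interval having $b_{j+1}$ as an endpoint would meet $[b_j,b_k]$.
\item The Weierstrass product gives the $e$-positivity, with the formal-power-series justification you indicate.
\end{itemize}

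The paper gives no proof of this theorem; it quotes it from Amdeberhan--Shareshian--Stanley. So there is no internal argument to compare against, only the paper's surrounding framework.

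Your $e$-positivity step is the infinite-product version of an observation in the paper's introduction. There, a factorization $F(t)=\prod(1+u_it)$ with $u_i\in\mathcal{R}_+$ gives $\Omega_F=\Omega(\bu,\bx)=\sum_\lambda m_\lambda(\bu)e_\lambda(\bx)$; this is the Edrei--Thoma mechanism behind Theorem~\ref{thm stan poly s pos}. You can shorten your argument by factoring $\cosh(\sqrt t)=\prod_{k\ge 0}(1+\alpha_k t)$ directly. That gives $\omega\Omega_F=\Omega(\boldsymbol\alpha,\bx)$ at once, with no detour through $h_\lambda$ and a second application of $\omega$.

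What your route buys is an explicit formula. The coefficient of $e_\lambda$ in $PM_{K_{2n}}$ is $(2n)!\,m_\lambda(\boldsymbol\alpha)$, where $\alpha_k=4/((2k+1)^2\pi^2)$. For example, $PM_{K_4}=e_1^2+4e_2$. Your integrality remark then shows these numbers are nonnegative integers.

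What it cannot see is the finer structure the paper cares about. Your argument rests on the single-variable generating function $\sec(\sqrt t)$, so it only controls the full sum. The paper instead writes $PM_{K_{2n}}=\sum_{\bd\in Z_{2n}}Y_{\bd,P_\bh}$ and conjectures that each piece is $e$-positive. It proves only Schur-positivity of the pieces, using the reweighted network $\Gamma_\bh^{(2)}$. No analytic factorization of this kind is available piece by piece.
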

Alexandersson--Dai \cite{AlexanderssonDai} considered the homomorphisms $T_k: \Lambda \to \Lambda$ on symmetric functions defined by $T_k: e_i(\bx) \mapsto e_{ki}(\bx)$, and showed that the coefficient of $e_\lambda(\bx)$ in $PM_{K_{2n}}(\bx)$ is the coefficient of $m_{2n}(\bx)$ in $T_2(m_\lambda(\bx))$. This inspired the following conjecture.
\begin{conjecture}\cite{AlexanderssonDai}
\label{conj AlexanderssonDai}
For all partitions $\lambda$, $T_k(m_\lambda(\bx))$ is monomial positive.
\end{conjecture}

\subsection{Majorization}

In this subsection, we set notation for several relations on real vectors and recall some useful facts. In particular, we recall \emph{majorization} and \emph{log-majorization}. An extensive treatment of majorization can be found in the book by Marshall, Olkin, and Arnold \cite{MOAmaj}.
The results of this subsection will be used in Section~\ref{sec sym evals}.
  
\begin{defin}
Let $\bu, \bv \in \mathbb{R}^n$. Write $u_{[i]}$ for the $i^{th}$ largest entry of $\bu$. We say $\bu$ \emph{weakly majorizes} $\bv$ and write $\bu \succeq_w \bv$ if, for each $1 \leq k \leq n$,
\begin{equation}
\sum_{i = 1}^k u_{[i]} \geq \sum_{i = 1}^k v_{[i]}.
\end{equation}
If $\bu \succeq_w \bv$ and $\sum_{i = 1}^n u_{[i]} = \sum_{i=1}^n v_{[i]}$, we say $\bu$ \emph{majorizes} $\bv$ and write $\bu \succeq \bv$.
For $\bu, \bv \in \mathbb{R}_{\geq 0}^n$, we say $\bu$ \emph{log-majorizes} $\bv$ and write $\bu \succeq_{\log} \bv$ if, for each $1 \leq k \leq n$,
\begin{equation}
\prod_{i=1}^k u_{[i]} \geq \prod_{i = 1}^k v_{[i]}.
\end{equation}
Equivalently, $\bu \succeq_{\log} \bv$ if $(\log u_1, \log u_2,\dots, \log u_n) \succeq_w (\log v_1, \log v_2, \dots, \log v_n)$.
\end{defin}

For any non-decreasing convex function $f: \mathbb{R} \to \mathbb{R}$, applying $f$ entrywise preserves weak majorization. Hence, if $\bu \succeq_{\log} \bv$, then $\bu \succeq_w \bv$.
The following lemmas of Ando \cite{Ando1989} and Dean--Verducci \cite{DeanVerducci1990} are important tools for our proof of Theorem~\ref{thm sym eval ineq}.
For $\bu, \bv \in \mathbb{R}^n$, write $\bu \leq_{\mathrm{entry}} \bv$ if $u_i \leq v_i$ for each $1 \leq i \leq n$.
\begin{lemma}\cite[Corollary 1.8]{Ando1989}
\label{lem Ando weak maj}
Let $\bu, \bv \in \mathbb{R}^n$. Then $\bu \preceq_w \bv$ if and only if there exists $\bw \in \mathbb{R}^n$ such that $\bu \leq_{\mathrm{entry}} \bw$ and $\bw \preceq \bv$.
\end{lemma}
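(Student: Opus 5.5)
The statement to prove is Lemma~\ref{lem Ando weak maj}: $\bu \preceq_w \bv$ if and only if some $\bw$ satisfies $\bu \leq_{\mathrm{entry}} \bw \preceq \bv$. The plan is to prove the easy direction by direct comparison of partial sums. For the hard direction, we construct $\bw$ explicitly by raising the smallest entries of $\bu$ until the total sum matches that of $\bv$.

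\emph{Easy direction.} Suppose $\bu \leq_{\mathrm{entry}} \bw$ and $\bw \preceq \bv$. Entrywise domination gives $u_{[i]} \leq w_{[i]}$ for each $i$. Indeed, the $i$-th largest entry is monotone under entrywise increase: at least $i$ entries of $\bw$ are $\geq u_{[i]}$. Summing over $i \leq k$ gives
\[
\sum_{i=1}^k u_{[i]} \leq \sum_{i=1}^k w_{[i]} \leq \sum_{i=1}^k v_{[i]},
\]
so $\bu \preceq_w \bv$.

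\emph{Hard direction.} Assume $\bu \preceq_w \bv$. We may reorder coordinates so that $\bu$ is sorted decreasingly, and then pull the construction back at the end, since entrywise order is compatible with applying the same permutation to $\bu$ and $\bw$. Let $\delta = \sum_i v_i - \sum_i u_i \geq 0$.

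The construction is to replace the tail of $\bu$ by a constant level. For $c \in \mathbb{R}$, define $w_i(c) = \max(u_i, c)$. The sum $\sum_i w_i(c)$ is continuous and nondecreasing in $c$. It equals $\sum_i u_i$ for $c \leq u_n$ and tends to $\infty$ as $c \to \infty$. So we can pick $c$ with $\sum_i w_i(c) = \sum_i v_i$. (If $\delta = 0$, simply take $\bw = \bu$.)

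Then $\bu \leq_{\mathrm{entry}} \bw$, $\bw$ remains sorted decreasingly, and the totals agree. It remains to check $\sum_{i \leq k} w_i \leq \sum_{i \leq k} v_{[i]}$ for each $k$.
\begin{itemize}
\item Let $m$ be the number of indices with $u_i > c$. For $k \leq m$, we have $w_i = u_i$ for all $i \leq k$, so the inequality is exactly the hypothesis.
\item For $k > m$, the entries $w_{m+1}, \dots, w_n$ all equal $c$. Write $S_k = \sum_{i \leq k} v_{[i]}$. We need $\sum_{i \leq m} u_i + (k-m)c \leq S_k$. We know this at $k = m$ (by hypothesis) and at $k = n$ (with equality).
\item Both sides are piecewise linear in $k$. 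The left side is linear for $k \geq m$, while $S_k$ is concave in $k$ because the increments $v_{[k]}$ are decreasing. A concave sequence lies above the chord joining its values at $m$ and $n$. The left side is bounded above by that chord, since it is linear and lies below $S$ at both endpoints. This gives the inequality for all $k$ between $m$ and $n$.
\end{itemize}

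The main obstacle is this last step: verifying the partial-sum inequalities in the range where the entries of $\bw$ have been raised. The concavity-of-partial-sums argument is exactly what makes the "water-filling" choice of $c$ work. I would check the endpoint cases $m = 0$ and $m = n$ separately to make sure the chord argument is not vacuous.
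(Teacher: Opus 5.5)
Your proof is correct. The paper gives no argument of its own for this lemma. It imports the statement from \cite[Corollary 1.8]{Ando1989} and uses it as a black box in the proof that $\bu \preceq_{\log} \bv$ implies $\bu^\lambda \preceq_{\log} \bv^\lambda$.

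Your route is a self-contained water-filling construction. You set $w_i = \max(u_i, c)$, with $c$ chosen by the intermediate value theorem so that the totals agree. The only nontrivial check is the range $k > m$, and it follows from the discrete concavity of $S_k = \sum_{i \le k} v_{[i]}$: writing $L_k = \sum_{i \le m} u_i + (k-m)c$, you have $L_k = \frac{n-k}{n-m}L_m + \frac{k-m}{n-m}L_n \le \frac{n-k}{n-m}S_m + \frac{k-m}{n-m}S_n \le S_k$.

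The edge cases you flag resolve on their own:
\begin{itemize}
\item When $\delta > 0$, any admissible $c$ satisfies $c > u_n$, since the sum is $\sum_i u_i < \sum_i v_i$ for $c \le u_n$. Hence $m \le n-1$ and the chord is never degenerate.
\item The case $m = 0$ only needs $L_0 = 0 = S_0$. Concavity still holds with $S_0$ included, because the increments $v_{[1]} \ge v_{[2]} \ge \cdots$ are nonincreasing from the start.
\end{itemize}

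The citation buys brevity and places the fact within the standard theory of majorization. Your argument buys an explicit $\bw$: only the entries of $\bu$ below a single threshold are raised, and $\bw$ is ordered the same way as $\bu$. It also uses nothing beyond monotonicity of order statistics under entrywise increase and concavity of partial sums. Including it would make the majorization input to the log-majorization arguments of Section 3 self-contained.
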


\begin{lemma}\cite{DeanVerducci1990}
\label{lem DV linear pres}
Let $T: \mathbb{R}^n \to \mathbb{R}^m$ be a linear map. Then $T\bu \preceq T\bv$ for all $\bu, \bv \in \mathbb{R}^n$ such that $\bu \preceq \bv$ if and only if for any $n \times n$ permutation matrix $P$, there exists an $m \times m$ permutation matrix $Q$ such that $TP = QT$.
\end{lemma}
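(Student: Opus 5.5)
The plan is to prove both directions using the classical characterization of majorization due to Hardy--Littlewood--P\'olya and Rado. It says that $\bu \preceq \bv$ if and only if $\bu = D\bv$ for some doubly stochastic matrix $D$. Combined with Birkhoff's theorem, this means $\bu \preceq \bv$ if and only if $\bu$ is a convex combination $\sum_P c_P P\bv$ of permutations of $\bv$.

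For the direction ($\Leftarrow$), suppose that for every $n\times n$ permutation matrix $P$ there is an $m\times m$ permutation matrix $Q_P$ with $TP = Q_PT$. Let $\bu \preceq \bv$, and write $\bu = \sum_P c_P P\bv$ with $c_P \geq 0$ and $\sum_P c_P = 1$. By linearity,
\begin{equation*}
T\bu = \sum_P c_P TP\bv = \sum_P c_P Q_P (T\bv),
\end{equation*}
which is a convex combination of permutations of $T\bv$. By the same characterization, now applied in $\mathbb{R}^m$, we get $T\bu \preceq T\bv$.

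For the direction ($\Rightarrow$), suppose $T$ preserves majorization, and fix a permutation matrix $P$. For every $\bv \in \mathbb{R}^n$ we have both $P\bv \preceq \bv$ and $\bv \preceq P\bv$. Hence $TP\bv \preceq T\bv$ and $T\bv \preceq TP\bv$. Mutual majorization forces the decreasing rearrangements to coincide: the partial sums of the sorted entries agree for every $k$, so the sorted entries themselves agree. Therefore, for each $\bv$ there is some permutation matrix $Q$, possibly depending on $\bv$, with $TP\bv = QT\bv$.

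The main obstacle is to make $Q$ independent of $\bv$. For each of the finitely many $m\times m$ permutation matrices $Q$, the set
\begin{equation*}
V_Q = \{\bv \in \mathbb{R}^n : (TP - QT)\bv = 0\}
\end{equation*}
is a linear subspace of $\mathbb{R}^n$. The previous step shows that these finitely many subspaces cover $\mathbb{R}^n$. A vector space over an infinite field is never a finite union of proper subspaces, so some $V_Q$ equals all of $\mathbb{R}^n$. For that $Q$ we have $TP = QT$, which completes the proof.
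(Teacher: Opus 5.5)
Your proof is correct in both directions. The paper gives no proof of this lemma; it is simply quoted from Dean--Verducci. So there is no argument of the paper's to compare against, and what you have written is a complete, self-contained proof.

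Your $(\Leftarrow)$ direction rests on the Hardy--Littlewood--P\'olya/Rado characterization together with Birkhoff's theorem. With these, $T\bu = \bigl(\sum_P c_P Q_P\bigr) T\bv$ and $\sum_P c_P Q_P$ is doubly stochastic, so the easy direction of that characterization gives $T\bu \preceq T\bv$. This is the only direction the paper actually uses. It appears in the proof that $\bu \preceq_{\log} \bv$ implies $\bu^\lambda \preceq_{\log} \bv^\lambda$: there one checks that $T_\lambda P = Q T_\lambda$ and concludes that $T_\lambda$ preserves majorization.

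Your $(\Rightarrow)$ direction is elementary and handled correctly. Since $P\bv$ and $\bv$ majorize each other, $TP\bv$ and $T\bv$ majorize each other, and so have the same sorted entries. This gives a permutation $Q$ depending on $\bv$. You then correctly make $Q$ uniform by observing that $\mathbb{R}^n$ cannot be a finite union of proper subspaces $\ker(TP - QT)$.
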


Majorization is closely related to symmetric polynomials via \emph{symmetric means}. For a symmetric polynomial $g(\bx)$ in $n$ variables, define the associated symmetric mean $M[g](\bx) = \left(\frac{g(\bx)}{g(\mathbf{1}_n)}\right)^{1/n}$. 
A classical result of Muirhead \cite{Muirhead} states that for partitions $\lambda, \mu \vdash k$, we have $M[m_\lambda](\bx) \geq M[m_\mu](\bx)$ for all $\bx \in \mathbb{R}_{\geq 0}^n$ if and only if $\lambda \succeq \mu$. 
Cutler--Greene--Skandera \cite{CGSsymMeans} conjectured and Sra \cite{Sra} proved that for $\lambda, \mu \vdash k$, $M[s_\lambda](\bx) \geq M[s_\mu](\bx)$ for all $\bx \in \mathbb{R}_{\geq 0}^n$ if and only if $\lambda \succeq \mu$.
Khare--Tao \cite{KhareTao} generalized Sra's theorem by defining generalized Schur symmetric means indexed by positive real vectors.

\subsection{Total Positivity} 
\label{sec LGV background}
Let $A$ be a square matrix with real number entries. We say $A$ is \emph{totally positive} (totally nonnegative) if every minor is positive (nonnegative). For a matrix $A$ with entries in a $\mathbb{Q}$-algebra $\mathcal{R}$ such that entries of $A$ commute, we say $A$ is totally positive with respect to a positive cone $\mathcal{R}_+$ if all minors of $A$ are in $\mathcal{R}_+$. Totally positive matrices are closely related to networks via Lindstr\"om's lemma. 
In this subsection, we recall Lindstr\"om's lemma and related results on totally nonnegative matrices. Additionally, we recall the \emph{cylindric partitions} of Gessel--Krattenthaler \cite{GKcyl} and extend their results to general \emph{shift-symmetric} networks. The results of this subsection will be used in Section~\ref{sec s pos from networks}. 
\begin{defin}
Let $\Gamma$ be a directed acyclic graph. Let $\bs = \bs^\Gamma = (s_i)$ and $\bt = \bt^\Gamma = (t_i)$ be totally ordered collections of source and sink vertices in $\Gamma$. Write $P(s_i, t_j)$ for the set of paths from $s_i$ to $t_j$. We say $\Gamma$ is \emph{compatible} with $\bs$ and $\bt$ if $\gamma \in P(s_i, t_j)$ and $\eta \in P(s_k, t_l)$ share a vertex whenever $i < k$ and $j > l$. Throughout the paper, we assume that each directed acyclic graph has compatible source and sink labels.

Let $\wt: E(\Gamma) \to \mathcal{R}$ be the \emph{weight function of $\Gamma$}. We assume the edge weights $\{\wt(e) \mid e \in E(\Gamma)\}$ commute pairwise. For a path $\gamma \in P(s_i, t_j)$, let $\wt(\gamma) = \prod_{e \in \gamma} \wt(e)$, and for a family of paths $A = (\gamma_1, \gamma_2,\dots, \gamma_k)$, let $\wt(A) = \prod_{i = 1}^k \wt(\gamma_i)$. For a pair $(s_i, t_j)$, let $\wt(s_i, t_j) = \sum_{\gamma \in P(s_i, t_j)} \wt(\gamma)$.
We say a weighted, directed, acyclic graph $\Gamma$ with compatible collections of source and sink vertices is a \emph{network}.

Let $W_\Gamma = \left[\wt(s_i, t_j)\right]$ be the \emph{weight matrix of $\Gamma$}.
For $\ba = (a_1,a_2,\dots, a_k)$ and $\bb = (b_1,b_2,\dots, b_k)$, let $\bP(\ba, \bb)$ be the collection of all families of paths $(\gamma_1, \gamma_2,\dots, \gamma_k)$ such that $\gamma_r$ is a path from $s_{a_r}$ to $t_{b_r}$.
Without loss of generality, we assume that the source vertex indices $\ba = (a_1, a_2,\dots,a_k)$ are always in increasing order.
Let $\bN(\ba, \bb) \subset \bP(\ba,\bb)$ be the set of path families such that no two paths intersect.
\end{defin}

The following result, known as Lindstr\"om's Lemma, gives a correspondence between minors of the weight matrix of $\Gamma$ and noncrossing path families in $\Gamma$. As a corollary, we have that if the weights of $\Gamma$ are positive, then the weight matrix of $\Gamma$ is totally positive. Brenti \cite{BrentiPF} proved the converse of this result for totally nonnegative matrices with real entries, showing that for any such matrix $A$, there exists a graph $\Gamma$ with nonnegative weights such that $A = W_\Gamma$. 

\begin{theorem}[Lindstr\"om's Lemma \cite{GesselViennot, Lindstrom}] 
\label{thm Lindstrom}
Let $\Gamma$ be a network, let $I = \{i_1 <  i_2 < \dots < i_k\}$, and let $J = \{j_1 < j_2 < \dots < j_k\}$. Then the $I, J$-minor of the weight matrix satisfies
\begin{equation}
\Delta_{I, J} W_\Gamma = \det(\wt(s_i, t_j))_{i \in I, j \in J} = \sum_{A \in \bN(I, J)} \wt(A).
\end{equation}
\end{theorem}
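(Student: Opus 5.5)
We prove Lindstr\"om's Lemma by the Gessel--Viennot sign-reversing involution. First expand the determinant by the Leibniz formula:
\begin{equation*}
\det(\wt(s_i, t_j))_{i \in I, j \in J} = \sum_{\sigma \in S_k} \sgn(\sigma) \prod_{r=1}^k \wt(s_{i_r}, t_{j_{\sigma(r)}}) = \sum_{\sigma \in S_k} \sgn(\sigma) \sum_{A \in \bP(I, \sigma(J))} \wt(A).
\end{equation*}
Here $\bP(I,\sigma(J))$ is the set of path families whose $r$-th path runs from $s_{i_r}$ to $t_{j_{\sigma(r)}}$. The second equality comes from distributing the product of path sums. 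Since the edge weights commute pairwise, $\wt(A)$ is simply the product of the weights of all edges used, counted with multiplicity. In particular it depends only on the multiset of edges in $A$, not on how those edges are grouped into paths.

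Next, define an involution $\iota$ on the set of pairs $(\sigma, A)$ in which $A$ has at least one intersection. Take the smallest index $r$ such that $\gamma_r$ meets some other path. Let $v$ be the first vertex along $\gamma_r$ that lies on another path, and let $r' > r$ be the smallest index of a path through $v$. Swap the tails of $\gamma_r$ and $\gamma_{r'}$ after $v$, and replace $\sigma$ by $\sigma \circ (r\, r')$.

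Three checks are needed for this map.
\begin{enumerate}
\item The new family has the same multiset of edges, so $\wt$ is unchanged.
\item The sign of $\sigma$ flips, since we composed with a transposition.
\item Applying the rule to the new family picks the same $r$, $v$ and $r'$, so $\iota$ is an involution.
\end{enumerate}
Check (3) is the main obstacle and must be done carefully. After the swap, the set of vertices covered by the union of the paths is unchanged. Every path with index smaller than $r$ is also unchanged. So $r$ is still the first index whose path meets another path. The initial segment of $\gamma_r$ up to $v$ is unchanged, so $v$ is still its first intersection vertex. The set of paths passing through $v$ is also the same, so the same $r'$ is chosen. Acyclicity of $\Gamma$ guarantees that each swapped path is still a path, with no repeated vertex. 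Because $\iota$ is a weight-preserving, sign-reversing involution, all intersecting families cancel in pairs.

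What remains is the sum over non-intersecting families, $\sum_\sigma \sgn(\sigma)\sum_{A\in \bN(I,\sigma(J))}\wt(A)$. Now use compatibility. Suppose $\sigma \neq \mathrm{id}$. Then there are $r < r'$ with $\sigma(r) > \sigma(r')$. Because the source indices are increasing, the paths $\gamma_r$ and $\gamma_{r'}$ satisfy $i_r < i_{r'}$ and $j_{\sigma(r)} > j_{\sigma(r')}$. Compatibility forces two such paths to share a vertex, so $\bN(I,\sigma(J)) = \emptyset$. Only $\sigma = \mathrm{id}$ survives, and it contributes with sign $+1$. This gives $\sum_{A \in \bN(I,J)} \wt(A)$, as claimed.
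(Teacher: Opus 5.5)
Your proof is correct. It is the standard Gessel--Viennot sign-reversing involution, which the paper cites rather than proves; the paper uses the same tail-swapping argument in its proof of Theorem~\ref{thm GK det sum}, where your compatibility step plays the role of Lemma~\ref{lem cyl nonint paths}, and the remaining difference is cosmetic: you choose the swap point by a minimal-index, first-vertex rule, whereas the paper fixes, for each skeleton, a pair of edges entering a common vertex.
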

\begin{corollary}
Let $\Gamma$ be a directed acyclic graph, and let $\bs$, $\bt$ be a pair of source and sink labels compatible with $\Gamma$. If $\wt(e) \in \mathcal{R}_+$ for all $e \in E(\Gamma)$, then the weight matrix $W_\Gamma$ is totally nonnegative with respect to $\mathcal{R}_+$.
\end{corollary}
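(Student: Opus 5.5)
The plan is the standard Lindstr\"om--Gessel--Viennot involution argument. Expand the determinant over permutations:
\begin{equation*}
\det(\wt(s_i, t_j))_{i \in I, j \in J} = \sum_{\sigma \in S_k} \sgn(\sigma) \prod_{r=1}^k \wt(s_{i_r}, t_{j_{\sigma(r)}}).
\end{equation*}
Because the edge weights commute, each product of path-sums $\wt(s_{i_r}, t_{j_{\sigma(r)}})$ expands into a sum of $\wt(A)$ over families $A = (\gamma_1,\dots,\gamma_k)$ with $\gamma_r \in P(s_{i_r}, t_{j_{\sigma(r)}})$. The determinant is therefore a signed sum over pairs $(\sigma, A)$, with sign $\sgn(\sigma)$ and weight $\wt(A)$.

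Next I will construct a sign-reversing, weight-preserving involution on the pairs $(\sigma, A)$ in which $A$ has some intersecting pair of paths. Take the smallest index $r$ such that $\gamma_r$ meets some other path. Let $v$ be the first vertex along $\gamma_r$ lying on another path, and let $r' \neq r$ be the smallest index of a path through $v$. Swap the tails of $\gamma_r$ and $\gamma_{r'}$ after $v$, and replace $\sigma$ by $\sigma \circ (r\, r')$.

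Several things need checking:
\begin{enumerate}
\item The multiset of edges is unchanged, so the weight is preserved. This again uses commutativity: $\wt(A)$ depends only on the multiset of edges.
\item The sign flips.
\item The resulting family still intersects, and the rule applied to it selects the same $r$, $v$, $r'$, so the map is an involution.
\end{enumerate}
The main care point is the third check: the choices of $r$, $v$ and $r'$ must be determined by data the swap does not alter. The set of vertices on each path up to $v$ and the collection of vertices covered are unchanged, so this holds. Acyclicity guarantees that the spliced walks are still paths. After the cancellation, the determinant equals the sum over non-intersecting pairs $(\sigma, A)$.

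Finally, I will use compatibility to show that only $\sigma = \mathrm{id}$ survives. Suppose $\sigma$ is not the identity. Then there exist $r < q$ with $\sigma(r) > \sigma(q)$, so $i_r < i_q$ while $j_{\sigma(r)} > j_{\sigma(q)}$. By the compatibility hypothesis, $\gamma_r$ and $\gamma_q$ share a vertex, which contradicts non-intersection. Hence the surviving terms are exactly the families in $\bN(I,J)$, each with sign $+1$, which gives the stated formula. The corollary follows at once, since each $\wt(A)$ is a product of elements of the positive cone $\mathcal{R}_+$ and $\mathcal{R}_+$ is closed under products and sums.
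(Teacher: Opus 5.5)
Your proposal is correct and matches the paper's route: the paper treats this corollary as immediate from Lindstr\"om's Lemma (Theorem~\ref{thm Lindstrom}), since each minor is a sum of $\wt(A)$ over non-intersecting families and each $\wt(A)$ is a product of edge weights in $\mathcal{R}_+$. The only difference is that you also reprove that lemma, using the standard tail-swapping involution together with the compatibility argument, whereas the paper simply cites it from Gessel--Viennot and Lindstr\"om.
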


\begin{theorem}\cite{BrentiPF}
Let $A$ be a totally nonnegative matrix with real entries. Then there exists a network $\Gamma$ with nonnegative edge weights such that $A = W_\Gamma$.
\end{theorem}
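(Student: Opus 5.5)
The approach is to reduce the statement to a factorization of $A$ into elementary totally nonnegative factors, each of which is visibly the weight matrix of a small planar network, and then to concatenate networks. First I would reduce to the square case: if $A$ is $p\times q$, pad it with zero rows or columns to a square matrix. This preserves total nonnegativity, since every new minor is either an old minor or $0$. A network realizing the padded matrix realizes $A$ after we discard the extra sources or sinks.

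Next I would record two basic facts about planar networks. Use networks drawn in a strip, with sources $s_1,\dots,s_N$ on the left boundary and sinks $t_1,\dots,t_N$ on the right, both listed bottom to top. Such a network is automatically compatible in the sense of the definition above, because paths from $s_i$ to $t_j$ and from $s_k$ to $t_l$ with $i<k$, $j>l$ must cross by planarity. The facts are:
\begin{itemize}
\item Gluing the sinks of $\Gamma_1$ to the sources of $\Gamma_2$ gives a planar network with $W_{\Gamma_1\circ\Gamma_2}=W_{\Gamma_1}W_{\Gamma_2}$. Every path factors uniquely through the glued vertices, so the path weights multiply in this way.
\item Every lower or upper bidiagonal matrix with nonnegative entries is the weight matrix of such a network. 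Take $N$ horizontal lines, where line $i$ carries an edge of weight equal to the diagonal entry $d_i$. Add one slanted edge between lines $i$ and $i+1$ carrying the off-diagonal entry. Edges of weight $0$ may simply be deleted.
\end{itemize}

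The heart of the argument is a factorization theorem. Every square totally nonnegative real matrix can be written as $A=L_1\cdots L_r\,U_1\cdots U_s$, where each $L_i$ is lower bidiagonal and each $U_j$ is upper bidiagonal, all with nonnegative entries. For this I would invoke Cryer's theorem that a totally nonnegative matrix admits an LU factorization $A=LU$ with $L$ and $U$ themselves totally nonnegative. I would then factor each triangular totally nonnegative matrix by Neville (Whitney) elimination. To clear the entries below the diagonal in a column, subtract a nonnegative multiple of row $i$ from row $i+1$, working from the bottom up. One checks that each such operation preserves total nonnegativity, because the relevant minors are expressed via Sylvester/Desnanot--Jacobi identities in terms of minors of the previous matrix. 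Each step thus peels off a factor $I+aE_{i+1,i}$ with $a\ge 0$, and the process ends at a nonnegative diagonal matrix.

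I expect the main obstacle to be the singular case, where a pivot vanishes and the elimination multiplier $a_{i+1,j}/a_{i,j}$ is undefined. My first attempt would use the standard observation for a totally nonnegative triangular matrix: if a pivot $a_{i,j}$ is zero, then by total nonnegativity the entries below it in that column (or the corresponding row) also vanish. Concretely, a $2\times 2$ minor argument shows that a zero entry together with a positive entry below it forces a whole block of zeros. So one can take the multiplier to be $0$ and continue. Should this bookkeeping fail in degenerate configurations, I would fall back on Cryer's full result that every totally nonnegative matrix is a product of totally nonnegative bidiagonal matrices, allowing zero diagonal entries. This is exactly what the bidiagonal network construction above accommodates.

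Once the factorization is in hand, the proof is finished. Concatenating the bidiagonal networks in the order of the factors, together with a diagonal layer, yields a network $\Gamma$ with nonnegative weights and $W_\Gamma=A$.
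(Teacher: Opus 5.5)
The paper gives no proof of this statement; it is quoted from Brenti. The argument you ultimately fall back on (Cryer's factorization of a finite totally nonnegative matrix into nonnegative bidiagonal factors, a one-layer planar network for each factor, and concatenation) is the standard proof of Brenti's theorem. With that fallback your proposal is correct, but two points need correcting.

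\textbf{The Neville-elimination route fails for singular matrices.} Because of this, the fallback is not optional. Row operations peel factors off on the left, so you are trying to write a lower triangular totally nonnegative $L$ as $E_1\cdots E_k D$, with each $E_i$ unipotent and $D$ diagonal. Then the first column of $L$ is $L_{11}$ times the first column of the unipotent matrix $E_1\cdots E_k$. The matrix $L=\begin{pmatrix}0&0\\1&0\end{pmatrix}$ is totally nonnegative with $L_{11}=0<L_{21}$, so no such factorization exists. Your $2\times 2$ minor argument forces zeros only to the right of a vanishing pivot in its row (here row $1$ is zero), not below it. Setting the multiplier to $0$ therefore leaves $L_{21}$ uneliminated. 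What is needed is diagonal factors with zero entries interspersed among the elementary ones, e.g.\ $L=\mathrm{diag}(0,1)\,(I+E_{21})\,\mathrm{diag}(1,0)$. Cryer's bidiagonal factors, which may have zero diagonal entries, supply exactly this.

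\textbf{The bidiagonal layer needs a specific placement.} On line $i$, the weight $d_i$ must sit after the point where the slanted edge leaving line $i$ departs, and before the point where the slanted edge entering line $i$ arrives. Otherwise you get entries $c_i d_i$ or $d_{i+1}c_i$ in place of $c_i$, or paths through two slanted edges that leave the bidiagonal band. Equivalently, the lower bidiagonal matrix with diagonal $(d_i)$ and subdiagonal $(c_i)$ factors as $\mathrm{diag}(d_1,1,\dots,1)(I+c_1E_{21})\mathrm{diag}(1,d_2,1,\dots,1)(I+c_2E_{32})\cdots(I+c_{n-1}E_{n,n-1})\mathrm{diag}(1,\dots,1,d_n)$. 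This handles zero $d_i$ without any division.
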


The following lemma shows that $(\gamma_1, \gamma_2, \dots, \gamma_l) \in \bP(\ba, \bb)$ with $a_1 < a_2 < \dots < a_l$ is intersecting if and only if there is an adjacent pair of paths $\gamma_i$ and $\gamma_{i+1}$ which intersect. This fact allows us to more easily characterize non-intersecting path families. The proof is straightforward.

\begin{lemma}
\label{lem adj path intersect}
Let $\ba = (a_1 < a_2 < \dots < a_l)$ be a sequence of source vertex indices, and let $\bb$ be a sequence of sink vertex indices. Let $(\gamma_1, \gamma_2, \dots, \gamma_l) \in \bP(\ba, \bb)$. If $\gamma_k$ intersects $\gamma_j$ for some $k, j$, then there is an index $i$ such that $\gamma_i$ intersects $\gamma_{i+1}$.  
\end{lemma}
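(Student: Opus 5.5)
We argue by contradiction and reduce to a single adjacent pair. Suppose some pair of paths in the family meets, and among all intersecting pairs $(\gamma_k,\gamma_j)$ with $k<j$ choose one with $j-k$ minimal. If $j=k+1$ we are done, so assume $j-k\ge 2$. By minimality, the intermediate path $\gamma_{k+1}$ meets neither $\gamma_k$ nor $\gamma_j$, and more generally no two paths strictly inside the window $k,\dots,j$ meet. We will show that $\gamma_{k+1}$ is nevertheless forced to meet $\gamma_k$ or $\gamma_j$.

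The key tool is the compatibility hypothesis, which says that paths from $s_i\to t_j$ and $s_k\to t_l$ with $i<k$ and $j>l$ must share a vertex. Let $v$ be a common vertex of $\gamma_k$ and $\gamma_j$. Splice the two paths at $v$: follow $\gamma_k$ from $s_{a_k}$ to $v$, then $\gamma_j$ from $v$ to $t_{b_j}$; and symmetrically follow $\gamma_j$ up to $v$, then $\gamma_k$ to $t_{b_k}$. Because $\Gamma$ is acyclic, both spliced walks are genuine paths.

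Now compare the sink index $b_{k+1}$ of the middle path with $b_k$ and $b_j$.

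If $b_{k+1}<b_k$, then since $a_k<a_{k+1}$, compatibility forces $\gamma_k$ and $\gamma_{k+1}$ to meet. This contradicts minimality.

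If $b_{k+1}>b_j$, then since $a_{k+1}<a_j$, compatibility forces $\gamma_{k+1}$ and $\gamma_j$ to meet. Again a contradiction.

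Otherwise $b_{k+1}$ lies between $b_k$ and $b_j$, and we use a spliced path.
- If $b_k\le b_{k+1}\le b_j$, take $\eta$ = ($\gamma_j$ up to $v$, then $\gamma_k$), which runs from $s_{a_j}$ to $t_{b_k}$. Since $a_{k+1}<a_j$ and $b_{k+1}>b_k$ (with the equality cases treated separately), compatibility makes $\gamma_{k+1}$ meet $\eta$. Any vertex of $\eta$ lies on $\gamma_j$ or on $\gamma_k$, so $\gamma_{k+1}$ meets one of them. This contradicts minimality.
- If $b_j\le b_{k+1}\le b_k$, use instead the other splice, which runs from $s_{a_k}$ to $t_{b_j}$; the same argument applies.

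The equality cases $b_{k+1}=b_k$ or $b_{k+1}=b_j$ are handled by observing that the paths then share the sink vertex. So every case yields a contradiction, and the minimal intersecting pair must be adjacent.

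The main obstacle is this case analysis on the sink order. In particular, one must ensure the spliced path is used with sources and sinks in the correct relative order for compatibility to apply, and must handle ties, where the sink indices coincide, separately.
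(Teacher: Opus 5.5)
Your proof is correct. The paper gives no proof of this lemma (it only says the proof is straightforward), and your argument fills that gap using exactly the hypotheses available in the stated generality: acyclicity, which makes the spliced walk $s_{a_j}\to v\to t_{b_k}$ a genuine path, and the compatibility axiom. It does not rely on planarity of the particular networks $\Gamma_\bh$.

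Two small remarks.

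First, the case analysis reduces to a trichotomy on $b_{k+1}$ versus $b_k$ alone, so comparing with $b_j$ is unnecessary:
\begin{itemize}
\item if $b_{k+1}<b_k$, compatibility applies directly to $\gamma_k,\gamma_{k+1}$;
\item if $b_{k+1}>b_k$, it applies to $\gamma_{k+1}$ and the splice from $s_{a_j}$ to $t_{b_k}$;
\item if $b_{k+1}=b_k$, the two paths share a sink.
\end{itemize}

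Second, your second bullet is vacuous once the first two cases are excluded, since together they force $b_k=b_{k+1}=b_j$. Its justification is also wrong as written. The splice from $s_{a_k}$ to $t_{b_j}$ and $\gamma_{k+1}$ have $a_k<a_{k+1}$ and $b_j\le b_{k+1}$. That is not a crossing configuration, so compatibility gives no intersection there, and ``the same argument applies'' is false. You should simply delete that bullet.
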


In this paper, we will be primarily interested in totally nonnegative Toeplitz matrices. A network $\Gamma$ is \emph{shift-symmetric} with respect to source and sink labels $\bs = (s_i)_{i \in \mathbb{Z}}$ and $\bt = (t_j)_{j \in \mathbb{Z}}$ if $\wt(s_i, t_j) = \wt(s_{i+k}, t_{j+k})$ for all $i,j, k \in \mathbb{Z}$. Equivalently, if $\Gamma$ is shift-symmetric, then $W_\Gamma$ is a Toeplitz matrix. The motivating example of a shift-symmetric network can be found in the Gessel--Viennot \cite{GesselViennot} proof of the Jacobi--Trudi identity, giving a network with weight matrix $W = [e_{j-i}(\bx)]_{i,j = 1}^\infty$. More examples of totally nonnegative Toeplitz matrices can be found in the paper \cite{BrentiPF} by Brenti.

In many cases, the shift-symmetry of $\Gamma$ will be induced by a \emph{shift automorphism} $\mathsf{S}: \Gamma \to \Gamma$ which sends each source $s_i$ to $s_{i+1}$, sends each sink $t_j$ to $t_{j+1}$, and preserves the weight of each edge. For such cases, we are interested in cylindrically nonintersecting path families.
For integer sequences $\ba = (a_1 < a_2 < \dots < a_k)$ and $\bb = (b_1 < b_2 < \dots < b_k),$ and a \emph{period} $d > \max\{a_k - a_1, b_k - b_1\}$, a path family $(\gamma_1, \gamma_2, \dots, \gamma_k) \in \bP(\ba,\bb)$ is \emph{cylindrically nonintersecting} with period $d$ if the path family $(\mathsf{S}^{md} \gamma_i \mid i,m \in \mathbb{Z})$ is nonintersecting. Write $\bCN(\ba, \bb, d)$ for the set of cylindrically nonintersecting path families from $(s_i)_{i \in \ba}$ to $(t_j)_{j \in \bb}$ with period $d$.
Write $\bbZ^k_0$ for the set of $\br \in \bbZ^k$ such that $\sum_{i = 1}^k r_i = 0$.
For $\sigma \in S_k$ and $\br \in \bbZ^k_0$, let $\sigma(\bb) + \br d = (b_{\sigma(1)} + r_1 d, b_{\sigma(2)} + r_2 d, \dots, b_{\sigma(k)} + r_k d)$.

\begin{lemma}
\label{lem cyl nonint paths}
Let $\ba = (a_1 < a_2 < \dots < a_k)$ and $\bb = (b_1 < b_2 < \dots < b_k)$ be integer sequences, and let $d > \max\{a_k - a_1, b_k - b_1\}$. Let $\sigma \in S_k$ and $\br \in \bbZ^k_0$. 
If $\sigma$ is not the identity permutation or if $\br \neq \mathbf{0},$ then $\bN(\ba, \sigma(\bb) + \br d) = \emptyset.$ 
\end{lemma}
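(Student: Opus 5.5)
We argue by contradiction: suppose $(\gamma_1,\dots,\gamma_k) \in \bN(\ba, \sigma(\bb) + \br d)$ with $(\sigma,\br) \neq (\mathrm{id}, \mathbf{0})$. Set $c_i = b_{\sigma(i)} + r_i d$, the sink index of the path starting at $s_{a_i}$. The plan has two steps. First, show that the sinks are in increasing order, $c_1 < c_2 < \dots < c_k$. Second, show that the only way to achieve this with the given constraints is $\sigma = \mathrm{id}$ and $\br = \mathbf{0}$.

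For the first step we use the compatibility assumption on the network. If $i<j$ but $c_i > c_j$, then any path from $s_{a_i}$ to $t_{c_i}$ and any path from $s_{a_j}$ to $t_{c_j}$ share a vertex, since $a_i < a_j$. This contradicts the non-intersection of the family. If $c_i = c_j$, the two paths share the sink vertex, which is again a contradiction. Hence $c_1 < c_2 < \dots < c_k$.

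The second step is purely combinatorial and is the main point. Write $B = \{b_1 < \dots < b_k\}$. Every $b_j$ lies in the window $[b_1, b_1 + d - 1]$, because $b_k - b_1 < d$. Each $c_i$ is congruent modulo $d$ to a distinct element of $B$, so the residues of the $c_i$ are distinct.

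Look at $c_1$ and $c_k$. We have $\sum_i r_i = 0$, so $\sum_i c_i = \sum_j b_j$. Suppose some $r_i \neq 0$. Then there is an index with $r_i \geq 1$ and an index with $r_j \leq -1$. Take $i_+$ with $r_{i_+} \geq 1$, so $c_{i_+} \geq b_1 + d$. Take $i_-$ with $r_{i_-} \leq -1$, so $c_{i_-} \leq b_k - d < b_1$. Then
\[
c_{i_+} - c_{i_-} > b_1 + d - b_1 = d,
\]
but this alone is not yet a contradiction. We need a cleaner invariant.

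Consider the counting function $N(x) = \#\{i : c_i \leq x\}$ and compare it with $\#\{j : b_j \leq x\}$. A simpler route is to note that the increasing sequence $c$ consists of $k$ integers with distinct residues mod $d$ that all lie in the residue set of $B$. Define the shifted window map: since the $c_i$ are increasing, $c_k - c_1 \geq$ the spread forced by the residues. Concretely, let $m = \min_i r_i$ and $M = \max_i r_i$. If $M > m$, then $c_k - c_1 \geq (b_1 + Md) - (b_k + md) \geq d - (b_k - b_1) > 0$. That much is consistent, so ordering by itself must be combined with the sum condition.

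The right argument is to sort. Because the $c_i$ are increasing with distinct residues from $B$, the sequence $c$ is determined by the multiset $\{r_i\}$, and $\sum_i c_i = \sum_j b_j + d\sum_i r_i$ holds regardless of $\sigma$. That identity gives no information, so instead we use the sources, via $d > a_k - a_1$. Apply the shift automorphism: $\mathsf{S}^{d}\gamma_{i}$ runs from $s_{a_i + d}$ to $t_{c_i + d}$. Compatibility forces the infinite family $\{\mathsf{S}^{md}\gamma_i\}$ to behave like a nonintersecting family wherever applicable. The key step is then the following. If $r_{i_+} \geq 1$ for some $i_+ < k$, compare $\gamma_k$ with $\gamma_{i_+}$: we have $a_{i_+} < a_k$, and $c_{i_+} \geq b_1 + d > b_k \geq$ the base sink. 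Run this through compatibility against the unshifted path with the larger base.

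I expect this residue/ordering bookkeeping to be the main obstacle. The first thing I would try is a direct counting argument. The increasing $c_i$, with residues a permutation of those of $B$, and $\sum_i r_i = 0$, force $c_i = b_i$ if one additionally has $c_1 \geq b_1 - $ (something) and $c_k \leq b_k + $ (something). I would try to derive those boundary bounds from the path compatibility against shifted copies.
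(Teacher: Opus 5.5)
Your Step 1 is correct, and it already disposes of the case $\br=\mathbf{0}$, $\sigma\neq\mathrm{id}$, as well as any $\br$ having indices $i<j$ with $r_i>0>r_j$. The gap is Step 2. The claim that increasing sinks $c_1<\dots<c_k$ force $(\sigma,\br)=(\mathrm{id},\mathbf{0})$ is false: for $k=2$, $\sigma=\mathrm{id}$, $\br=(-1,1)$ one gets $c=(b_1-d,\,b_2+d)$, which is increasing. No residue or sum bookkeeping can rescue this, because if $\bN$ means ordinary pairwise non-intersection, the statement itself fails. Take $\Gamma_{\bh}$ with $\bh=(1,2,3,4,5)$, $\ba=(0,1)$, $\bb=(3,4)$, $d=2$, $\sigma=\mathrm{id}$, $\br=(-1,1)$. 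The sinks are $t_1^{\bh}=(1,6)$ and $t_6^{\bh}=(6,6)$, and the paths $(0,1)\to(1,2)\to(1,3)\to\cdots\to(1,6)$ and $(1,1)\to(2,2)\to\cdots\to(6,6)$ are vertex-disjoint. Your remark that compatibility ``forces'' $\{\mathsf{S}^{md}\gamma_i\}$ to behave like a nonintersecting family is exactly the missing hypothesis. It does not follow from the pairwise disjointness of $\gamma_1,\dots,\gamma_k$.

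The paper's proof actually establishes the cylindric version: every family in $\bP(\ba,\sigma(\bb)+\br d)$ with $(\sigma,\br)\neq(\mathrm{id},\mathbf{0})$ is cylindrically intersecting with period $d$. This is also what Theorem~\ref{thm GK det sum} needs to conclude $\bCN(\ba,\bb,d)=\mathcal{P}\setminus\mathcal{P}^-$. Under that reading the argument is short. If $\br\neq\mathbf{0}$, pick $i,j$ with $r_i>0>r_j$; then $c_i-c_j\geq 2d-(b_k-b_1)>d$. If $i<j$, compatibility makes $\gamma_i$ and $\gamma_j$ meet, as in your Step 1. If $i>j$, then $a_i<a_j+d$ because $a_i-a_j\leq a_k-a_1<d$, while $c_i>c_j+d$. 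Since $\mathsf{S}^d\gamma_j$ runs from $s_{a_j+d}$ to $t_{c_j+d}$, compatibility forces $\gamma_i$ to meet $\mathsf{S}^d\gamma_j$. In the example above, $\mathsf{S}^2\gamma_1$ meets $\gamma_2$ at $(3,3)$. This single comparison with a shifted copy is the idea your proposal lacks; the residue and sum considerations play no role.
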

\begin{proof}
Let $(\gamma_1, \gamma_2,\dots, \gamma_k)$ be a path family in $\bP(\ba, \sigma(\bb) + \br d)$.
Suppose $\br \neq \mathbf{0}$. 
Then there are indices $i, j$ such that $r_i > 0$ and $r_j < 0$.
If $i < j$, then, as $d > b_k - b_1$, we have $b_{\sigma(i)} + r_i d > b_{\sigma(j)} + r_j d$.
As $\gamma_i$ is a path from $s_{a_i}$ to $t_{b_{\sigma(i)} + r_i d}$ and $\gamma_j$ is a path from $s_{a_j}$ to $t_{b_{\sigma(j)} + r_j d}$ with $a_i< b_j$, this means that $\gamma_i$ and $\gamma_j$ intersect. 
If $i > j$,  then as $d > b_k - b_1$, we have $b_{\sigma(i)} + r_i d > b_{\sigma(j)} + (r_j + 1) d$.
As $d > a_k - a_1$, we have $a_i < a_j + d$. Hence, as $\gamma_i$ is a path from $s_{a_i}$ to $t_{b_{\sigma(i)} + r_i d}$ and $\mathsf{S}^d \gamma_j$ is a path from $s_{a_j + d}$ to $t_{b_{\sigma(j)} + (r_j + 1) d}$, this means that $\gamma_i$ and $\mathsf{S}^d \gamma_j$ intersect.

Suppose $\br = \mathbf{0}$, and $\sigma$ is not the identity permutation. Then there is some $i < j$ such that $\sigma(i) > \sigma(j)$, so $\gamma_i$ intersects $\gamma_j$.
\end{proof}
The following theorem is a slight generalization of a result of Gessel--Krattenthaler \cite{GKcyl}. For a path family $A$, define the \emph{skeleton} $\skel(A)$ of $A$ to be the multiset of edges in $A$. 
\begin{theorem}\cite{GKcyl}
\label{thm GK det sum}
Let $\Gamma$ be a network with shift automorphism $\mathsf{S}$, let $\ba = (a_1 < a_2 < \dots < a_k)$ and $\bb = (b_1 < b_2 < \dots < b_k)$ be integer sequences, and let $d > \max\{a_k - a_1, b_k - b_1\}$. Suppose that for all vertices $v$ of $\Gamma$, there is no path from $v$ to $\mathsf{S}^{md}v$ for all $m \in \mathbb{Z}$. Then 
\begin{equation}
\sum_{A \in \bCN(\ba,\bb,d)} \wt(A) = \sum_{\br \in \bbZ^k_0} \det\left[ \wt(s_{a_\alpha}, t_{b_{\beta} + r_\alpha d})\right]_{\alpha, \beta = 1}^k.
\end{equation}
\end{theorem}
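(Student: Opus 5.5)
We prove Theorem~\ref{thm GK det sum} by the Gessel--Krattenthaler sign-reversing involution, adapted to a general network with shift automorphism. First we expand the right-hand side. For each $\br \in \bbZ^k_0$, the Leibniz formula writes the determinant as a signed sum over $\sigma \in S_k$. Reindexing $(\sigma,\br)$ so that the sink for the source $s_{a_\alpha}$ is $t_{b_{\sigma(\alpha)} + r_\alpha d}$, the right-hand side becomes
\begin{equation}
\sum_{\sigma \in S_k} \sum_{\br \in \bbZ^k_0} \sgn(\sigma) \sum_{A \in \bP(\ba, \sigma(\bb) + \br d)} \wt(A).
\end{equation}
Absolute convergence, or finiteness in each graded piece, is needed to rearrange this sum. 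It should follow from the hypothesis that there is no path from $v$ to $\mathsf{S}^{md}v$: this hypothesis forces only finitely many $\br$ to contribute path families of any given weight, or else we work formally. I would state this as a bookkeeping remark rather than dwell on it.

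The core step is a weight-preserving, sign-reversing involution $\iota$ on the set of triples $(\sigma, \br, A)$ for which $A$ is \emph{not} cylindrically nonintersecting. Lift the family to the periodic family $\{\mathsf{S}^{md}\gamma_\alpha\}$. Since $A$ is not cylindrically nonintersecting, some pair $\gamma_\alpha$ and $\mathsf{S}^{md}\gamma_\beta$ meets, with $(\alpha,m) \neq (\beta,0)$. Choose canonically a first intersection vertex $v$ and a first pair $(\alpha,\beta,m)$ in a fixed total order. For example, take the smallest $\alpha$ whose path meets some translate of another path, then the first such vertex along $\gamma_\alpha$, then the smallest $(\beta,m)$.

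Swap the tails of $\gamma_\alpha$ and $\gamma_\beta$ after the corresponding vertices. Concretely, $\gamma_\alpha$ continues along $\mathsf{S}^{md}\gamma_\beta$ after $v$, and $\gamma_\beta$ continues along $\mathsf{S}^{-md}\gamma_\alpha$ after $\mathsf{S}^{-md}v$. Because $\mathsf{S}$ preserves weights, the multiset of edge weights is unchanged, so the weight is preserved. The ending sinks change as follows: $r_\alpha$ becomes $r_\beta+m$ and $r_\beta$ becomes $r_\alpha-m$, so the sum of the entries of $\br$ stays $0$. The permutation $\sigma$ is composed with a transposition, which flips the sign.

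The case $\alpha=\beta$ with $m\neq 0$ would give a path from $v$ to a translate of itself, which the hypothesis excludes. This is exactly why the hypothesis is assumed.

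The main obstacle is showing that $\iota$ is an involution, i.e.\ that after the swap the same canonical choice $(\alpha,\beta,m,v)$ is selected. I would follow Gessel--Krattenthaler and choose the intersection by minimal data along the initial segments, which the swap leaves unchanged: the pieces of the paths up to $v$ are untouched, and the set of intersection points of the union of the lifted paths is unchanged. I expect care is needed to make the choice depend only on the lifted path union and on the initial segments.

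The fixed-point-free cancellation leaves only the cylindrically nonintersecting families. By Lemma~\ref{lem cyl nonint paths}, applied to the lifted family restricted to one period window, any such family must have $\sigma = \mathrm{id}$ and $\br=\mathbf{0}$, and so has sign $+1$. Conversely, every $A\in\bCN(\ba,\bb,d)$ appears with $\sigma=\mathrm{id}$ and $\br=\mathbf 0$. The surviving terms are therefore exactly $\sum_{A\in\bCN(\ba,\bb,d)}\wt(A)$, which is the left-hand side.
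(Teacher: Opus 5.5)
Your proposal is correct and is essentially the paper's proof. Like the paper, you expand each determinant into signed families in $\bP(\ba,\sigma(\bb)+\br d)$, cancel the cylindrically intersecting ones by a weight-preserving tail swap between $\gamma_\alpha$ and a translate $\mathsf{S}^{md}\gamma_\beta$ (the hypothesis forces $\alpha\neq\beta$, so the sign flips), and use Lemma~\ref{lem cyl nonint paths} to identify the survivors with $\bCN(\ba,\bb,d)$. The only difference is the canonical choice of swap point: the paper fixes, for each periodic skeleton $\skel(\mathsf{S}^{\bbZ d}A)$, a pair of edges entering a common vertex, whereas your LGV-style choice (smallest $\alpha$, first shared vertex on $\gamma_\alpha$, smallest $(\beta,m)$) is also invariant under the swap. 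That invariance holds because the prefixes up to $v$ and the vertex multiset of the lifted family are unchanged, and the hypothesis rules out $\gamma_\alpha$ or $\gamma_\beta$ meeting their own translates.
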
 
\begin{proof}
Given a path family $A \in \bP(\ba, \sigma(\bb) + \br d)$, the permutation $\sigma$ can be recovered as $d > b_k - b_1$. Thus, we may define $\sgn(A) = \sgn(\sigma)$ for $A \in \bP(\ba, \sigma(\bb) + \br d)$.
Consider the set of path families
\begin{equation}
\mathcal{P} = \bigsqcup_{\br \in \mathbb{Z}^k_0} \bigsqcup_{\sigma \in S_k} \bP(\ba, \sigma(\bb) + \br d).
\end{equation}
Observe that 
\begin{align}
\sum_{\br \in \bbZ^k_0} \det\left[ \wt(s_{a_\alpha}, t_{b_{\beta} + r_\alpha d})\right]_{\alpha, \beta = 1}^k &= \sum_{\br \in \bbZ^k_0}  \sum_{\sigma \in S_k} \sum_{A \in \bP(\ba, \sigma(\bb) + \br d)} \sgn(\sigma) \wt(A) \\
&= \sum_{A \in \mathcal{P}} \sgn(A) \wt(A).
\end{align} 
 
Let $\mathcal{P}^-$ denote the set of path families in $\mathcal{P}$ which are cylindrically intersecting with period $d$.
By Lemma~\ref{lem cyl nonint paths}, $\bCN(\ba, \bb, d) = \mathcal{P} \setminus \mathcal{P}^-$. 
Note that $A \in \mathcal{P}$ is cylindricly intersecting with period $d$ if and only if there is a vertex $v$ of $\Gamma$ such that $\skel(\mathsf{S}^{\mathbb{Z} d} A)$ contains two edges directed into $v$. 
Let $\skel(\mathcal{P}) = \{ \skel(\mathsf{S}^{\mathbb{Z} d} A) \mid A \in \mathcal{P}\}$ denote the set of skeletons of periodic path families, and let $\skel(\mathcal{P}^{-}) \subset \skel(\mathcal{P})$ denote the set of intersecting cylindric skeletons. 
For each $\mathcal{K} \in \skel(\mathcal{P}^-)$, fix a pair of edges $X_{\mathcal{K}}$ which are directed into the same vertex.

For $A = (\gamma_1, \gamma_2, \dots, \gamma_k) \in \mathcal{P}^-$, let $\mathcal{K} = \skel(\mathsf{S}^{\mathbb{Z}d} A) \in \skel(\mathcal{P}^-)$. 
As $A \in \mathcal{P}^-$, there are paths $\mathsf{S}^{m_1 d}\gamma_i$ and $\mathsf{S}^{m_2 d} \gamma_j$ which intersect at $X_{\mathcal{K}}$. Write $\gamma_i = (\gamma_{i,1}, \gamma_{i,2}, \dots, \gamma_{i, l_i})$ and $\gamma_j = (\gamma_{j,1}, \gamma_{j,2}, \dots, \gamma_{j, l_j})$ for the sequence of edges in $\gamma_i$ and $\gamma_j$, and let $\gamma_{i, v}$ and $\gamma_{j, w}$ be the edges such that $\{\mathsf{S}^{m_1d} \gamma_{i,v}, \mathsf{S}^{m_2 d} \gamma_{j, w}\} = X_{\mathcal{K}}$. 
Define $\varphi(A) = (\gamma_1', \gamma_2', \dots, \gamma_k')$ to be the path family such that 
\begin{equation}
\gamma_\alpha' = \begin{cases}
\gamma_\alpha & \text{if } \alpha \not\in \{i, j\}, \\
(\gamma_{i,1}, \gamma_{i,2}, \dots, \gamma_{i, v}, \mathsf{S}^{(m_2 - m_1)d} \gamma_{j, w+1},\mathsf{S}^{(m_2 - m_1)d} \gamma_{j, w+2}, \dots, \mathsf{S}^{(m_2 - m_1)d} \gamma_{j, l_j}) & \text{if } \alpha = i, \\
(\gamma_{j,1}, \gamma_{j,2}, \dots, \gamma_{j, w}, \mathsf{S}^{(m_1 - m_2)d} \gamma_{i, v+1},\mathsf{S}^{(m_1 - m_2)d} \gamma_{i, v+2}, \dots, \mathsf{S}^{(m_1 - m_2)d} \gamma_{i, l_i}) & \text{if } \alpha = j.
\end{cases} 
\end{equation}
Observe that $\varphi(A)$ is a path family from $\ba$ to $\pi(\bb) + \br'$, where $\pi = \tau_{i,j} \circ \sigma$ and $\br' = \tau_{i,j} \br + (m_1 - m_2)(\mathbf{1}_j - \mathbf{1}_i)$, where $\tau_{i,j}$ is the transposition swapping $i$ and $j$, and $\mathbf{1}_i$ is the $i^{th}$ standard basis vector. 
As there is no path from $v$ to $\mathsf{S}^{md}v$ for any $m \in \mathbb{Z}$, we have that $i \neq j$ and $\sgn(\pi) = -\sgn(\sigma)$.  
Observe that $\varphi$ is an involution and that $\wt(A) = \wt(\varphi(A))$.
By applying $\varphi$ to each $A \in \mathcal{P}^-$, we have
\begin{equation}
\sum_{A \in \mathcal{P}^-} \sgn(A) \wt(A) = - \sum_{A \in \mathcal{P}^-} \sgn(A) \wt(A) = 0.
\end{equation}
By Lemma~\ref{lem cyl nonint paths}, we have that $\sgn(A) = 1$ for all $A \in \bCN(\ba, \bb, d)$, so
\begin{equation}
\sum_{A \in \mathcal{P}} \sgn(A) \wt(A) = \sum_{A \in \bCN(\ba, \bb, d)} \wt(A).
\end{equation}
\end{proof}

Gessel--Krattenthaler used Theorem~\ref{thm GK det sum} to count \emph{cylindric tableaux}. Let $\lambda$ be a partition with $c$ columns, let $\mu$ be partition such that $\mu \subset \lambda$, and let $d > \max\{\lambda_1' - \lambda_c', \mu_1' - \mu_c'\}$ be an integer. A semistandard Young tableau $T \in \SSYT(\lambda/\mu)$ is a \emph{cylindric tableau} of \emph{cylindric shape} $\lambda/\mu/d$ if $T_{r, c} \leq T_{r+d, 1}$ for all $\mu_c' < r \leq \lambda_c'$. Write $\CSSYT(\lambda/\mu/d)$ for the set of all cylindric tableaux of cylindric shape $\lambda/\mu/d$.
Gessel--Krattenthaler \cite{GKcyl} implicitly defined a symmetric function generating function for cylindric tableaux.
Define the \emph{cylindric Schur function} $s_{\lambda/\mu/d}(\bx)$ corresponding to the cylindric shape $\lambda/\mu/d$ by
\begin{equation}
s_{\lambda/\mu/d}(\bx) = \sum_{T \in \CSSYT(\lambda/\mu/d)} \bx_T,
\end{equation}
where $\bx_T = \prod_{i \geq 1} x_i^{\# i\text{'s appearing in }T}.$
Cylindric Schur functions were formally introduced by Postnikov \cite{PostnikovCylSchur} 
in connection with the quantum cohomology of the Grassmannian, and later studied by McNamara \cite{Mcnamara} and  Lee \cite{LeeCylSchur}.
The author used $P$-analogues of cylindric Schur functions to give combinatorial interpretations of certain sums of $e$-coefficients of chromatic symmetric functions of incomparability graphs of \threeone -free posets \cite{Isaiah}.
The following corollary of Theorem~\ref{thm GK det sum} expresses cylindric Schur functions as sums of determinants.
\begin{corollary}
\label{cor CSSYT det}
Let $\lambda/\mu/d$ be a cylindric shape with $c$ columns. Then
\begin{equation}
s_{\lambda/\mu/d}(\bx) = \sum_{\br \in \bbZ^c_0} \det\left[e_{\lambda_i' - \mu_j' + i - j + r_j(c+d)}(\bx) \right]_{i,j = 1}^c.
\end{equation}
\end{corollary}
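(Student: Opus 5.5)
The plan is to apply Theorem~\ref{thm GK det sum} to the Gessel--Viennot network $\Gamma$ whose weight matrix is $[e_{j-i}(\bx)]$, and then read off the resulting path families as cylindric tableaux.

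First, I fix the network concretely. Its vertices are lattice points $(p,q)\in\bbZ\times\bbZ_{>0}$, with unit steps in which an east step at height $q$ (or a north step, depending on convention) carries weight $x_q$. Each path is forced to use at most one weighted step per height, so $\wt(s_i,t_j)=e_{j-i}(\bx)$. The shift automorphism $\mathsf S$ is horizontal translation by one unit; it sends $s_i\mapsto s_{i+1}$ and $t_j\mapsto t_{j+1}$ and preserves weights. Since every edge moves weakly rightward and strictly up in height, or strictly right, there is no path from $v$ to $\mathsf S^{md}v$ for $m\neq 0$. For $m=0$ the only path is the trivial one, and the proof of Theorem~\ref{thm GK det sum} only needs $i\neq j$, which this guarantees. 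So the hypotheses of Theorem~\ref{thm GK det sum} hold.

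Next, I choose indices so that the determinant matches. I take $a_j=\mu'_j-j$ and $b_i=\lambda'_i-i$ for $1\le i,j\le c$, reindexed increasingly (i.e., reversing the order), and I take the period to be $c+d$. The condition $c+d>\max\{a_k-a_1,b_k-b_1\}$ becomes $c+d>\mu'_1-\mu'_c+c-1$ and $c+d>\lambda'_1-\lambda'_c+c-1$, which follows from the assumption on $d$. With these choices the $(j,i)$ entry is $\wt(s_{a_j},t_{b_i+r_j(c+d)})=e_{\lambda'_i-\mu'_j+i-j+r_j(c+d)}(\bx)$. Theorem~\ref{thm GK det sum} gives the right-hand side, possibly up to transposing and reversing rows and columns simultaneously, which does not change the determinant, and possibly up to relabeling $\br\mapsto-\br$, which is a bijection of $\bbZ^c_0$.

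Finally, I need a weight-preserving bijection between $\bCN(\ba,\bb,c+d)$ and $\CSSYT(\lambda/\mu/d)$. The standard Gessel--Viennot bijection sends path $j$ to column $j$ of the tableau. Since each path uses one weighted step per height, the heights of its weighted steps, read in order, form a strictly increasing sequence, which is the column entries read top to bottom. It is classical that non-intersection of consecutive paths $\gamma_j,\gamma_{j+1}$ is equivalent to the row-weak condition between columns $j$ and $j+1$ of $\lambda/\mu$. By Lemma~\ref{lem adj path intersect}, the whole family is non-intersecting exactly when adjacent pairs are. The cylindric condition adds exactly one more adjacent pair: $\gamma_c$ and $\mathsf S^{c+d}\gamma_1$. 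Under the same dictionary, the non-intersection of this pair should translate into $T_{r,c}\le T_{r+d,1}$. The row offset $d$ comes from the shift $c+d$ minus the $c$ column-index offset built into $a_j,b_i$.

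I expect this last translation to be the main obstacle: checking the exact offset, and that the inequality comes out weak rather than strict. I would verify it directly by comparing the positions where $\gamma_c$ and the shifted $\gamma_1$ make their steps at each height, exactly as in the adjacent-column argument. Cylindric non-intersection means that all translates $\mathsf S^{m(c+d)}\gamma_i$ avoid one another. By translation invariance and the adjacency reduction (Lemma~\ref{lem adj path intersect} applied to the doubly infinite, ordered family), this reduces to these $c$ adjacent pairs. Summing weights then gives the formula.
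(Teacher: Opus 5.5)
Your route is the one the paper intends: the paper offers no argument beyond calling this a corollary of Theorem~\ref{thm GK det sum} for the Gessel--Viennot network of $[e_{j-i}(\bx)]$. However, your index computation contains a sign slip, and that slip is exactly what makes your argument appear to reproduce the displayed formula. With $a_j=\mu'_j-j$ and $b_i=\lambda'_i-i$ one gets $b_i-a_j=\lambda'_i-\mu'_j-i+j$, not $\lambda'_i-\mu'_j+i-j$. So your argument, carried out correctly, proves
\[
s_{\lambda/\mu/d}(\bx)=\sum_{\br\in\bbZ^c_0}\det\left[e_{\lambda'_i-\mu'_j-i+j+r_j(c+d)}(\bx)\right]_{i,j=1}^c .
\]
This is the cylindric dual Jacobi--Trudi identity. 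It agrees with the $(I,J)$-minor description of $s_{\lambda/\mu}$ in the introduction and with the convention the paper itself uses in the proof of Theorem~\ref{thm Gamma rect e pos}.

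The statement as printed, with $+i-j$, is false. Take $\lambda=(2,1)$, $\mu=\emptyset$, $c=2$, $d=2$. Both rows of the printed matrix equal $\left(e_{2+4r_1},\,e_{1+4r_2}\right)$, so every determinant vanishes. But the only cylindric condition, $T_{1,2}\le T_{3,1}$, is vacuous because $(3,1)\notin\lambda$, so $s_{\lambda/\mu/d}=s_{21}\neq 0$. The corrected formula gives $e_2e_1-e_3=s_{21}$, since all $\br\neq\mathbf 0$ contribute zero. The right conclusion of your proof is the corrected identity, stated as a correction, not an adjustment of the arithmetic to match the target.

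Two smaller points. First, the network you describe is the $h$-network: unit east and north steps, with east steps at height $q$ weighted $x_q$. It does not force one weighted step per height. Worse, its horizontal paths run from $v$ to $\mathsf{S}^m v$, which violates the hypothesis of Theorem~\ref{thm GK det sum}. You want vertical steps of weight $1$ and diagonal steps $(p,q-1)\to(p+1,q)$ of weight $x_q$; this is $\Gamma_\bh$ for $\bh=(1,2,\dots,n)$, letting $n\to\infty$. Since every edge raises the height by one, this gives both $\wt(s_i,t_j)=e_{j-i}$ and the no-path hypothesis.

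Second, the cylindric translation you left open does go through, but your labeling needs fixing. With $\gamma_j$ the path of column $j$, $\gamma_1$ is the rightmost path, so the extra adjacent pair is $\gamma_1$ and $\mathsf{S}^{c+d}\gamma_c$, not $\gamma_c$ and $\mathsf{S}^{c+d}\gamma_1$. The weighted step of cell $(r,j)$ runs from horizontal position $r-j-1$ to $r-j$. Hence $\gamma_1$ arrives at position $r-1$ via cell $(r,1)$, while $\mathsf{S}^{c+d}\gamma_c$ leaves position $r'+d-1$ via cell $(r',c)$. Avoiding a common vertex at a shared position forces $r=r'+d$ and reads $T_{r',c}\le T_{r'+d,1}$, which is weak, as required. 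The bound on $d$ puts the source and sink columns of $\gamma_1$ strictly left of those of $\mathsf{S}^{c+d}\gamma_c$. So every potential meeting is of this form with both cells present, and no other conditions arise.
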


A special case of cylindric Schur functions are the monomial symmetric functions corresponding to rectangular shapes. 
\begin{lemma}
For all $c,r \in \bbZ_{\geq 1}$, we have
\begin{equation}
m_{c^r}(\bx) = s_{c^r/\emptyset/0}(\bx).
\end{equation}
\end{lemma}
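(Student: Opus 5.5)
The plan is to compute $s_{c^r/\emptyset/0}(\bx)$ straight from the definition of cylindric tableaux. We do not go through Corollary~\ref{cor CSSYT det}. Here $\lambda = c^r$ is a rectangle with $c$ columns, each of length $\lambda_j' = r$, and $\mu = \emptyset$. The cylindric condition for $T \in \SSYT(c^r)$ with $d = 0$ reads
\begin{equation}
T_{i,c} \leq T_{i+0,1} = T_{i,1} \quad \text{for all } 0 < i \leq r.
\end{equation}
Strictly, the standing hypothesis $d > \max\{\lambda_1' - \lambda_c', \mu_1' - \mu_c'\} = 0$ fails for $d = 0$. So I interpret $s_{c^r/\emptyset/0}(\bx)$ simply as the generating function of the tableaux defined by this condition, and remark that only the defining inequality is used.

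First, I show every $T \in \CSSYT(c^r/\emptyset/0)$ has constant rows. Weak increase along row $i$ gives $T_{i,1} \leq T_{i,2} \leq \dots \leq T_{i,c}$. Combined with the cylindric inequality $T_{i,c} \leq T_{i,1}$, this forces $T_{i,1} = T_{i,2} = \dots = T_{i,c} =: a_i$.

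Second, strict increase down columns gives $a_1 < a_2 < \dots < a_r$. Conversely, any strictly increasing sequence $a_1 < \dots < a_r$ of positive integers produces a filling with row $i$ constant equal to $a_i$. This filling is semistandard and satisfies the cylindric condition with equality. So $T \mapsto (a_1, \dots, a_r)$ is a bijection between $\CSSYT(c^r/\emptyset/0)$ and $r$-element subsets of $\bbZ_{>0}$.

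Finally, the weight of such a $T$ is $\bx_T = x_{a_1}^c x_{a_2}^c \cdots x_{a_r}^c$. Summing over all $r$-subsets $\{a_1 < \dots < a_r\}$ gives exactly the sum of all distinct monomials with $r$ exponents equal to $c$ and all others zero, which is $m_{c^r}(\bx)$.

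There is no real obstacle in this argument. The only delicate point is the degenerate period $d = 0$ lying outside the range where Theorem~\ref{thm GK det sum} applies. That is why I argue from the definition rather than from the determinantal formula of Corollary~\ref{cor CSSYT det}.
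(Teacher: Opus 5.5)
Your proof is correct: at $d=0$ the cylindric inequality $T_{i,c}\le T_{i,1}$ combined with weak row-increase forces constant rows, column-strictness makes the row values an arbitrary $r$-subset $a_1<\dots<a_r$, and the weights $x_{a_1}^c\cdots x_{a_r}^c$ sum to $m_{c^r}(\bx)$. The paper states this lemma without proof, treating it as immediate from the definition, so your direct verification is the intended argument; you are also right that $d=0$ violates the standing hypothesis $d>\max\{\lambda_1'-\lambda_c',\mu_1'-\mu_c'\}=0$, so the lemma has to be read through the defining inequality, as you do.
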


\subsection{Representation Theory}
In this subsection we recall some useful facts about the representation theory of the general linear and symmetric groups. Results from this subsection will be used in Section~\ref{sec SPD matrices}. Exposition of these definitions and theorems can be found in the book by Fulton and Harris \cite[Lecture 6]{FultonHarris} and in the book by Merris \cite{MerrisMultilin}. For a linear map $A: V \to V$ on a finite-dimensional vector space $V$, write $\Tr(A)$ for the trace of $A$.

\begin{defin}
For a finite-dimensional vector space $V$ and partition $\lambda$, the \emph{Schur functor} $S_\lambda$ is a functor from the category of $GL(V)$-modules to itself. For each $\lambda$, $S_\lambda(V)$ is an irreducible $GL(V)$-module. For a linear map $A: U \to W$ between finite-dimensional vector spaces, $S_\lambda(A)$ is a linear map from $S_\lambda(U)$ to $S_\lambda(W)$.

For a partition $\lambda \vdash n$, write $\Sp_\lambda$ for the \emph{Specht module} associated to $\lambda$. Specht modules are the irreducible $S_n$-modules. Write $\chi^\lambda$ for the character of $\Sp_\lambda$, and write $f^\lambda$ for the dimension of $\Sp_\lambda$. Equivalently, $f^\lambda$ is the number of standard Young tableaux of shape $\lambda$.
\end{defin}

\begin{theorem}[Schur--Weyl Duality]
Let $V$ be a finite-dimensional vector space. The symmetric group $S_n$ acts on $V^{\otimes n}$ by $\sigma \cdot \bv_1 \otimes \bv_2 \otimes \dots \otimes \bv_n = \bv_{\sigma^{-1}(1)} \otimes \bv_{\sigma^{-1}(2)} \otimes \dots \otimes \bv_{\sigma^{-1}(n)}$, and this action commutes with the natural action of $GL(V)$. As a $GL(V) \times S_n$-module, 
\begin{equation}
V^{\otimes n} \cong \bigoplus_{\lambda \vdash n} S_\lambda(V) \otimes \Sp_\lambda.
\end{equation}
\end{theorem}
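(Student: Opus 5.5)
The statement just given is classical Schur--Weyl duality. I would prove it with the double centralizer theorem, in the form presented in \cite[Lecture 6]{FultonHarris}.

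First, since $\mathbb{C}S_n$ is semisimple, I decompose $V^{\otimes n}$ into $S_n$-isotypic components:
\begin{equation}
V^{\otimes n} \cong \bigoplus_{\lambda \vdash n} \operatorname{Hom}_{S_n}(\Sp_\lambda, V^{\otimes n}) \otimes \Sp_\lambda .
\end{equation}
Here the commutant $B := \operatorname{End}_{S_n}(V^{\otimes n})$ acts on the multiplicity spaces $M_\lambda := \operatorname{Hom}_{S_n}(\Sp_\lambda, V^{\otimes n})$. Let $A := \mathrm{im}(\mathbb{C}S_n)$, which is semisimple. The double centralizer theorem for the semisimple algebra $A$ acting on the finite-dimensional space $V^{\otimes n}$ then says that each nonzero $M_\lambda$ is an irreducible $B$-module, and that distinct $\lambda$ give non-isomorphic modules. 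I will identify $M_\lambda$ with $S_\lambda(V) = c_\lambda V^{\otimes n}$, where $c_\lambda$ is the Young symmetrizer. This identification uses $M_\lambda \cong \operatorname{Hom}_{S_n}(\mathbb{C}S_n c_\lambda, V^{\otimes n}) \cong c_\lambda V^{\otimes n}$. As usual, $S_\lambda(V)=0$ when $\ell(\lambda) > \dim V$, and those summands are simply absent.

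Second, it remains to pass from $B$ to $GL(V)$. The diagonal action $g \mapsto g^{\otimes n}$ clearly commutes with the $S_n$ action, so it lands in $B$. Moreover $B = \operatorname{End}(V)^{\otimes n} \cap \operatorname{End}(V^{\otimes n})^{S_n}$, which is exactly the space of symmetric tensors $\operatorname{Sym}^n(\operatorname{End} V)$. I then need to show that the elements $A^{\otimes n}$, for $A \in \operatorname{End}(V)$, span $\operatorname{Sym}^n(\operatorname{End} V)$. This follows from the polarization identity: the symmetrization of $A_1 \otimes \cdots \otimes A_n$ is a linear combination of the tensors $(\sum_{i\in S} A_i)^{\otimes n}$ over subsets $S \subseteq [n]$.

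Finally, $GL(V)$ is Zariski dense in $\operatorname{End}(V)$, and $A \mapsto A^{\otimes n}$ is polynomial. So the span of $\{g^{\otimes n} : g \in GL(V)\}$ equals the span of $\{A^{\otimes n}\}$, which is $B$. Consequently $GL(V)$-submodules and $B$-submodules of $V^{\otimes n}$ coincide. Each $S_\lambda(V)$ is therefore an irreducible $GL(V)$-module, these modules are pairwise non-isomorphic, and the displayed decomposition is a $GL(V)\times S_n$-isomorphism.

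I expect the main obstacle to be the spanning step, that is, the polarization argument together with the density of $GL(V)$. The rest is formal semisimple algebra once the double centralizer theorem is granted.
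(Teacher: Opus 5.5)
Your argument is correct. It is the standard double-centralizer proof from \cite[Lecture 6]{FultonHarris}: isotypic decomposition, identification of the multiplicity spaces $M_\lambda$ with $c_\lambda V^{\otimes n}$, and the polarization-plus-density step showing that the $g^{\otimes n}$ span $\operatorname{End}_{S_n}(V^{\otimes n})$. The paper gives no proof of its own and simply cites this source, so your approach matches it.

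One small point: you do not need the spanning step for the displayed $GL(V)\times S_n$-isomorphism. That isomorphism already follows once you know $g^{\otimes n}$ lies in the commutant and $M_\lambda\cong c_\lambda V^{\otimes n}$ as modules over it. The spanning step is needed only for the irreducibility and pairwise non-isomorphism of the $S_\lambda(V)$.
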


\begin{theorem}
Let $A: V \to V$ be a linear map. Then 
\begin{equation}
\Tr(S_\lambda(A)) = s_\lambda[A],
\end{equation}
where $s_\lambda[A]$ is the Schur function $s_\lambda$ evaluated at the eigenvalues of $A$.
\end{theorem}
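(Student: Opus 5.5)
The plan is to compute the trace on the Schur--Weyl decomposition of $V^{\otimes n}$ using the character of $S_n$, and then invert with the Frobenius character formula. Let $\lambda \vdash n$, $\dim V = N$, and $A \in \mathrm{End}(V)$. Extend the Schur functor to arbitrary linear maps in the usual way: $A^{\otimes n}$ commutes with the $S_n$-action, so under the isomorphism
\[
V^{\otimes n} \cong \bigoplus_{\mu \vdash n} S_\mu(V) \otimes \Sp_\mu
\]
it acts as $\bigoplus_\mu S_\mu(A) \otimes \mathrm{id}_{\Sp_\mu}$. This is essentially the definition of $S_\mu(A)$ in the Fulton--Harris framework via the Young symmetrizer image $c_\lambda V^{\otimes n}$. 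Working with arbitrary $A$ directly avoids any density argument over diagonalizable matrices.

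\textbf{Step 1: isolate $\Tr S_\lambda(A)$ with the isotypic projector.} Let
\[
e_\lambda = \frac{f^\lambda}{n!}\sum_{\sigma \in S_n} \chi^\lambda(\sigma)\,\sigma,
\]
the central idempotent of $\mathbb{C}S_n$ projecting onto the $\lambda$-isotypic component $S_\lambda(V)\otimes \Sp_\lambda$. Since $e_\lambda$ commutes with $A^{\otimes n}$,
\[
\Tr\bigl(A^{\otimes n} e_\lambda\bigr) = f^\lambda \, \Tr S_\lambda(A).
\]
Expanding $e_\lambda$ then gives
\[
\Tr S_\lambda(A) = \frac{1}{n!}\sum_{\sigma} \chi^\lambda(\sigma)\,\Tr\bigl(A^{\otimes n}\sigma\bigr).
\]

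\textbf{Step 2: compute $\Tr(A^{\otimes n}\sigma)$.} Work in the basis $e_{i_1}\otimes\cdots\otimes e_{i_n}$. A direct index computation shows that each cycle $(c_1\, c_2 \cdots c_k)$ of $\sigma$ contributes a factor $\sum A_{i_{c_1} i_{c_2}} A_{i_{c_2} i_{c_3}}\cdots = \Tr(A^k)$. Hence
\[
\Tr(A^{\otimes n}\sigma) = \prod_{\text{cycles}} \Tr(A^{k}) = p_{\rho(\sigma)}[A],
\]
where $\rho(\sigma)$ is the cycle type and $\Tr(A^k) = p_k$ evaluated at the eigenvalues of $A$, for instance by triangularizing $A$.

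\textbf{Step 3: apply the Frobenius formula.} Grouping $\sigma$ by cycle type, with $n!/z_\rho$ elements of type $\rho$, gives
\[
\Tr S_\lambda(A) = \sum_{\rho \vdash n} z_\rho^{-1}\chi^\lambda(\rho)\,p_\rho[A].
\]
By the Frobenius character formula this equals $s_\lambda[A]$.

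The only delicate point is Step 2: checking that the $S_n$-action convention $\sigma\cdot(\bv_1\otimes\cdots)=\bv_{\sigma^{-1}(1)}\otimes\cdots$ produces exactly the cycle-product of traces. Any convention mismatch merely replaces $\sigma$ by $\sigma^{-1}$, which has the same cycle type, so it is harmless. One should also confirm that the terms with $\ell(\lambda) > N$ vanish on both sides, which they do automatically: $S_\lambda(V)=0$ and $s_\lambda$ of $N$ variables is $0$.
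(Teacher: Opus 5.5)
Your proof is correct. The paper does not prove this statement; it quotes it as background from Fulton--Harris (Lecture~6), and your argument is the standard proof given there: compute $\Tr(A^{\otimes n}\sigma)$ as a product of power sums, decompose $V^{\otimes n}$ by Schur--Weyl duality, and invert with Frobenius' formula $s_\lambda=\sum_\rho z_\rho^{-1}\chi^\lambda(\rho)p_\rho$. As I recall, the reference states the result for $g\in GL(V)$, while you handle arbitrary, possibly singular, $A$ directly; this matters because the paper applies it to singular matrices such as graph Laplacians. Your idempotent $e_\lambda$ is exactly the paper's $P_\lambda$ from Definition~\ref{def schur proj}, and your Step~1 identity $\Tr(P_\lambda A^{\otimes n})=f^\lambda\,\Tr S_\lambda(A)$ is the form in which the paper uses this fact in the proof of Theorem~\ref{thm herm s pos proof}.
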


\begin{defin}
For an $n \times n$ matrix $A$ and an $S_n$-class function $\psi$, the \emph{$\psi$-immanant} $\imm_\psi(A)$ of $A$ is given by
\begin{equation}
\imm_\psi(A) = \sum_{\pi \in S_n} \psi(\pi) \prod_{i = 1}^n A_{i, \pi(i)}.
\end{equation}
Write $\imm_\lambda(A)$ for the immanant $\imm_{\chi^\lambda}(A)$ corresponding to the irreducible character $\chi^\lambda$.
\end{defin}

\begin{theorem}\cite[Schur's Inequality 7.6]{MerrisMultilin}
Let $\lambda \vdash n$ and let $A$ be an $n \times n$ Hermitian positive semidefinite matrix. Then
\begin{equation}
\imm_\lambda(A) \geq f^\lambda \det(A) \geq 0.
\end{equation} 
\end{theorem}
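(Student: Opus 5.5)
The plan is to realize the immanant as the squared norm of a projected vector in $V^{\otimes n}$, with $V=\mathbb{C}^n$, and then bound that norm below by projecting further onto a copy of the regular representation of $S_n$, where a triangular factorization isolates $\det(A)$. Both inequalities then follow at once.

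\emph{Step 1: write the immanant as a norm.} Since $A$ is Hermitian positive semidefinite, it has a Cholesky factorization $A=T^*T$ with $T$ upper triangular, possibly with zero diagonal entries when $A$ is singular. Let $\bt_1,\dots,\bt_n$ be the columns of $T$, so that $A_{ij}=\langle \bt_j,\bt_i\rangle$ and $\bt_i\in\mathrm{span}(\mathbf{e}_1,\dots,\mathbf{e}_i)$. Put $x=\bt_1\otimes\cdots\otimes \bt_n$ and let $S_n$ act as in Schur--Weyl duality. Then $\langle \sigma x,x\rangle=\prod_i\langle \bt_{\sigma^{-1}(i)},\bt_i\rangle$ is a product of entries of $A$ indexed by the pairs $\{i,\sigma^{-1}(i)\}$. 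Since $\chi^\lambda$ is real and $\chi^\lambda(\sigma)=\chi^\lambda(\sigma^{-1})$, summing gives
\begin{equation*}
\imm_\lambda(A)=\sum_{\sigma\in S_n}\chi^\lambda(\sigma)\langle\sigma x,x\rangle=\frac{n!}{f^\lambda}\langle e_\lambda x,x\rangle,\qquad e_\lambda=\frac{f^\lambda}{n!}\sum_{\sigma}\chi^\lambda(\sigma)\,\sigma .
\end{equation*}
The element $e_\lambda$ is the central idempotent of the group algebra. It is self-adjoint because the action is unitary and the character is real, so it acts as an orthogonal projection. Hence $\imm_\lambda(A)=\frac{n!}{f^\lambda}\|e_\lambda x\|^2\ge 0$.

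\emph{Step 2: project onto a regular representation.} Let $W\subset V^{\otimes n}$ be the span of the vectors $\mathbf{e}_{\pi(1)}\otimes\cdots\otimes \mathbf{e}_{\pi(n)}$ for $\pi\in S_n$. This subspace is $S_n$-stable and is isomorphic to the regular representation, and its orthogonal complement is also $S_n$-stable. So the orthogonal projection $Q$ onto $W$ commutes with $e_\lambda$, which gives $\|e_\lambda x\|^2\ge\|Qe_\lambda x\|^2=\|e_\lambda Qx\|^2$.

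Now expand $x$ in the standard tensor basis. The coordinate of $x$ at $\mathbf{e}_{j_1}\otimes\cdots\otimes \mathbf{e}_{j_n}$ vanishes unless $j_i\le i$ for every $i$. The only permutation $(j_1,\dots,j_n)$ with $j_i\le i$ for all $i$ is the identity. Therefore $Qx=c\,\mathbf{e}_1\otimes\cdots\otimes \mathbf{e}_n$ with $c=\prod_i T_{ii}$, and so $|c|^2=\det(T^*T)=\det(A)$.

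\emph{Step 3: compute in the regular representation.} Let $u=\mathbf{e}_1\otimes\cdots\otimes \mathbf{e}_n$. Its translates under $S_n$ are orthonormal, so
\begin{equation*}
\|e_\lambda u\|^2=\langle e_\lambda u,u\rangle=\frac{f^\lambda}{n!}\chi^\lambda(\mathrm{id})=\frac{(f^\lambda)^2}{n!}.
\end{equation*}
Combining the three steps gives
\begin{equation*}
\imm_\lambda(A)\ge \frac{n!}{f^\lambda}\cdot\frac{(f^\lambda)^2}{n!}\det(A)=f^\lambda\det(A),
\end{equation*}
and $\det(A)\ge0$ because $A$ is positive semidefinite.

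The main point needing care is Step 2: one must check that $Q$ genuinely commutes with the $S_n$-action, and that the triangularity of $T$ kills every permuted basis coordinate except the identity one. The adjoint and sign conventions in Step 1 (using $\sigma^{-1}$ versus $\sigma$) are harmless, since the character is constant on inverse pairs.
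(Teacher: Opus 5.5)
The paper does not prove this statement. It is quoted from Merris as background, so there is no in-paper argument to compare against.

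Your proof is correct. It is essentially the classical proof of Schur's inequality: write $A=T^*T$ with $T$ upper triangular, symmetrize $\bt_1\otimes\cdots\otimes\bt_n$, and use that symmetrized basis tensors from different $S_n$-orbits are orthogonal. Then only the orbit of the identity word survives and carries $\prod_i T_{ii}$. Your projection $Q$ onto the regular-representation subspace $W$ is a clean way to package that orbit orthogonality. It also fits well with the projection $P_\lambda$ of Definition~\ref{def schur proj} that the paper uses in Section~\ref{sec SPD matrices}. The details check out:
\begin{itemize}
\item $W$ and $W^\perp$ are both spanned by standard basis tensors that $S_n$ permutes, so $Q$ commutes with $e_\lambda$.
\item Triangularity forces $Qx=(\prod_i T_{ii})\,\mathbf{e}_1\otimes\cdots\otimes\mathbf{e}_n$.
\item The constants combine to exactly $f^\lambda\det(A)$, with equality at $A=I$ as expected.
\item An upper-triangular $T$ with $A=T^*T$ exists in the singular case too, for instance from a QR factorization of $A^{1/2}$.
\end{itemize}

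One point is worth flagging. Your Step~1 identity $\imm_\lambda(A)=\frac{n!}{f^\lambda}\|e_\lambda x\|^2$ has the correct normalization. Theorem~\ref{thm norm imm} as stated in the paper, $\imm_\lambda(K)=\|P_\lambda\bv\|^2$, drops the factor $n!/f^\lambda$. For example, with $\lambda=(n)$ and $K=I$ one has $\mathrm{per}(I)=1$ but $\|P_{(n)}(\mathbf{e}_1\otimes\cdots\otimes\mathbf{e}_n)\|^2=1/n!$. Had you imported that statement instead of deriving Step~1 yourself, Step~3 would only give the weaker bound $\imm_\lambda(A)\ge\frac{(f^\lambda)^2}{n!}\det(A)$. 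Deriving the constant directly, as you did, is the right call.
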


\begin{defin}
\label{def schur proj}
For a partition $\lambda \vdash n$, define the \emph{symmetrizer} $P_\lambda: V^{\otimes n} \to V^{\otimes n}$ by
\begin{equation}
P_\lambda = \frac{f^\lambda}{n!} \sum_{\pi \in S_n} \chi^\lambda(\pi) \pi.
\end{equation}
\end{defin}
\begin{theorem}
For each $\lambda \vdash n$, the image of $P_\lambda$ is a the orthogonal projection from $V^{\otimes n}$ onto the isotypic component $S_\lambda(V) \otimes \Sp_\lambda$. Hence, $P_\lambda^* = P_\lambda$ and $P_\lambda^2 = P_\lambda$.
\end{theorem}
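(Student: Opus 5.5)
The plan is to recognize $P_\lambda$ as the image, under the representation $S_n \to GL(V^{\otimes n})$ from Schur--Weyl Duality, of the central primitive idempotent of $\mathbb{C}S_n$ attached to $\chi^\lambda$. The proof then has three parts: compute how this idempotent acts on each irreducible $S_n$-module, transport that to $V^{\otimes n}$ via Schur--Weyl Duality, and check self-adjointness with respect to the standard Hermitian inner product on $V^{\otimes n}$.

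\emph{Step 1: action on irreducibles.} Let $e_\lambda = \frac{f^\lambda}{n!}\sum_{\pi\in S_n}\chi^\lambda(\pi)\pi \in \mathbb{C}S_n$. Since $\chi^\lambda$ is a class function, $e_\lambda$ is central. By Schur's lemma it therefore acts on each Specht module $\Sp_\mu$ by a scalar $c_\mu$. To find $c_\mu$, take the trace of this action on $\Sp_\mu$:
\begin{equation}
c_\mu f^\mu = \frac{f^\lambda}{n!}\sum_{\pi\in S_n}\chi^\lambda(\pi)\chi^\mu(\pi).
\end{equation}
The characters of $S_n$ are real-valued and $\chi(\pi^{-1})=\chi(\pi)$. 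So the right-hand side is $f^\lambda\langle \chi^\lambda,\chi^\mu\rangle = f^\lambda\delta_{\lambda\mu}$ by the orthogonality relations. Hence $c_\mu=\delta_{\lambda\mu}$, so $e_\lambda$ acts as the identity on $\Sp_\lambda$ and as zero on every $\Sp_\mu$ with $\mu\neq\lambda$.

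\emph{Step 2: action on $V^{\otimes n}$.} The operator $P_\lambda$ is exactly the action of $e_\lambda$ on $V^{\otimes n}$. By Schur--Weyl Duality, $V^{\otimes n}\cong\bigoplus_\mu S_\mu(V)\otimes\Sp_\mu$, where $S_n$ acts only on the second tensor factor. By Step 1, $P_\lambda$ is the identity on the summand $S_\lambda(V)\otimes\Sp_\lambda$ and zero on the others. It follows that $P_\lambda^2=P_\lambda$ and that the image of $P_\lambda$ is precisely the isotypic component $S_\lambda(V)\otimes\Sp_\lambda$. One could also check $e_\lambda^2=e_\lambda$ directly in $\mathbb{C}S_n$ by a convolution identity for characters, but the module-theoretic argument above avoids that computation.

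\emph{Step 3: self-adjointness.} Give $V^{\otimes n}$ the inner product induced from a Hermitian inner product on $V$. On pure tensors, $\langle \bv_1\otimes\cdots\otimes\bv_n, \bw_1\otimes\cdots\otimes\bw_n\rangle = \prod_i\langle \bv_i,\bw_i\rangle$. Permuting the tensor factors preserves this product, so each $\pi$ acts unitarily, and $\pi^*=\pi^{-1}$. Then
\begin{equation}
P_\lambda^*=\frac{f^\lambda}{n!}\sum_{\pi}\overline{\chi^\lambda(\pi)}\,\pi^{-1}=\frac{f^\lambda}{n!}\sum_{\pi}\chi^\lambda(\pi^{-1})\,\pi^{-1}=P_\lambda,
\end{equation}
where the middle equality uses that $\chi^\lambda$ is real-valued with $\chi^\lambda(\pi^{-1})=\chi^\lambda(\pi)$. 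An idempotent self-adjoint operator is an orthogonal projection, so $P_\lambda$ is the orthogonal projection onto $S_\lambda(V)\otimes\Sp_\lambda$.

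The only step needing real care is the scalar computation in Step 1. It depends on the normalization $f^\lambda/n!$ and on the reality of the characters of $S_n$. Everything else follows formally from Schur--Weyl Duality and the unitarity of the permutation action.
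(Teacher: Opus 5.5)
Your argument is correct and complete. The paper gives no proof of this statement; it is quoted as standard background from Fulton--Harris and Merris. What you wrote is the standard argument behind that citation: $P_\lambda$ is the image of the central idempotent of $\mathbb{C}S_n$ attached to $\chi^\lambda$, character orthogonality gives the scalar $\delta_{\lambda\mu}$ on each $\Sp_\mu$, and unitarity of the permutation action gives $P_\lambda^*=P_\lambda$.

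Two small remarks. First, the reality fact you flag as the delicate point follows in one line: $\pi$ and $\pi^{-1}$ have the same cycle type, so $\overline{\chi^\lambda(\pi)}=\chi^\lambda(\pi^{-1})=\chi^\lambda(\pi)$. Second, Schur--Weyl duality is used only to identify the $\lambda$-isotypic component of the $S_n$-module $V^{\otimes n}$ with $S_\lambda(V)\otimes\Sp_\lambda$. Steps 1 and 3 hold verbatim for any finite-dimensional unitary representation of $S_n$.
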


\begin{theorem}
\label{thm norm imm}
\cite[Corollary 7.27]{MerrisMultilin}
For $\bv = \bv_1 \otimes \bv_2 \otimes \dots \otimes \bv_n \in V^{\otimes n},$ let $K = \left[\bv_i^* \bv_j\right]_{i,j = 1}^n$ be the \emph{Gram matrix} corresponding to $\bv_1,\bv_2,\dots, \bv_n$. Then
\begin{equation}
\imm_\lambda(K) = \Tr(P_\lambda \bv \bv^*) = \|P_\lambda \bv\|^2 \geq 0.
\end{equation}
\end{theorem}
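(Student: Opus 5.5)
The plan is to compute $\bv^* P_\lambda \bv$ directly from Definition~\ref{def schur proj} and compare it term by term with the immanant. Then I will use the fact, stated just above, that $P_\lambda$ is an orthogonal projection ($P_\lambda^* = P_\lambda$ and $P_\lambda^2 = P_\lambda$) to identify this quantity with a squared norm.

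First, the trace identity. For any linear map $B$ on $V^{\otimes n}$ we have $\Tr(B\bv\bv^*) = \bv^* B \bv$, so $\Tr(P_\lambda \bv\bv^*) = \bv^* P_\lambda \bv$. Expanding the symmetrizer gives
\begin{equation*}
\bv^* P_\lambda \bv = \frac{f^\lambda}{n!}\sum_{\pi \in S_n} \chi^\lambda(\pi)\, \bv^*(\pi\cdot \bv).
\end{equation*}
Since $\pi \cdot \bv = \bv_{\pi^{-1}(1)}\otimes \dots \otimes \bv_{\pi^{-1}(n)}$ and the inner product on $V^{\otimes n}$ is multiplicative on pure tensors, we get
\begin{equation*}
\bv^*(\pi\cdot\bv) = \prod_{i=1}^n \bv_i^*\bv_{\pi^{-1}(i)} = \prod_{i=1}^n K_{i,\pi^{-1}(i)}.
\end{equation*}
Reindex the sum by $\pi \mapsto \pi^{-1}$, using that characters of $S_n$ are real and satisfy $\chi^\lambda(\pi^{-1}) = \chi^\lambda(\pi)$ (since $\pi$ and $\pi^{-1}$ are conjugate). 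This turns the sum into $\sum_\pi \chi^\lambda(\pi)\prod_i K_{i,\pi(i)} = \imm_\lambda(K)$.

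Second, the norm identity. Using self-adjointness and idempotence,
\begin{equation*}
\|P_\lambda\bv\|^2 = \bv^* P_\lambda^* P_\lambda \bv = \bv^* P_\lambda^2 \bv = \bv^* P_\lambda\bv,
\end{equation*}
which is manifestly $\geq 0$. This also shows the trace is real.

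The only delicate point, and the main obstacle, is the normalization constant. With $P_\lambda$ normalized as in Definition~\ref{def schur proj}, the computation above literally gives
\begin{equation*}
\bv^*P_\lambda\bv = \frac{f^\lambda}{n!}\,\imm_\lambda(K).
\end{equation*}
So the first equality in the statement holds once this positive factor is absorbed, either into the convention for $\imm_\lambda$ or into the unnormalized symmetrizer $\sum_\pi \chi^\lambda(\pi)\pi$, as in Merris. I would state the identity with the constant $f^\lambda/n!$ made explicit. Since this constant is positive, the conclusion $\imm_\lambda(K) \geq 0$ is unaffected, and that nonnegativity is what is used later.
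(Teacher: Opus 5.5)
Your computation is correct and is the standard argument behind the cited result. The paper gives no proof of its own here, only the reference to Merris, so there is no competing route to compare.

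Your normalization remark is the substantive point, and it is right. With $P_\lambda$ normalized as in Definition~\ref{def schur proj}, the first equality as printed is false. The correct identity is $\frac{f^\lambda}{n!}\imm_\lambda(K) = \Tr(P_\lambda \bv\bv^*) = \|P_\lambda\bv\|^2 \geq 0$, which is exactly what you derived. A one-line check: take $\lambda = (n)$ and every $\bv_i$ equal to a fixed unit vector $e$. Then $K$ is the all-ones matrix and $\imm_{(n)}(K) = n!$, whereas $P_{(n)}\bv = \bv$ gives $\|P_{(n)}\bv\|^2 = 1$.

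Two refinements of your last paragraph. First, of the two ways you suggest to absorb the constant, only rescaling $\imm_\lambda$ (or writing the constant explicitly, as you finally propose) is consistent. The unnormalized $Q = \sum_{\pi}\chi^\lambda(\pi)\pi$ is self-adjoint but satisfies $Q^2 = \frac{n!}{f^\lambda}Q$. So $\Tr(Q\bv\bv^*) = \imm_\lambda(K)$ while $\|Q\bv\|^2 = \frac{n!}{f^\lambda}\imm_\lambda(K)$, and the two equalities cannot both hold.

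Second, the constant propagates. In the corollary following Theorem~\ref{thm herm s pos proof}, the expression $\frac{1}{f^\lambda}\sum_{\ba}\bz_\ba\imm_\lambda(K_\ba)$ should be $\frac{1}{k!}\sum_{\ba}\bz_\ba\imm_\lambda(K_\ba)$. For $\lambda = (1,1)$ the printed version gives $2\sum_{a<b} z_a z_b \det K_{(a,b)}$, which is twice the value $e_2[A(\bz)]$ obtained from Cauchy--Binet. As you note, only nonnegativity is used later, so none of the paper's positivity conclusions are affected.
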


\section{Symmetric Function Evaluation Inequalities}
\label{sec sym evals}
In this section, we prove Theorem~\ref{thm sym eval ineq}. In particular, for $\bu,\bv \in \mathbb{R}_{\geq 0}^n$, $s_\lambda(\bu) \geq s_{\lambda}(\bv)$ for all partitions $\lambda$ if and only if $\bu$ log-majorizes $\bv$. 
For a polynomial $F(t) = \prod_{i=1}^n (u_i t + 1)$, we call $\bu = (u_1,u_2,\dots, u_n)$ the \emph{inverted roots} of $F(t)$.
As a consequence of Theorem~\ref{thm sym eval ineq} and the Schur-positivity of the chromatic symmetric functions of incomparability graphs of \threeone -free posets, we have that if $H$ is an induced subgraph of a claw-free incomparability graph $G$, then the inverted roots of the independence polynomial $I_G(t)$ log-majorize the inverted roots of $I_H(t)$.
Additionally, we provide necessary conditions for Schur and $e$-positivity of $\Omega_F(\bz, \bx)$ for $F(\bz, t) \in \mathbb{R}[\bz, t]$ in terms of symmetric function evaluations of inverted roots of $F(\br, t)$ for $\br \in \mathbb{R}_{\geq 0}^n$. 
For a partition $\lambda$ and vector $\bu \in \mathbb{R}^n$, let $\bu^\lambda = (\bu^\alpha \mid \alpha \in \mathbb{Z}_{\geq 0}^n, \sort(\alpha) = \lambda)$.
Recall the statement of Theorem~\ref{thm sym eval ineq}.
\begin{theorem}[Theorem~\ref{thm sym eval ineq}]
\label{thm sym eval ineq proof}
Let $\bu, \bv \in \mathbb{R}^n_{\geq 0}$.
The following are equivalent.
\begin{enumerate}
\item $\bu \succeq_{\log} \bv$,
\item $m_\lambda(\bu) \geq m_\lambda(\bv)$ for all partitions $\lambda$,
\item $s_\lambda(\bu) \geq s_\lambda(\bv)$ for all partitions $\lambda$,
\item $\Omega(\bu, \bx) - \Omega(\bv, \bx)$ is Schur-positive,
\item $\Omega(\bu, \bx) - \Omega(\bv, \bx)$ is $e$-positive.
\end{enumerate}
\end{theorem}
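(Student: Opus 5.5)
The plan is to prove the cycle $(1)\Rightarrow(2)\Rightarrow(3)\Rightarrow(1)$ and then attach (4) and (5) through the dual Cauchy identity. Since $\Omega(\bu,\bx)=\sum_\lambda s_\lambda(\bu)s_{\lambda'}(\bx)=\sum_\lambda m_\lambda(\bu)e_\lambda(\bx)$, we have
\begin{equation*}
\Omega(\bu,\bx)-\Omega(\bv,\bx)=\sum_\lambda \bigl(s_\lambda(\bu)-s_\lambda(\bv)\bigr)s_{\lambda'}(\bx)=\sum_\lambda \bigl(m_\lambda(\bu)-m_\lambda(\bv)\bigr)e_\lambda(\bx).
\end{equation*}
Hence $(3)\Leftrightarrow(4)$ and $(2)\Leftrightarrow(5)$ are immediate. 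Also $(2)\Rightarrow(3)$ holds because $s_\lambda=\sum_\mu K_{\lambda\mu}m_\mu$ with nonnegative Kostka numbers.

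For $(3)\Rightarrow(1)$, fix $k\le n$ and use the rectangle $\lambda=(m^k)$ with $m\to\infty$. Every monomial of $s_{(m^k)}(\bu)$ is a product of at most $k$ variables with multiplicities, and each variable appears at most $km$ times. The total degree is $km$, and each column has strictly increasing entries. Hence every monomial is at most $\bigl(u_{[1]}u_{[2]}\cdots u_{[k]}\bigr)^m$. The monomial $(u_{[1]}\cdots u_{[k]})^m$ itself occurs, with coefficient $1$.

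This gives
\begin{equation*}
\bigl(u_{[1]}\cdots u_{[k]}\bigr)^m\le s_{(m^k)}(\bu)\le |\SSYT(m^k,n)|\,\bigl(u_{[1]}\cdots u_{[k]}\bigr)^m .
\end{equation*}
By Corollary~\ref{cor ssyt pol}, $|\SSYT(m^k,n)|$ is a polynomial in $m$. Taking $m$-th roots and letting $m\to\infty$ gives $s_{(m^k)}(\bu)^{1/m}\to\prod_{i\le k}u_{[i]}$, and likewise for $\bv$. So (3) yields $\prod_{i\le k}u_{[i]}\ge\prod_{i\le k}v_{[i]}$ for every $k$, which is (1). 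The same argument, with $m_{(m^k)}(\bu)=e_k(u_1^m,\dots,u_n^m)$, would also give $(2)\Rightarrow(1)$ directly, but that implication is not needed.

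The main step is $(1)\Rightarrow(2)$. First assume all entries of $\bu,\bv$ are positive. Write $\log\bu$ for the entrywise logarithm. Then $m_\lambda(\bu)$ is the sum of the entries of $\exp(T\log\bu)$, where $T:\mathbb{R}^n\to\mathbb{R}^N$ is the linear map $\by\mapsto(\alpha\cdot\by)_{\sort(\alpha)=\lambda}$; that is, $\bu^\lambda=\exp(T\log\bu)$.

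Permuting coordinates of $\by$ permutes the set $\{\alpha:\sort(\alpha)=\lambda\}$. So for every permutation matrix $P$ there is a permutation matrix $Q$ with $TP=QT$, and Lemma~\ref{lem DV linear pres} shows that $T$ preserves majorization. Hypothesis (1) says $\log\bv\preceq_w\log\bu$. By Lemma~\ref{lem Ando weak maj} there is $\bw$ with $\log\bv\le_{\mathrm{entry}}\bw$ and $\bw\preceq\log\bu$. Then $T\bw\preceq T\log\bu$. Since $T$ has nonnegative entries, $T\log\bv\le_{\mathrm{entry}}T\bw$, and therefore $T\log\bv\preceq_w T\log\bu$.

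Applying the nondecreasing convex function $\exp$ entrywise preserves weak majorization. Taking $k=N$ in the definition of weak majorization then gives $\sum\bv^\lambda\le\sum\bu^\lambda$, that is, $m_\lambda(\bv)\le m_\lambda(\bu)$.

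I expect the main care to be needed in the boundary case where some entries are zero, where $\log$ is undefined. I would handle it by perturbation. If $\bu\succeq_{\log}\bv$, replace $\bv$ by $\bv_\epsilon=(1+\epsilon)^{-1}\bigl(\bv+\epsilon\mathbf{1}_n\bigr)\cdot c_\epsilon$ and $\bu$ by $\bu+\delta\mathbf{1}_n$, with $c_\epsilon\to1$ and $\delta\to0$ chosen so that the partial products of $\bu+\delta\mathbf{1}_n$ still dominate those of $\bv_\epsilon$. Alternatively, and more simply, reduce to the positive case by noting that zero coordinates of $\bv$ only delete monomials. I would then apply the positive-case inequality and pass to the limit using continuity of $m_\lambda$.
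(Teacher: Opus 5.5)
Your proposal is correct and follows the paper's proof essentially step for step: the linear map $T$ with the Dean--Verducci and Ando lemmas for (1)$\Rightarrow$(2), Kostka positivity for (2)$\Rightarrow$(3), rectangular shapes with polynomial growth of $|\SSYT(m^k,n)|$ for (3)$\Rightarrow$(1), and the dual Cauchy expansion for (4) and (5). Your explicit two-sided bound on $s_{(m^k)}(\bu)$ via strictly increasing columns and your handling of zero entries tighten points the paper only sketches; for the zeros, the truncation route works (drop the zero entries of $\bv$ together with the same number of smallest entries of $\bu$). You should, however, delete the inaccurate remark that each monomial involves at most $k$ variables, since the column bound is what actually does the work.
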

\begin{lemma}
For a partition $\lambda$ and vectors $\bu, \bv \in \mathbb{R}^n_{\geq 0}$, if $\bu \preceq_{\log} \bv$, then $\bu^\lambda \preceq_{\log} \bv^\lambda$. 
\end{lemma}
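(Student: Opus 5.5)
The plan is to pass to logarithms, where log-majorization becomes weak majorization and the map $\bu \mapsto \bu^\lambda$ becomes a linear map. Then I apply Lemma~\ref{lem Ando weak maj} and Lemma~\ref{lem DV linear pres}.

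\emph{Reduction to positive vectors.} If some entries of $\bu$ or $\bv$ are zero, fix $\delta>0$ and set $\bu^{(\delta)}_i = \max(u_i,\delta)$, $\bv^{(\delta)}_i=\max(v_i,\delta)$. On logarithms this applies $f(t)=\max(t,\log\delta)$ entrywise, and $f$ is nondecreasing and convex. Such maps preserve weak majorization, so $\bu^{(\delta)} \preceq_{\log} \bv^{(\delta)}$ still holds. The partial products $\prod_{i\le k}(\bw^\lambda)_{[i]}$ are continuous in $\bw \in \mathbb{R}_{\geq 0}^n$. Hence it suffices to prove the lemma for strictly positive vectors and then let $\delta \to 0$.

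\emph{Linear model.} For positive $\bu,\bv$, write $\bx = (\log u_i)$ and $\by = (\log v_i)$, so the hypothesis is $\bx \preceq_w \by$. Let $A_\lambda = \{\alpha\in\mathbb{Z}_{\geq 0}^n : \sort(\alpha)=\lambda\}$, with $N = |A_\lambda|$. Define the linear map $T:\mathbb{R}^n\to\mathbb{R}^{N}$ by $T\bx = (\langle \alpha,\bx\rangle)_{\alpha\in A_\lambda}$. Then $\log(\bu^\lambda) = T\bx$ entrywise, so the goal is $T\bx \preceq_w T\by$.

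Take an $n\times n$ permutation matrix $P$, corresponding to a permutation $\sigma$ of the coordinates. We have $\langle \alpha, P\bx\rangle = \langle P^{T}\alpha, \bx\rangle$, and $A_\lambda$ is stable under permuting coordinates. So $TP = QT$, where $Q$ is the permutation matrix of the induced permutation $\alpha \mapsto P^{T}\alpha$ of $A_\lambda$. By Lemma~\ref{lem DV linear pres}, $T$ therefore maps majorization to majorization.

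\emph{Handling weak majorization.} By Lemma~\ref{lem Ando weak maj}, there is $\bw$ with $\bx \le_{\mathrm{entry}} \bw$ and $\bw \preceq \by$. The previous step gives $T\bw \preceq T\by$. Since every $\alpha \in A_\lambda$ has nonnegative entries, $T\bx \le_{\mathrm{entry}} T\bw$. Entrywise domination implies weak majorization, because sorted partial sums are monotone in each entry. Weak majorization is transitive, so $T\bx \preceq_w T\bw \preceq_w T\by$. Exponentiating gives $\bu^\lambda \preceq_{\log}\bv^\lambda$.

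\emph{Main obstacle.} The only delicate points are the boundary case with zero entries, which the $\max(\cdot,\delta)$ perturbation handles, and checking the intertwining hypothesis $TP=QT$ of Lemma~\ref{lem DV linear pres}. The latter is immediate because $A_\lambda$ is permutation-invariant.
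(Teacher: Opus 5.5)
Your proposal is correct and matches the paper's proof essentially step for step. It uses the same linear map $T_\lambda:\bx\mapsto(\langle\alpha,\bx\rangle)_\alpha$, the same intertwining $TP=QT$ to invoke Dean--Verducci, and Ando's lemma to write $\log\bu\le_{\mathrm{entry}}\bw\preceq\log\bv$. It then pushes this through $T_\lambda$ using nonnegativity of the $\alpha$'s. Your $\max(\cdot,\delta)$ reduction is a genuine improvement, since the paper takes logarithms of possibly zero entries without comment; the preservation step there needs a direct check on sorted partial sums because the log vectors can have $-\infty$ entries, but that check is immediate.
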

\begin{proof}
Let $\lambda$ be a partition with $n$ parts, and let $S_n\lambda = \{\alpha \vDash |\lambda| \mid \sort(\alpha) = \lambda\}$. Consider the linear map $T_\lambda: \mathbb{R}^n \to \mathbb{R}^{S_n\lambda}$ defined by $T_\lambda: \bu \mapsto (\alpha^T \bu \mid \alpha \in \mathbb{Z}^n_{\geq 0}, \, \sort(\alpha) = \lambda)$. 
Observe that for any $\pi \in S_n$, we have $\alpha^T \pi(\bx) = \pi^{-1}(\alpha)^T \bx$. Furthermore, the map $\alpha \mapsto \pi^{-1}(\alpha)$ is a permutation of $S_n\lambda$.
Thus, for any permutation map $P: \mathbb{R}^n \to \mathbb{R}^n$, there is a permutation map $Q: \mathbb{R}^{S_n \lambda} \to \mathbb{R}^{S_n\lambda}$ such that $T_\lambda P = Q T_\lambda$. 
Hence, by Lemma~\ref{lem DV linear pres}, if $\bu \preceq \bv$, then $T_\lambda\bu \preceq T_\lambda\bv$. 
As $T_\lambda$ has only non-negative entries, if $\bu \leq_{entry} \bv$, then $T_\lambda \bu \leq_{entry} T_\lambda\bv$.
Observe that for $\bu \in \mathbb{R}^n_{\geq 0}$, $T_\lambda(\log \bu) = \log (\bu^\lambda)$, and recall that $\bu \preceq_{\log} \bv$ is equivalent to $\log \bu \preceq_w \log \bv$. 
By Lemma~\ref{lem Ando weak maj}, if $\log \bu \preceq_w \log \bv$, then there exists $\bw \in \mathbb{R}^n$ such that $\log \bu \leq_{entry} \bw \preceq \log \bv$. Therefore,
\begin{equation}
\log(\bu^\lambda) = T_\lambda(\log \bu) \leq_{entry} T_\lambda(\bw) \preceq T_\lambda(\log \bv) = \log(\bv^\lambda).
\end{equation}
Thus, by Lemma~\ref{lem Ando weak maj}, $\log(\bu^\lambda) \preceq_w \log(\bv^\lambda)$ and $\bu^\lambda \preceq_{\log} \bv^\lambda$, as desired.
\end{proof}
\begin{corollary}
\label{cor maj imples mon}
For a partition $\lambda$ and vectors $\bu, \bv \in \mathbb{R}^n_{\geq 0}$, if $\bu \preceq_{\log} \bv$, then $\bu^\lambda \preceq_w \bv^\lambda$. In particular,
\begin{equation}
m_\lambda(\bu) = \sum_{\alpha:\, \sort(\alpha) = \lambda} \bu^\alpha \leq \sum_{\alpha:\, \sort(\alpha) = \lambda} \bv^\alpha = m_\lambda(\bv).
\end{equation}
\end{corollary}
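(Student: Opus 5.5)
The corollary follows from the preceding lemma together with the standard fact that log-majorization implies weak majorization. First I would fix a partition $\lambda$. If $\lambda$ has fewer than $n$ parts, I pad it with zeros so that it is a length-$n$ weak composition; the set $\{\alpha \in \mathbb{Z}_{\geq 0}^n \mid \sort(\alpha) = \lambda\}$ and the vector $\bu^\lambda$ are unchanged by this, and it is exactly the setting of the lemma. Assuming $\bu \preceq_{\log} \bv$, the lemma then gives $\bu^\lambda \preceq_{\log} \bv^\lambda$.

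The second step converts log-majorization into weak majorization. As recalled after the definition of majorization, applying a nondecreasing convex function entrywise preserves weak majorization. The hypothesis $\bu^\lambda \preceq_{\log} \bv^\lambda$ means $\log(\bu^\lambda) \preceq_w \log(\bv^\lambda)$, and applying $\exp$, which is nondecreasing and convex, gives $\bu^\lambda \preceq_w \bv^\lambda$.

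The one step needing care is zero entries, where $\log$ takes the value $-\infty$. I would handle it in one of two ways:
\begin{enumerate}
\item Perturb: replace $\bu,\bv$ by $\bu+\epsilon\mathbf{1}$ and $\bv+\epsilon'\mathbf{1}$, with $\epsilon'$ chosen so that log-majorization is preserved, and pass to the limit using continuity of partial sums of sorted entries.
\item Argue directly with the product form of log-majorization: for vectors in $\mathbb{R}_{\geq 0}^N$, the product inequalities $\prod_{i\le k} u_{[i]} \le \prod_{i \le k} v_{[i]}$ imply the partial-sum inequalities. This is the classical Weyl/Pólya argument, which also handles zeros, since entries of $\bu$ beyond the first zero contribute nothing to the partial sums.
\end{enumerate}

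Finally, taking $k$ equal to the full length of the vectors in the weak-majorization inequalities gives
\[
\sum_{\alpha:\,\sort(\alpha)=\lambda}\bu^\alpha \le \sum_{\alpha:\,\sort(\alpha)=\lambda}\bv^\alpha.
\]
By definition these sums are $m_\lambda(\bu)$ and $m_\lambda(\bv)$, which gives the displayed inequality. I expect no real obstacle beyond the bookkeeping with zero entries.
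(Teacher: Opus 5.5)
Your proposal is correct and is essentially the paper's argument. The paper states the corollary as immediate from the preceding lemma ($\bu^\lambda \preceq_{\log} \bv^\lambda$) together with its remark that log-majorization implies weak majorization via the nondecreasing convex function $\exp$, and the full-length partial sum then gives $m_\lambda(\bu) \leq m_\lambda(\bv)$; your treatment of zero entries, especially the direct product-form argument in your option (2), fills in a detail the paper passes over silently.
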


\begin{lemma}
\label{lem Schur implies maj}
Let $\bu, \bv \in \mathbb{R}^n_{\geq 0}$. If $s_{\lambda}(\bu) \leq s_\lambda(\bv)$ for all partitions $\lambda$, then $\bu \preceq_{\log} \bv$. 
\end{lemma}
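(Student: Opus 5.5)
We need to show: if $s_\lambda(\bu) \le s_\lambda(\bv)$ for all $\lambda$, then $\prod_{i\le k} u_{[i]} \le \prod_{i\le k} v_{[i]}$ for each $k$. The plan is to specialize to rectangular shapes $\lambda = m^k = (m,m,\dots,m)$ with $k$ rows and let $m \to \infty$. The point is that $s_{m^k}(\bu)$ grows like $(u_{[1]}\cdots u_{[k]})^m$ times a polynomial factor in $m$, so taking $m$-th roots isolates the product of the $k$ largest entries.

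\textbf{Reduction.} Order the entries so that $u_1 \ge u_2 \ge \dots \ge u_n$, using that Schur polynomials are symmetric. If $k > n$ the claim is vacuous, so fix $1 \le k \le n$.

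\textbf{Lower bound for $\bu$.} Expand $s_{m^k}(\bu) = \sum_{T \in \SSYT(m^k, n)} \bu^T$, a sum of nonnegative terms. The tableau whose row $i$ is filled entirely with $i$ is semistandard and contributes $(u_1 \cdots u_k)^m$. Hence
$$s_{m^k}(\bu) \ge (u_1\cdots u_k)^m.$$

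\textbf{Upper bound for $\bv$.} Every $T \in \SSYT(m^k,n)$ has column entries strictly increasing, so each column is a $k$-subset of $[n]$. Its weight is therefore at most $v_{[1]}\cdots v_{[k]}$, since the product of any $k$ entries of $\bv$ is at most the product of the $k$ largest, all entries being nonnegative. This gives
$$s_{m^k}(\bv) \le |\SSYT(m^k,n)|\,(v_{[1]}\cdots v_{[k]})^m.$$
By Corollary~\ref{cor ssyt pol}, $|\SSYT(m^k,n)| = s_{m\cdot 1^k}(1^n)$ is a polynomial $p(m)$.

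\textbf{Limit.} Combining the two bounds with the hypothesis,
$$(u_1\cdots u_k)^m \le p(m)\,(v_{[1]}\cdots v_{[k]})^m .$$
Take $m$-th roots and let $m\to\infty$; since $p(m)^{1/m}\to 1$, we get $\prod_{i\le k}u_{[i]} \le \prod_{i\le k} v_{[i]}$. This holds for every $k$, which is $\bu \preceq_{\log} \bv$. If some products vanish the inequality is immediate or still follows, because the bound is correct even with zeros.

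The main obstacle is only the upper bound: the product of the column weights must be controlled uniformly over all tableaux, and the tableau count must grow subexponentially in $m$. Corollary~\ref{cor ssyt pol} supplies the latter, and the column-subset observation supplies the former.
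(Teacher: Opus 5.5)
Your proposal is correct and follows the paper's own route: specialize to the rectangle with $k$ rows of length $m$, use Corollary~\ref{cor ssyt pol} to see that $|\SSYT(m^k,n)|$ grows polynomially in $m$, and let $m \to \infty$. Your explicit lower bound (the tableau whose row $i$ is filled with $i$) and your column-subset upper bound make rigorous the paper's informal step that this Schur evaluation ``behaves like'' $(u_{[1]}\cdots u_{[k]})^m$.
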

\begin{proof}
Suppose $\bu \not\preceq_{\log} \bv.$ Then for some $k$, 
\begin{equation}
\prod_{i=1}^k u_{[i]} > \prod_{i=1}^k v_{[i]}.
\end{equation}
Consider $s_{k^m}(\bu) = \sum_{T \in \SSYT(k^m, n)} \bu^T$. By Corollary~\ref{cor ssyt pol}, $|\SSYT(k^m, n)|$ is a polynomial in $m$. Hence, we have that $s_{k^m}(\bu)$ behaves like $(u_{[1]}u_{[2]}\cdots u_{[k]})^m$ as $m$ tends to infinity. Therefore, if $u_{[1]}u_{[2]}\cdots u_{[k]} > v_{[1]}v_{[2]}\cdots v_{[k]}$, 
then $s_{k^m}(\bu) > s_{k^m}(\bv)$ for some sufficiently large $m$. Thus, if $s_{\lambda}(\bu) \leq s_{\lambda}(\bv)$ for all partitions $\lambda$, then $\bu \preceq_{\log} \bv$.
\end{proof}

\begin{proof}[Proof of Theorem~\ref{thm sym eval ineq proof}]
Let $\bu, \bv \in \mathbb{R}^n_{\geq 0}$. If $\bu \preceq_{\log} \bv$, then $m_\lambda(\bu) \leq m_\lambda(\bv)$ for all partitions $\lambda$ by Corollary~\ref{cor maj imples mon}. As Schur functions are monomial-positive, if $m_\lambda(\bu) \leq m_\lambda(\bv)$ for all partitions $\lambda$, then $s_\lambda(\bu) \leq s_\lambda(\bv)$ for all partitions $\lambda$. By Lemma~\ref{lem Schur implies maj}, if $s_\lambda(\bu) \leq s_\lambda(\bv)$ for all partitions $\lambda$, then $\bu \preceq_{\log} \bv$. Hence, conditions (1), (2), and (3) are equivalent. As $\Omega(\bu, \bx) = \sum_{\lambda} s_\lambda(\bu) s_{\lambda'}(\bx) = \sum_{\lambda} m_\lambda(\bu) e_\lambda(\bx)$,
\begin{equation}
\Omega(\bu, \bx) - \Omega(\bv, \bx) = \sum_{\lambda} \left[s_\lambda(\bu) - s_\lambda(\bv)\right] s_{\lambda'}(\bx) = \sum_{\lambda} \left[m_\lambda(\bu) - m_\lambda(\bv)\right] e_\lambda(\bx).
\end{equation}
Therefore, condition (3) is equivalent to condition (4), and condition (2) is equivalent to condition (5). 
\end{proof}

\begin{corollary}
Let $F(\bz, t) \in \mathbb{Q}[z_1,z_2,...,z_n,t]$ and suppose $\Omega_F(\bz, \bx)$ is Schur-positive. Let $\br, \bs \in \mathbb{R}_{\geq 0}^n$ with $\br \geq_{entry} \bs$, let $\bu$ be the inverted roots of $F(\br, t)$, and let $\bv$ be the inverted roots of $F(\bs, t)$. Then $\bu \succeq_{\log} \bv$.
\end{corollary}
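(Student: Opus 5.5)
The plan is to combine the evaluation inequality \eqref{eq sF ineq} from the introduction with Lemma~\ref{lem Schur implies maj}, or equivalently with the implication (3)$\Rightarrow$(1) of Theorem~\ref{thm sym eval ineq proof}.

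First, since $\Omega_F(\bz,\bx)$ is Schur-positive, Proposition~\ref{prop bbQ Z eval roots} shows that $F(\br,t)$ and $F(\bs,t)$ have only negative real roots. We may then write
\begin{equation}
F(\br,t)=\prod_{i=1}^d(1+u_it), \qquad F(\bs,t)=\prod_{i=1}^d(1+v_it),
\end{equation}
with $\bu,\bv\in\mathbb{R}^d_{\geq 0}$. Here $d$ is the $t$-degree of $F$ as a polynomial in $t$ over $\mathbb{Q}[\bz]$. Whenever an evaluation drops degree, we pad the inverted-root vector with zeros so that $\bu$ and $\bv$ have the same length $d$. Padding changes neither the symmetric function evaluations nor the log-majorization products, since zeros sit at the bottom of the sorted order.

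Next, from $e_k^F(\br)=e_k(\bu)$ we get $f^F(\br)=f(\bu)$ for every symmetric function $f$, and likewise $f^F(\bs)=f(\bv)$. Schur-positivity gives $s_\lambda^F(\bz)\in\mathbb{Q}_{\geq 0}[\bz]$. A polynomial with nonnegative coefficients is monotone in each coordinate on $\mathbb{R}^n_{\geq 0}$, so $\br\geq_{entry}\bs$ yields $s_\lambda(\bu)=s^F_\lambda(\br)\geq s^F_\lambda(\bs)=s_\lambda(\bv)$ for all $\lambda$. This is exactly \eqref{eq sF ineq}.

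Finally, condition (3) of Theorem~\ref{thm sym eval ineq proof} now holds for the pair $(\bu,\bv)$, so condition (1) gives $\bu\succeq_{\log}\bv$. The only delicate point is the length bookkeeping from possible degree drops, which the zero-padding above handles.
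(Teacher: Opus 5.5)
Your proof is correct and matches the paper's argument: the paper also derives $s_\lambda(\bu)=s_\lambda^F(\br)\geq s_\lambda^F(\bs)=s_\lambda(\bv)$ from Schur-positivity and then applies the implication (3)$\Rightarrow$(1) of Theorem~\ref{thm sym eval ineq proof}. Your zero-padding to give $\bu$ and $\bv$ the same length is a detail the paper leaves implicit, and you handle it correctly.
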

\begin{proof}
As $\Omega_F(\bz, \bx)$ is Schur-positive, $s_\lambda(\bu) = s_\lambda^F(\br) \geq s_\lambda^F(\bs) = s_\lambda(\bv)$ for all partitions $\lambda$. Hence, $\bu \succeq_{\log} \bv$. 
\end{proof}

As a corollary of Theorem~\ref{thm Gasharov} and Theorem~\ref{thm sym eval ineq proof}, we obtain the following inequalities of the inverted roots of independence polynomials of incomparability graphs of \threeone -free posets. 

\begin{corollary}
\label{cor 3+1 inv root log maj}
Let $G$ be the incomparability graph of a \threeone -free poset on $[n]$, let $\br \geq_{entry} \bs \in \mathbb{R}^n_{\geq 0}$, let $\bu$ be the inverted roots of $I_G(\br, t)$, and let $\bv$ be the inverted roots of $I_G(\bs, t)$. Then $\bu \succeq_{\log} \bv$.
\end{corollary}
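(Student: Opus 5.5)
The plan is to reduce the statement to the preceding corollary (for general $F(\bz,t)$ with $\Omega_F(\bz,\bx)$ Schur-positive), applied with $F(\bz,t) = I_G(\bz,t)$. Everything then comes down to establishing that $\Omega^G(\bz,\bx) = \Omega_{I_G}(\bz,\bx)$ is Schur-positive with respect to $\bbQ_{\geq 0}[\bz]$ whenever $G=\inc(P)$ for a \threeone-free poset $P$ on $[n]$.

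\textbf{Step 1 (Schur-positivity of $\Omega^G$).} By Theorem~\ref{thm OmegaG sum},
\[
\Omega^G(\bz,\bx) = \sum_{\alpha\in\bbZ_{\geq 0}^n} \frac{\bz^\alpha}{\alpha!} X_{G[\alpha]}(\bx).
\]
As noted in the preliminaries, each blow-up $G[\alpha]$ is again the incomparability graph of a \threeone-free poset: replace each element $i$ by a chain of length $\alpha_i$. Hence by Gasharov's Theorem~\ref{thm Gasharov}, each $X_{G[\alpha]}(\bx)$ is a nonnegative integer combination of Schur functions. It follows that the coefficient $s^G_\lambda(\bz)$ of $s_{\lambda'}(\bx)$ in $\Omega^G(\bz,\bx)$ lies in $\bbQ_{\geq 0}[\bz]$. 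It is a polynomial rather than a power series, since $e^G_k(\bz)$ is a polynomial and $s^G_\lambda$ is a Jacobi--Trudi determinant in the $e^G_k$. Alternatively, one could cite Hikita's $e$-positivity via the corollary following Theorem~\ref{thm OmegaG sum}, but Gasharov suffices and is more elementary.

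\textbf{Step 2 (real-rootedness at nonnegative points).} By Proposition~\ref{prop bbQ Z eval roots}, all roots of $I_G(\br,t)$ and $I_G(\bs,t)$ are negative reals. So both polynomials factor as $\prod_i (1+u_i t)$ and $\prod_i (1+v_i t)$ with $\bu,\bv\ge 0$. Padding with zeros, we may regard $\bu$ and $\bv$ as vectors of the same length $d$, namely the independence number of $G$. This padding is harmless because zeros do not change $s_\lambda$ evaluations, and degree may drop when some $r_i=0$.

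\textbf{Step 3 (comparison and conclusion).} For every $\lambda$ we have $s_\lambda(\bu) = s^G_\lambda(\br) \geq s^G_\lambda(\bs) = s_\lambda(\bv)$. This holds because $s^G_\lambda$ has nonnegative coefficients and $\br \geq_{entry} \bs \geq 0$. Applying Theorem~\ref{thm sym eval ineq}, (3)$\Rightarrow$(1), then gives $\bu \succeq_{\log} \bv$.

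The only genuinely nontrivial point is Step 1, in particular checking that the clique blow-up preserves the \threeone-free incomparability structure so that Gasharov applies uniformly in $\alpha$. Everything else is a direct instance of the preceding corollary.
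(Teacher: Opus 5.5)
Your argument is correct and follows the paper's own route: Schur-positivity of $\Omega^G(\bz,\bx)$ comes from Theorem~\ref{thm OmegaG sum} together with Gasharov's theorem applied to every blow-up $G[\alpha]$, and the preceding corollary, i.e.\ (3)$\Rightarrow$(1) of Theorem~\ref{thm sym eval ineq}, then gives the conclusion.

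One correction to Step~1, the step you flagged as the nontrivial one. A clique in $\inc(P)$ is an \emph{antichain} in $P$, not a chain. So $G[\alpha]$ is realized by replacing each element $i$ with an antichain of $\alpha_i$ copies, each copy inheriting the comparabilities of $i$.

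Your chain construction fails on two counts. The copies would be pairwise nonadjacent in the incomparability graph, so you would get the wrong graph. It can also destroy \threeone-freeness: replace one element of a two-element antichain by a $3$-chain.

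With the antichain construction the check is immediate. Any $3$-chain consists of copies of three distinct elements $a<_P b<_P c$. An element incomparable to all three cannot be a copy of $a$, $b$, or $c$. So it is a copy of a fourth element incomparable to $a$, $b$, $c$ in $P$, which would be a \threeone in $P$.
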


%

We obtain stronger inequalities for symmetric function evaluations from Schur and $e$-positivity of $\Omega_F(\bz, \bx)$. For $S \subseteq [n]$, write $\mathbf{1}_S \in \mathbb{R}^n$ for the indicator vector of $S$.

\begin{theorem}
\label{thm IncExc ineq}
Let $F(\bz, t) \in \mathbb{Q}[z_1,z_2,\dots, z_n, t]$.
For $S \subseteq [n]$, let $\bu_S \in \mathbb{R}^n$ be the inverted roots of $F(\mathbf{1}_S, t)$. 
If $\Omega_F(\bz, \bx)$ is Schur-positive, then
\begin{equation}
\sum_{S \subseteq [n]} (-1)^{n - |S|} s_\lambda(\bu_S) \geq 0,
\end{equation}
for all partitions $\lambda$.
Likewise, if $\Omega_F(\bz, \bx)$ is $e$-positive, then
\begin{equation}
\sum_{S \subseteq [n]} (-1)^{n - |S|} m_\lambda(\bu_S) \geq 0,
\end{equation}
for all partitions $\lambda$.
\end{theorem}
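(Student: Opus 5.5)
Since $\Omega_F(\bz,\bx)$ is Schur-positive, each $F$-analogue $s_\lambda^F(\bz)$ lies in $\bbQ_{\geq 0}[\bz]$. The plan is to read the alternating sum as an inclusion--exclusion that extracts exactly the monomials of $s_\lambda^F(\bz)$ whose support is all of $[n]$.

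First, evaluating at $\mathbf{1}_S$ gives a root interpretation. As noted after Proposition~\ref{prop bbQ Z eval roots}, Schur-positivity forces $F(\mathbf{1}_S,t)=\prod_i(1+(u_S)_i t)$ with $\bu_S\in\bbR_{\geq 0}^d$. Hence $e_k^F(\mathbf{1}_S)=e_k(\bu_S)$ for every $k$. Since $f\mapsto f^F(\mathbf{1}_S)$ is the ring homomorphism sending $e_k\mapsto e_k^F(\mathbf{1}_S)$, we get $s_\lambda^F(\mathbf{1}_S)=s_\lambda(\bu_S)$ for all $\lambda$.

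Second, write $s_\lambda^F(\bz)=\sum_\alpha c_\alpha \bz^\alpha$ with $c_\alpha\geq 0$. Then
\begin{equation}
s_\lambda^F(\mathbf{1}_S)=\sum_{\alpha:\ \mathrm{supp}(\alpha)\subseteq S} c_\alpha .
\end{equation}
Summing with signs $(-1)^{n-|S|}$ and exchanging the order of summation, the coefficient of each $c_\alpha$ is
\begin{equation}
\sum_{\mathrm{supp}(\alpha)\subseteq S\subseteq[n]}(-1)^{n-|S|}.
\end{equation}
This sum is $1$ if $\mathrm{supp}(\alpha)=[n]$ and $0$ otherwise. Therefore
\begin{equation}
\sum_{S\subseteq[n]}(-1)^{n-|S|}s_\lambda(\bu_S)=\sum_{\alpha:\ \mathrm{supp}(\alpha)=[n]}c_\alpha\geq 0.
\end{equation}

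The $e$-positive case is identical. Here $m_\lambda^F(\bz)\in\bbQ_{\geq0}[\bz]$, and $m_\lambda^F(\mathbf{1}_S)=m_\lambda(\bu_S)$ by the same homomorphism argument.

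The only step needing care is the factorization in the $e$-positive case. Here $e$-positivity must still give real-rootedness. It does, because $e$-positivity implies Schur-positivity ($e_\lambda$ is Schur-positive), so Proposition~\ref{prop bbQ Z eval roots} still applies.

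A second point concerns the length of $\bu_S$. If $\deg_t F(\mathbf{1}_S,t)$ drops below $d$, pad $\bu_S$ with zeros. This is harmless, since symmetric functions are stable under adding zero variables. The remainder is routine.
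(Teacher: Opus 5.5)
Your proof is correct and is essentially the paper's argument. The paper applies the same inclusion--exclusion to the whole series, $\sum_{S\subseteq[n]}(-1)^{n-|S|}\Omega_F(\mathbf{1}_S,\bx)=\sum_{\alpha\in\bbZ^n_{\geq 1}}\Omega_F^\alpha(\bx)$, and then reads off the coefficients of $s_{\lambda'}(\bx)$ and $e_\lambda(\bx)$ using $s_\lambda^F(\mathbf{1}_S)=s_\lambda(\bu_S)$ and $m_\lambda^F(\mathbf{1}_S)=m_\lambda(\bu_S)$; you extract coefficients first and do the inclusion--exclusion on $s_\lambda^F(\bz)$ and $m_\lambda^F(\bz)$ directly. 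Your extra care about real-rootedness in the $e$-positive case is harmless but not needed, since $e_k^F(\mathbf{1}_S)=e_k(\bu_S)$ holds for the complex inverted roots regardless.
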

\begin{proof}
Write $\Omega_F(\bz, \bx) = \sum_{\alpha \in \bbZ^n_{\geq 0}} \bz^\alpha \Omega_F^\alpha(\bx)$.
From a straightforward inclusion-exclusion argument, we have
\begin{equation}
\sum_{S \subseteq [n]} (-1)^{n - |S|} \Omega_F(\mathbf{1}_S, \bx) = \sum_{\alpha \in \bbZ^{n}_{\geq 1}} \Omega_F^{\alpha}(\bx).
\end{equation}
Thus, if $\Omega_F(\bz, \bx)$ is Schur-positive ($e$-positive), then $\sum_{S \subseteq [n]} (-1)^{n - |S|} \Omega_F(\mathbf{1}_S, \bx)$ is Schur-positive ($e$-positive).
As $\Omega_F(\bz, \bx) = \sum_{\lambda} s_\lambda^F(\bz) s_{\lambda'}(\bx) = \sum_{\lambda} m_\lambda^F(\bz) e_\lambda(\bx)$, $s_\lambda^F(\mathbf{1}_S) = s_\lambda(\bu_S)$, and $m_\lambda^F(\mathbf{1}_S) = m_\lambda(\bu_S)$, we have 
\begin{align}
\sum_{S \subseteq [n]} (-1)^{n - |S|} \Omega_F(\mathbf{1}_S, \bx) &= \sum_\lambda \sum_{S \subseteq [n]} (-1)^{n-|S|} s_\lambda(\bu_S) s_{\lambda'}(\bx) \\
&= \sum_\lambda \sum_{S \subseteq [n]} (-1)^{n - |S|} m_\lambda(\bu_S) e_\lambda(\bx).
\end{align}
Hence, $\sum_{S \subseteq [n]} (-1)^{n - |S|}s_\lambda(\bu_S)$ is the coefficient of $s_{\lambda'}(\bx)$ and $\sum_{S \subseteq [n]} (-1)^{n-|S|}m_\lambda(\bu_S)$ is the coefficient of $e_\lambda(\bx)$ in $\sum_{S \subseteq [n]} (-1)^{n - |S|} \Omega_F(\mathbf{1}_S, \bx)$, and the result follows.
\end{proof}

\begin{corollary}
Let $G = \inc(P)$ for a \threeone -free poset $P$ on $[n]$. For $S \subseteq [n]$, let $\bu_S \in \bbR_{\geq 0}^d$ be the inverted roots of $I_G(\mathbf{1}_S, t)$. Then for all partitions $\lambda$,
\begin{equation}
\sum_{S \subseteq [n]} (-1)^{n - |S|} s_\lambda(\bu_S) \geq 0,
\end{equation}
and
\begin{equation}
\sum_{S \subseteq [n]} (-1)^{n-|S|} m_\lambda(\bu_S) \geq 0.
\end{equation}
\end{corollary}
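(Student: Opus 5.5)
This is an immediate specialization of Theorem~\ref{thm IncExc ineq} to $F(\bz,t) = I_G(\bz,t)$. It therefore suffices to check that both hypotheses of that theorem hold for $\Omega_{I_G}(\bz,\bx) = \Omega^G(\bz,\bx)$, namely Schur-positivity and $e$-positivity in $\bbQ_{\geq 0}[\bz]\otimes\Lambda$.

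The first step is to expand $\Omega^G(\bz,\bx)$ using Theorem~\ref{thm OmegaG sum}:
\begin{equation*}
\Omega^G(\bz,\bx) = \sum_{\alpha\in\bbZ_{\geq 0}^n} \frac{\bz^\alpha}{\alpha!} X_{G[\alpha]}(\bx).
\end{equation*}
Each coefficient of $\bz^\alpha$ is a positive rational multiple of the chromatic symmetric function of $G[\alpha]$. As remarked before Figure~\ref{fig blowup}, $G[\alpha]$ is again the incomparability graph of a \threeone-free poset, since blowing up a vertex into a clique corresponds to replacing an element of $P$ by an antichain of copies of it.

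The second step supplies the two positivity properties of $X_{G[\alpha]}$. Gasharov's Theorem~\ref{thm Gasharov} gives Schur-positivity, with coefficients $|\ST_{P}(\lambda)|\geq 0$. Hikita's theorem gives $e$-positivity (equivalently, the Corollary following Theorem~\ref{thm OmegaG sum}). Collecting terms, each $s_\lambda^G(\bz)$ and each $m_\lambda^G(\bz)$ lies in $\bbQ_{\geq 0}[\bz]$. Hence $\Omega^G(\bz,\bx)$ is both Schur-positive and $e$-positive.

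Finally, Theorem~\ref{thm IncExc ineq} applies and gives both displayed inequalities. Two points need checking. The inverted roots $\bu_S$ are real and nonnegative by Proposition~\ref{prop bbQ Z eval roots}. The degree $d$ of $I_G(\mathbf{1}_S,t)$ can drop for small $S$, so we pad $\bu_S$ with zeros; this changes neither $s_\lambda$ nor $m_\lambda$. None of this is a real obstacle. The only step needing care is the verification that the blowup $G[\alpha]$ remains an incomparability graph of a \threeone-free poset, which the paper already asserts.
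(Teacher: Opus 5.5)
Your proposal is correct and is essentially the argument the paper intends. The paper states this corollary without proof, as the specialization of Theorem~\ref{thm IncExc ineq} to $F = I_G$, with the Schur- and $e$-positivity of $\Omega^G(\bz,\bx)$ supplied by Theorem~\ref{thm OmegaG sum} together with Gasharov's and Hikita's theorems; note that $e$-positivity already implies Schur-positivity, so the appeal to Gasharov is not strictly needed.
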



\section{Schur Positivity From Networks}
\label{sec s pos from networks}
In this section, we use the results of Subsection~\ref{sec LGV background} to prove Schur-positivity results for chromatic symmetric functions. In particular, we prove Theorem~\ref{thm s pos XincP Gamma h} and Theorem~\ref{thm s pos matching sum}.
Both theorems are proven using networks $\Gamma_\bh$ indexed by Hessenberg functions. 


\begin{defin}
Let $\bh = (h_1, h_2,...,h_n) \in \mathbb{H}_n$ be a Hessenberg function. Define the planar network $\Gamma_{\bh}$ with weights in $\bbQ[z_1,z_2,\dots,z_n]$ to be the directed graph on vertices $V_\bh = \bbZ \times [n+1]$ with edges of weight $1$ from $(i,j)$ to $(i, j+1)$ and edges of weight $z_{j}$ from $(i,j)$ to $(i+1, h_{j} + 1)$. Write $s_i^\bh = (i, 1)$ for the source vertices and $t_i^\bh = (i, n+1)$ for the sink vertices of $\Gamma_\bh$.
 As $h_i \leq h_{i+1}$ for each $i$, we have that $\Gamma_\bh$ is planar, so $\Gamma_\bh$ is compatible with the source and sink labels $\bs^\bh$ and $\bt^\bh$. 
\end{defin}

\begin{figure}
\caption{$\Gamma_{\bh}$ for $\bh = (2,3,4,4)$.}
\[\begin{tikzcd}[ampersand replacement=\&]
	\& \bullet \& \bullet \& \bullet \& \bullet \& {} \\
	\dots \& \bullet \& \bullet \& \bullet \& \bullet \& \dots \\
	\dots \& \bullet \& \bullet \& \bullet \& \bullet \& \dots \\
	\dots \& \bullet \& \bullet \& \bullet \& \bullet \& \dots \\
	{} \& \bullet \& \bullet \& \bullet \& \bullet
	\arrow[from=2-1, to=1-2]
	\arrow[from=2-2, to=1-2]
	\arrow["{z_4}"{description}, from=2-2, to=1-3]
	\arrow[from=2-3, to=1-3]
	\arrow["{z_4}"{description}, from=2-3, to=1-4]
	\arrow[from=2-4, to=1-4]
	\arrow["{z_4}"{description}, from=2-4, to=1-5]
	\arrow[from=2-5, to=1-5]
	\arrow[from=2-5, to=1-6]
	\arrow[from=3-1, to=1-2]
	\arrow["{z_3}"{description}, from=3-2, to=1-3]
	\arrow[from=3-2, to=2-2]
	\arrow["{z_3}"{description}, from=3-3, to=1-4]
	\arrow[from=3-3, to=2-3]
	\arrow["{z_3}"{description}, from=3-4, to=1-5]
	\arrow[from=3-4, to=2-4]
	\arrow[from=3-5, to=1-6]
	\arrow[from=3-5, to=2-5]
	\arrow[from=4-1, to=2-2]
	\arrow["{z_2}"{description}, from=4-2, to=2-3]
	\arrow[from=4-2, to=3-2]
	\arrow["{z_2}"{description}, from=4-3, to=2-4]
	\arrow[from=4-3, to=3-3]
	\arrow["{z_2}"{description}, from=4-4, to=2-5]
	\arrow[from=4-4, to=3-4]
	\arrow[from=4-5, to=2-6]
	\arrow[from=4-5, to=3-5]
	\arrow[from=5-1, to=3-2]
	\arrow["{z_1}"{description}, from=5-2, to=3-3]
	\arrow[from=5-2, to=4-2]
	\arrow["{z_1}"{description}, from=5-3, to=3-4]
	\arrow[from=5-3, to=4-3]
	\arrow["{z_1}"{description}, from=5-4, to=3-5]
	\arrow[from=5-4, to=4-4]
	\arrow[from=5-5, to=3-6]
	\arrow[from=5-5, to=4-5]
\end{tikzcd}\]
\end{figure}

We prove Theorem~\ref{thm s pos XincP Gamma h}. By Theorem~\ref{thm Gasharov}, the coefficient of $s_\lambda(\bx)$ in $X_{\inc(P_\bh)}(\bx)$ is given by the number of $P_\bh$-tableaux of shape $\lambda'$. Hence, Theorem~\ref{thm XG s pos Gamma proof} below is a restatement of Theorem~\ref{thm s pos XincP Gamma h}.

\begin{theorem}
\label{thm XG s pos Gamma proof}
Let $\bh = (h_1,h_2,\dots, h_n) \in \mathbb{H}_n$ be a Hessenberg function. Then for all partitions $\lambda$, there is a bijection between non-intersecting path families from $(s_1^\bh, s_2^\bh, \dots, s_\ell^\bh)$ to $(t_{\lambda_\ell}^\bh, t_{\lambda_{\ell - 1} + 1}^\bh,\dots, t_{\lambda_1 + \ell - 1}^\bh)$ and $P_\bh$-tableaux of shape $\lambda'$.
\end{theorem}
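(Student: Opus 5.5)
The plan is to read paths in $\Gamma_\bh$ as chains in $P_\bh$, so that the $\ell$ paths become the $\ell$ columns of a filling of $\lambda'$. We then check that non-intersection of the paths is exactly the row condition in the definition of $P_\bh$-tableaux, with column-strictness coming for free.

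\textbf{Step 1: paths are chains.} A path from $s_a^\bh=(a,1)$ to $t_b^\bh=(b,n+1)$ climbs through the levels $1,\dots,n+1$. Each diagonal edge it uses leaves some level $j$, has weight $z_j$, raises the first coordinate by one, and lands at level $h_j+1$. So the path is determined by the set $C=\{j_1<j_2<\dots<j_{b-a}\}$ of levels at which it takes diagonal edges. The only constraint is $j_{k+1}\geq h_{j_k}+1$, that is, $h_{j_k}<j_{k+1}$, which says exactly $j_k<_{P_\bh} j_{k+1}$. Hence paths from $s_a^\bh$ to $t_b^\bh$ are in weight-preserving bijection with chains of $P_\bh$ of length $b-a$; in particular $W_{\Gamma_\bh}=[e^{\inc(P_\bh)}_{j-i}(\bz)]$.

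\textbf{Step 2: paths to columns.} The $r$-th path runs from $s_r^\bh$ to the $r$-th listed sink, $t^\bh_{\lambda_{\ell+1-r}+r-1}$. Reading the sinks as indexed so that this path has exactly $\lambda_{\ell+1-r}$ diagonal steps (the indexing offset in the statement has to be fixed so this holds), its chain has length $\lambda_{\ell+1-r}$. We write this chain increasingly down column $\ell+1-r$ of $\lambda'$, whose columns have lengths $\lambda_1\geq\dots\geq\lambda_\ell$. Columns are then $P_\bh$-increasing automatically, and the map from path families to fillings with $P_\bh$-increasing columns is a bijection. By Lemma~\ref{lem adj path intersect} it remains to show: $\gamma_r$ and $\gamma_{r+1}$ are disjoint if and only if columns $c=\ell-r$ and $c+1$ satisfy $T_{k,c}\not>_{P_\bh}T_{k,c+1}$ for every row $k\leq\lambda_{\ell-r}$. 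Here $c+1$ is the shorter column, carried by the path with the smaller source.

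\textbf{Step 3: the adjacent-pair criterion.} Let $\gamma,\gamma'$ be the paths for the shorter and longer column, starting at $(r,1)$ and $(r+1,1)$. After the diagonal step at its $k$-th chain element, $\gamma$ has first coordinate $r+k$. After its $(k-1)$-st diagonal step, $\gamma'$ has first coordinate $r+k$. So $\gamma'$ sits in the same column $r+k$ as $\gamma$ during the stretch between its $(k-1)$-st and $k$-th diagonal steps. A common vertex occurs exactly when these two vertical stretches in the same column overlap in level. In terms of $x=T_{k,c+1}$, the $k$-th entry of the shorter column, and $y=T_{k,c}$, the $k$-th entry of the longer column, the stretches are as follows, with $h_{y_{k-1}}$ read as $0$ when $k=1$:
\begin{itemize}
\item $\gamma$ occupies levels from $h_{x_{k-1}}+1$ up to $x_k$ in column $r+k-1$, and from $h_{x_k}+1$ up to $x_{k+1}$ in column $r+k$;
\item $\gamma'$ occupies levels from $h_{y_{k-1}}+1$ up to $y_k$ in column $r+k$.
\end{itemize}
I expect disjointness to reduce, using the planarity $h_j\leq h_{j+1}$, to the inequality $h_{y_k}\geq x_k$ for every $k$, which is $y_k\not>_{P_\bh}x_k$, i.e.\ Gasharov's row condition $T_{k,c}\not>_{P_\bh}T_{k,c+1}$.

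\textbf{Main obstacle.} The delicate part is this local overlap analysis. Diagonal edges skip levels, so two paths can cross without sharing a vertex unless planarity forces a meeting. I would first check that crossing forces a common vertex in a single column $r+k$: the edges $(i,j)\to(i+1,h_j+1)$ are monotone in $j$, so once $\gamma'$ weakly passes $\gamma$ they share a vertex. Then I would verify, by comparing $h_{y_k}$ with $x_k$ and using that both columns are chains, that the overlap in column $r+k$ occurs exactly when $y_k>_{P_\bh}x_k$, and that otherwise no overlap occurs anywhere, including in the rows where only the longer column has entries. Composing these bijections over all adjacent pairs gives the bijection with $P_\bh$-tableaux of shape $\lambda'$. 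Theorem~\ref{thm s pos XincP Gamma h} then follows from Theorem~\ref{thm Gasharov}.
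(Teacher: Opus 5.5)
Your plan matches the paper's proof: paths in $\Gamma_\bh$ are chains of $P_\bh$, the $\ell$ paths become the columns of $\lambda'$, Lemma~\ref{lem adj path intersect} reduces everything to adjacent pairs, and disjointness of an adjacent pair is Gasharov's row condition (the paper only asserts this last step). You are also right that the listed sinks are off by one, since a path from $s_a^\bh$ to $t_b^\bh$ takes exactly $b-a$ diagonal steps.

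When you write out Step 3, fix two things. First, the stretches you list give disjointness in column $r+k$, with $\gamma$ entering only after $\gamma'$ has left, as $h_{x_k}+1>y_k$. So the inequality is $h_{x_k}\geq y_k$, equivalently $y_k\not>_{P_\bh}x_k$; your $h_{y_k}\geq x_k$ has $x$ and $y$ swapped. Second, a common vertex in column $r+k$ is forced only for the \emph{first} $k$ with $h_{x_k}<y_k$. There, $x_k>h_{x_{k-1}}\geq y_{k-1}$ gives $x_{k+1}>h_{x_k}\geq h_{y_{k-1}}$, so the two stretches overlap. For later such $k$ the stretches in column $r+k$ can be disjoint, so the claim that overlap in column $r+k$ occurs ``exactly when'' $y_k>_{P_\bh}x_k$ should be argued by induction on the first failure.
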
 
\begin{proof}
Let $\gamma$ be a path from $s_i^\bh$ to $t_j^\bh$. Observe that $\wt(\gamma)$ will be $z_{i_1}z_{i_2} \dots z_{i_{j-i}}$ where $i_k <_{P_\bh} i_{k+1}$ for all $1 \leq k < j-i$ and that $\gamma$ is the unique path from $s_i^\bh$ to $t_j^\bh$ with this weight. 
Hence, $\wt(s_i^\bh, t_j^\bh) = e_{j-i}^{\inc(P_\bh)}(\bz)$. Therefore, $W_\Gamma = \left[e_{j-i}^{\inc(P_\bh)}(\bz)\right]_{i,j=1}^\infty.$ 
From Theorem~\ref{thm Lindstrom}, this shows that $s_{\lambda}^{\inc(P_\bh)}(\bz)$ is in $\bbZ_{\geq 0}[\bz]$ for all partitions $\lambda$. Moreover, we have
\begin{equation}
s_{\lambda'}^{\inc(P_\bh)}(\bz) = \sum_{A \in \bN(\ba,\bb)} \wt(A),
\end{equation}
where $\ba = (1,2,\dots, \ell)$ and $\bb = (\lambda_\ell, \lambda_{\ell-1} + 1, \dots, \lambda_1 + \ell - 1)$.
Let $(\gamma_1, \gamma_2, \dots, \gamma_\ell) \in \bP(\ba, \bb)$. 
Then $\wt(\gamma_i) = z_{c_{i,1}} z_{c_{i,2}} \cdots z_{c_{i,\lambda_{\ell - i+1}}}$ for some $c_{i,1} <_P c_{i,2} <_P \dots <_P c_{i, \lambda_{\ell - i+1}}$. Observe that $\gamma_i$ will intersect $\gamma_{i+1}$ if and only if there is some $r$ such that $c_{i, r} <_P c_{i+1, r}$. Hence, by Lemma~\ref{lem adj path intersect}, $(\gamma_1, \gamma_2,\dots, \gamma_\ell) \in \bN(\ba,\bb)$ if and only if the filling $T$ of $\lambda'$ given by $T_{i,j} = c_{j, \ell - i + 1}$ is a $P_\bh$-tableau. 
\end{proof}

\begin{remark}
There is no obvious way to extend Theorem~\ref{thm XG s pos Gamma proof} to \threeone -free posets that are not natural unit interval orders. In particular, the author was unable to find a network with polynomial weights that realizes Schur-positivity for the poset $\twotwo$.
\end{remark}

The remainder of this section focuses on Schur-positive sums of chromatic symmetric functions. In particular, we prove Theorem~\ref{thm s pos matching sum}. The main tool will be the \emph{$r^{th}$ reweighting} $\Gamma^{(r)}$ of a network $\Gamma$.

\begin{defin}
Let $\mathcal{R} = \mathbb{Q}[z_1, z_2,\dots, z_n]$, and let $\mathsf{S}: \Gamma \to \Gamma$ be a shift automorphism. For a positive integer $k$, the \emph{$r^{th}$ reweighting} $\Gamma^{(r)}$ of $\Gamma$ is a network with weights in $\mathbb{Q}[z_{i, j} \mid 1 \leq i \leq n, 1 \leq j \leq r]$ defined as follows.
\begin{enumerate}
\item As unweighted directed networks, $\Gamma^{(r)}$ is isomorphic to $\Gamma$.

\item If $e$ is an edge in $\Gamma$ with weight $\bz^{\alpha}$ which appears in a path from $s_0$ to $t_1$, the corresponding edge in $\Gamma^{(r)}$ has weight $z_{1,1}^{\alpha_1} z_{2,1}^{\alpha_2} \cdots z_{n,1}^{\alpha_n}$.

\item If $e$ is an edge in $\Gamma^{(r)}$ with weight $z_{1,i}^{\alpha_1} z_{2, i}^{\alpha_2} \cdots z_{n, i}^{\alpha_n}$, then $\mathsf{S}(e)$ has weight $z_{1, i+1}^{\alpha_1} z_{2, i+1}^{\alpha_2} \cdots z_{n, i+1}^{\alpha_n}$, with the convention that $z_{j, r+1} = z_{j, 1}$.

\item $\Gamma^{(r)}$ has sources $s_i^{\Gamma^{(r)}} = s_{ik}^{\Gamma}$ and sinks $t_{i}^{\Gamma^{(r)}} = t_{ir}^{\Gamma}$.
\end{enumerate}
\end{defin}

\begin{defin}
For a poset $P$ on $[n]$, write $\chain_r(P)$ for the set of $r$-element chains in $P$. For a multiset $M$ of elements of $\chain_r(P)$, let $\deg(M) \in \bbZ_{\geq 0}^{n \times r}$ be the matrix such that $\deg(M)_{i,j}$ is the number of times $i$ appears as the $r^{th}$ largest entry of an element of $M$. For $\bd \in \bbZ_{\geq 0}^{n \times r}$, write $\mathcal{M}(\bd, P)$ for the set of multisets $M$ with elements in $\chain_r(P)$ such that $\deg(M) = \bd$. Observe that $\mathcal{M}(\bd, P)$ will be empty unless each column of $\bd$ sums to the same number.
\end{defin}

Recall that for a collection $M$ of real numbers, we write $P_M$ for the interval poset associated with $M$. 
For a multiset $M$ of elements in ${\bbR \choose r}$, let $P_M$ be the interval poset obtained by identifying each $r$-element with the interval between its minimum and maximum elements. The following theorem is a generalization of Theorem~\ref{thm s pos matching sum}. Theorem~\ref{thm s pos matching sum} follows as a special case by taking $r = 2$ and $\bh = (1,2,\dots, n)$. Details of the proof are postponed to later in the section. 
In general, the individual terms of the sum will not be Schur-positive.

\begin{theorem}
\label{thm s pos sum proof}
Let $\bh \in \mathbb{H}_n$ be a Hessenberg function, and let $P_\bh$ be the associated natural unit interval order. Then for $\bd \in \bbZ_{\geq 0}^{n \times r}$, the symmetric function
\begin{equation}
Y_{\bd, P_\bh}(\bx) := \sum_{M \in \mathcal{M}(\bd, P_\bh)} \displaystyle{\frac{ X_{\inc(P_M)}(\bx)}{\prod_{e \in \chain_r(P)} m_e(M)!} }
\end{equation}
is Schur-positive.
\end{theorem}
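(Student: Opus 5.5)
We will use the $r^{th}$ reweighting $\Gamma_\bh^{(r)}$ of the network $\Gamma_\bh$, together with Proposition~\ref{prop TP s pos} and Lindstr\"om's lemma. Take the shift $\mathsf{S}:(i,j)\mapsto(i+1,j)$ on $\Gamma_\bh$. Then $\Gamma_\bh^{(r)}$ has sources $s_i^{(r)}=s_{ir}^\bh$ and sinks $t_i^{(r)}=t_{ir}^\bh$. Its weights lie in $\bbQ_{\geq 0}[z_{i,j}]$, and it is shift-symmetric under $\mathsf{S}^r$. Hence its weight matrix $W=[w_{j-i}]$ is a Toeplitz matrix with $w_0=1$, and by Theorem~\ref{thm Lindstrom} it is totally nonnegative with respect to $\bbQ_{\geq 0}[z_{i,j}]$.

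Next we identify the polynomial $F(t)=\sum_k w_k t^k$. A path from $s_0^{(r)}$ to $t_k^{(r)}$ is a path in $\Gamma_\bh$ from $s_0$ to $t_{kr}$. As in the proof of Theorem~\ref{thm XG s pos Gamma proof}, it corresponds to a chain $c_1<_{P_\bh}c_2<_{P_\bh}\dots<_{P_\bh}c_{kr}$. The $m$-th step of the path lies in the $m$-th translate of the basic column strip, so under the reweighting the variable $z_{c_m}$ becomes $z_{c_m,\,m \bmod r}$, with the residue taken in $\{1,\dots,r\}$. Cutting the chain into $k$ consecutive blocks of length $r$ gives $k$ chains in $\chain_r(P_\bh)$, pairwise comparable in $P_\bh$ and in increasing order, with the $j$-th element of each block tagged by the index $j$.

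So $w_k$ is the generating function for $k$-element sets of $r$-chains whose associated intervals $[\min,\max]$ are pairwise ``comparable'' in the interval-order sense. Such a set is exactly an independent set in $\inc(P_M)$ for a multiset $M$. We must check that consecutive chains $\bc<\bc'$ in a $P_\bh$-chain means $\max\bc<_{P_\bh}\min\bc'$. This holds because a natural unit interval order is transitive along the concatenation. This check is the main obstacle. It requires showing that the relation ``$\max\bc<_P\min\bc'$'' is an interval order on $\chain_r(P_\bh)$ matching the $P_M$ convention, and that the order of the chains is forced, so each independent set is counted once.

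Thus $F(t)=I_H(\by,t)$, where $H$ is the incomparability graph of the interval order on $\chain_r(P_\bh)$ with $\bc<\bc'$ iff $\max \bc<_{P_\bh}\min\bc'$. The variable attached to $\bc$ is $y_\bc=\prod_{j} z_{c_j,j}$, where $c_j$ is the $j$-th entry of $\bc$. Proposition~\ref{prop TP s pos} then gives that $\Omega^H(\by,\bx)$ is Schur-positive as a polynomial in the $z$'s.

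By Theorem~\ref{thm OmegaG sum}, $\Omega^H(\by,\bx)=\sum_{\alpha}\frac{\by^\alpha}{\alpha!}X_{H[\alpha]}(\bx)$, where $\alpha$ ranges over multiplicity vectors on $\chain_r(P_\bh)$, that is, over multisets $M$. Here $H[\alpha]=\inc(P_M)$, since repeated chains give equal, mutually incomparable intervals and so form a clique. Also $\alpha!=\prod_\bc m_\bc(M)!$, and $\by^\alpha=\prod_{i,j}z_{i,j}^{\deg(M)_{i,j}}$. Extracting the coefficient of $\prod z_{i,j}^{d_{i,j}}$ yields exactly $Y_{\bd,P_\bh}(\bx)$. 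The coefficient of a monomial in a Schur-positive expression (with $\bbQ_{\geq0}$ coefficients) is Schur-positive, which proves the theorem.
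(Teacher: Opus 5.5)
Your argument is correct, and its skeleton is the paper's: Lindstr\"om's lemma on $\Gamma_\bh^{(r)}$ via Proposition~\ref{prop TP s pos}, followed by extracting the coefficient of $\prod z_{i,j}^{d_{i,j}}$. The difference is in how you obtain the expansion $\Omega_{F_{\bh,(r)}}(\bz,\bx)=\sum_{\bd}\bigl(\prod z_{i,j}^{d_{i,j}}\bigr)Y_{\bd,P_\bh}(\bx)$ of Lemma~\ref{lem Omega Y exp}.

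\begin{itemize}
\item The paper compares $m_{\lambda'}$-coefficients directly. It counts $P_\bh$-arrays of shape $\lambda^r$ and uses the $\prod_\bc m_\bc(M)!$-to-one map $\varphi_M$ from standard $P_M$-arrays (Lemma~\ref{lem inc PM array exp}).
\item You recognize $F_{\bh,(r)}(t)$ as the specialization $y_\bc\mapsto\prod_j z_{c_j,j}$ of the multivariate independence polynomial of $H=\inc(\chain_r(P_\bh),\prec)$. You then invoke Theorem~\ref{thm OmegaG sum}, with $H[\alpha]\cong\inc(P_M)$ and $\alpha!=\prod_\bc m_\bc(M)!$.
\end{itemize}

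Your route reuses a cited theorem instead of re-deriving its multiplicity count in this special case. It also makes explicit that $P_M$ must be read with $\bc<\bc'$ iff $\max\bc<_{P_\bh}\min\bc'$. This is the convention the paper's $\varphi_M$ silently needs. The paper's phrase ``interval between its minimum and maximum elements,'' read literally in $\bbR$, would instead compare $\max\bc<\min\bc'$ as integers, which is a different relation in general. In exchange, the paper's route gives an explicit array model for the monomial coefficients of $Y_{\bd,P_\bh}$.

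The step you call the main obstacle is routine and uses nothing special about $P_\bh$.
\begin{itemize}
\item Consecutive blocks of a $kr$-chain satisfy $\max\bc^{(i)}=c_{ir}<_{P_\bh}c_{ir+1}=\min\bc^{(i+1)}$ by definition, not by any transitivity property of natural unit interval orders.
\item On the $r$-chains of any poset $P$, $\prec$ is a strict partial order. It is irreflexive because $\min\bc\le_P\max\bc$. It is transitive because $\max\bc<_P\min\bc'\le_P\max\bc'<_P\min\bc''$.
\item Hence a pairwise comparable set of $r$-chains has a unique increasing arrangement and concatenates to a unique $kr$-chain. So each independent set of $H$ is counted exactly once.
\end{itemize}
You never need $\prec$ to be an interval order, since Theorem~\ref{thm OmegaG sum} holds for every graph. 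The natural-unit-interval hypothesis enters only through the construction and planarity of $\Gamma_\bh$.
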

\begin{proof}
Let $e_k^{\bh, (r)} = \sum_{\bi \in \chain_{kr}(P_\bh)} \prod_{j = 1}^{kr} z_{i_j, j'}$, where $1 \leq j' \leq r$ is the index such that $j' \equiv j \mod r$. Let $F_{\bh, (r)}(t) = \sum_{k \geq 0} e_k^{\bh,(r)}(\bz) t^k$.
From Lemma~\ref{lem Omega Y exp}, 
\begin{equation}
\Omega_{F_{\bh, (r)}}(\bz, \bx) = \sum_{\bd \in \bbZ_{\geq 0}^{n \times r}} \left( \prod_{i=1}^n \prod_{j = 1}^r z_{i,j}^{d_{i,j}}\right) Y_{\bd, P_\bh}(\bx).
\end{equation}
Observe that the weight matrix of $\Gamma_\bh^{(r)}$ is given by $W_{\Gamma_\bh^{(r)}} = \left[e_{j-i}^{\bh, (r)}\right]_{i,j=1}^{\infty}$. Hence, $\Omega_{F_{\bh, (r)}}(\bz, \bx)$ is Schur-positive. As $Y_{\bd, P_\bh}(\bx)$ is the coefficient of $\prod_{i=1}^n \prod_{j = 1}^r z_{i,j}^{d_{i,j}}$ in $\Omega_{F_{\bh, (r)}}(\bz, \bx)$, we have that $Y_{\bd, P_\bh}(\bx)$ is Schur-positive.
\end{proof}

\begin{exmp}
Let $r = 2$ and $\bh = (1,2,\dots, 6) \in \mathbb{H}_n$. In this case, $P_\bh$ is the total order on $[6]$, so $\chain_r(P_\bh)$ is the set of all pairs $\{i < j\} \subset [6]$. Thus, mutlisets of elements of $\chain_r(P_\bh)$ are multigraphs on $[6]$. For $\bd$ with columns $(2, 0, 1, 0, 1, 0)$ and $(0, 1, 0, 1, 0, 2),$ $\mathcal{M}(\bd, P_\bh)$ consists of the sets $M_1 = \{12, 16, 34, 56\}$ and $M_2 = \{12, 14, 26, 56\}$.
The incomparability graphs of the associated interval posets are the following.
\[\begin{tikzcd}[ampersand replacement=\&,column sep=small]
	\& 16 \&\&\&\&\&\& \\
	12 \& 34 \& 56 \&\& 12 \& 14 \& 36 \& 56
	\arrow[no head, from=1-2, to=2-2]
	\arrow[no head, from=1-2, to=2-3]
	\arrow[no head, from=2-1, to=1-2]
	\arrow[no head, from=2-5, to=2-6]
	\arrow[no head, from=2-6, to=2-7]
	\arrow[no head, from=2-7, to=2-8]
\end{tikzcd}\]
As a consequence of Theorem~\ref{thm s pos XincP Gamma h}, we have that $X_{\inc(P_{M_1})}(\bx) + X_{\inc(P_{M_2})}(\bx)$ is Schur-positive. Indeed,
\begin{align}
X_{\inc(P_{M_1})}(\bx) + X_{\inc(P_{M_2})}(\bx) &= \left(s_{(3,1)}(\bx) - s_{(2,2)}(\bx) + 5 s_{(2,1,1)}(\bx) + 8 s_{(1,1,1,1)}\right)\\
& + \left(2s_{(2,2)}(\bx) + 4 s_{(2, 1, 1)}(\bx) + 8 s_{(1,1,1,1)}(\bx) \right) \\
&= s_{(3,1)}(\bx) + s_{(2,2)}(\bx) + 9 s_{(2,1,1)}(\bx) + 16 s_{(1,1,1,1)}(\bx).
\end{align}
\end{exmp}

The symmetric function $PM_{K_{2n}}(\bx)$ defined in Section~\ref{sec chromaticbackground} can be expressed as a sum of symmetric functions $Y_{\bd, P_\bh}(\bx)$. Take $Z_{2n} \subset \bbZ_{\geq 0}^{2n \times 2}$ to be the set of $\{0,1\}$-matrices such that each row has exactly one $1$ and each column has exactly $n$ 1's.  For $\bd \in Z_{2n}$, we have that $\mathcal{M}(\bd, P_\bh)$ is a set of perfect matchings in the comparability graph of $P_\bh$.
Taking $\bh = (1,2,\dots, 2n) \in \mathbb{H}_{2n}$, we have that
\begin{equation}
 PM_{K_{2n}}(\bx) = \sum_{\bd \in Z_{2n}} Y_{\bd, P_\bh}.
\end{equation}
In Theorem~\ref{thm AmdShaSta match}, Amdeberhan--Shareshian--Stanley \cite{AmdShaSta} showed that $PM_{K_{2n}}(\bx)$ is $e$-positive for all $n$. 
We conjecture the following refinement of Theorem~\ref{thm AmdShaSta match}. Conjecture~\ref{conj Yd e pos} has been checked for $r = 2$ for all $\bd$ with column sums at most 6. 
\begin{conjecture}
\label{conj Yd e pos}
Let $\bh \in \mathbb{H}_n$ and let $\mathbb{Z}_{\geq 0}^{n \times r}$. Then $Y_{\bd, P_\bh}(\bx)$ is $e$-positive.
\end{conjecture}

Conjecture~\ref{conj Yd e pos} is closely related to Conjecture~\ref{conj AlexanderssonDai} of Alexandersson--Dai \cite{AlexanderssonDai}. Recall that $T_k: \Lambda \to \Lambda$ is the map which sends $e_i(\bx)$ to $e_{ik}(\bx)$. Conjecture~\ref{conj AlexanderssonDai} states that $T_k(m_\lambda(\bx))$ is monomial-positive for all partitions $\lambda$. An equivalent statement to  Conjecture~\ref{conj Yd e pos} is that $m_\lambda^{\bh, (r)}(\bz)$ is in $\mathbb{Q}_{\geq 0}[z_{i,j}]_{i \in [n], j \in [r]}$ for all partitions $\lambda$. Taking $\bh = (1,2,\dots, n) \in \mathbb{H}_n$, and sending $z_{i,1}, z_{i,2}, \dots, z_{i,r}$ to $z_i$ gives $T_r(m_\lambda(\bz))$. Hence, Conjecture~\ref{conj Yd e pos} implies Conjecture~\ref{conj AlexanderssonDai}. The following result shows elementary symmetric functions indexed by rectangular-shape partitions have non-negative coefficients in $\Omega_{F^\Gamma}(\bz, \bx)$ when $\Gamma$ has a nice shift automorphism.
As a corollary, we have that the coefficient of $e_{c^r}(\bx)$ in $Y_{\bd, P_\bh}(\bx)$ is non-negative.

\begin{theorem}
\label{thm Gamma rect e pos}
Let $\Gamma$ be a network with weights in $\mathcal{R}_+$ and a shift automorphism $\mathsf{S}$. If there is no integer $m$ and vertex $v$ of $\Gamma$ such that there is a path from $v$ to $\mathsf{S}^m(v)$, then $\omega \Omega_\Gamma(\bx)$ has non-negative coefficients in the powersum basis. Moreover, the coefficient of $e_{c^r}(\bx)$  in $\Omega_\Gamma(\bx)$ is non-negative for all $r,c \geq 0$.
\end{theorem}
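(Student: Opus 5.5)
The plan is to express the $p$-coefficients of $\omega\Omega_\Gamma(\bx)$ through cylindric path families and invoke Theorem~\ref{thm GK det sum}. Here $\Omega_\Gamma(\bx)=\Omega_F(\bx)$ with $F(t)=\sum_k \wt(s_0,t_k)t^k$, so $e_k^F=\wt(s_0,t_k)$.

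\textbf{Reduction to power sums.} By \eqref{eq F cauchy id} and the $p$-form of the dual Cauchy identity,
\[
\omega\Omega_F(\bx)=\sum_\lambda z_\lambda^{-1}p_\lambda^F\,p_\lambda(\bx),
\]
because $\omega p_\lambda=(-1)^{|\lambda|-\ell(\lambda)}p_\lambda$ cancels the sign. Since $p_\lambda^F=\prod_i p_{\lambda_i}^F$ and $\mathcal{R}_+$ is closed under products, it suffices to show $p_k^F\in\mathcal{R}_+$ for every $k$.

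\textbf{Power sums as cylindric path families.} The Newton/hook identity gives $p_k=\sum_{a+b+1=k}(-1)^b s_{(a+1,1^b)}$. Hence $(-1)^{k-1}p_k^F$ is a signed sum of hook minors of the Toeplitz matrix $[e^F_{j-i}]$. A cleaner route is $p_k=\sum_{j}(-1)^{j-1}\,$ times the cyclic Jacobi–Trudi form: $(-1)^{k-1}p_k$ equals the sum over $\br\in\bbZ^1_0$ of a $1\times 1$ determinant with period $d=k$. I would instead use Theorem~\ref{thm GK det sum} with $k$ paths, $\ba=(0,1,\dots,k-1)$, $\bb=(1,2,\dots,k)$, and period $d=k$. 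Expanding $\det[e^F_{b_\beta-a_\alpha+r_\alpha k}]$ summed over $\br\in\bbZ^k_0$ reproduces the standard expansion of $(-1)^{k-1}p_k$ as $\det$ of the companion-type matrix $[e_{1+j-i}]$ with wrap-around. This is the classical identity $p_k=\sum(-1)^{\dots}$ obtained from $\log F$, i.e. $\sum_k(-1)^{k-1}p_k^Ft^k/k = \log F(t)$ expanded cyclically. The key check is the purely symmetric-function identity
\[
\sum_{\br\in\bbZ^k_0}\det\big[e_{\beta-\alpha+1+r_\alpha k}\big]_{\alpha,\beta=1}^k=(-1)^{k-1}p_k,
\]
or its correct variant. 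I expect this identification (getting indices, period and sign exactly right) to be the main obstacle. I would verify it by specializing to a single variable, where only $\lambda=(k)$-type terms survive, together with the fact that both sides are cylindric Schur functions of a $1$-row-wide cylinder, equal to $m$-type sums. Applying $\omega$ handles the sign: the cylindric shape should correspond, via Corollary~\ref{cor CSSYT det}, to a cylindric ribbon whose generating function is $p_k$ up to $\omega$.

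Given the identity, the hypothesis that no path runs from $v$ to $\mathsf{S}^m v$ allows Theorem~\ref{thm GK det sum}. It shows that the relevant $p$-coefficient equals $\sum_{A\in\bCN}\wt(A)\in\mathcal{R}_+$.

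\textbf{Rectangles.} For $e_{c^r}$, by \eqref{eq F cauchy id} its coefficient is $m_{c^r}^F$. By the Lemma $m_{c^r}=s_{c^r/\emptyset/0}$ and Corollary~\ref{cor CSSYT det}, $m^F_{c^r}$ is a sum over $\br$ of determinants in the $e^F_j$. I would match these to Theorem~\ref{thm GK det sum} with $c$ sources/sinks (sinks shifted by $r$) and period $c$ (the corollary's $c+d$ with $d=0$; the strict inequality $d>\max(\dots)$ holds since the ranges are $c-1$). This gives $m^F_{c^r}=\sum_{A\in\bCN}\wt(A)\in\mathcal{R}_+$, again using the no-$v\to\mathsf{S}^mv$ hypothesis.
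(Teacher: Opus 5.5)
Your overall strategy is the paper's. You reduce the power-sum claim to $p_k^F\in\mathcal{R}_+$ via $\omega\Omega_F(\bx)=\sum_\lambda z_\lambda^{-1}p_\lambda^F p_\lambda(\bx)$, and you get positivity of cylindric determinant sums from Theorem~\ref{thm GK det sum}. Your rectangles paragraph ($m_{c^r}=s_{c^r/\emptyset/0}$, $c$ sources, sinks shifted by $r$, period $c$) is exactly the paper's argument.

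The power-sum paragraph, however, has a genuine error at the step you flagged. The identity
\[
\sum_{\br\in\bbZ^k_0}\det\big[e_{\beta-\alpha+1+r_\alpha k}\big]_{\alpha,\beta=1}^k=(-1)^{k-1}p_k
\]
is false for even $k$. For $k=2$ the only nonzero terms come from $\br=(0,0)$, giving $e_1^2-e_2$, and from $\br=(-1,1)$, giving $-e_2$. So the sum is $e_1^2-2e_2=+p_2$. Moreover, if your sign were right, Theorem~\ref{thm GK det sum} would put $p_k^F$ in $-\mathcal{R}_+$ for even $k$, the opposite of what you need. Saying ``applying $\omega$ handles the sign'' cannot fix this: $\omega$ acts only on the $\bx$ variables, you already spent it cancelling $(-1)^{|\lambda|-\ell(\lambda)}$, and it does not touch the coefficient $p_k^F\in\mathcal{R}$. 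Neither detour helps either: the hook expansion is signed, and the $\bbZ^1_0$ version is just a single entry $e_j$.

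The missing observation is $p_k=m_{(k)}$, which is the rectangle $c^r$ with $c=k$, $r=1$. Your data $\ba=(0,\dots,k-1)$, $\bb=(1,\dots,k)$, period $k$ is precisely this case of your own rectangles paragraph. The sum above therefore equals $s_{(k)/\emptyset/0}=m_{(k)}=p_k$ with a plus sign, since a one-row cylindric tableau with $T_{1,k}\le T_{1,1}$ is constant. Hence $p_k^F=\sum_{A\in\bCN(\ba,\bb,k)}\wt(A)\in\mathcal{R}_+$, and $p_\lambda^F\in\mathcal{R}_+$ by multiplicativity. This is how the paper orders the proof: it establishes the rectangle statement first and then deduces the power-sum statement as its one-row case.
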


\begin{proof}
Write $e_{j-i}^\Gamma = \wt(s_i^\Gamma, t_j^\Gamma)$. As $\Gamma$ is shift-symmetric, $e_k^\Gamma$ is well defined for all $k$. For a symmetric function $g(\bx) = \sum_{\lambda} a_\lambda e_\lambda(\bx)$, define $g^\Gamma = \sum_{\lambda} a_\lambda e_\lambda^\Gamma$.
From Theorem~\ref{thm GK det sum}, we have that cylindrically nonintersecting paths of period $d$ from $\ba = (a_1,\dots, a_k)$ to $\bb = (b_1, \dots, b_k)$ in $\Gamma$ are given by 
\begin{equation}
\sum_{A \in \bCN(\ba, \bb, d)} \wt(A) = \sum_{\br \in \bbZ_{0}^k} \det\left[\wt(s_{a_\alpha}, t_{b_\beta + r_\alpha d)}\right]_{\alpha, \beta = 1}^k = \sum_{\br \in \bbZ_{0}^k} \left[e_{b_\beta - a_\alpha + r_\alpha d}^\Gamma(\bz)\right]_{\alpha, \beta = 1}^d.
\end{equation}
For a partition $\lambda$ of length $\ell$, let $\ba_\lambda = (1,2,\dots, \ell)$ and let $\bb_\lambda = (\lambda_\ell, \lambda_{\ell-1} + 1, \dots, \lambda_1 + \ell - 1)$. Let $d > \lambda_1 - \lambda_\ell$. Then by Corollary~\ref{cor CSSYT det},
\begin{equation}
s_{\lambda'/\emptyset/d}^\Gamma = \sum_{\br \in \bbZ_0^{k}} \det\left[e_{\lambda_i - i + j + r_i(\ell + d)}^\Gamma\right]_{i,j = 1}^\ell = \sum_{A \in \bCN(\ba_\lambda, \bb_\lambda, d + \ell)} \wt(A).
\end{equation}
Thus for all cylindric partitions $\lambda/\emptyset/d$, we have that $s_{\lambda/\emptyset/d}^\Gamma$ is an element of $\mathcal{R}_+$.
For the rectangular shape $c^r$, we have $m_{c^r}(\bx) = s_{c^r/\emptyset/0}(\bx)$. As the coefficient of $e_{c^r}(\bx)$ in $\Omega_\Gamma(\bx)$ is $m_{c^r}^\Gamma$, the coefficient of $e_{c^r}(\bx)$ is in $\mathcal{R}_+$. As $p_k(\bx) = m_k(\bx)$, and $m_k^\Gamma \in \mathcal{R}_+$, we have that $p_\lambda^\Gamma \in \mathcal{R}_+$ for all partitions $\lambda$. As the coefficient of $p_\lambda(\bx)$ in $\omega \Omega_\Gamma(\bx)$ is $z_\lambda^{-1} p_\lambda^\Gamma$, $\omega \Omega_\Gamma(\bx)$ has non-negative coefficients in the powersum basis.
\end{proof}

\begin{corollary}
For $\bd \in \mathbb{Z}_{\geq 0}^{n \times r}$ and $\bh \in \mathbb{H}_n$, we have that $\omega Y_{\bd, P_{\bh}}(\bx)$ has non-negative coefficients in the powersum basis, and the coefficient of $e_{c^r}(\bx)$ is non-negative in $Y_{\bd, P_\bh}(\bx)$.
\end{corollary}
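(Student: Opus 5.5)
The corollary should follow by applying Theorem~\ref{thm Gamma rect e pos} to the reweighted network $\Gamma_\bh^{(r)}$ and then extracting coefficients of monomials in the $z_{i,j}$.

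First, I would check that $\Gamma_\bh^{(r)}$ meets the hypothesis of Theorem~\ref{thm Gamma rect e pos}. Its underlying graph is $\Gamma_\bh$ on $V_\bh = \bbZ \times [n+1]$. The weights of $\Gamma_\bh^{(r)}$ are monomials in the $z_{i,j}$, so they lie in the positive cone $\bbQ_{\geq 0}[z_{i,j}]$. The shift $\mathsf{S}:(i,j)\mapsto(i+1,j)$ is an automorphism of $\Gamma_\bh$. The $r^{th}$ reweighting is built so that $\mathsf{S}$ cycles the second index of the variables. Hence $\mathsf{S}^r$ is a weight-preserving shift automorphism of $\Gamma_\bh^{(r)}$ sending $s_i^{\Gamma^{(r)}}$ to $s_{i+1}^{\Gamma^{(r)}}$, and this is the shift to use. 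For acyclicity: every edge of $\Gamma_\bh$ either increases the second coordinate by one, or increases the first coordinate by one while moving the second to $h_j+1 \geq j+1$. So the second coordinate strictly increases along every edge. Since $\mathsf{S}^m$ fixes the second coordinate, no path can run from $v$ to $\mathsf{S}^{mr}(v)$ for any $m$ (the case $m=0$ is trivial, as there are no loops).

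Next, the proof of Theorem~\ref{thm s pos sum proof} identifies the weight matrix $W_{\Gamma_\bh^{(r)}} = [e^{\bh,(r)}_{j-i}]$. Therefore $\Omega_{\Gamma_\bh^{(r)}}(\bx) = \Omega_{F_{\bh,(r)}}(\bz,\bx)$. By Lemma~\ref{lem Omega Y exp} this equals $\sum_{\bd} \bz^{\bd} Y_{\bd,P_\bh}(\bx)$, where $\bz^{\bd} = \prod_{i,j} z_{i,j}^{d_{i,j}}$.

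Theorem~\ref{thm Gamma rect e pos} then gives two facts. First, $\omega\Omega_{F_{\bh,(r)}}$ has powersum coefficients in $\bbQ_{\geq 0}[z_{i,j}]$. Second, the coefficient of $e_{c^{r'}}(\bx)$ in $\Omega_{F_{\bh,(r)}}$ lies in $\bbQ_{\geq 0}[z_{i,j}]$ for all rectangles $c^{r'}$. The coefficient of $\bz^{\bd}$ in a polynomial with nonnegative coefficients is nonnegative. Because $\omega$ and the expansions in the $p$- and $e$-bases commute with extracting $\bz^{\bd}$, both conclusions pass to $Y_{\bd,P_\bh}$.

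The main obstacle is bookkeeping. The shift by one unit in $\Gamma_\bh$ permutes the labels $z_{\cdot,j}$, so it is not itself a weight-preserving automorphism of $\Gamma_\bh^{(r)}$. One must verify that $\mathsf{S}^r$, relabelled as the unit shift of $\Gamma_\bh^{(r)}$, is the automorphism required, and that this matches the convention in Theorem~\ref{thm GK det sum}, which is used inside Theorem~\ref{thm Gamma rect e pos}.
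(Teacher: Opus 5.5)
Your proposal is correct and matches the paper's proof. Like the paper, you apply Theorem~\ref{thm Gamma rect e pos} to $\Gamma_\bh^{(r)}$ with the shift $(i,j)\mapsto(i+r,j)$, rule out paths from $v$ to its shifts because the second coordinate strictly increases along every edge, and pass to $Y_{\bd,P_\bh}$ by extracting the coefficient of $\prod_{i,j} z_{i,j}^{d_{i,j}}$ via Lemma~\ref{lem Omega Y exp}.
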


\begin{proof}
As $\Omega_{\Gamma_\bh^{(r)}}(\bz, \bx) = \sum_{\bd \in \bbZ_{\geq 0}^{n \times r}} \left(\prod_{i=1}^n \prod_{j=1}^r z_{i,j}^{d_{i,j}}\right) Y_{\bd, P_{\bh}}(\bx)$, it suffices to show that $\Gamma_\bh^{(r)}$ satisfies the assumptions of Theorem~\ref{thm Gamma rect e pos}. Observe that $\Gamma_\bh^{(r)}$ has shift automorphism $\mathsf{S}$ sending vertex $(i,j)$ to $(i + r, j)$. Observe that if there is a path from $(i,j)$ to $(i', j')$ in $\Gamma_\bh^{(r)}$, then $j' > j$. Therefore, for any vertex $v = (i,j)$ and integer $m$, there is no path from $v$ to $\mathsf{S}^m(v) = (i + rm, j)$, and $\Gamma_\bh^{(r)}$ satisfies the assumptions of Theorem~\ref{thm Gamma rect e pos}.
\end{proof}

In the remainder of the section, we prove some technical lemmas which were used in the proof of Theorem~\ref{thm s pos sum proof}. In particular, we show that the coefficient of $\left(\prod_{i=1}^n \prod_{j = 1}^r z_{i,j}^{d_{i,j}}\right)$ in $\Omega_{F_{\bh, (r)}}(\bz, \bx)$ is $Y_{\bd, P_\bh}(\bx)$.

\begin{defin}
For a poset $P$ and a partition $\lambda$, a \emph{$P$-array} of shape $\lambda$ is a filling of $\lambda$ with elements of $P$ such that entries are $P$-increasing down columns. 
Write $\A_P(\lambda)$ for the set of $P$-arrays of shape $\lambda$.
A \emph{standard $P$-array} is a $P$-array in which every element of $P$ appears once.
Write $\SA_P(\lambda)$ for the set of standard $P$-arrays of shape $\lambda$.

For a partition $\lambda = (\lambda_1, \lambda_2, \dots)$ of $k$, write $\lambda^r$ for the partition of $rk$ with $r$ copies of each part $\lambda_i$. For a $P$-array $T$ of shape $\lambda^r$, let $M_T$ be the multiset $\{(T_{ri+1, j}, T_{ri+1, j},\dots, T_{r(i+1),j}) \in \chain_r(P)  \mid (ri+1, j) \in \lambda^r\}$. 
For a multiset $M$ of elements of $\chain_r(P)$, let $\A_P(\lambda^r, M) = \{T \in \A_P(\lambda^r) \mid M_T = M\}$.
Write $\wt(M) = \prod_{\bi \in M} z_{i_1, 1} z_{i_2, 2} \cdots z_{i_r,r} \in \mathbb{Q}[z_{s,t}]_{s \in [n], t\in [r]}$ for the \emph{weight of $M$}.

For a multiset $M$ of $r$-element sets of real numbers, let $\varphi_M: P_M \to \bbR^r$ be the map that sends elements of $P_M$ to the associated sets of real numbers. For a partition $\lambda \vdash k$, a poset $P$ on $[n]$, and an $k$-element multiset $M$ of chains in $\chain_r(P)$, we extend $\varphi_M$ to a map $\varphi_M: \SA_{P_M}(\lambda) \to \A_P(\lambda^r, M)$ by sending $T \in \SA_{P_M}(\lambda)$ to the $P$-array $\varphi_M(T)$ with entries
$\varphi_M(T)_{i,j} = \varphi_M(T_{i'+1, j})_{i''}$, where $i = ri' + i''$ with $1 \leq i'' \leq r$.
\end{defin}

\begin{lemma}
\label{lem inc PM array exp}
Let $P$ be a poset on $[n]$ and let $M$ be a $k$-element multiset of chains in $\chain_r(P)$. Then
\begin{equation}
\prod_{e \in \chain_r(P)} \frac{1}{m_e(M)!} X_{\inc(P_M)} = \sum_{\lambda \vdash k} |\A_P(\lambda^r, M)|\, m_{\lambda'}(\bx). 
\end{equation}
\end{lemma}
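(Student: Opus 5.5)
The plan is to compare coefficients of $m_\mu(\bx)$ on both sides. First I express the monomial coefficients of $X_{\inc(P_M)}(\bx)$ in terms of standard $P_M$-arrays. Then I pass from $P_M$-arrays to $P$-arrays through the map $\varphi_M$ and count its fibers.

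\emph{Step 1.} For any graph $G$, the coefficient of $m_\mu(\bx)$ in $X_G(\bx)$ is the number of proper colorings $\kappa$ with $|\kappa^{-1}(i)| = \mu_i$ for each $i$. Equivalently, it is the number of ordered sequences $(S_1, S_2, \dots)$ of pairwise disjoint independent sets covering $V(G)$ with $|S_i| = \mu_i$. For $G = \inc(P_M)$, the independent sets are exactly the chains of $P_M$, and a chain has a unique increasing order. So such a sequence is the same thing as a filling of the diagram of $\lambda = \mu'$ in which column $j$ lists $S_j$ in $P_M$-increasing order from top to bottom. Columns of $\lambda$ have lengths $\lambda'_j = \mu_j$, and every element of $P_M$ appears exactly once, so these fillings are precisely the standard $P_M$-arrays of shape $\lambda$. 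Hence
\begin{equation}
X_{\inc(P_M)}(\bx) = \sum_{\lambda \vdash k} |\SA_{P_M}(\lambda)|\, m_{\lambda'}(\bx),
\end{equation}
and it remains to show $|\SA_{P_M}(\lambda)| = \prod_{e} m_e(M)! \cdot |\A_P(\lambda^r, M)|$.

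\emph{Step 2.} Next I check that $\varphi_M$ lands in $\A_P(\lambda^r, M)$ and is surjective. The map $\varphi_M$ replaces each cell of $T \in \SA_{P_M}(\lambda)$ by the vertical block of $r$ cells containing the underlying chain $c_1 <_P \dots <_P c_r$. Inside each block the column is $P$-increasing automatically. Between consecutive blocks, with the upper cell holding $x$ and the lower cell holding $y$, we need $\max(x) <_P \min(y)$. This should be equivalent to $x <_{P_M} y$ under the identification of each chain with the interval from its minimum to its maximum. This equivalence is the step I expect to need the most care: I must verify that the interval-order relation on $P_M$ is exactly ``last element of one chain $<_P$ first element of the next''. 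If the definition of $P_M$ is read through real endpoints, I would check it against the $P$-relation for the chains that actually arise. Given the equivalence, $\varphi_M(T)$ is a $P$-array, and $M_{\varphi_M(T)} = M$ because every element of $P_M$ is used once. Conversely, take $U \in \A_P(\lambda^r, M)$. Its blocks are chains of $M$ with the right multiplicities, so choosing which copy of each repeated chain goes into which block produces a preimage in $\SA_{P_M}(\lambda)$.

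\emph{Step 3.} The group $\prod_e S_{m_e(M)}$, which permutes the copies of identical chains in $P_M$, acts on $\SA_{P_M}(\lambda)$. This action preserves the array conditions, because identical chains have identical $P_M$-relations. It is also free, because each element appears exactly once in a standard array. Its orbits are exactly the fibers of $\varphi_M$. So every fiber has size $\prod_e m_e(M)!$, which gives the identity and proves the lemma.
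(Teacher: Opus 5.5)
Your route is the paper's. Step~1 is its identification of the $m_{\lambda'}$-coefficient of $X_{\inc(P_M)}(\bx)$ with $|\SA_{P_M}(\lambda)|$, and Steps~2--3 unpack its one-line claim that $\varphi_M$ is $\prod_e m_e(M)!$-to-one; your free-action argument for the fiber size is fine. The gap is the equivalence you flagged and then assumed, namely that $x <_{P_M} y$ if and only if $\max_P(x) <_P \min_P(y)$. With $P_M$ defined by real endpoints, as in the paper, $x <_{P_M} y$ means $\max_{\bbR} x < \min_{\bbR} y$ as integers. This is not the $P$-relation in general, not even for natural unit interval orders.

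Take $\bh = (1,3,3,4)$, so that $P_\bh$ has $1 <_P 2 <_P 4$ and $1 <_P 3 <_P 4$, with $2$ and $3$ incomparable. Let $r = 2$ and $M = \{\{1,2\},\{3,4\}\}$. Since $2 < 3$, the intervals $[1,2]$ and $[3,4]$ are comparable, so $\inc(P_M)$ has no edge and $X_{\inc(P_M)}(\bx) = m_{(2)}(\bx) + 2m_{(1,1)}(\bx)$. On the other side, for $\lambda = (1,1)$ we have $\lambda^r = (1,1,1,1)$. Both candidate columns $(1,2,3,4)$ and $(3,4,1,2)$ fail to be $P$-increasing, since $2 \not<_P 3$ and $4 \not<_P 1$. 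So the coefficient of $m_{(2)}(\bx)$ on the right is $0$.

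Thus $\varphi_M$ need not land in $\A_P(\lambda^r, M)$, and with this reading of $P_M$ the stated identity fails. For naturally labeled $P$, only the implication $\max_P(x) <_P \min_P(y) \Rightarrow x <_{P_M} y$ survives. That gives your surjectivity but not well-definedness.

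What is missing is to order $P_M$ by the rule: $x <_{P_M} y$ if and only if the $P$-largest element of $x$ is $<_P$ the $P$-smallest element of $y$. Each block of $\varphi_M(T)$ should also be filled in the chain's own $P$-order. This relation is a partial order, since transitivity uses $\min_P(y) \le_P \max_P(y)$. Copies of the same chain are incomparable and have identical relations to everything else. It is also exactly the condition that $\A_P(\lambda^r, M)$ and the coefficients $e_{\lambda'}^{\bh,(r)}(\bz)$ in Lemma~\ref{lem Omega Y exp} impose between consecutive blocks. With this definition, your Steps~1--3 go through verbatim.

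When $P$ is the chain $1 <_P 2 <_P \dots <_P n$, the two definitions coincide, so there your argument is already complete. This is the case $\bh = (1,2,\dots,n)$ relevant to $PM_{K_{2n}}(\bx)$. The paper's own proof just asserts the $\prod_e m_e(M)!$-to-one property and does not address this point either.
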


\begin{proof}
The coefficient of $m_\lambda(\bx)$ in $X_{G}(\bx)$ is the number of ways to partition the vertices of $G$ into independent sets of size $\lambda_1, \lambda_2, \dots, \lambda_l$. Hence, the coefficient of $m_\lambda(\bx)$ in $X_{\inc(P)}(\bx)$ is the number of standard $P$-arrays $|\SA_P(\lambda')|$. Hence,
\begin{equation}
X_{\inc(P_M)}(\bx) = \sum_{\lambda \vdash k} |\SA_{P_M}(\lambda)| m_{\lambda'}(\bx).
\end{equation}
Observe that for all $\lambda \vdash k$, $\varphi_M$ is a $\left(\prod_{e \in \chain_r(P)} m_e(M)!\right)$-to-one map from $\SA_{P_M}(\lambda)$ to $\A_P(\lambda^r, M)$. Hence, $|\SA_{P_M}(\lambda)| = \left(\prod_{e \in \chain_r(P)} m_e(M)!\right) |\A_P(\lambda^r, M)|$, and the result follows.
\end{proof}
\begin{lemma}
\label{lem Omega Y exp}
Let $\bh \in \mathbb{H}_n$ be a Hessenberg function, and let $P = P_\bh$ be the associated unit interval graph. Let $e_k^{\bh, (r)} = \sum_{\bi \in \chain_{kr}(P)} \prod_{j = 1}^{kr} z_{i_j, j'}$, where $1 \leq j' \leq r$ is the index such that $j' \equiv j \mod r$. Let $F_{\bh, (r)}(t) = \sum_{k \geq 0} e_k^{\bh, (r)}(\bz) t^k$. Then
\begin{equation}
\Omega_{F_{\bh, (r)}}(\bz, \bx) = \sum_{\bd \in \bbZ_{\geq 0}^{n \times r}} \left(\prod_{i = 1}^n \prod_{j = 1}^r z_{i, j}^{d_{i,j}}\right) Y_{\bd, P_\bh}(\bx).
\end{equation}
\end{lemma}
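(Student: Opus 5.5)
The plan is to expand $\Omega_{F_{\bh,(r)}}(\bz,\bx)$ in the monomial basis via \eqref{eq F cauchy id} and compare it term by term with Lemma~\ref{lem inc PM array exp}. By \eqref{eq F cauchy id},
\begin{equation*}
\Omega_{F_{\bh,(r)}}(\bz,\bx) = \sum_{\mu} e_\mu^{\bh,(r)}(\bz)\, m_\mu(\bx),
\end{equation*}
so it suffices to show that for each partition $\mu = \lambda'$ with $\lambda \vdash k$,
\begin{equation*}
e_{\lambda'}^{\bh,(r)}(\bz) = \sum_{M} \wt(M)\, |\A_P(\lambda^r, M)|,
\end{equation*}
where the sum is over $k$-element multisets $M$ of elements of $\chain_r(P)$. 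Once this holds, we group the multisets $M$ by $\deg(M) = \bd$, noting that $\wt(M) = \prod_{i,j} z_{i,j}^{d_{i,j}}$ exactly when $\deg(M)=\bd$. Lemma~\ref{lem inc PM array exp} then converts $\sum_{\lambda}|\A_P(\lambda^r,M)|\,m_{\lambda'}(\bx)$ into $X_{\inc(P_M)}(\bx)/\prod_e m_e(M)!$, and summing over $M \in \mathcal{M}(\bd,P)$ gives $Y_{\bd,P_\bh}(\bx)$.

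To prove the identity for $e_{\lambda'}^{\bh,(r)}$, I will expand the product $e_{\lambda'_1}^{\bh,(r)} e_{\lambda'_2}^{\bh,(r)}\cdots$. A term of $e_{c}^{\bh,(r)}$ is a chain $i_1 <_P i_2 <_P \dots <_P i_{cr}$ of length $cr$, with weight $\prod_j z_{i_j, j'}$. I write this chain down the $j$-th column of $\lambda^r$, which has length $r\lambda'_j$ when $c = \lambda'_j$. Columns are then $P$-increasing, so a tuple of chains, one per column, is exactly a $P$-array of shape $\lambda^r$. Each column is cut into consecutive blocks of $r$ entries, namely rows $ri+1,\dots,r(i+1)$. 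Each block is an $r$-chain, and together these blocks form the multiset $M_T$. The weight of the term equals $\wt(M_T)$, since the entry in block-position $j'$ receives the variable $z_{\cdot,j'}$. Summing over all tuples therefore gives $\sum_{T\in\A_P(\lambda^r)} \wt(M_T)$, which is $\sum_M \wt(M)|\A_P(\lambda^r,M)|$.

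The main obstacle is consistency of conventions. First, I must check that the position-mod-$r$ labeling of the chain agrees with the definition of $\deg(M)$, that is, which element of the chain counts as the ``$j$-th'' one. Second, I must check that $\varphi_M$, as defined, places entries compatibly with $M_T$, so that Lemma~\ref{lem inc PM array exp} applies verbatim. I also expect to verify that $\mathcal{M}(\bd,P)$ consists of exactly the multisets of weight $\bz^{\bd}$. Beyond these checks, the argument is routine bookkeeping.
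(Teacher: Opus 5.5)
Your proposal is correct and follows essentially the same route as the paper's proof. Like the paper, you compare coefficients of $m_{\lambda'}(\bx)$, identify $e_{\lambda'}^{\bh,(r)}(\bz)$ with $\sum_{T\in\A_P(\lambda^r)}\wt(M_T)$ by writing the chains of length $r\lambda'_j$ down the columns of $\lambda^r$, group the multisets by $\deg(M)=\bd$, and invoke Lemma~\ref{lem inc PM array exp}; the convention checks you flag resolve as expected (take $M_T$ to use consecutive rows $ri+1,\dots,r(i+1)$, and $\deg(M)_{i,j}$ to count chains of $M$ whose $j$-th element is $i$).
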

\begin{proof}
We prove the equality by comparing the coefficients of $m_{\lambda'}(\bx)$. The coefficient of $m_{\lambda'}(\bx)$ in $\Omega_{F_{\bh, (r)}}(\bz, \bx)$ is $e_{\lambda'}^{\bh, (r)}(\bz)$. 
We have
\begin{equation}
e_{\lambda'}^{\bh, (r)}(\bz) = \sum_{T \in \A_P(\lambda^r)} \wt(M_T) = \sum_{\bd \in \bbZ_{\geq 0}^{n \times r}} \sum_{M \in \mathcal{M}(\bd, P)} \wt(M) |\A_P(\lambda^r, M)|.
\end{equation}
Hence,
\begin{equation}
\Omega_{F_{\bh,(r)}}(\bz, \bx) = \sum_{\bd\in \bbZ_{\geq 0}^{n \times r}} \sum_{M \in \mathcal{M}(\bd, P)} \wt(M) \sum_{\lambda \vdash |M|} |\A_P(\lambda^r, M)| m_{\lambda'}(\bx).
\end{equation}
As $\wt(M) = \prod_{i = 1}^n \prod_{j = 1}^r z_{i,j}^{d_{i,j}}$ for all $M \in \mathcal{M}(\bd, P)$,
\begin{equation}
\Omega_{F_{\bh,(r)}}(\bz, \bx) = \sum_{\bd \in \bbZ_{\geq 0}^{n \times r}} \left(\prod_{i = 1}^n \prod_{j = 1}^r  z_{i,j}^{d_{i,j}}\right) \sum_{M \in \mathcal{M}(\bd, P)} \sum_{\lambda \vdash |M|} |\A_P(\lambda^r, M)| m_{\lambda'}(\bx).
\end{equation}
Combining Lemma~\ref{lem inc PM array exp} and the definition of $Y_{\bd,P_\bh}(\bx)$, we have
\begin{equation}
\Omega_{\bh, (r)}(\bz, \bx) = \sum_{\bd \in \bbZ_{\geq 0}^{n\times r}} \left(\prod_{i = 1}^n \prod_{j = 1}^r z_{i, j}^{d_{i,j}}\right) Y_{\bd, P_\bh}(\bx),
\end{equation}
as desired.
\end{proof}

\section{Schur-positive Symmetric Functions from Positive Semidefinite Matrices}
\label{sec SPD matrices}
In this section, we study $\Omega_{F_A}(\bz, \bx)$ for polynomials $F_A(\bz, \bx)$ of the form
\begin{equation}
F_A(\bz, t) = \det(I + t A(\bz)),
\end{equation}
where $A(\bz)$ is a matrix with nonnegative eigenvalues for all nonnegative evaluations of $\bz$.
In particular, we consider the case when $A(\bz)$ is a Hermitian matrix of the form $A(\bz) = \sum_{i = 1}^m z_i \bv_i \bv_i^*$ for a collection of vectors $\bv_1,\bv_2, \dots, \bv_n \in \mathbb{C}^n$ and prove Theorem~\ref{thm herm s pos}. Additionally, we show that if $A(\bz) = B(\bz) C(\bz)$ is a product of Hermitian matrices $B(\bz) = \sum_{i=1}^m z_i \bv_i \bv_i^*$ and $C(\bz) = \sum_{i = 1}^{m'} z_{i+m} \bw_i \bw_i^*$ for vectors $\bv_1,\bv_2,\dots,\bv_m, \bw_1,\bw_2,\dots, \bw_{m'} \in \bbC^n$, then $\Omega_{F_A}(\bz, \bx)$ is Schur-positive.
The proof of Theorem~\ref{thm herm s pos} uses basic results in the representation theory of $GL(V)$ and was found using ChatGPT 5.6 sol.
As an application, we define the \emph{Kirchhoff symmetric function} $K_G(\bx)$ of a loopless graph $G$ and prove that it is Schur-positive.

\begin{theorem}
\label{thm herm s pos proof}
Let $\bv_1,\bv_2,\dots, \bv_m, \bw_1,\bw_2,\dots, \bw_{m'}$ be a collection of vectors in $\bbC^n$. Let $A(\bz) = B(\bz) C(\bz)$, where $B(\bz) = \sum_{i = 1}^m z_i \bv_i \bv_i^*$ and $C(\bz) = \sum_{i=1}^{m'} z_{i + m} \bw_i \bw_i^*$. Then $\Omega_{F_A}(\bz, \bx)$ is Schur-positive.
\end{theorem}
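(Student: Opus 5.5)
We need $s_\lambda^{F_A}(\bz) \in \bbQ_{\geq 0}[\bz]$ for every partition $\lambda$. (Strictly, the coefficients are complex, so the positive cone here should be polynomials with nonnegative real coefficients.) The plan is to compute $s_\lambda^{F_A}$ via Schur functors, expand multilinearly in the rank-one pieces, and recognize each coefficient as an immanant of a positive semidefinite matrix.

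\textbf{Step 1: identify $s_\lambda^{F_A}$ as a trace.} Since $F_A(\bz,t)=\det(I+tA(\bz))=\prod_i(1+a_it)$, where the $a_i$ are the eigenvalues of $A(\bz)$ (over the algebraic closure of $\bbC(\bz)$), we have $e_k^{F_A}=e_k(\text{eigenvalues})$. Hence $s_\lambda^{F_A}=s_\lambda[A(\bz)]=\Tr S_\lambda(A(\bz))$. This trace is a polynomial identity in $\bz$, so it suffices to check it entrywise.

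Using Schur--Weyl duality with $|\lambda|=k$, restrict to the isotypic component to get
\[
\Tr S_\lambda(A)=\frac{1}{f^\lambda}\Tr\bigl(P_\lambda A^{\otimes k}\bigr)=\frac{1}{k!}\sum_{\pi\in S_k}\chi^\lambda(\pi)\Tr\bigl(\pi A^{\otimes k}\bigr).
\]

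\textbf{Step 2: expand multilinearly.} Write $A=\sum_{i,j} z_iz_{j+m}\,\bv_i\bv_i^*\bw_j\bw_j^*$. Each summand is a scalar multiple of the rank-one operator $\bv_i\bw_j^*$, with scalar $\bv_i^*\bw_j$. Expanding $A^{\otimes k}$ over sequences $(i_1,j_1),\dots,(i_k,j_k)$, the coefficient of a monomial is a sum of terms of the form
\[
\prod_r(\bv_{i_r}^*\bw_{j_r})\cdot\Tr\Bigl(P_\lambda\,(\bv_{i_1}\otimes\cdots\otimes\bv_{i_k})(\bw_{j_1}\otimes\cdots\otimes\bw_{j_k})^*\Bigr).
\]
These terms are not individually nonnegative, so they must be grouped before positivity becomes visible.

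\textbf{Step 3: regroup via $A=B^{1/2}(B^{1/2}C)$.} The similarity-invariant form $s_\lambda[A]=s_\lambda[B^{1/2}CB^{1/2}]$ shows the value is nonnegative at every point, but not that the polynomial has nonnegative coefficients. Instead, fix a monomial $\prod z_i^{\alpha_i}\prod z_{j+m}^{\beta_j}$ and use $P_\lambda=P_\lambda^*=P_\lambda^2$ together with the fact that $P_\lambda$ commutes with $X^{\otimes k}$. Then
\[
\Tr\bigl(P_\lambda B^{\otimes k}C^{\otimes k}\bigr)=\Tr\bigl((P_\lambda B^{\otimes k}P_\lambda)(P_\lambda C^{\otimes k}P_\lambda)\bigr).
\]
Now symmetrize over the orderings of the multiset of indices. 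Summed over the $S_k$-orbit of index sequences, the $B$-part becomes $\sum_\sigma \sigma\,\bv\bv^*\sigma^{-1}$, where $\bv=\bv_{i_1}\otimes\cdots\otimes\bv_{i_k}$; similarly for $C$. The coefficient of the monomial then becomes a sum of terms $\Tr(P_\lambda X P_\lambda Y)$ with $X=\sum_\sigma \sigma\bv\bv^*\sigma^{-1}\succeq 0$ and $Y=\sum_\tau \tau\bw\bw^*\tau^{-1}\succeq 0$.

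Since $\Tr(XY)\geq 0$ for PSD $X,Y$, each coefficient is nonnegative. Concretely, each coefficient reduces to sums of $|\langle P_\lambda\sigma\bv,\tau\bw\rangle|^2$. For the Hermitian case $C=I$ (or $m'=0$) this collapses to Theorem~\ref{thm norm imm}: the coefficient is $\frac{1}{\alpha!}\imm_\lambda(K)$ for the Gram matrix $K$ of the repeated vectors, which is nonnegative.

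\textbf{Main obstacle.} The expected difficulty is the bookkeeping in Step 3. A monomial fixes only the multisets of $i$'s and $j$'s, not the pairing between them, so the sum over all interleavings must be shown to factor as $\Tr\bigl(P_\lambda X P_\lambda Y\bigr)$, with $X$ and $Y$ obtained by separately symmetrizing the $B$-factors and the $C$-factors. The identity $\Tr(P_\lambda(BC)^{\otimes k})=\Tr(P_\lambda B^{\otimes k}C^{\otimes k})$ should make this factorization automatic, because extracting the coefficient of $z^\alpha$ from $B^{\otimes k}$ yields exactly the orbit-symmetrized $X$ up to the factor $1/\alpha!$.
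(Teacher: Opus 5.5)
Your proposal is correct and follows essentially the same route as the paper. Both arguments identify the coefficient with $\frac{1}{f^\lambda}\Tr(P_\lambda A(\bz)^{\otimes k})$, use $(BC)^{\otimes k}=B^{\otimes k}C^{\otimes k}$ together with $P_\lambda=P_\lambda^*=P_\lambda^2$ commuting with $B^{\otimes k}$ to rewrite this as $\Tr(P_\lambda B^{\otimes k}P_\lambda C^{\otimes k})$, and then expand bilinearly in the disjoint $B$- and $C$-variables, so that each coefficient is a sum of terms $|\bw_\bb^* P_\lambda \bv_\ba|^2\geq 0$. The only difference is your symmetrization over orderings of the index multisets, which is harmless but unnecessary, since each individual term $\Tr(P_\lambda \bv_\ba\bv_\ba^* P_\lambda \bw_\bb\bw_\bb^*)$ is already nonnegative.
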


\begin{proof}
Observe that the inverted roots of $F_{A}(\bz, t)$ are the eigenvalues of $A(\bz)$. Hence, the coefficient of $s_\lambda(\bx)$ in $\Omega_{F_A}(\bz, \bx)$ is $s_{\lambda'}[A(\bz)]$.

Recall that $V^{\otimes k} \cong \bigoplus_{\nu \vdash k} S_\nu(V)^{\oplus f^\nu}$. Let $P_\lambda$ be the orthogonal projection from $V^{\otimes k}$ to $S_\lambda(V)^{\oplus f^\lambda} \subseteq V^{\otimes k}$ defined in Definition~\ref{def schur proj}. 
We have that $A(\bz)^{\otimes k}$ acts on $S_\lambda(V)^{\oplus f^\lambda}$ as $S_\lambda(A(\bz))^{\oplus f^\lambda}$. Hence, $\Tr(P_\lambda A(\bz)^{\otimes k}) = \Tr(S_\lambda(A(\bz)))\cdot f^\lambda = s_\lambda[A(\bz)]\cdot f^\lambda$. 

Observe that 
\begin{align}
A(\bz)^{\otimes k} &= (B(\bz) C(\bz))^{\otimes k} = \left(\sum_{i=1}^m \sum_{j = 1}^{m'} z_i z_{m+j} v_i v_i^* w_j w_j^*\right)^{\otimes k} \\
&= \sum_{\ba \in [m]^k} \sum_{\bb \in [m']^k} \bz_\ba \bz_\bb \bv_\ba \bv_\ba^* \bw_\bb \bw_\bb^*,
\end{align}
where $\bz_\ba = z_{a_1} z_{a_2} \cdots z_{a_k}$, $\bz_\bb = z_{m + b_1} z_{m + b_2} \cdots z_{m + b_k}$, $\bv_\ba = \bv_{a_1} \otimes \bv_{a_2} \otimes \dots \otimes \bv_{a_k}$, and $\bw_\bb = \bw_{b_1} \otimes \bw_{b_2} \otimes \dots \otimes \bw_{b_k}$.

As $P_\lambda$ is an orthogonal projection, $P_\lambda = P_\lambda^* = P_\lambda^2$. Furthermore, as $P_\lambda$ is a map on $GL(V)$-modules, $P_\lambda M^{\otimes k} = M^{\otimes k} P_\lambda$ for any map $M: V \to V$. Hence,
\begin{align}
\Tr(P_\lambda A(\bz)^{\otimes k}) &= \Tr(P_\lambda A(\bz)^{\otimes k} P_\lambda) = \Tr\left(\sum_{\ba \in [m]^k} \sum_{\bb \in [m']^k} \bz_\ba \bz_\bb P_\lambda \bv_\ba \bv_\ba^* \bw_\bb \bw_\bb^* P_\lambda\right) \\
&= \sum_{\ba \in [m]^k} \sum_{\bb \in [m']^k} \bz_\ba \bz_\bb \Tr(P_\lambda \bv_\ba \bv_\ba^* \bw_\bb \bw_\bb^* P_\lambda) = \sum_{\ba \in [m]^k} \sum_{\bb \in [m']^k} \bz_\ba \bz_\bb\Tr( P_\lambda \bv_\ba \bv_\ba^* P_\lambda \bw_\bb \bw_\bb^*).
\end{align}
As $\Tr( P_\lambda \bv_\ba \bv_\ba^* P_\lambda \bw_\bb \bw_\bb^*) = \Tr(\bw_\bb^* P_\lambda \bv_\ba \bv_\ba^* P_\lambda \bw_\bb) = (\bw_\bb^* P_\lambda \bv_\ba) \cdot (\bv_\ba^* P_\lambda \bw_\bb) = (\bw_\bb^* P_\lambda \bv_\ba)^2$, we have $\Tr(P_\lambda A(\bz)^{\otimes k}) \in \bbR_{\geq 0}[\bz]$. Therefore $s_\lambda[A(\bz)] \in \bbR_{\geq 0}[\bz]$, and $\Omega_{F_A}(\bz, \bx)$ is Schur-positive.
\end{proof}

\begin{corollary}[Theorem~\ref{thm herm s pos}]
Let $A(\bz) = \sum_{i = 1}^m z_i \bv_i \bv_i^*$ for vectors $\bv_1,\bv_2,\dots, \bv_m \in \bbC^n$. Then $\Omega_{F_A}(\bz, \bx)$ is Schur-positive. Moreover, for a partition $\lambda \vdash k$, the coefficient of $s_{\lambda'}(\bx)$ is 
\begin{equation}
s_\lambda[A(\bz)] = \frac{1}{f^\lambda} \sum_{\ba \in [m]^k} \bz_\ba \|P_\lambda \bv_\ba\|^2 = \frac{1}{f^\lambda} \sum_{\ba \in [m]^k} \bz_\ba \imm_\lambda(K_\ba),
\end{equation}
where $K_\ba$ is the Gram matrix $\left[\bv_{a_i}^*\bv_{a_j}\right]_{i,j = 1}^k$.
\end{corollary}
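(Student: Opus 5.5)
The corollary is the special case of Theorem~\ref{thm herm s pos proof} in which the second factor is trivial, so the plan is to rerun that argument directly with $C(\bz)$ removed, keeping track of the explicit coefficient. Schur-positivity could also be read off from Theorem~\ref{thm herm s pos proof} by taking $m'=n$ and $\bw_i$ the standard basis vectors, which gives $C = \mathrm{diag}(z_{m+1},\dots,z_{m+n})$, and then setting those variables to $1$. The cleaner route, however, is the direct computation, since it also produces the formula for the coefficient.

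First step: the inverted roots of $F_A(\bz,t)=\det(I+tA(\bz))$ are the eigenvalues of $A(\bz)$. The generalized dual Cauchy identity \eqref{eq F cauchy id} then identifies the coefficient of $s_{\lambda'}(\bx)$ as $s_\lambda^{F_A} = s_\lambda[A(\bz)]$. Using the facts recalled above, $A^{\otimes k}$ commutes with the symmetrizer $P_\lambda$ and acts on the isotypic component as $S_\lambda(A)^{\oplus f^\lambda}$. This gives
\begin{equation*}
f^\lambda s_\lambda[A(\bz)] = \Tr(P_\lambda A(\bz)^{\otimes k}).
\end{equation*}

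Second step: expand
\begin{equation*}
A(\bz)^{\otimes k}=\sum_{\ba\in[m]^k}\bz_\ba \bv_\ba\bv_\ba^*,
\end{equation*}
where $\bv_\ba = \bv_{a_1}\otimes\cdots\otimes \bv_{a_k}$; this works because $(\bv\bv^*)\otimes(\bw\bw^*)=(\bv\otimes\bw)(\bv\otimes\bw)^*$. Since $P_\lambda=P_\lambda^*=P_\lambda^2$, we get
\begin{equation*}
\Tr(P_\lambda \bv_\ba\bv_\ba^*)=\Tr(P_\lambda \bv_\ba\bv_\ba^* P_\lambda)=\|P_\lambda\bv_\ba\|^2\ge 0.
\end{equation*}
Dividing by $f^\lambda$ gives the first displayed expression. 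Theorem~\ref{thm norm imm} then rewrites $\|P_\lambda \bv_\ba\|^2$ as $\imm_\lambda(K_\ba)$, with $K_\ba$ the Gram matrix of $\bv_{a_1},\dots,\bv_{a_k}$, which gives the second expression.

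The one subtle point is that these coefficients are real and nonnegative rather than rational. The positive cone here is $\bbR_{\geq0}[\bz]$, exactly as in the conclusion of Theorem~\ref{thm herm s pos proof}, and Schur-positivity is meant in that sense. Beyond that I expect no real obstacle; the only care needed is the identification of the trace of $A^{\otimes k}$ on the isotypic component via Schur--Weyl duality, which is already recorded above.
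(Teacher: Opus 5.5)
You follow the paper's own route: expand $A(\bz)^{\otimes k}=\sum_{\ba}\bz_\ba\bv_\ba\bv_\ba^*$, use $f^\lambda s_\lambda[A(\bz)]=\Tr(P_\lambda A(\bz)^{\otimes k})$ and $P_\lambda=P_\lambda^*=P_\lambda^2$ to get $\Tr(P_\lambda\bv_\ba\bv_\ba^*)=\|P_\lambda\bv_\ba\|^2\geq 0$, then cite Theorem~\ref{thm norm imm}. This correctly establishes Schur-positivity and the first displayed formula. Your remark that the relevant cone is $\bbR_{\geq 0}[\bz]$ agrees with the conclusion of Theorem~\ref{thm herm s pos proof}.

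The final step fails as written. The problem comes from Theorem~\ref{thm norm imm} as the paper states it, and the paper's proof has the same flaw. Take $P_\lambda=\frac{f^\lambda}{k!}\sum_{\pi\in S_k}\chi^\lambda(\pi)\pi$ and $\bv_\ba^*(\pi\bv_\ba)=\prod_{i}\bv_{a_i}^*\bv_{a_{\pi^{-1}(i)}}$, and use $\chi^\lambda(\pi^{-1})=\chi^\lambda(\pi)$. A direct computation gives
\[
\|P_\lambda\bv_\ba\|^2=\bv_\ba^*P_\lambda\bv_\ba=\frac{f^\lambda}{k!}\sum_{\pi\in S_k}\chi^\lambda(\pi)\prod_{i=1}^k (K_\ba)_{i,\pi^{-1}(i)}=\frac{f^\lambda}{k!}\,\imm_\lambda(K_\ba).
\]
So the last expression should be $\frac{1}{k!}\sum_{\ba\in[m]^k}\bz_\ba\imm_\lambda(K_\ba)$, not $\frac{1}{f^\lambda}\sum_{\ba}\bz_\ba\imm_\lambda(K_\ba)$.

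The smallest case already shows the discrepancy. For $n=m=1$, $\bv_1=1$, $\lambda=(k)$, we have $s_{(k)}[z]=z^k$ and $\|P_{(k)}\bv_\ba\|^2=1$. But $\imm_{(k)}(K_\ba)$ is the permanent of the all-ones $k\times k$ matrix, which is $k!$.

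Similarly, for $\lambda=(1^k)$ the corrected formula recovers the Cauchy--Binet expansion $e_k[A(\bz)]=\sum_{|S|=k}\prod_{i\in S}z_i\det[\bv_i^*\bv_j]_{i,j\in S}$, while the stated one is $k!$ times too large. Replace the citation with the computation above and the argument is complete.
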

\begin{proof}
From the proof of Theorem~\ref{thm herm s pos proof}, we have 
\begin{equation}
\Tr(P_\lambda A(\bz)^{\otimes k}) = f^\lambda \cdot s_\lambda[A(\bz)] = \sum_{\ba \in [m]^k} \bz_\ba \Tr(P_\lambda \bv_\ba \bv_\ba^*).
\end{equation}
Observe that as $P_\lambda = P_\lambda^* = P_\lambda^2$,
\begin{equation}
\Tr(P_\lambda \bv_\ba \bv_\ba^*) = \bv_\ba^* P_\lambda \bv_\ba = \bv_\ba^* P_\lambda^* P_\lambda \bv_\ba = (P_\lambda \bv_\ba)^*(P_\lambda \bv_\ba) = \|P_\lambda \bv_\ba\|^2.
\end{equation}
From Theorem~\ref{thm norm imm}, $\|P_\lambda \bv_\ba\|^2 = \imm_\lambda(K_\ba)$ and the result follows.
\end{proof}

A combinatorially interesting application of Theorem~\ref{thm herm s pos proof} is the case when $A(\bz)$ is the \emph{weighted graph Laplacian} $\mathcal{L}_G(\bz)$ of a graph $G$. The graph Laplacian is a central object of study in spectral graph theory.

\begin{defin}
Let $G$ be a loopless, not necessarily simple graph on vertex set $[n]$. For each edge $e = \{i < j\} \in E(G)$, define $\bv_e = \mathbf{1}_i - \mathbf{1}_j \in \mathbb{R}^n$, where $\mathbf{1}_i$ is the $i^{th}$ standard basis vector. Define the \emph{weighted graph Laplacian} $\mathcal{L}_G(\bz)$ by
\begin{equation}
\mathcal{L}_G(\bz) = \sum_{e \in E(G)} z_e \bv_e \bv_e^*.
\end{equation}
\end{defin}

Kirchhoff's \emph{matrix-tree theorem} gives a combinatorial interpretation of principal minors of $\mathcal{L}_G(\bz)$. We state a general form of Kirchhoff's theorem below.

\begin{defin}
A \emph{rooted spanning forest} of a graph $G$ is a pair of sets $A \subseteq E(G)$ and $R \subseteq V(G)$ such that $A$ contains no cycles and $R$ has one vertex from each connected component of $A$. 
Write $\mathcal{RF}(G)$ for the set of rooted spanning forests of $G$. Define the \emph{rooted forest polynomial} of $G$ by
\begin{equation}
RF_G(\bz, t) = \sum_{(A, R) \in \mathcal{RF}(G)} t^{|A|} \prod_{e \in A} z_e.
\end{equation}
\end{defin}

\begin{theorem}
\label{thm matrix-tree}
\cite[Theorem 4.7]{Bapat}
Let $G$ be a loopless graph. Then 
\begin{equation}
RF_G(\bz, t) = \det(I + t \mathcal{L}_G(\bz)).
\end{equation}
\end{theorem}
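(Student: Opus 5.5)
We will prove the identity by expanding the determinant into principal minors and then evaluating each minor with the Cauchy--Binet formula. The one combinatorial input is a lemma computing minors of the incidence matrix.

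\textbf{Step 1: principal minors.} For any $n\times n$ matrix $L$ with commuting entries, the standard expansion gives
\begin{equation}
\det(I + tL) = \sum_{S \subseteq [n]} t^{|S|} \det L[S],
\end{equation}
where $L[S]$ is the principal submatrix indexed by $S$. We apply this with $L = \mathcal{L}_G(\bz)$.

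\textbf{Step 2: Cauchy--Binet.} Let $V$ be the $n \times |E(G)|$ incidence matrix whose column indexed by $e$ is $\bv_e$, and let $Z = \mathrm{diag}(z_e)$. Then $\mathcal{L}_G(\bz) = V Z V^{T}$, so $\mathcal{L}_G(\bz)[S] = V[S,E]\, Z\, V[S,E]^T$. Cauchy--Binet gives
\begin{equation}
\det \mathcal{L}_G(\bz)[S] = \sum_{A \subseteq E(G),\ |A| = |S|} \Big(\prod_{e\in A} z_e\Big) \det\big(V[S,A]\big)^2 .
\end{equation}
Combining with Step 1, $\det(I+t\mathcal{L}_G(\bz))$ is a sum over pairs $(S,A)$ with $|A|=|S|$ of $t^{|A|}\prod_{e\in A}z_e \det(V[S,A])^2$.

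\textbf{Step 3: the key lemma.} Put $R = [n]\setminus S$. We claim $\det V[S,A] = \pm 1$ if $(A,R)$ is a rooted spanning forest, and $\det V[S,A] = 0$ otherwise.

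\begin{enumerate}
\item If $A$ contains a cycle, then a signed sum of the cycle's columns vanishes in $V$, hence also in $V[S,A]$, so the determinant is $0$.
\item If $A$ is acyclic with $|A| = |S|$, then $A$ has $n-|A| = |R|$ components. So either every component contains exactly one vertex of $R$, or some component $C$ contains no vertex of $R$.
\item In the latter case $C \subseteq S$, and summing the rows of $V[S,A]$ indexed by $C$ gives zero. Indeed, each edge of $C$ contributes $+1-1$, and edges outside $C$ have no entries in those rows. So the determinant is $0$.
\item In the former case, induct on $|A|$. Some component has an edge, and a tree with at least two vertices has at least two leaves, so it has a leaf $u \in S$. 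Row $u$ has a single nonzero entry $\pm1$, in the column of its pendant edge $e$. Expanding along that row reduces to $V[S\setminus\{u\}, A\setminus\{e\}]$, which is the same situation for $G-u$ with root set $R$.
\end{enumerate}

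\textbf{Step 4: conclusion.} Each rooted spanning forest $(A,R)$ arises from exactly one pair, namely $S = [n]\setminus R$. This pair satisfies $|S| = n-|R| = |A|$, because $|R|$ equals the number of components of $A$. Its contribution is $t^{|A|}\prod_{e\in A} z_e$, and all other pairs contribute $0$. Hence the sum equals $RF_G(\bz,t)$.

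Parallel edges cause no issue, since they are simply distinct columns of $V$; two parallel edges form a $2$-cycle, whose columns are linearly dependent, matching the acyclicity requirement. I expect the main care to be needed in Step 3, specifically the bookkeeping that $|A|=|S|$ forces the number of components to equal $|R|$, and the leaf-deletion induction.
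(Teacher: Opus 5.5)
Your proof is correct and complete. The paper gives no argument of its own for this statement, only the citation to Bapat, and your route is the standard Cauchy--Binet proof of this principal-minors (forest) form of the matrix-tree theorem: expand $\det(I+t\mathcal{L}_G(\bz))$ into principal minors, apply Cauchy--Binet to $\mathcal{L}_G(\bz)=VZV^{T}$, and show that $\det V[S,A]=\pm1$ exactly when $(A,[n]\setminus S)$ is a rooted spanning forest and $0$ otherwise. Your count of $n-|A|$ components also correctly treats isolated vertices as components of $A$, which is the reading of the paper's definition under which the identity holds.
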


Using Theorem~\ref{thm matrix-tree}, we obtain a combinatorial interpretation for the monomial expansion of $\Omega_{RF_G}(\bz, \bx)$. We state this in terms of the \emph{Kirchhoff symmetric function}, which we introduce below. As a corollary of Theorem~\ref{thm herm s pos proof}, we obtain Schur-positivity of Kirchhoff symmetric functions. A natural problem is to determine a combinatorial interpretation of the Schur-coefficients.

\begin{figure}
\caption{An element of $\Pi_{\mathcal{RF}(G)}(5,3,2)$ with its associated weight.}
`\usetikzlibrary{arrows,arrows.meta}
\tikzcdset{arrow style=tikz, diagrams={>= stealth}}
\[\begin{tikzcd}[ampersand replacement=\&, every arrow/.append style={line width=1.2pt}]
	\&\& \bullet \& \\
	\& \bullet \& \bullet \& \bullet \\
	\bullet \& \bullet \& \bullet
	\arrow["1"{description}, color={rgb,255:red,214;green,92;blue,92}, from=1-3, to=2-2]
	\arrow["3"{description}, color={rgb,255:red,92;green,214;blue,92}, from=1-3, to=2-3]
	\arrow["2"{description}, color={rgb,255:red,92;green,92;blue,214}, from=2-2, to=3-2]
	\arrow["1"{description}, color={rgb,255:red,214;green,92;blue,92}, from=2-3, to=2-2]
	\arrow["1"{description}, color={rgb,255:red,214;green,92;blue,92}, from=3-1, to=2-2]
	\arrow["2"{description}, color={rgb,255:red,92;green,92;blue,214}, from=3-2, to=2-3]
	\arrow["1"{description}, color={rgb,255:red,214;green,92;blue,92}, from=3-2, to=3-1]
	\arrow["3"{description}, color={rgb,255:red,92;green,214;blue,92}, from=3-3, to=2-3]
	\arrow["1"{description}, color={rgb,255:red,214;green,92;blue,92}, from=3-3, to=2-4]
	\arrow["2"{description}, color={rgb,255:red,92;green,92;blue,214}, from=3-3, to=3-2]
\end{tikzcd}
\ \ x^5_{\color{red}{1}} x^3_{\color{blue}{2}} x^2_{\color{green}{3}}.
\]

\end{figure}

\begin{defin}
Let $G$ be a graph with $m$ edges.
For a partition $\lambda \vdash m$, let $\Pi_{\mathcal{RF}(G)}(\lambda)$ be the sequence $((A_i, R_i))_{i \geq 1}$ of rooted spanning forests in $G$ such that $|A_i| = \lambda_i$ and $\bigcup A_i = E(G).$ 
Define the \emph{Kirchhoff symmetric function} $K_G(\bx)$ by
\begin{equation}
K_G(\bx) = \sum_{\lambda \vdash m} |\Pi_{\mathcal{RF}(G)}(\lambda)|\, m_\lambda(\bx).
\end{equation}
\end{defin}

\begin{theorem}
For $\alpha \in \mathbb{Z}_{\geq 0}^{E}$, let $G^{\alpha}$ be the graph obtained from $G$ by taking $\alpha_e$ copies of each edge $e$. Then
\begin{equation}
\Omega_{RF_G}(\bz, \bx) = \sum_{\alpha \in \bbZ^E_{\geq 0}} \frac{\bz^\alpha K_{G^\alpha}(\bx)}{\alpha!},
\end{equation} 
where $\alpha! = \prod_{e \in E} \alpha_e !.$
\end{theorem}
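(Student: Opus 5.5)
We compare coefficients of $m_\lambda(\bx)$ on both sides, in the same way as the proof of Lemma~\ref{lem Omega Y exp}. By \eqref{eq F cauchy id}, the coefficient of $m_\lambda(\bx)$ in $\Omega_{RF_G}(\bz,\bx)$ is
\[
e_\lambda^{RF_G}(\bz)=\prod_i e_{\lambda_i}^{RF_G}(\bz),
\qquad
e_k^{RF_G}(\bz)=\sum_{(A,R)\in\mathcal{RF}(G),\,|A|=k}\ \prod_{e\in A} z_e .
\]
Expanding the product gives a sum over sequences $((A_i,R_i))_{i\ge1}$ of rooted spanning forests of $G$ with $|A_i|=\lambda_i$. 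Such a sequence has weight $\bz^{\alpha}$, where $\alpha_e$ is the number of indices $i$ with $e\in A_i$. Grouping the sequences by $\alpha$, it suffices to show that for each $\alpha\in\bbZ^E_{\ge0}$ with $|\alpha|=|\lambda|$,
\[
\#\{((A_i,R_i))_i :\ |A_i|=\lambda_i,\ \#\{i: e\in A_i\}=\alpha_e\ \forall e\}
=\frac{|\Pi_{\mathcal{RF}(G^\alpha)}(\lambda)|}{\alpha!}.
\]
Indeed, the right-hand side is by definition the coefficient of $\bz^\alpha m_\lambda(\bx)$ in $\bz^\alpha K_{G^\alpha}(\bx)/\alpha!$.

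To prove this identity, consider the projection $\pi$ that sends a sequence in $\Pi_{\mathcal{RF}(G^\alpha)}(\lambda)$ to a sequence of forests of $G$ by replacing each copy of an edge $e$ with $e$ itself. This requires three checks.
\begin{itemize}
\item \textbf{Well-definedness.} A forest in $G^\alpha$ contains at most one copy of each edge $e$, since two parallel copies form a cycle (here looplessness matters: no edge is a cycle on its own). Its image is therefore a forest in $G$ with the same edge count and the same components, so the root set $R_i$ transfers unchanged.
\item \textbf{Correct image.} The condition $\bigcup A_i=E(G^\alpha)$, together with $\sum|A_i|=|\lambda|=|E(G^\alpha)|$, forces the $A_i$ to partition $E(G^\alpha)$. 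Hence each copy is used exactly once, and $e$ lies in exactly $\alpha_e$ of the image forests.
\item \textbf{Fibers.} Conversely, fix a target sequence in which $e$ appears in exactly $\alpha_e$ forests. Its lifts correspond to bijections between the $\alpha_e$ copies of $e$ and those $\alpha_e$ forests, independently for each $e$. Every fiber therefore has size $\alpha!$.
\end{itemize}
This yields the identity, and summing over $\alpha$ and $\lambda$ gives the theorem.

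The main point needing care is the definition of $\Pi_{\mathcal{RF}(G)}(\lambda)$. The $A_i$ must be pairwise disjoint, which follows from $\bigcup A_i=E$ together with $\sum|A_i|=m$. Entries with $\lambda_i=0$ are trivial forests, and each contributes the factor $e_0^{RF_G}=1$. Parts $\lambda_i=0$ are implicitly absent, so the sequence should be indexed by $i\le\ell(\lambda)$. With that convention the count of root choices matches $e_k^{RF_G}$, and the argument goes through. Theorem~\ref{thm matrix-tree} is not needed for this identity; it only identifies $RF_G$ with $\det(I+t\mathcal{L}_G)$ so that Theorem~\ref{thm herm s pos proof} applies afterwards.
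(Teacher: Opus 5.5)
Your proof is correct and follows the same route as the paper. You compare $m_\lambda(\bx)$-coefficients, read the $\bz^\alpha$-coefficient of $e_\lambda^{RF_G}(\bz)$ as counting sequences of rooted forests of $G$ that use each edge $e$ exactly $\alpha_e$ times, and invoke the $\alpha!$-to-one projection from $\Pi_{\mathcal{RF}(G^\alpha)}(\lambda)$; your three checks (parallel copies cannot lie in one forest, the $A_i$ partition $E(G^\alpha)$, fibers are bijections between copies and forests) spell out what the paper asserts in a single line.
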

\begin{proof}
The coefficient of $m_\lambda(\bx)$ in $\Omega_{RF_G}(\bz, \bx)$ is $e_\lambda^{RF_G}(\bz)$. We have
\begin{equation}
e_{\lambda}^{RF_G}(\bz) = \prod_{i = 1}^{\ell(\lambda)} \sum_{\substack{(A, R) \in \mathcal{RF}(G) \\ |A| = \lambda_i}} \prod_{e \in A} z_e. 
\end{equation} 
Hence, for $\alpha \in \bbZ_{\geq 0}^E$, the coefficient of $\bz^\alpha$ in $e_\lambda^{RF_G}$ is the number of sequences $((A_i, R_i))_{i=1}^{\ell(\lambda)}$ of rooted forests such that $|A_i| = \lambda_i$ and $\prod_{i = 1}^{\ell(\lambda)} \prod_{e \in A_i} z_e = \bz^\alpha$. As $G^\alpha$ is obtained from $G$ by taking $\alpha_e$ copies of each edge $e$, there is a $\prod_{e \in E(G)} \alpha_e!$-to-one map from $\Pi_{\mathcal{RF}(G)}(\lambda)$ to such sequences. Thus, the coefficient of $m_\lambda(\bx)$ in $K_{G^\alpha}(\bx)$ is $\alpha!$ times the coefficient of $\bz^{\alpha}$ in $e_\lambda^{RF_G}(\bz)$, and the result follows.
\end{proof}

\begin{corollary}
\label{cor KG s pos}
For any loopless graph $G$, the Kirchhoff symmetric function $K_G(\bx)$ is Schur-positive.
\end{corollary}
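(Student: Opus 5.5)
The plan is to combine the preceding theorem with the Schur-positivity result for Laplacian-type determinantal polynomials, and then extract coefficients. By Theorem~\ref{thm matrix-tree}, $RF_G(\bz,t) = \det(I + t\mathcal{L}_G(\bz))$, where $\mathcal{L}_G(\bz) = \sum_{e \in E(G)} z_e \bv_e \bv_e^*$ with $\bv_e = \mathbf{1}_i - \mathbf{1}_j \in \mathbb{R}^n \subset \mathbb{C}^n$. This is exactly the form $A(\bz) = \sum_{e} z_e \bv_e\bv_e^*$ in Theorem~\ref{thm herm s pos}, indexed by the edges of $G$. So that theorem (the corollary of Theorem~\ref{thm herm s pos proof}) shows that $\Omega_{RF_G}(\bz,\bx)$ is Schur-positive. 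Concretely, for each $\lambda$ the coefficient of $s_{\lambda'}(\bx)$ is $s_\lambda[\mathcal{L}_G(\bz)] \in \bbR_{\geq 0}[\bz]$. Since $\Omega_{RF_G}(\bz,\bx)$ has rational coefficients in $\bz$, these coefficients in fact lie in $\bbQ_{\geq 0}[\bz]$.

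Next I expand using the preceding theorem:
\begin{equation}
\Omega_{RF_G}(\bz,\bx) = \sum_{\alpha \in \bbZ_{\geq 0}^E} \frac{\bz^\alpha}{\alpha!} K_{G^\alpha}(\bx).
\end{equation}
Take $\alpha = \mathbf{1}_E$, the all-ones vector. Then $G^\alpha = G$ and $\alpha! = 1$, so $K_G(\bx)$ is exactly the coefficient of $\prod_{e\in E} z_e$ in $\Omega_{RF_G}(\bz,\bx)$. Expanding in Schur functions, the coefficient of $s_{\lambda'}(\bx)$ in $K_G(\bx)$ equals the coefficient of $\prod_e z_e$ in $s_\lambda[\mathcal{L}_G(\bz)]$. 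That polynomial has nonnegative coefficients, so $K_G(\bx)$ is Schur-positive.

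The only care needed concerns multigraphs. $G$ is allowed to be non-simple, so parallel edges get separate variables $z_e$ and separate (equal) vectors $\bv_e$. Theorem~\ref{thm herm s pos} allows repeated vectors, so this causes no problem. Loops are excluded because $\bv_e$ would then be zero and the matrix-tree theorem is stated for loopless graphs. The main point to verify is the identification $G^{\mathbf{1}} = G$ in the expansion, which is immediate from the definition. For an explicit formula I would also record, via the corollary of Theorem~\ref{thm herm s pos proof}, that the coefficient is
\begin{equation}
\frac{1}{f^\lambda}\sum_{\ba} \imm_\lambda(K_\ba),
\end{equation}
where the sum is over the $\ba \in E^k$ that list each edge of $G$ exactly once, with $k = |E(G)|$.
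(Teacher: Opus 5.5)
Your proposal is correct and follows essentially the same route as the paper, which leaves this corollary's proof implicit: the matrix-tree theorem puts $RF_G(\bz,t)$ in the form covered by Theorem~\ref{thm herm s pos}, so $\Omega_{RF_G}(\bz,\bx)$ is Schur-positive, and $K_G(\bx)$ is the coefficient of $\prod_{e} z_e$ (the case $\alpha = \mathbf{1}$, with $G^{\mathbf{1}} = G$ and $\mathbf{1}! = 1$) in the preceding expansion. Your remarks on parallel edges and your immanant formula for the Schur coefficients, summing over orderings of $E(G)$, are also accurate.
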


%
%

\section{Concluding Remarks and Open Problems}
In this section, we state open problems arising from this paper. In addition, we consider the case when $\mathcal{R}$ is the $S_n$ group ring $\bbQ S_n$ and conjecture Schur-positivity of symmetric functions coming from Eulerian elements.

\subsection{Symmetric Function Evaluation Inequalities}
In Section~\ref{sec sym evals}, we considered the relationship between positivity of $\Omega_F(\bz, \bx)$ and symmetric function evaluations. Let $F(\bz, t) \in \bbQ[z_1,z_2,\dots, z_n, t]$. Theorem~\ref{thm sym eval ineq proof} shows that if $\Omega_F(\bz, \bx)$ is Schur-positive, then for $\br \geq_{entry} \bs \in \bbR^n$, the inverted roots of $F(\br, t)$ log-majorize the inverted roots of $F(\bs, t)$. 
As a consequence of the Schur-positivity of $X_{\inc(P)}(\bx)$ for \threeone -free posets, Corollary~\ref{cor 3+1 inv root log maj} states that the inverted roots of the independence polynomial $I_{\inc(P)}(\br, t)$ log-majorize the inverted roots of $I_{\inc(P)}(\bs, t)$ whenever $\br \geq_{entry} \bs \in \bbR_{\geq 0}^n$. 
In light of the recent counterexamples to the Schur-positivity of $X_G(\bx)$ for claw-free graphs \cite{MMclaw, PrajapatiClaw}, it is interesting to consider if the inverted root log-majorization inequalities extend to independence polynomials of claw-free graphs.
\begin{problem}
Let $G$ be a claw-free graph on $[n]$.
Determine if the inverted roots of $I_G(\br, t)$ log-majorize the inverted roots of $I_G(\bs, t)$ for all $\br \geq_{entry} \bs \in \bbR_{\geq 0}^n$.
\end{problem} 

For $S \subseteq [n]$, let $\bu_S$ be the inverted roots of $F(\mathbf{1}_S, t)$.
In Theorem~\ref{thm IncExc ineq}, we showed that if $\Omega_F(\bz, \bx)$ is Schur-positive, then for all partitions $\lambda$, the alternating sum $\sum_{S \subseteq [n]} (-1)^{n-|S|} s_\lambda(\bu_S)$ is non-negative. Likewise, if $\Omega_F(\bz, \bx)$ is $e$-positive, then $\sum_{S \subseteq [n]} (-1)^{n-|S|} m_\lambda(\bu_S)$ is non-negative for all partitions $\lambda$. This suggests the following problems.
In each case, the $n = 1$ case is solved by Theorem~\ref{thm sym eval ineq proof}, as
$s_\lambda(\bu_{[1]}) - s_\lambda(\bu_\emptyset) \geq 0$ for all $\lambda$ and
$m_\lambda(\bu_{[1]}) - m_\lambda(\bu_\emptyset) \geq 0$ for all $\lambda$ are both equivalent to $\bu_{[1]} \succeq_{\log} \bu_\emptyset$.

\begin{problem}
Characterize functions $\bu: 2^{[n]} \to \mathbb{R}_{\geq 0}^k$ such that 
\begin{equation}
\sum_{S \subseteq [n]} (-1)^{n-|S|} s_\lambda(\bu_S) \geq 0.
\end{equation}
\end{problem}

\begin{problem}
Characterize functions $\bu: 2^{[n]} \to \mathbb{R}_{\geq 0}^k$ such that 
\begin{equation}
\sum_{S \subseteq [n]} (-1)^{n-|S|} m_\lambda(\bu_S) \geq 0.
\end{equation}
\end{problem}

\subsection{Schur-Positivity from Networks}
In Section~\ref{sec s pos from networks}, we used Lindstr\"om's lemma to give a new proof of Schur-positivity of $X_{\inc(P)}(\bx)$ when $P$ is a natural unit interval order. In particular, for a Hessenberg function $\bh \in \mathbb{H}_n$, we constructed a directed network $\Gamma_\bh$ with weights in $\bbQ_{\geq 0}[z_1,z_2,\dots, z_n]$ and weight matrix $[e_{j-i}^{\inc(P_\bh)}(\bz)]_{i,j=1}^\infty$. Interestingly, there does not seem to exist a directed network $\Gamma$ with weights in $\bbQ_{\geq 0}[z_1,z_2,\dots,z_n]$ and weight matrix $[e_{j-i}^{\inc(P)}(\bz)]_{i,j = 1}^\infty$ for general \threeone -free posets $P$. In particular, the author was unable to find such a directed network for $P = \twotwo$. This suggests the following problem.

\begin{problem}
Let $\Gamma$ be directed network such that the weight of each source-to-sink path is a polynomial in $\bbQ_{\geq 0}[z_1,z_2,\dots, z_n]$. Under what conditions will the weight matrix $W_\Gamma$ be totally nonnegative with respect to $\bbQ_{\geq 0}[z_1,z_2,\dots, z_n]$?  
\end{problem}

\subsection{Kirchhoff Symmetric Functions}

In Section~\ref{sec SPD matrices}, we introduce the Kirchhoff symmetric function $K_G(\bx)$ of a loopless graph $G$. As a consequence of Theorem~\ref{thm herm s pos proof}, Corollary~\ref{cor KG s pos} shows that $K_G(\bx)$ is Schur-positive for any loopless graph $G$. A natural problem is to find a combinatorial interpretation for the Schur-coefficients of $K_G(\bx)$.

\begin{problem}
Find a combinatorial interpretation of the Schur-coefficients of the Kirchhoff symmetric function $K_G(\bx)$.
\end{problem}

\subsection{Symmetric Group Statistics}
In this section, we consider the case when $\mathcal{R}$ is the symmetric group algebra $\mathbb{Q}S_n$ and $\mathcal{R}_+$ is $\mathbb{Q}_{\geq 0}S_n$. In particular, we consider polynomials of the form 
\begin{equation}
F(t) = \sum_{\pi \in S_n} \pi t^{\stat(\pi)}
\end{equation}
for a symmetric group statistic $\stat: S_n \to \bbZ_{\geq 0}$.
For $\Omega_F(\bx)$ to be a well-defined power series, we require that $\stat(\pi) = 0$ if and only if $\pi$ is the identity in $S_n$. Likewise, for $\Omega_F(\bx)$ to be a symmetric function, we require that the coefficients of $F(t)$ commute with each other in $\mathbb{Q}S_n$. As a corollary of Theorem~\ref{thm stan poly s pos}, if $\Omega_F(\bx)$ is Schur-positive, then the polynomial $\sum_{\pi \in S_n} t^{\stat(\pi)}$, obtained from $F(t)$ by sending each permutation in $S_n$ to $1$, must be real-rooted. Statistics satisfying these conditions are somewhat rare. We consider the cases when $\stat(\pi)$ is $n - \cyc(\pi)$, $\des(\pi)$, and $\lpk(\pi)$, where $\cyc(\pi)$ is the number of cycles, $\des(\pi)$ is the number of descents, and $\lpk(\pi)$ is the number of left peaks. 


\begin{theorem}\cite{Jucys}
The Jucys--Murphy elements commute in $\mathbb{Q}S_n$. Moreover, 
\begin{equation}
C_n(t) := \sum_{\pi \in S_n} \pi t^{n -\cyc(\pi)} = \prod_{k = 2}^n (J_k t + 1),
\end{equation}
where $J_k = \sum_{1 \leq i < k} \tau_{i,k}$ is the $k^{th}$ Jucys--Murphy element.
\end{theorem}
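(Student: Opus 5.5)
We must prove that the Jucys--Murphy elements $J_k = \sum_{1 \le i < k} \tau_{i,k}$ pairwise commute, and that $\prod_{k=2}^n (1 + J_k t) = \sum_{\pi \in S_n} \pi\, t^{n-\cyc(\pi)}$. The natural approach is induction on $n$, tracking how multiplying by a single transposition changes the cycle count.

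\textbf{Commutativity.} First we show that $J_k$ commutes with every element of $\mathbb{Q}S_{k-1}$, viewed inside $\mathbb{Q}S_n$ as the permutations fixing $k,\dots,n$. For $\sigma \in S_{k-1}$ we have
\begin{equation}
\sigma \tau_{i,k} \sigma^{-1} = \tau_{\sigma(i),k},
\end{equation}
and $\sigma$ permutes $\{1,\dots,k-1\}$. Hence $\sigma J_k \sigma^{-1} = J_k$. For $j < k$, the element $J_j$ lies in $\mathbb{Q}S_{j} \subseteq \mathbb{Q}S_{k-1}$, so $J_j J_k = J_k J_j$. This step is routine.

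\textbf{Product formula.} We prove by induction on $n$ that
\begin{equation}
\prod_{k=2}^n (1 + J_k t) = \sum_{\pi \in S_n} \pi\, t^{n-\cyc(\pi)}.
\end{equation}
The case $n=1$ is trivial. For the inductive step, we multiply the identity for $S_{n-1}$ on the right by
\begin{equation}
1 + J_n t = 1 + t\sum_{i<n} \tau_{i,n}.
\end{equation}
Because the $J_k$ commute, the order of multiplication does not matter. We claim that the map
\begin{equation}
(\sigma, i) \longmapsto \sigma \tau_{i,n}, \qquad \sigma \in S_{n-1},\ i \in \{1,\dots,n-1\} \cup \{n\},
\end{equation}
with the convention $\tau_{n,n} = \mathrm{id}$, is a bijection onto $S_n$. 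Given $\pi \in S_n$, the index $i$ is recovered as $i = \pi^{-1}(n)$, and then $\sigma = \pi \tau_{i,n}$ fixes $n$.

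\textbf{Main obstacle: the exponent bookkeeping.} We must show that the exponents match, i.e.\ that $n - \cyc(\pi) = (n-1) - \cyc_{n-1}(\sigma) + [i \neq n]$. Here $\cyc_{n-1}(\sigma)$ counts cycles in $S_{n-1}$, so $\cyc_n(\sigma) = \cyc_{n-1}(\sigma) + 1$, since $n$ is a fixed point.
\begin{itemize}
\item If $i = n$, then $\pi = \sigma$ and $\cyc(\pi) = \cyc_{n-1}(\sigma) + 1$, so the exponent is $(n-1) - \cyc_{n-1}(\sigma)$, as required.
\item If $i < n$, then right multiplication by $\tau_{i,n}$ inserts $n$ into the cycle of $\sigma$ containing $i$. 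This merges the fixed point $n$ into that cycle, so $\cyc(\pi) = \cyc_{n-1}(\sigma)$. The exponent increases by exactly one, matching the extra factor of $t$.
\end{itemize}
Verifying this cycle-merging carefully, including the direction-of-composition convention, is the only delicate point. With it the induction closes, and the coefficient of each $\pi$ is exactly $t^{n-\cyc(\pi)}$.
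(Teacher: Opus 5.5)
Your proof is correct and complete. The paper gives no argument of its own for this statement; it is simply quoted from Jucys. What you wrote is the standard self-contained proof. The conjugation invariance $\sigma J_k \sigma^{-1} = J_k$ for $\sigma \in S_{k-1}$ gives commutativity. The factorization then follows by induction from the right coset decomposition $S_n = \bigsqcup_{i=1}^{n} S_{n-1}\tau_{i,n}$ (with $\tau_{n,n} = \mathrm{id}$), together with the fact that multiplying by a transposition whose two letters lie in different cycles merges those cycles.

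The one convention-dependent detail is harmless. With left-to-right composition you would recover $i = \pi(n)$ rather than $\pi^{-1}(n)$, and $n$ would be inserted just before rather than just after $i$ in its cycle. In either convention, however, the $\tau_{i,n}$ form a transversal and $\cyc(\sigma\tau_{i,n}) = \cyc_{n-1}(\sigma)$ for $i<n$, so the exponent bookkeeping goes through unchanged.
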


\begin{corollary}
The symmetric function $\Omega_{C_n}(\bx)$ is $e$-positive.
\end{corollary}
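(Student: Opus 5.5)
The plan is to use the factorization $C_n(t) = \prod_{k=2}^n (1 + J_k t)$ from the theorem of Jucys just stated, together with the general principle from the introduction: if $F(t) = \prod_{i=1}^d (1 + u_i t)$ with pairwise commuting factors in $\mathcal{R}[t]$, then $e_k^F = e_k(\bu)$, hence $f^F = f(\bu)$ for every symmetric function $f$, and $\Omega_F(\bx) = \Omega(\bu, \bx)$. Here $\mathcal{R} = \mathbb{Q}S_n$, $\mathcal{R}_+ = \mathbb{Q}_{\geq 0}S_n$, and $\bu = (J_2, J_3, \dots, J_n)$.

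The steps, in order, are as follows.

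\begin{enumerate}
\item Check that the coefficients of $C_n(t)$ commute with each other. This follows because each coefficient $e_k(J_2,\dots,J_n)$ is a polynomial in the pairwise commuting Jucys--Murphy elements, so $\Omega_{C_n}(\bx)$ is a well-defined symmetric function with coefficients in $\mathbb{Q}S_n$. It is also visible directly, since each coefficient is a class sum of permutations with $n - \cyc(\pi) = k$, and these are central.
\item By the dual Cauchy identity applied to these commuting variables,
\begin{equation*}
\Omega_{C_n}(\bx) = \prod_{i \geq 1} \prod_{k=2}^n (1 + J_k x_i) = \sum_\lambda m_\lambda(J_2, \dots, J_n)\, e_\lambda(\bx).
\end{equation*}
This matches \eqref{eq F cauchy id} with $m_\lambda^{C_n} = m_\lambda(J_2,\dots,J_n)$.
\item Show that $m_\lambda(J_2,\dots,J_n) \in \mathbb{Q}_{\geq 0}S_n$. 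Each $m_\lambda(\bu)$ is a sum, with coefficient $1$, of monomials $J_{k_1}^{a_1} \cdots J_{k_r}^{a_r}$. Each $J_k$ is a sum of transpositions with coefficient $1$, so each such monomial is a nonnegative integer combination of permutations. Since $\mathcal{R}_+$ is closed under products and nonnegative linear combinations, every $m_\lambda^{C_n}$ lies in $\mathcal{R}_+$, which gives $e$-positivity.
\end{enumerate}

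The only point requiring care is step (2): the identity $\prod_{i,j}(1 + x_i u_j) = \sum_\lambda m_\lambda(\bu) e_\lambda(\bx)$ is a formal polynomial identity in commuting variables. It therefore specializes to any commuting family $u_j$ in a $\mathbb{Q}$-algebra, so commutativity of the $J_k$ is exactly what is needed. I do not expect any real obstacle beyond invoking the commutativity in the theorem of Jucys.
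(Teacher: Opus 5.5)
Your proposal is correct and follows essentially the same route as the paper: you factor $C_n(t)$ into the commuting Jucys--Murphy factors, apply the dual Cauchy identity to get $\Omega_{C_n}(\bx) = \sum_\lambda m_\lambda(J_2,\dots,J_n)\, e_\lambda(\bx)$, and conclude $m_\lambda(J_2,\dots,J_n) \in \mathbb{Q}_{\geq 0}S_n$ because each $J_k$ is a nonnegative sum of transpositions. Your added remarks, on why the coefficients commute and why the formal identity specializes to commuting elements of a $\mathbb{Q}$-algebra, only make explicit what the paper's two-line argument takes for granted.
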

\begin{proof}
From the classical dual Cauchy product formula, $\Omega_{C_n}(\bx) = \sum_{\lambda} m_\lambda(J_2,J_3,\dots, J_n) e_\lambda(\bx)$. As each $J_i \in \mathcal{R}_+$, we have $m_\lambda(J_2, J_3,\dots, J_n) \in \mathcal{R}_+$ for all partitions $\lambda$, so $\Omega_{C_n}(\bx)$ is $e$-positive. 
\end{proof}

One of the most well-known family of real-rooted polynomials in combinatorics are the \emph{Eulerian polynomials}. The \emph{$n^{th}$-Eulerian polynomial} $A_n(t)$ is defined by
\begin{equation}
A_n(t) = \sum_{\pi \in S_n} t^{\des(\pi)},
\end{equation}
where $\des(\pi) = \#\{i \in [n-1] \mid \pi_i > \pi_{i+1}\}$ is the \emph{descent statistic}. We define the \emph{$\bbQ S_n$-coefficient Eulerian polynomial} $E_n(t) \in \bbQ S_n[t]$ by
\begin{equation}
E_n(t) = \sum_{i = 0}^{n-1} E_n^{(i)} t^i := \sum_{\pi \in S_n} \pi t^{\des(\pi)}.
\end{equation}
A theorem of Loday \cite{Loday} states that the \emph{Eulerian elements} $\{E_n^{(i)}\}_{i = 1}^{n-1}$ commute with each other.
\begin{theorem}
\label{thm Loday}
\cite{Loday}
For each $0 \leq i,j \leq n-1$,
\begin{equation}
E_n^{(i)} E_n^{(j)} = E_n^{(j)} E_n^{(i)}.
\end{equation}
\end{theorem}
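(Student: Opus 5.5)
The statement is Loday's theorem: the Eulerian elements $E_n^{(i)}$ commute in $\bbQ S_n$. The plan is to show that all the $E_n^{(i)}$ are polynomials in a single element. Then they automatically commute.

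The natural vehicle is the Solomon descent algebra. For each subset $S\subseteq[n-1]$ let $D_S=\sum_{\mathrm{Des}(\pi)=S}\pi$. By Solomon's theorem the span of the $D_S$ is a subalgebra $\mathcal{D}_n$ of $\bbQ S_n$. Each $E_n^{(i)}=\sum_{|S|=i}D_S$ lies in $\mathcal{D}_n$, and more precisely in the span of the class sums $\sum_{|S|=i}D_S$. Ordinary multiplication of these class sums is not closed under a naive rule, so instead I would use the generating-function identity (Worpitzky / Gessel style)
\begin{equation}
\sum_{k\geq 0} \Phi_k\, q^k = \frac{\sum_{\pi\in S_n}\pi\, q^{\des(\pi)}}{(1-q)^{n+1}}, \qquad \Phi_k := \sum_{\pi \in S_n} \binom{k-\des(\pi^{-1})+n-1}{n}\pi .
\end{equation}
The precise form needs checking and may need $\pi$ replaced by $\pi^{-1}$. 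Here $\Phi_k$ is the element $\sum_{\pi}\binom{\cdot}{n}\pi$ of "$k$-shuffles" (the Gessel--Reutenauer / Bayer--Diaconis riffle element, up to the inversion convention). The key fact is the shuffle composition rule $\Phi_k\Phi_l=\Phi_{kl}$, i.e.\ an $l$-shuffle followed by a $k$-shuffle is a $kl$-shuffle. This can be proved by counting: $\Phi_k$ is the sum over weak compositions of $n$ into $k$ parts of the corresponding shuffle, and composing two such shuffles is a bijection with shuffles indexed by $k\times l$ matrices read in lexicographic order.

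Granting $\Phi_k\Phi_l=\Phi_{kl}$, all $\Phi_k$ commute. Clearing the factor $(1-q)^{n+1}$ expresses each $E_n^{(i)}$ as a finite $\bbZ$-linear combination of $\Phi_0,\dots,\Phi_{n}$, namely $E_n^{(i)}=\sum_j(-1)^{i-j}\binom{n+1}{i-j}\Phi_j$. Hence the $E_n^{(i)}$ commute pairwise. An alternative finish is to note that $\Phi_k$ is a polynomial in $k$ of degree $n$ with coefficients the Eulerian idempotents $e_n^{(j)}$, that is $\Phi_k=\sum_j k^j e_n^{(j)}$. Then $\Phi_k\Phi_l=\Phi_{kl}$ for all $k,l$ forces the $e_n^{(j)}$ to be orthogonal idempotents, and the $E_n^{(i)}$ lie in their commutative span.

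The main obstacle is the multiplication rule $\Phi_k\Phi_l=\Phi_{kl}$ together with correctly matching descents of $\pi$ versus $\pi^{-1}$ in the Worpitzky-type identity. I would first verify the identity $\sum_k\Phi_kq^k=E_n(q)/(1-q)^{n+1}$ by grouping permutations according to which shuffle positions they arise from: a permutation arises from a $k$-shuffle in $\binom{k-\des+n-1}{n}$ ways, the count of weakly increasing cut sequences compatible with its descent set. Then I would prove the composition rule via the $k\times l$ matrix bijection, checking small cases like $n=3$ to fix conventions.
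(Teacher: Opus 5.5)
The paper gives no proof of this statement; it only cites Loday, so there is no argument of the paper's to match. Your argument is correct, and it is essentially the classical route to Loday's theorem: Loday's $\lambda$-operations satisfy $\lambda^k\lambda^l=\lambda^{kl}$, and in Bayer--Diaconis language an $l$-shuffle followed by a $k$-shuffle is a $kl$-shuffle. The reduction holds up. You have $\Phi_k\Phi_l=\Phi_{kl}=\Phi_l\Phi_k$, and the Eulerian classes are finite integer combinations of the $\Phi_j$. The $\pi$ versus $\pi^{-1}$ ambiguity is harmless, because $\pi\mapsto\pi^{-1}$ extends to an anti-automorphism of $\bbQ S_n$, which sends commuting pairs to commuting pairs.

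You do need to fix an index shift. Since $\sum_{k\ge0}\binom{k-d+n-1}{n}q^k=q^{d+1}/(1-q)^{n+1}$, your definition gives $\Phi_0=0$ and $\Phi_1=\mathrm{id}$. The correct statements are
\[
\sum_{k\ge0}\Phi_k q^k=\frac{q\sum_{\pi\in S_n}\pi\,q^{\des(\pi^{-1})}}{(1-q)^{n+1}},\qquad \sum_{\des(\pi^{-1})=i}\pi=\sum_{j=0}^{i+1}(-1)^{i+1-j}\binom{n+1}{i+1-j}\Phi_j .
\]
As you wrote it, the inversion would give $E_n^{(0)}=\Phi_0=0$.

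Your opening remark about the descent algebra can also remove the inversion bookkeeping. Let $B_p=\sum_{\mathrm{Des}(\pi)\subseteq S(p)}\pi$, where $S(p)$ is the set of partial sums of $p$. Let $\Phi'_k=\sum_p B_p$, summed over weak compositions $p$ of $n$ into $k$ parts. Then $\Phi'_k=\sum_\pi\binom{k-\des(\pi)+n-1}{n}\pi$ directly. Solomon's product rule for the $B_p$ is a sum over nonnegative integer matrices with prescribed margins, read row by row. It is the algebraic form of your $k\times l$ matrix bijection, and it gives $\Phi'_k\Phi'_l=\Phi'_{kl}$.

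Compared with the bare citation, your route yields more than commutativity. Write $\Phi'_k=\sum_{j=1}^n k^j\epsilon_j$ as in your alternative finish, and use $\sum_{k\ge0}k^jq^k=qA_j(q)/(1-q)^{j+1}$. This gives
\[
E_n(t)=\sum_{j=1}^n\epsilon_j\,(1-t)^{n-j}A_j(t),
\]
with the $\epsilon_j$ orthogonal idempotents. Hence $f^{E_n}=\sum_j f(\mathbf{u}^{(j)})\,\epsilon_j$ for every symmetric function $f$. Here $\mathbf{u}^{(j)}$ consists of $n-j$ entries equal to $-1$ together with the inverted roots of the Eulerian polynomial $A_j$. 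This is a concrete handle on the paper's conjecture that $\Omega_{E_n}(\bx)$ is Schur-positive.

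What your route does not give is the bijective proof the paper asks for in its open problems. The passage from the $\Phi_k$ to the $E_n^{(i)}$ is an alternating-sign inversion. So the bijectivity of shuffle composition does not produce the bijections $\varphi_E^{i,j}$.
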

As the identity element is the only permutation with 0 descents and the coefficients of $E_n(t)$ commute, $\Omega_{E_n}(\bx)$ is a well-defined symmetric function with coefficients in $\mathbb{Q}S_n$. 
\begin{conjecture}
\label{conj Eul elem s pos}
The symmetric function $\Omega_{E_n}(\bx) = \prod_{i \geq 1} E_n(x_i)$ is Schur-positve.
\end{conjecture}

For a permutation $\pi \in S_n$, a \emph{left peak} of $\pi$ is an index $1 \leq i < n$ such that $\pi_{i-1} < \pi_i > \pi_{i+1}$ with the convention that $\pi_0 = 0$. Define the \emph{left peak statistic} $\lpk(\pi)$ to be the number of left peaks of $\pi$. Write $L_n^{(i)} = \sum_{\pi \in S_n: \lpk(\pi) = i} \pi$ for the \emph{$i^{th}$ left peak Eulerian element} of $\mathbb{Q}S_n$. Petersen \cite{Petersen} showed that the  left peak Eulerian elements commute with each other. 
\begin{theorem}
\label{thm Petersen}
\cite{Petersen}
For each $0 \leq i, j \leq  n-1$,
\begin{equation}
L_n^{(i)} L_n^{(j)} = L_n^{(j)} L_n^{(i)}.
\end{equation}
\end{theorem}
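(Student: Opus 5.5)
The statement is cited from Petersen \cite{Petersen}. To prove it, I would imitate the classical proof of Loday's Theorem~\ref{thm Loday}. In that proof one writes the Eulerian generating function $E_n(t)$ as a linear combination of elements $\rho_k$ with $\rho_k\rho_l=\rho_{kl}$. Since such elements generate a commutative algebra, the coefficients of $E_n(t)$ commute.

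\emph{Step 1: recall the Eulerian argument.} Let $\rho_k\in\bbQ S_n$ be the sum of all permutations whose inverse has at most $k-1$ descents. Equivalently, $\rho_k$ is the image of the $k$-th convolution (Adams) power of the identity on the tensor Hopf algebra, realized in Solomon's descent algebra. The Hopf-algebraic identity $\psi^k\circ\psi^l=\psi^{kl}$ gives $\rho_k\rho_l=\rho_{kl}$. Counting permutations by descents then gives, in $\bbQ S_n[[t]]$,
\begin{equation}
\frac{E_n(t)}{(1-t)^{n+1}}=\sum_{k\ge 1}\rho_k\, t^{k-1},
\end{equation}
possibly after inverting $\pi\mapsto\pi^{-1}$, which is an anti-automorphism and does not affect commutativity.

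\emph{Step 2: the left peak analogue.} I would construct elements $\theta_k\in\bbQ S_n$, for $k$ odd, with $\theta_k\theta_l=\theta_{kl}$. The natural candidate is the sum over permutations $\pi$ weighted by the number of ``signed'' $k$-colourings compatible with $\pi$, in the sense of Stembridge's enriched $P$-partitions adapted to left peaks; Petersen's type-B/left-enriched setting is the model. Equivalently, $\theta_k=\rho_{(k+1)/2}\,\omega\,\rho_{(k-1)/2}$-type convolution products in the descent algebra, where the extra sign accounts for the position $0$ in the left-peak convention. The multiplicativity $\theta_k\theta_l=\theta_{kl}$ should follow from the same Hopf-algebra composition argument, applied to the convolution power of an involutive character (odd powers behave multiplicatively).

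\emph{Step 3: the generating function.} Summing over the fibres of $\lpk$, I expect an identity of the form
\begin{equation}
\frac{\sum_{i} L_n^{(i)}\,\bigl(\tfrac{4t}{(1+t)^2}\bigr)^{i}}{(1-t)^{n}}\cdot(\text{explicit scalar factor})=\sum_{j\ge 0}\theta_{2j+1}\,t^{j}.
\end{equation}
This would be proved by refining Stembridge's left-peak enumeration $\sum_j (2j+1)^n t^j$-type formula from $\bbQ$ to $\bbQ S_n$. Concretely, one decomposes each enriched colouring into the unique compatible permutation and counts the colourings with a fixed left peak set.

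\emph{Step 4: conclude.} Comparing coefficients of powers of $t$ shows that each $L_n^{(i)}$ is a finite $\bbQ$-linear combination of the elements $\theta_1,\theta_3,\dots,\theta_{2n-1}$. The substitution in Step 3 is invertible as a power series change of variables, so only finitely many $\theta$'s are needed by degree considerations. These $\theta$'s commute pairwise by Step 2, and hence so do the $L_n^{(i)}$.

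The main obstacle is Step 3: getting the exact group-algebra-valued enumeration of enriched colourings by left peak set. I would first verify it for $n\le 4$ by computer, and then prove it by a sign-reversing or fibre-counting argument that matches Stembridge's scalar identity term by term over each permutation.
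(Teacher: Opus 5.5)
Your architecture is the right one: find elements $\theta_{2p+1}\in\bbQ S_n$ with $\theta_k\theta_l=\theta_{kl}$ for odd $k,l$, prove $\sum_{p\ge 0}\theta_{2p+1}t^p=\frac{(1+t)^n}{(1-t)^{n+1}}\sum_i L_n^{(i)}\bigl(\frac{4t}{(1+t)^2}\bigr)^i$, and invert. This matches the left-enriched $P$-partition route of the cited source. The paper itself only cites Petersen and gives no proof.

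\textbf{The gap is in Step 2.} The multiplicativity there is only asserted (``should follow''), and it is the whole content of the theorem. The concrete mechanism you suggest does not produce such elements.
\begin{enumerate}
\item If the involution is the antipode $S$, then $\mathrm{id}*S$ is the convolution unit. Every such product collapses to $\mathrm{id}^{*m}$ for some $m\in\bbZ$.
\item If it is the word reversal $\bar S$ (the antipode twisted by $(-1)^{\deg}$), the grouped product $\mathrm{id}^{*(p+1)}*\bar S^{*p}$ has coefficients that are not functions of $\lpk$. For $p=1$ the words $132$ and $213$, both with one left peak, get weights $6$ and $2$. So no identity of the Step 3 type can hold for it.
\end{enumerate}

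\textbf{A construction that works.} Take the \emph{alternating} convolution $\Phi_p=\mathrm{id}*\bar S*\mathrm{id}*\cdots*\bar S*\mathrm{id}$ ($2p+1$ factors) on the tensor algebra with concatenation product and shuffle coproduct.
\begin{enumerate}
\item By cocommutativity, $\Phi_q$ is a coalgebra map, so $\Phi_p\circ\Phi_q=\Phi_q*(\bar S\circ\Phi_q)*\Phi_q*\cdots*\Phi_q$.
\item Since $\bar S$ is an involutive algebra anti-automorphism (using cocommutativity again), $\bar S\circ\Phi_q$ is the alternating word of length $2q+1$ beginning and ending with $\bar S$.
\item Hence the composite is the alternating word of length $(2p+1)(2q+1)$, that is, $\Phi_p\circ\Phi_q=\Phi_{2pq+p+q}$.
\item On $V^{\otimes n}$ with $\dim V\ge n$, $\Phi_p$ acts faithfully as $\theta_{2p+1}=\sum_w c_p(w)\,w$, up to $w\leftrightarrow w^{-1}$, which is harmless for commutativity.
\end{enumerate}
Here $c_p(w)$ is the number of ways to cut the word $w$ into $2p+1$ consecutive, possibly empty, blocks that are alternately increasing and decreasing, with the first and last block increasing. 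These are exactly the left-enriched colourings with values $0<-1<1<\cdots<-p<p$ that you had in mind.

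\textbf{Step 3 is not the main obstacle.} With this definition it needs neither a group-algebra refinement nor a computer check. It is the scalar identity $\sum_{p\ge0}c_p(w)t^p=\frac{(1+t)^n}{(1-t)^{n+1}}\bigl(\frac{4t}{(1+t)^2}\bigr)^{\lpk(w)}$ for each fixed word $w$. It follows from the usual run decomposition: a maximal segment on which $|f|=a>0$ must be valley-shaped and then carries exactly two sign patterns, and the segment with $f=0$ must be increasing. This is the chain case of the count behind the relation between $\lpk$ and the type B Eulerian polynomial.

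\textbf{Two smaller corrections.}
\begin{enumerate}
\item In Step 1, $\rho_k$ is not the plain sum of the permutations with at most $k-1$ descents; that element already violates $\rho_2^2=\rho_4$ for $n=2$. It is the weighted sum $\sum_\pi\binom{k+n-1-\des(\pi)}{n}\pi$ (up to inversion), which is also what your displayed generating function requires.
\item In Step 4 the point is not a change of variables. Your identity gives $(1-t)^{n+1}\sum_{p\ge0}\theta_{2p+1}t^p=\sum_i L_n^{(i)}(4t)^i(1+t)^{n-2i}$. The polynomials $(4t)^i(1+t)^{n-2i}$ are linearly independent, since their lowest-order terms differ. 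So each $L_n^{(i)}$ is a linear combination of $\theta_1,\theta_3,\dots,\theta_{2n+1}$, not only up to $\theta_{2n-1}$.
\end{enumerate}
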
  
Petersen also related the polynomial $\sum_{\pi \in S_n} t^{\lpk(\pi)}$ to the type B Eulerian polynomial \cite[Proposition 4.15]{Petersen}. Consequently, 
$\sum_{\pi \in S_n} t^{\lpk(\pi)}$ is real-rooted. As the identity permutation is the only permutation with 0 left peaks, $\Omega_{L_n}(\bx)$ is a well-defined symmetric function for $L_n(t) = \sum_{i = 0}^{n-1} L_n^{(i)} t^i$.
We conjecture that $\Omega_{L_n}(\bx)$ is Schur-positive.
\begin{conjecture}
\label{conj peak eul s pos}
The symmetric function $\Omega_{L_n}(\bx) = \prod_{i \geq 1} L_n(x_i)$ is Schur-positive.
\end{conjecture}

A major obstacle to understanding the symmetric functions $\Omega_{E_n}(\bx)$ and $\Omega_{L_n}(\bx)$ is the lack of bijective proofs of Theorems~\ref{thm Loday} and \ref{thm Petersen}. Write $\mathbf{E}_n^{(i)} = \{\pi \in S_n \mid \des(\pi) = i\}$ and $\mathbf{L}_n^{(i)} = \{\pi \in S_n \mid \lpk(\pi) = i\}$. 
Solutions to the following open problems would be helpful for understanding $\Omega_{E_n}(\bx)$ and $\Omega_{L_n}(\bx)$.
\begin{problem}
Find bijections $\varphi_E^{i,j}: \mathbf{E}_n^{(i)} \times \mathbf{E}_n^{(j)} \to \mathbf{E}_n^{(j)} \times \mathbf{E}_n^{(i)}$ such that if $\varphi_E^{i,j}(\pi, \sigma) = (\sigma', \pi')$, then $\pi\sigma = \sigma'\pi'$. 
\end{problem}
\begin{problem}
Find bijections $\varphi_L^{i,j}: \mathbf{L}_n^{(i)} \times \mathbf{L}_n^{(j)} \to \mathbf{L}_n^{(j)} \times \mathbf{L}_n^{(i)}$ such that if $\varphi_L^{i,j}(\pi, \sigma) = (\sigma', \pi')$, then $\pi\sigma = \sigma'\pi'$. 
\end{problem}

\section{Acknowledgements}
The author thanks his advisor Sara Billey for her mentorship.
The author thanks Nantel Bergeron, Jonah Blasiak, Tracy Chin, Darij Grinberg, Joe Rogge, Hunter Spink, Vasu Tewari, and Cynthia Vinzant for helpful conversations.

\section{AI Use Statement}
The proof of Theorem~\ref{thm herm s pos} was found using ChatGPT 5.6 sol.

\clearpage
\bibliographystyle{plain}
\bibliography{mycitations}

@book{Bapat,
 author = {Bapat, R. B.},
 title = {Graphs and matrices},
 fseries = {Universitext},
 series = {Universitext},
 issn = {0172-5939},
 isbn = {978-1-84882-980-0; 978-1-84882-981-7},
 year = {2011},
 publisher = {London: Springer; New Delhi: Hindustan Book Agency},
 language = {English},
 doi = {10.1007/978-1-84882-981-7},
 zbMATH = {5712789},
 Zbl = {1248.05002}
}

@Article{BLamLLT,
    AUTHOR = {Blasiak, Jonah},
     TITLE = {Haglund's conjecture on 3-column {M}acdonald polynomials},
   JOURNAL = {Math. Z.},
  FJOURNAL = {Mathematische Zeitschrift},
    VOLUME = {283},
      YEAR = {2016},
    NUMBER = {1-2},
     PAGES = {601--628},
      ISSN = {0025-5874},
   MRCLASS = {05E05 (33C52)},
  MRNUMBER = {3489082},
MRREVIEWER = {Meesue Yoo},
       DOI = {10.1007/s00209-015-1612-7},
       URL = {http://dx.doi.org/10.1007/s00209-015-1612-7},
}

@misc{OAIePos,
  author       = {{OpenAI}},
  title        = {{Elementary positivity of chromatic quasisymmetric functions}},
  year         = {2026},
  url   = {https://github.com/openai/math/tree/main/preprints/Elementary-Positivity-of-Chromatic-Quasisymmetric-Functions-September-24-2026}
}

@ARTICLE{BEPS,
   author = {{Blasiak}, Jonah and {Eriksson}, Holden and {Pylyavskyy}, Pavlo and {Siegl}, Isaiah},
   title = "{Noncommutative Schur functions for posets}",
  FJournal = {Selecta Mathematica. New Series},
  Journal = {Selecta Math. (N.S.)},
  ISSN = {1420-9020},
  Volume = {31},
  Number = {21},
  Pages = {56},
  year = {2025}
}

@incollection{Tymoczko,
 author = {Tymoczko, Julianna S.},
 title = {Permutation actions on equivariant cohomology of flag varieties},
 booktitle = {Toric topology. International conference, Osaka, Japan, May 28--June 3, 2006},
 isbn = {978-0-8218-4486-1},
 pages = {365--384},
 year = {2008},
 publisher = {Providence, RI: American Mathematical Society (AMS)},
 language = {English},
 zbMATH = {5343262},
 Zbl = {1147.14024}
}

@article{BLKronecker,
    AUTHOR = {Blasiak, Jonah and Liu, Ricky Ini},
     TITLE = {Kronecker coefficients and noncommutative super {S}chur
              functions},
   JOURNAL = {J. Combin. Theory Ser. A},
  FJOURNAL = {Journal of Combinatorial Theory. Series A},
    VOLUME = {158},
      YEAR = {2018},
     PAGES = {315--361},
      ISSN = {0097-3165},
   MRCLASS = {05E10 (20C30)},
  MRNUMBER = {3800131},
       DOI = {10.1016/j.jcta.2018.02.007},
       URL = {https://doi.org/10.1016/j.jcta.2018.02.007},
}

@article {LamRibbon,
    AUTHOR = {Lam, Thomas},
     TITLE = {Ribbon {S}chur operators},
   JOURNAL = {European J. Combin.},
  FJOURNAL = {European Journal of Combinatorics},
    VOLUME = {29},
      YEAR = {2008},
    NUMBER = {1},
     PAGES = {343--359},
      ISSN = {0195-6698},
   MRCLASS = {05E10 (17B37)},
  MRNUMBER = {2368641 (2009a:05210)},
MRREVIEWER = {Arthur L. B. Yang},
       DOI = {10.1016/j.ejc.2006.01.016},
       URL = {http://dx.doi.org/10.1016/j.ejc.2006.01.016},
}

@Article{Hwang,
 Author = {Hwang, Byung-Hak},
 Title = {Chromatic quasisymmetric functions and noncommutative {{\(P\)}}-symmetric functions},
 FJournal = {Transactions of the American Mathematical Society},
 Journal = {Trans. Am. Math. Soc.},
 ISSN = {0002-9947},
 Volume = {377},
 Number = {4},
 Pages = {2855--2896},
 Year = {2024},
 Language = {English},
 DOI = {10.1090/tran/9096},
 zbMATH = {7876035},
 Zbl = {1542.05177}
}

@article{Jucys,
title = {Symmetric polynomials and the center of the symmetric group ring},
journal = {Reports on Mathematical Physics},
volume = {5},
number = {1},
pages = {107-112},
year = {1974},
issn = {0034-4877},
doi = {https://doi.org/10.1016/0034-4877(74)90019-6},
url = {https://www.sciencedirect.com/science/article/pii/0034487774900196},
author = {A.-A.A. Jucys}
}

@article {ASW,
    AUTHOR = {Aissen, Michael and Schoenberg, I. J. and Whitney, A. M.},
     TITLE = {On the generating functions of totally positive sequences.
              {I}},
   JOURNAL = {J. Analyse Math.},
  FJOURNAL = {Journal d'Analyse Math\'ematique},
    VOLUME = {2},
      YEAR = {1952},
     PAGES = {93--103},
      ISSN = {0021-7670,1565-8538},
   MRCLASS = {27.0X},
  MRNUMBER = {53174},
MRREVIEWER = {E.\ Hille},
       DOI = {10.1007/BF02786970},
       URL = {https://doi.org/10.1007/BF02786970},
}

@article{Edrei,
 author = {Edrei, Albert},
 title = {On the generating functions of totally positive sequences. {II}},
 fjournal = {Journal d'Analyse Math{\'e}matique},
 journal = {J. Anal. Math.},
 issn = {0021-7670},
 volume = {2},
 pages = {104--109},
 year = {1952},
 language = {English},
 doi = {10.1007/BF02786971},
 zbMATH = {3077239},
 Zbl = {0049.17202}
}

@article{Thoma,
 author = {Thoma, Elmar},
 title = {Die unzerlegbaren, positiv-definiten {Klassenfunktionen} der abz{\"a}hlbar unendlichen, symmetrischen {Gruppe}},
 fjournal = {Mathematische Zeitschrift},
 journal = {Math. Z.},
 issn = {0025-5874},
 volume = {85},
 pages = {40--61},
 year = {1964},
 language = {German},
 doi = {10.1007/BF01114877},
 url = {https://eudml.org/doc/170284},
 zbMATH = {3305204},
 Zbl = {0192.12402}
}

@article{Lindstrom,
 author = {Lindstr{\"o}m, Bernt},
 title = {On the vector representations of induced matroids},
 fjournal = {Bulletin of the London Mathematical Society},
 journal = {Bull. Lond. Math. Soc.},
 issn = {0024-6093},
 volume = {5},
 pages = {85--90},
 year = {1973},
 language = {English},
 doi = {10.1112/blms/5.1.85},
 zbMATH = {3412660},
 Zbl = {0262.05018}
}

@misc{AmdShaSta,
      title={Sprout Symmetric Functions: Part 1}, 
      author={Tewodros Amdeberhan and John Shareshian and Richard P. Stanley},
      year={2026},
      howpublished = {Preprint, {arXiv}:2605.27828},
      eprint={2605.27828},
      archivePrefix={arXiv},
      primaryClass={math.CO},
      url={https://arxiv.org/abs/2605.27828}, 
}

@misc{AlexanderssonDai,
 author = {Per Alexandersson and Lilan Dai},
 title = {Partition division maps, symmetric functions and positivity},
 year = {2026},
 howpublished = {Preprint, {arXiv}:2604.25440},
 url = {https://arxiv.org/abs/2604.25440},
 arXiv = {arXiv:2604.25440}
}

@article{BrentiPF,
 author = {Brenti, Francesco},
 title = {Combinatorics and total positivity},
 fjournal = {Journal of Combinatorial Theory. Series A},
 journal = {J. Comb. Theory, Ser. A},
 issn = {0097-3165},
 volume = {71},
 number = {2},
 pages = {175--218},
 year = {1995},
 language = {English},
 doi = {10.1016/0097-3165(95)90000-4},
 zbMATH = {802355},
 Zbl = {0851.05095}
}

@article{DeanVerducci1990,
 author = {Dean, A. M. and Verducci, J. S.},
 title = {Linear transformations that preserve majorization, {Schur} concavity, and exchangeability},
 fjournal = {Linear Algebra and its Applications},
 journal = {Linear Algebra Appl.},
 issn = {0024-3795},
 volume = {127},
 pages = {121--138},
 year = {1990},
 language = {English},
 doi = {10.1016/0024-3795(90)90339-E},
 zbMATH = {4171108},
 Zbl = {0712.15002}
}

@article{Ando1989,
 author = {Ando, T.},
 title = {Majorization, doubly stochastic matrices, and comparison of eigenvalues},
 fjournal = {Linear Algebra and its Applications},
 journal = {Linear Algebra Appl.},
 issn = {0024-3795},
 volume = {118},
 pages = {163--248},
 year = {1989},
 language = {English},
 doi = {10.1016/0024-3795(89)90580-6},
 zbMATH = {4101381},
 Zbl = {0673.15011}
}

@article{ChudnovSeym,
 author = {Chudnovsky, Maria and Seymour, Paul},
 title = {The roots of the independence polynomial of a clawfree graph},
 fjournal = {Journal of Combinatorial Theory. Series B},
 journal = {J. Comb. Theory, Ser. B},
 issn = {0095-8956},
 volume = {97},
 number = {3},
 pages = {350--357},
 year = {2007},
 language = {English},
 doi = {10.1016/j.jctb.2006.06.001},
 zbMATH = {5143573},
 Zbl = {1119.05075}
}

@article{Engstrom,
 author = {Engstr{\"o}m, Alexander},
 title = {Inequalities on well-distributed point sets on circles},
 fjournal = {JIPAM. Journal of Inequalities in Pure \& Applied Mathematics},
 journal = {JIPAM, J. Inequal. Pure Appl. Math.},
 issn = {1443-5756},
 volume = {8},
 number = {2},
 pages = {5},
 note = {Id/No 34},
 year = {2007},
 language = {English},
 zbMATH = {5224744},
 Zbl = {1370.52007}
}

@article{LeakeRyder,
 author = {Leake, Jonathan D. and Ryder, Nick R.},
 title = {Generalizations of the matching polynomial to the multivariate independence polynomial},
 fjournal = {Algebraic Combinatorics},
 journal = {Algebr. Comb.},
 issn = {2589-5486},
 volume = {2},
 number = {5},
 pages = {781--802},
 year = {2019},
 language = {English},
 doi = {10.5802/alco.63},
 zbMATH = {7115041},
 Zbl = {1434.05075}
}

@misc{MMclaw,
      title={Chromatic symmetric functions of claw-free graphs are not Schur positive}, 
      author={Jacob P. Matherne and Alejandro H. Morales},
      year={2026},
      eprint={2607.21508},
      archivePrefix={arXiv},
      howpublished = {Preprint, {arXiv}:2607.21508},
      primaryClass={math.CO},
      url={https://arxiv.org/abs/2607.21508}, 
}

@misc{PrajapatiClaw,
      title={A counterexample to the claw-free Schur-positivity conjecture}, 
      author={Jitendra Prajapati},
      year={2026},
      eprint={2607.26364},
      howpublished = {Preprint, {arXiv}:2607.26364},
      archivePrefix={arXiv},
      primaryClass={math.CO},
      url={https://arxiv.org/abs/2607.26364}, 
}

@misc{WZZclaw,
      title={Two infinite families of counterexamples to the Stanley--Gasharov conjecture}, 
      author={David G. L. Wang and K. Zhang and T. Y. Zhao},
      year={2026},
      eprint={2607.27166},
      archivePrefix={arXiv},
      howpublished = {Preprint, {arXiv}:2607.27166},
      primaryClass={math.CO},
      url={https://arxiv.org/abs/2607.27166}, 
}

@book{FultonHarris,
 author = {Fulton, William and Harris, Joe},
 title = {Representation theory. {A} first course},
 fseries = {Graduate Texts in Mathematics},
 series = {Grad. Texts Math.},
 issn = {0072-5285},
 volume = {129},
 isbn = {0-387-97495-4; 0-387-97527-6},
 year = {1991},
 publisher = {New York etc.: Springer-Verlag},
 language = {English},
 zbMATH = {51906},
 Zbl = {0744.22001}
}

@book{MerrisMultilin,
 author = {Merris, Russell},
 title = {Multilinear algebra},
 fseries = {Algebra, Logic and Applications},
 series = {Algebra Log. Appl.},
 issn = {1041-5394},
 volume = {8},
 isbn = {90-5699-078-0},
 year = {1997},
 publisher = {Langhorne, PA: Gordon \& Breach},
 language = {English},
 zbMATH = {1050306},
 Zbl = {0892.15020}
}

@article{Petersen,
 author = {Petersen, T. Kyle},
 title = {Enriched {{\(P\)}}-partitions and peak algebras},
 fjournal = {Advances in Mathematics},
 journal = {Adv. Math.},
 issn = {0001-8708},
 volume = {209},
 number = {2},
 pages = {561--610},
 year = {2007},
 language = {English},
 doi = {10.1016/j.aim.2006.05.016},
 zbMATH = {5129762},
 Zbl = {1111.05097}
}

@article{Loday,
 author = {Loday, Jean-Louis},
 title = {Op{\'e}rations sur l'homologie cyclique des alg{\`e}bres commutatives. ({Operations} on the cyclic homology of commutative algebras)},
 fjournal = {Inventiones Mathematicae},
 journal = {Invent. Math.},
 issn = {0020-9910},
 volume = {96},
 number = {1},
 pages = {205--230},
 year = {1989},
 language = {French},
 doi = {10.1007/BF01393976},
 url = {https://eudml.org/doc/143678},
 zbMATH = {4123962},
 Zbl = {0686.18006}
}

@article{Muirhead,
 author = {Muirhead, R. F.},
 title = {Some methods applicable to identities and inequalities of symmetric algebraic functions of {{\(n\)}} letters.},
 fjournal = {Proceedings of the Edinburgh Mathematical Society},
 journal = {Proc. Edinb. Math. Soc.},
 issn = {0013-0915},
 volume = {21},
 pages = {143--157},
 year = {1903},
 language = {English},
 zbMATH = {2656169},
 JFM = {34.0202.01}
}

@article{BF,
    AUTHOR = {Blasiak, Jonah and Fomin, Sergey},
     TITLE = {Noncommutative {S}chur functions, switchboards, and {S}chur
              positivity},
   JOURNAL = {Selecta Math. (N.S.)},
  FJOURNAL = {Selecta Mathematica. New Series},
    VOLUME = {23},
      YEAR = {2017},
    NUMBER = {1},
     PAGES = {727--766},
      ISSN = {1022-1824},
   MRCLASS = {05E05 (05A15 16S99)},
  MRNUMBER = {3595905},
       DOI = {10.1007/s00029-016-0253-y},
       URL = {http://dx.doi.org/10.1007/s00029-016-0253-y},
}

@article {BFcommute,
    AUTHOR = {Blasiak, Jonah and Fomin, Sergey},
     TITLE = {Rules of three for commutation relations},
   JOURNAL = {J. Algebra},
  FJOURNAL = {Journal of Algebra},
    VOLUME = {500},
      YEAR = {2018},
     PAGES = {193--220},
      ISSN = {0021-8693},
   MRCLASS = {05E05 (16S99)},
  MRNUMBER = {3765453},
MRREVIEWER = {Matt S. Davis},
       DOI = {10.1016/j.jalgebra.2016.12.023},
       URL = {https://doi.org/10.1016/j.jalgebra.2016.12.023},
}

@article{KhareTao,
 author = {Khare, Apoorva and Tao, Terence},
 title = {Schur polynomials, entrywise positivity preservers, and weak majorization},
 fjournal = {S{\'e}minaire Lotharingien de Combinatoire},
 journal = {S{\'e}min. Lothar. Comb.},
 issn = {1286-4889},
 volume = {80B},
 pages = {12},
 note = {Id/No 14},
 year = {2018},
 language = {English},
 url = {www.mat.univie.ac.at/~slc/wpapers/FPSAC2018//14-Khare-Tao.html},
 zbMATH = {7054603},
 Zbl = {1411.05263}
}

@book {ec2,
    AUTHOR = {Stanley, Richard P.},
     TITLE = {Enumerative combinatorics. {V}ol. 2.},
    SERIES = {Cambridge Studies in Advanced Mathematics},
    VOLUME = {62},
 PUBLISHER = {Cambridge University Press},
   ADDRESS = {Cambridge},
      YEAR = {1999},
     PAGES = {xii+581},
      ISBN = {0-521-56069-1; 0-521-78987-7},
   MRCLASS = {05A15 (05-02 05E05 05E10 68R05)},
  MRNUMBER = {MR1676282 (2000k:05026)},
MRREVIEWER = {Ira Gessel},
}

@article {FG,
    AUTHOR = {Fomin, Sergey and Greene, Curtis},
     TITLE = {Noncommutative {S}chur functions and their applications},
      NOTE = {Selected papers in honor of Adriano Garsia (Taormina, 1994)},
   JOURNAL = {Discrete Math.},
  FJOURNAL = {Discrete Mathematics},
    VOLUME = {193},
      YEAR = {1998},
    NUMBER = {1-3},
     PAGES = {179--200},
      ISSN = {0012-365X},
     CODEN = {DSMHA4},
   MRCLASS = {05E05 (20C30)},
  MRNUMBER = {1661368 (2000c:05149)},
MRREVIEWER = {Ernesto Vallejo},
       DOI = {10.1016/S0012-365X(98)00140-X},
       URL = {http://dx.doi.org.proxy.lib.umich.edu/10.1016/S0012-365X(98)00140-X},
}

@article {GesselViennot,
    AUTHOR = {Gessel, Ira and Viennot, G{\'e}rard},
     TITLE = {Binomial determinants, paths, and hook length formulae},
   JOURNAL = {Adv. in Math.},
  FJOURNAL = {Advances in Mathematics},
    VOLUME = {58},
      YEAR = {1985},
    NUMBER = {3},
     PAGES = {300--321},
      ISSN = {0001-8708},
     CODEN = {ADMTA4},
   MRCLASS = {05A15 (05A19)},
  MRNUMBER = {815360 (87e:05008)},
MRREVIEWER = {S. Milne},
       DOI = {10.1016/0001-8708(85)90121-5},
       URL = {http://dx.doi.org/10.1016/0001-8708(85)90121-5},
}

@article {LeeCylSchur,
    AUTHOR = {Lee, Seung Jin},
     TITLE = {Positivity of cylindric skew {S}chur functions},
   JOURNAL = {J. Combin. Theory Ser. A},
  FJOURNAL = {Journal of Combinatorial Theory. Series A},
    VOLUME = {168},
      YEAR = {2019},
     PAGES = {26--49},
      ISSN = {0097-3165},
   MRCLASS = {05E05 (05A05)},
  MRNUMBER = {3960164},
MRREVIEWER = {Markus E. Fulmek},
       DOI = {10.1016/j.jcta.2019.05.009},
       URL = {https://doi-org.offcampus.lib.washington.edu/10.1016/j.jcta.2019.05.009},
}

@article {PostnikovCylSchur,
    AUTHOR = {Postnikov, Alexander},
     TITLE = {Affine approach to quantum {S}chubert calculus},
   JOURNAL = {Duke Math. J.},
  FJOURNAL = {Duke Mathematical Journal},
    VOLUME = {128},
      YEAR = {2005},
    NUMBER = {3},
     PAGES = {473--509},
      ISSN = {0012-7094},
   MRCLASS = {05E05 (14M15 14N15 14N35)},
  MRNUMBER = {2145741},
MRREVIEWER = {Harry Tamvakis},
       DOI = {10.1215/S0012-7094-04-12832-5},
       URL = {https://doi-org.offcampus.lib.washington.edu/10.1215/S0012-7094-04-12832-5},
}

@article {McNamara,
    AUTHOR = {McNamara, Peter},
     TITLE = {Cylindric skew {S}chur functions},
   JOURNAL = {Adv. Math.},
  FJOURNAL = {Advances in Mathematics},
    VOLUME = {205},
      YEAR = {2006},
    NUMBER = {1},
     PAGES = {275--312},
      ISSN = {0001-8708},
   MRCLASS = {05E05 (06A07 14N35)},
  MRNUMBER = {2254313},
MRREVIEWER = {Thomas Fun Yau Lam},
       DOI = {10.1016/j.aim.2005.07.011},
       URL = {https://doi.org/10.1016/j.aim.2005.07.011},
}

@article {GKcyl,
    AUTHOR = {Gessel, Ira M. and Krattenthaler, Christian},
     TITLE = {Cylindric partitions},
   JOURNAL = {Trans. Amer. Math. Soc.},
  FJOURNAL = {Transactions of the American Mathematical Society},
    VOLUME = {349},
      YEAR = {1997},
    NUMBER = {2},
     PAGES = {429--479},
      ISSN = {0002-9947},
   MRCLASS = {05A15 (05A17 05A30 33D20 33D80)},
  MRNUMBER = {1389777},
MRREVIEWER = {Mikl\'{o}s B\'{o}na},
       DOI = {10.1090/S0002-9947-97-01791-1},
       URL = {https://doi.org/10.1090/S0002-9947-97-01791-1},
}

@article{Isaiah,
 author = {Siegl, Isaiah},
 title = {Cylindric {{\(P\)}}-tableaux for {{\((3+1)\)}}-free posets},
 fjournal = {Combinatorial Theory},
 journal = {Comb. Theory},
 issn = {2766-1334},
 volume = {5},
 number = {3},
 pages = {22},
 note = {Id/No 18},
 year = {2025},
 language = {English},
 doi = {10.5070/C65365566},
 zbMATH = {8121954},
 Zbl = {1575.05219}
}

@misc{guaypaquetCohom,
      title={A second proof of the {S}hareshian--{W}achs conjecture, by way of a new {H}opf algebra}, 
      author={Mathieu Guay-Paquet},
      year={2016},
      howpublished = {Preprint, {arXiv}: 1601.05498},
      eprint={1601.05498},
      archivePrefix={arXiv},
      primaryClass={math.CO},
      url={https://arxiv.org/abs/1601.05498}, 
}

@misc{GPchromatic,
	Author = {Guay-Paquet, Mathieu},
	howpublished = {Preprint, {arXiv}:1306.2400},
	Title = {A modular relation for the chromatic symmetric functions of (3+1)-free posets},
	Year = {2013}}

@Article{BrosnanChow,
 Author = {Brosnan, Patrick and Chow, Timothy Y.},
 Title = {Unit interval orders and the dot action on the cohomology of regular semisimple {Hessenberg} varieties},
 FJournal = {Advances in Mathematics},
 Journal = {Adv. Math.},
 ISSN = {0001-8708},
 Volume = {329},
 Pages = {955--1001},
 Year = {2018},
 Language = {English},
 DOI = {10.1016/j.aim.2018.02.020},
 zbMATH = {6863462},
 Zbl = {1410.05222}
}

@article{Gasharov,
	Author = {Gasharov, Vesselin},
	Journal = {Discrete Math.},
	Pages = {193--197},
	Title = {Incomparability graphs of $(3+1)$-free posets are $s$-positive},
	Volume = {157},
	Year = {1996}}

@article{GasharovClaw,
title = {On Stanley's chromatic symmetric function and clawfree graphs},
journal = {Discrete Mathematics},
volume = {205},
number = {1},
pages = {229-234},
year = {1999},
issn = {0012-365X},
doi = {https://doi.org/10.1016/S0012-365X(99)00106-5},
url = {https://www.sciencedirect.com/science/article/pii/S0012365X99001065},
author = {Vesselin Gasharov}
}

@misc{HikitaChromatic,
      title={A proof of the {S}tanley--{S}tembridge conjecture}, 
      author={Tatsuyuki Hikita},
      year={2024},
      month = oct,
      howpublished = {Preprint, {arXiv}:2410.12758}
}

@book{MOAmaj,
 author = {Marshall, Albert W. and Olkin, Ingram and Arnold, Barry C.},
 title = {Inequalities: theory of majorization and its applications},
 edition = {2nd edition},
 fseries = {Springer Series in Statistics},
 series = {Springer Ser. Stat.},
 issn = {0172-7397},
 isbn = {978-0-387-40087-7; 978-0-387-68276-1},
 year = {2011},
 publisher = {New York, NY: Springer},
 language = {English},
 doi = {10.1007/978-0-387-68276-1},
 zbMATH = {5829875},
 Zbl = {1219.26003}
}

@article {Stanleychromatic,
    AUTHOR = {Stanley, Richard P.},
     TITLE = {A symmetric function generalization of the chromatic
              polynomial of a graph},
   JOURNAL = {Adv. Math.},
  FJOURNAL = {Advances in Mathematics},
    VOLUME = {111},
      YEAR = {1995},
    NUMBER = {1},
     PAGES = {166--194},
      ISSN = {0001-8708},
   MRCLASS = {05E05 (05C15)},
  MRNUMBER = {1317387},
MRREVIEWER = {SeungKyung Park},
       DOI = {10.1006/aima.1995.1020},
       URL = {https://doi.org/10.1006/aima.1995.1020},
}

@article{StanleyStembridge,
	Author = {Stanley, Richard P. and Stembridge, John R.},
	Doi = {10.1016/0097-3165(93)90048-D},
	Fjournal = {Journal of Combinatorial Theory. Series A},
	Issn = {0097-3165},
	Journal = {J. Combin. Theory Ser. A},
	Mrclass = {05E15 (20C30)},
	Mrnumber = {1207737},
	Mrreviewer = {A. Kerber},
	Number = {2},
	Pages = {261--279},
	Title = {On immanants of {J}acobi-{T}rudi matrices and permutations with restricted position},
	Url = {https://doi-org.ezp1.lib.umn.edu/10.1016/0097-3165(93)90048-D},
	Volume = {62},
	Year = {1993}}

@article{Sra,
 author = {Sra, Suvrit},
 title = {On inequalities for normalized {Schur} functions},
 fjournal = {European Journal of Combinatorics},
 journal = {Eur. J. Comb.},
 issn = {0195-6698},
 volume = {51},
 pages = {492--494},
 year = {2016},
 language = {English},
 doi = {10.1016/j.ejc.2015.07.005},
 zbMATH = {6487490},
 Zbl = {1321.05286}
}

@article{SWchromatic,
	Author = {Shareshian, John and Wachs, Michelle L.},
	Journal = {Adv. Math.},
	Pages = {497--551},
	Title = {Chromatic quasisymmetric functions},
	Volume = {295},
	Year = {2016}}

@article{CGSsymMeans,
 author = {Cuttler, Allison and Greene, Curtis and Skandera, Mark},
 title = {Inequalities for symmetric means},
 fjournal = {European Journal of Combinatorics},
 journal = {Eur. J. Comb.},
 issn = {0195-6698},
 volume = {32},
 number = {6},
 pages = {745--761},
 year = {2011},
 language = {English},
 doi = {10.1016/j.ejc.2011.01.020},
 zbMATH = {5937082},
 Zbl = {1229.05267}
}

@Article{StanleyChromatic1998,
 Author = {Stanley, Richard P.},
 Title = {Graph colorings and related symmetric functions: {I}deas and applications: {A} description of results, interesting applications, and notable open problems.},
 FJournal = {Discrete Mathematics},
 Journal = {Discrete Math.},
 ISSN = {0012-365X},
 Volume = {193},
 Number = {1-3},
 Pages = {267--286},
 Year = {1998},
 Language = {English},
 DOI = {10.1016/S0012-365X(98)00146-0},
 zbMATH = {1525037},
 Zbl = {1061.05508}
}

\end{document}